\newtheorem{lemma}{\bf Lemma}[section]
\newtheorem{theorem}{\bf Theorem}[section]
\newtheorem{proposition}{\bf Proposition}[section]
\newtheorem{remark}{\bf Remark}[section]
\numberwithin{equation}{section}
\newcommand{\be}{\begin{equation}}
	\newcommand{\ce}{\end{equation}}
\newcommand\bes{\begin{eqnarray}}
	\newcommand\ees{\end{eqnarray}}
\newcommand\bess{\begin{eqnarray*}}
	\newcommand\eess{\end{eqnarray*}}
\begin{document}
	\title{{\Large Convergence to shock profiles for Burgers equation with\\ singular fast-diffusion and boundary effect}
		\footnotetext{\small
			*Corresponding author.}
		\footnotetext{\small E-mail addresses: xwli@math.pku.edu.cn (X. Li), ming.mei@mcgill.ca (M. Mei).} }
	
	\author{{Xiaowen Li$^{1}$ and Ming Mei$^{2,3,4}$$^\ast$}\\[2mm]
		\small\it $^1$School of Mathematical Sciences, Peking University, \\
		\small\it  Beijing, 100871, P.~R.~China \\
		 \small\it $^2$School of Mathematics and Statistics,  Jiangxi Normal University,\\
		\small\it    Nanchang, 330022, P.R.China \\
		\small\it $^3$Department of Mathematics, Champlain College Saint-Lambert,\\
		\small\it     Saint-Lambert, Quebec, J4P 3P2, Canada\\
		\small\it $^4$Department of Mathematics and Statistics, McGill University,\\
		\small\it     Montreal, Quebec, H3A 2K6, Canada
		  }

	\date{}
	
	\maketitle
	
	\begin{quote}
		\small \textbf{Abstract}: In this paper, we study the asymptotic stability of viscous shock profile for the Burgers equation $u_t +f(u)_x = (\frac{u_{x}}{u^{1-m}})_x$ on the half-space $(0,+\infty)$, subject to the boundary conditions $u|_{x=0}=u_->0$ and $u|_{x=+\infty}=0$. Here, the parameter $\frac{1}{2}<m<1$ measures the strength of fast diffusion. A key challenge arises from the pronounced singularity in the diffusivity $\left(\frac{u_x}{u^{1-m}} \right)_x$ at $u=0$ and the boundary layer. We demonstrate that the long-time behavior of $u$ converges to a shifted shock profile $U(x-st-d(t))$, where $d(t)$ is governed by the boundary layer dynamics at $x=0$ and driven by the initial data $u(x,0)$. To overcome the  singularity from fast diffusion  compounded by the bad effect of boundary layer for wave stability, some new techniques for weighted energy estimates are introduced artfully.

		\indent \textbf{Keywords}: Asymptotic behavior; shock profiles; Burgers equation; singularity; boundary effect
		
		\indent \textbf{AMS (2010) Subject Classification}: 35B40, 35L65

	\end{quote}

\tableofcontents	

\baselineskip=17pt
	
	
\section{Introduction}
We consider the initial-boundary value problem of Burgers equation on the half space $\mathbb{R_+}=(0,\infty)$
	\begin{equation}\label{original model}
		\begin{cases}
			u_t +f(u)_x = (\frac{u_{x}}{u^{1-m}})_x,\quad x\in \mathbb{R_+},~t>0,\\
			u(0,t)=u_-,\quad t>0,\\
			u(x,0)=u_0(x)=\begin{cases}
				u_-,\quad x=0,\\ 0,\quad x \rightarrow+\infty,
			\end{cases}
		\end{cases}
	\end{equation}
where $u$ is the velocity and $f$ is a smooth and convex flux function. Without loss of generality, we assume $f(0)=0$. The exponent $0<m<1$ indicates that the equation \eqref{original model} is of fast diffusion $(\frac{u_x}{u^{1-m}})_x$, which exhibits a pronounced
singularity at $u=0$.

The corresponding Cauchy problem in whole space $\mathbb{R}$ has a unique viscous shock wave $u=U(z)(z=x-st)$ with wave speed $s$ up to a shift \cite{Xu-Mei-Qin-Sheng}
\begin{equation}\label{shock profile equ}
	\begin{cases}
		-sU_{z}-\left(U^{m-1}U_{z}\right)_{z}+f(U)_{z}=0,\quad z\in \mathbb{R}, \\
		U(+\infty)=0,~ U(-\infty)=u_{-},
	\end{cases}
\end{equation}
under the Rankine-Hugoniot condition
\begin{equation}\tag{R-H}\label{R-H}
	s=\frac{f(0)-f(u_-)}{0-u_-},
\end{equation}
and the generalized shock condition
\begin{equation}\tag{E}\label{Lax's}
	g(u)\triangleq f(u)-f(u_{-})-s(u-u_{-})=f(u)-su<0, \quad\text{for } u\in(0,u_-).
\end{equation}
However, since \eqref{original model} is posed in the half-space $\mathbb{R_+}$, a boundary layer develops at $x=0$, namely,
\begin{equation}\nonumber
(u-U)\big|_{x=0}=u_--U(-st)\neq0,	
\end{equation}
the effects of which, together with those arising from singularity of the fast diffusion, are our main concern in this paper.

{\large\textbf{Background of study}}.
The stability of viscous shock profiles for the Cauchy problem of \eqref{original model} has been extensively studied in many works by various methods; however, the majority of these focus on the linear diffusion case (i.e.~$m=1$),
\begin{equation}\label{linear}\nonumber
	 u_t+f(u)_x=u_{xx},
\end{equation}
which corresponds to the classical Burgers equation. See \cite{Il'in} for the first result on the stability of viscous shock profiles by maximum principle, \cite{same line,nonconvex,conventional energy method,M. Mei} for stability and convergence rates derived through energy method, \cite{Howard-2,Howard-3,Howard-1} for sharper stability results from the construction of Green's function, and \cite{Kang-Vasseur} for $L^2$ contraction of shock waves of general perturbations via the relative entropy method. We also refer the significant contributions of \cite{Jones,spectural} based on spectral analysis and of \cite{Serre,nonconvex2} on $L^1$ stability within a semigroup framework.

When $m>1$, the equation \eqref{original model} corresponds to the slow diffusion regime in porous medium flows. In this case, the viscous shock profile degenerates and loses regularity at $u=0$, leading to a free boundary problem. Osher-Ralston  \cite{nonconvex2} first proved $L^1$ stability, which remains the only known result for the Cauchy problem of slow diffusion case of \eqref{original model}, so far. However, some related works have addressed reaction-diffusion models with slow diffusion, focusing on the stability of sharp traveling waves of Fisher-KPP equations \cite{Biro,Du-Quiros-Zhou,Du-Garriz-Quiros,Lou-Zhou} and Burgers-Fisher-KPP equations \cite{XJMY3,Xu-Ji-Mei-Yin,XJMY2,XJMY JDE}. In all these works, the analysis heavily relies on the comparison principles owing to the reaction term, with stability established via the method of upper and lower solutions.

When $m<1$, the equation \eqref{original model} falls into the fast diffusion regime in porous medium flows, encompassing the super-fast diffusion case ($m<0$), the critical fast diffusion case ($m=0$), and the regular fast diffusion case ($0<m<1$). In these regimes, the nonlinear diffusion $(\frac{u_x}{u^{1-m}})_x$ generates a singularity at $u=0$, which brings considerable difficulties to the stability analysis. Recently, the authors of this paper made a breakthrough in this direction by employing weighted energy estimates to overcome the pronounced singularity at the end state $u_+=0$ caused by nonlinear diffusivity $(\ln u)_{xx}=(\frac{u_x}{u})_x$ for the critical fast diffusion with $m=0$,  and established the stability of viscous shock profiles for the Cauchy problem in the critical fast diffusion case \cite{critical fast diffusion}. Then, the second author of this paper with collaborators \cite{Xu-Mei-Qin-Sheng} extended the analysis to the regular fast diffusion case with $0<m<1$, where a stronger singular weighted estimate is required. To this end, a convexity condition was also introduced in the stability results. Moreover, we \cite{super fast diffusion} have also treated the super-fast diffusion case for the shock wave stability, which exhibits the strongest singularity.

In consideration of the boundary effect, existing studies have addressed only \eqref{original model} with linear diffusion (i.e.~$m=1$). For the classical Burgers equation on the half-space with a Dirichlet boundary condition, Liu-Yu \cite{Liu-Yu} provided the first analysis of asymptotic convergence to the stationary viscous shock wave. To compensate the ``boundary gap'', they showed that
$$
u(x,t)\sim U(x+d(t)),\quad d(t)\sim \log t \text{ as }t\rightarrow \infty,
$$
in the case of $f=\frac{u^2}{2}$, employing the method of pointwise estimates. This was then significantly developed by Liu-Nishihara \cite{Liu-Nishihara}, see also   \cite{Nishihara}.  Moreover, Liu-Matsumura-Nishihara \cite{Liu-Matsumura-Nishihara} investigated the asymptotic behavior of the superposition of a viscous shock wave forming a boundary layer and a rarefaction wave propagating away from the boundary. For the classical Burgers equation in exterior domains on multidimensional
spaces, we refer to \cite{Hashimoto radially} for the
stability of radially symmetric stationary solutions, and \cite{Hashimoto} for non-radially symmetric perturbed fluid motion.	These results are entirely free from singularities.

Compared with the case of single viscous conservation laws, the same argument cannot be directly applied to the case of systems, not even to the physical $p$-system, since the shift $d(t)$ cannot be uniquely determined from two equations. However,  Matsumura-Mei \cite{Matsumura-Mei} observed that the boundary conditions cannot be imposed on $v(x,t)$ which allows the wave shift $\beta$ being a constant to be determined explicitly from the equation satisfied by $v$ and proved the shock stability of $U(x-st-\beta)$. While the global asymptotics toward the rarefaction wave for solutions of p-system with viscosity was also analyzed by Matsumura-Nishihara \cite{Matsumura-Nishihara} and Pan-Liu-Nishihara \cite{Pan} based on elementary $L^2$ energy method. Recently, Hashimoto-Matsumura \cite{Hashimoto-Matsumura} demonstrated that, in the
inviscid limit, the Navier-Stokes flow  converges uniformly to a linear superposition of the corresponding boundary layer profile and the Euler flow in the exterior of a ball in $\mathbb{R}^n$ ($n\geq2$).

In the fast diffusion regime of porous medium flows, the asymptotic behavior has been far less studied and remains poorly understanding, due to the strong singularity in the diffusivity and the pronounced influence of boundary effects. In this paper, we shall show that when $u_->u_+:=0$, if the initial perturbation is suitably small, then \eqref{original model} admits a unique global solution that converges to a shifted viscous shock profile $U(x-st-d(t))$. The shift $d(t)$ is governed by the boundary layer dynamics at $x=0$ and driven by the initial data $u(x,0)$, with the purpose of ensuring that the perturbation vanishes at the boundary.  To treat the singularity of the fast diffusion at $u=0$, we apply the weighted energy method, where the weight functions are artfully selected which are based on those targeted shock waves with singularity.

{\large\textbf{Difficulties}}.
In the aforementioned cases, determining the shift of the asymptotic viscous shock profile to mitigate the effects of the boundary layer is crucial for stability analysis.
Following the determination of the shift--chosen in the spirit of \cite{Liu-Nishihara}: the crucial difficulty lies in the construction of weight functions, tailored to the weighted energy method, for deriving the desired \textit{a priori} estimate to establish global existence of perturbation.

Merely adopting the weight functions from the whole-space Cauchy problem of \eqref{original model} as in \cite{Xu-Mei-Qin-Sheng} to preserve ellipticity of perturbation equation is insufficient, since it neither captures the boundary-induced effects nor guarantees the convergence of the shift-related term $d'(t)U(x-st-d(t))$ arising from the singularity. These considerations necessitate a carefully tailored and delicately constructed weight functions.

Unlike the conventional weights used in \cite{nonconvex,Liu-Nishihara,M. Mei}, the weight functions employed here retain a singularity. In combination with the singular diffusion term $(\frac{u_x}{u^{1-m}})$ at $u=0$, this generates nonzero boundary contributions at $x=+\infty$, requiring careful control of both the boundary terms at $x=0$ and those arising at infinity.

{\large\textbf{Strategies}}.
In order to overcome the singularity for the fast diffusion at $u=0$ and the strong boundary layer for the system, we propose  some new thoughts  for establishing the energy estimates of stability. These strategies are described as follows.
Firstly, to ensure the ellipticity of the perturbation equation of $\phi$, the weight functions in the first- and second-order estimates must be chosen so that
\begin{equation}\label{0sobolev embedding4}
	\sup_{t\in [0,T]}	\left\|\phi_x(\cdot,t)/U\right\|_{L^{\infty}} \leq CN(T)
\end{equation}
holds via the Sobolev embedding inequality, with a sufficiently small $N(T)$, that also plays a key role in controlling the nonlinear estimates of flux and diffusivity. The most natural choice would be $w(U(z))=U^{-2}$, which diverges as $z\to+\infty$, as in \cite{Xu-Mei-Qin-Sheng}. However, the weight functions with singularity need to be modified to $U^{-(1+m)}$ in the first-order estimate and $U^{m-3}$ in the second-order estimate, which, together with the decay of $U(x-st-d(t))$ as $x\rightarrow+\infty$, ensures the convergence of the shift-related term $d'(t)U(x-st-d(t))$ in the higher-order estimates. Moreover, based on the choice of weight functions for the higher-order estimates, we select the weight function $U^{1-3m}$ in $L^2$ estimate, where the condition $m>\frac{1}{2}$ is imposed to control the shift-related term. We remark that this carefully tailored choice of weight functions also guarantees \eqref{0sobolev embedding4} holds.

Secondly, to deal with the boundary-induced effects at $x=0$, a time-dependent algebraic weight $\langle\xi-\xi_\star\rangle^{\beta+j}$ is considered, with $j=0,1,2,3$ corresponding to different orders, $\xi=x-st-d(t)$ and $\xi_\star$ determined by $f'(U(\xi_\star))=s$. In fact, it is necessary to perform estimates up to the third-order derivatives to close the argument.

Thirdly, to avoid the appearance of non-zero boundary terms at $x=+\infty$ when performing integration by parts, we employ a cut-off technique in the estimates, which also used in the Cauchy problem of \eqref{original model} with fast diffusion on the whole space \cite{critical fast diffusion}.

{\large\textbf{Notations}}.
We denote a generic positive constant by $C$ that may vary between lines. We abbreviate the integrals
\[
\int_{0}^{+\infty}f(x)\mathrm{d}x=\int f(x), \mbox{ and }  \int_{0}^{t}\int_{0}^{+\infty}f(x,\tau)\mathrm{d}x\mathrm{d}\tau=\int_{0}^{t}\int f(x,\tau).
\]
For a weight function $w(x)>0$, we denote $L^{2}_{w}(\mathbb{R_+})$  the weighted Sobolev space consisting of measurable functions $f$ such that $\sqrt{w}  f \in L^2$ with
\[
\|f\|_{L_w^2(\mathbb{R_+})}:=\left(\int w(x)| f|^2 d x\right)^{1 / 2}.
\]
 We also denote
\begin{align}
\langle x\rangle:= \sqrt{1+x^2},\quad \langle x\rangle_{+}:=\left\{\begin{aligned}
&\sqrt{1+x^2},\quad&\text{ for }x\geq0,\\
&1,\quad	&\text{ for }x\leq0.
\end{aligned}\right.\nonumber
\end{align}
When $w(x)=\langle x\rangle^\beta$ for $\beta>0$, we write $L^2_w(\mathbb{R_+})=L^2_{\langle x\rangle^\beta}(\mathbb{R_+})$ and
$
\|\cdot\|_{L^2_{w}(\mathbb{R_+})}=\|\cdot\|_{L^2_{\langle x\rangle^\beta}(\mathbb{R_+})}.
$
 The weighted Sobolev space $H_{\langle x\rangle^\beta}^k(\mathbb{R_+})$ denotes
the space of measurable functions $f$ such that $\partial_x^j f \in L^2_{\beta+j}$ for $0\leq j\leq k$ with norm
\[
\|f\|_{H_{\langle x\rangle^\beta}^k(\mathbb{R_+})}:=\left(\sum_{j=0}^k  \|\partial_x^j f\|_{L_{\langle x\rangle^{\beta+j}}^2(\mathbb{R_+})}^2 \right)^{1 / 2}.
\]
 For simplicity, we denote $\|\cdot\|:=\|\cdot\|_{L^2(\mathbb{R_+})}$, $\|\cdot\|_w:=\|\cdot\|_{L_w^2(\mathbb{R_+})}$, $\|\cdot\|_{\langle x\rangle^\beta}:=\|\cdot\|_{L^2_{\langle x\rangle^\beta}(\mathbb{R_+})}$ 
and $\|\cdot\|_{k, \langle x\rangle^\beta}:=\|\cdot\|_{H_{\langle x\rangle^\beta}^k(\mathbb{R_+})}$.

Furthermore, $L_{\beta,w}^2(\mathbb{R_+})$ denotes the space of measurable functions $f$ such that $\sqrt{\langle x\rangle^\beta w} f \in L^2$ with norm 
\[
\|f\|_{L_{\langle x\rangle^\beta,w}^2(\mathbb{R_+})}:=\left( \int\langle x\rangle^\beta w(x) f^2 d x\right)^{1 / 2}.
\]
 $H_{\langle x\rangle^\beta,w}^k(\mathbb{R_+})$ denotes
the space of measurable functions $f$ such that $\partial_x^j f \in L^2_{\langle x\rangle^{\beta+j},w}$ for $0\leq j\leq k$ with norm 
\[
\|f\|_{H_{\langle x\rangle^\beta,w}^k(\mathbb{R_+})}\!\!:=\Big(\!\sum_{j=0}^k  \|\partial_x^j f\|_{L_{\langle x\rangle^{\beta+j},w}^2}^2 \Big)^{1 / 2}.
\]

\vspace{0.3cm}
The organization of this paper is as follows. After stating the notations, in Section \ref{section main results} we review the existence of viscous shock profiles for the Cauchy problem of \eqref{original model}, and state the main result on the asymptotic convergence to the viscous shock profile on the half-space $ \mathbb{R_+}$. In Section \ref{section stability}, we reformulate our problem and establish the global existence and the asymptotic behavior of the solution by combining local existence with \textit{a priori} estimate.

\section{Preliminaries and main results}\label{section main results}
Integrating \eqref{shock profile equ} in $z$, gives an ordinary differential equation of viscous shock wave
\begin{equation}\label{ODE}
	U_z=U^{1-m}\left[f(U)-f(u_{-})-s(U-u_{-})\right]=U^{1-m}\left(f(U)-sU\right).
\end{equation}
The existence of a viscous shock wave can be stated precisely as follows. See \cite{Xu-Mei-Qin-Sheng} for the proof.
\begin{lemma}[\cite{Xu-Mei-Qin-Sheng}, Theorem 2.1]\label{Existence of the shock profile}
	Let $0<m<1$.
	\begin{enumerate}
		\item [(i)] Suppose the Rankine-Hugoniot condition \eqref{R-H} and the generalized shock condition \eqref{Lax's} hold. Then there exists a unique  smooth monotone solution $U(z)$ (up to a shift) of \eqref{shock profile equ} satisfying
		\begin{equation}\nonumber
			U_z(z)<0,\quad \forall z\in \mathbb{R}.
		\end{equation}
		\item [(ii)]  Moreover, if $f'(0)<s<f'(u_-)$, it holds that
		\begin{equation}\label{f''>0}
			\left|U(z)-0\right|\sim|z|^{-\frac{1}{1-m}},\quad\text{as}~z\rightarrow+\infty,
		\end{equation}	
		\begin{equation}\label{decay u_-}		
			\left|U(z)-u_-\right|\sim\mathrm{e}^{-\lambda_-|z|},\quad\text{as}~z\rightarrow-\infty,
		\end{equation}
	with $\lambda_-=u_-^{1-m}(f'(u_-)-s)$.
	\end{enumerate}
\end{lemma}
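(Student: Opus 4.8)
The plan is to reduce the second-order boundary-value problem \eqref{shock profile equ} to the first-order autonomous equation \eqref{ODE}, $U_z = U^{1-m}g(U)$ with $g(U)=f(U)-sU$ as in \eqref{Lax's}, and then run a phase-line analysis. First I would record the structure of the right-hand side: by \eqref{R-H} together with $f(0)=0$ one has $g(0)=0$ and $g(u_-)=0$, so $U\equiv 0$ and $U\equiv u_-$ are the only equilibria, while \eqref{Lax's} gives $g(U)<0$ on $(0,u_-)$, hence $U^{1-m}g(U)<0$ there. For the regularity of the vector field near $U=0$ I would write $g(U)=U\,h(U)$ with $h$ smooth (since $g$ is smooth and $g(0)=0$), so that $U^{1-m}g(U)=U^{2-m}h(U)$ with $2-m>1$; thus the right-hand side is $C^1$ up to $U=0$ and $C^\infty$ on $(0,u_-]$, which makes the initial value problem well posed and, since a solution with values in $(0,u_-)$ never reaches the endpoints in finite $z$, guarantees that the resulting profile is $C^\infty$ on $\mathbb R$.

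For part (i), fix any $U_0\in(0,u_-)$ and solve $U_z=U^{1-m}g(U)$, $U(0)=U_0$. As long as $U\in(0,u_-)$ the right-hand side is strictly negative, so $U$ is strictly decreasing; since $\{0\}$ and $\{u_-\}$ are invariant, $U(z)\in(0,u_-)$ throughout the maximal interval of existence, and by monotonicity and boundedness the solution extends to all of $\mathbb R$ and possesses limits $U(\pm\infty)$, which must be equilibria -- forcing $U(-\infty)=u_-$ and $U(+\infty)=0$. This yields a solution with $U_z<0$ everywhere; uniqueness up to a shift in $z$ is then immediate from autonomy, since any two solutions ranging over $(0,u_-)$ coincide after a translation.

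For part (ii) I would read off the decay rates from the local behaviour of the vector field at the two equilibria, using the strict convexity of $f$ to convert the hypothesis into $g'(0)=f'(0)-s<0<f'(u_-)-s=g'(u_-)$. Near $z=-\infty$, setting $p=U-u_-\to 0^-$ and linearizing gives $p_z=\lambda_-\,p\,(1+o(1))$ with $\lambda_-=u_-^{1-m}(f'(u_-)-s)>0$, and since the error is itself $O(p)$ hence exponentially small, a Gronwall argument yields $|U(z)-u_-|\sim e^{-\lambda_-|z|}$, which is \eqref{decay u_-}. Near $z=+\infty$ the equilibrium $U=0$ is non-hyperbolic: there $U_z=(f'(0)-s)\,U^{2-m}(1+o(1))$, so comparing $U$ from above and below with solutions of $\dot V=-(s-f'(0)\pm\eps)V^{2-m}$ and integrating $V^{m-2}\,dV$ gives $U(z)\sim\big[(1-m)(s-f'(0))\,z\big]^{-1/(1-m)}$, i.e.\ the algebraic rate \eqref{f''>0}. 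I expect this last step -- making the degenerate equilibrium at $U=0$ rigorous and pinning down the sharp exponent $-\tfrac1{1-m}$ -- to be the main technical obstacle, while the rest is routine scalar ODE analysis.
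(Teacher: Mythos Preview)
Your argument is correct and is the standard phase-line analysis for this type of scalar profile ODE. Note, however, that the paper does not give its own proof of this lemma: it simply records the statement and cites \cite{Xu-Mei-Qin-Sheng} for the proof (``See \cite{Xu-Mei-Qin-Sheng} for the proof''), so there is no in-paper argument to compare against. Your sketch---reducing \eqref{shock profile equ} to \eqref{ODE}, using $g(0)=g(u_-)=0$ and $g<0$ on $(0,u_-)$ for the phase-line, linearizing at $u_-$ for the exponential rate, and separating variables at the degenerate equilibrium $U=0$ for the algebraic rate---is exactly the expected approach and presumably what the cited reference does. One minor remark: in part (ii) the inequalities $f'(0)<s<f'(u_-)$ are assumed directly, so you do not need convexity to obtain them; convexity only enters in the paper's Remark following the lemma, where it is used to identify \eqref{Lax's} with \eqref{degenerate lax's}.
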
	

\begin{remark}
If the flux $f$ is convex, then the generalized shock condition	\eqref{Lax's} is equivalent to the well-known Lax's condition
	 \begin{equation}\label{degenerate lax's}
	 	f'(0)< s < f'(u_-).
	 \end{equation}
\end{remark}

\begin{remark}\label{remark 2.1}
	 Since $f$ is smooth and $f(0)=0$, it follows from \eqref{ODE} that
	 \begin{equation}\label{q1}
	 	 |U_z(z)|\leq C U^{2-m}(z),\quad \forall z\in \mathbb{R}.
	 \end{equation}
Moreover, a direct calculation gives
 \begin{equation}\label{direct Uzz}
 	U_{zz}=(1-m)\frac{U_z^2}{U}+(f'(U)-s)U^{1-m}U_z,
 \end{equation}
which implies that
 \begin{equation}\label{q2}\nonumber
 	|U_{zz}(z)| \leq  CU^{3-2m}(z),\quad \forall z\in\mathbb{R}.
 \end{equation}
Similarly, we have
\begin{equation}\label{q3}
	|U_{zzz}(z)|\leq C U^{4-3m}(z),\quad \forall z\in \mathbb{R},
\end{equation}
and
\begin{equation}\label{q4}
	|U_{zzzz}(z)|\leq C U^{5-4m}(z),\quad \forall z\in \mathbb{R}.
\end{equation}
\end{remark}	

Before stating our main result, let us define
	\begin{equation}\nonumber
		\phi_0(x):=\int_x^{+\infty}\left( u_0(y)-U(y-d_0)\right) d y,
	\end{equation}
where $d_0$ is the constant uniquely determined by
\begin{equation}\nonumber
	\int_{-d_0}^{0}U(y) \mathrm{d}y=\int_{0}^{+\infty}\left(u_0(x)-U(x)\right) \mathrm{d} x,
\end{equation}
and denote
\begin{eqnarray}\label{alphai}
	\alpha_1=\frac{3m-1}{1-m},~ \alpha_2=\frac{1+m}{1-m},~ \alpha_3=\frac{3-m}{1-m},~\alpha_4=\frac{2m}{1-m},~\alpha_5=\frac{2}{1-m},~ \alpha_6=\frac{4-2m}{1-m}.
\end{eqnarray}
	
\begin{theorem}[Stability of shock profiles]\label{stability theorem} For $\frac{1}{2}<m<1$ and $f\in C^4$, let
	  \eqref{R-H} with $s>0$ and \eqref{degenerate lax's} hold, and let $u_0(x)-U(x)\in L^1(\mathbb{R_+})$.  
When  $\phi_0\in  L_{\langle x\rangle_{+}^{\alpha_1}}^2\cap H_{\langle x\rangle^{\beta}}^3$, $\phi_{0x}\in L_{\langle x\rangle_{+}^{\alpha_2}}^2$ and $\phi_{0xx}\in L_{\langle x\rangle_{+}^{\alpha_3}}^2$ and that
\begin{equation}\label{new1}\nonumber
\|\phi_0\|_{\langle x\rangle_{+}^{\alpha_1}}+\|\phi_0\|_{3,\langle x\rangle^{\beta}}+\left\|u_0-U\right\|_{\langle x\rangle_{+}^{\alpha_2}}+\left\|(u_0-U)_x\right\|_{ \langle x\rangle_{+}^{\alpha_3}}+d_0^{-\frac{1}{2}}\leq \epsilon_1,
 \end{equation}
provided with $0<\epsilon_1\ll 1$, $0<\beta\leq\frac{3m-1}{1-m}$, then there exists a unique global solution $u(x,t)$ of \eqref{original model} and a time-dependent shift function $d(t)$ satisfying
\begin{equation}\label{new2}
	\begin{cases}
		-d'(t)U(-st-d(t))+f(u_-)-f\left(U(-st-d(t)) \right)=\left(u^{m-1}u_{x}-U^{m-1}U_{x}\right)\big|_{x=0},\\	
		d(0)=d_0,
	\end{cases}
\end{equation}		
such that
\begin{equation}\nonumber
u-U  \in C\left([0, \infty) ; L_{\langle x-st\rangle_{+}^{\alpha_2}}^2\cap H_{\langle x-st\rangle^{\beta+1}}^2\right)\cap L^2\left((0, \infty) ;  L_{\langle x-st\rangle_{+}^{\alpha_4}}^2\cap H_{\langle x-st\rangle^{\beta},~\langle x-st\rangle_{+}}^3\right),
\end{equation}	
\begin{equation}\nonumber
(u\!-\!U)_x \in C\left([0, \infty) ; L_{\langle x-st\rangle_{+}^{\alpha_3}}^2\right)\cap L^2\left((0, \infty) ; L_{\langle x-st\rangle_{+}^{\alpha_5}}^2\right),(u\!-\!U)_{xx} \in  L^2\left((0, \infty) ; L_{\langle x-st\rangle_{+}^{\alpha_6}}^2\right),		
\end{equation}
 and in particular, the convergence of $u(x,t)$ to the shifted shock wave $U(x-st-d(t)$:
\begin{equation}\label{new-2}
	\sup _{x \in \mathbb{R_+}}|u(x, t)-U(x-s t-d(t))| \rightarrow 0, \quad \text { as } t \rightarrow+\infty,
\end{equation}
\begin{equation}\label{new-3}
 d(t)\rightarrow d_{\infty},\text{ as } t\rightarrow \infty,
\end{equation}
for some constant $d_{\infty}$. 
\end{theorem}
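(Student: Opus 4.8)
The plan is to pass to the moving frame $\xi=x-st-d(t)$, work with the antiderivative of the perturbation, close a hierarchy of weighted energy estimates of orders $0$ through $3$, and then read off the asymptotics. Concretely, I would set $\phi(x,t):=\int_x^{+\infty}\bigl(u(y,t)-U(y-st-d(t))\bigr)\,dy$, so that $\phi_x=-(u-U)$ and $u=U(\xi)-\phi_x$. Writing \eqref{shock profile equ} for the shifted profile $U(x-st-d(t))$ produces the extra term $-d'(t)U_x$; subtracting from \eqref{original model} and integrating over $(x,+\infty)$ gives a scalar parabolic equation of the schematic form
\begin{equation}\nonumber
\phi_t-U^{m-1}\phi_{xx}+\bigl(f'(U)-(m-1)U^{m-2}U_x\bigr)\phi_x+d'(t)U(\xi)=\cN(\phi_x,\phi_{xx}),
\end{equation}
with $\cN$ collecting the quadratic and higher-order remainders of the Taylor expansions of $f$ and of $w\mapsto w^{m-1}w_x$. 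Evaluating this at $x=0$, where $u\equiv u_-$, shows that \eqref{new2} is exactly the condition $\phi_t(0,t)=0$; since $d(0)=d_0$ is chosen so that $\phi_0(0)=0$, one obtains $\phi(0,t)\equiv0$, the device that annihilates the $x=0$ boundary contributions in the lowest-order estimate. One also checks that \eqref{new2} is uniquely solvable for $d$ along the evolution: $d_0$ being large ($d_0^{-1/2}\le\epsilon_1$), $-st-d(t)\to-\infty$ and so $U(-st-d(t))\to u_-$ by \eqref{decay u_-}, whence the denominator stays bounded below and $|d'(t)|\le Ce^{-\lambda_-(st+d(t))}+C|\phi_{xx}(0,t)|$, the last term a boundary trace controlled by the weighted $H^2$--$H^3$ norms of $\phi$.

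For small $T>0$ I would build a local solution of \eqref{original model} coupled with \eqref{new2} in the solution class of Theorem \ref{stability theorem} by a contraction argument; the only delicacy is the singular diffusivity $u^{m-1}$ at $u=0$, which stays under control because the embedding \eqref{0sobolev embedding4} forces $u=U-\phi_x\sim U>0$ of the right order, keeping the equation uniformly parabolic on bounded $x$-intervals. Global existence and uniqueness then follow from the standard continuation argument and an energy computation for a difference of solutions, once one proves the closed bound $N(T)^2+(\text{dissipation integrals})\le C\epsilon_1^2+CN(T)^3$, where $N(T)$ is the supremum over $[0,T]$ of the full weighted norm of $(\phi,\phi_x,\phi_{xx})$ occurring in the theorem. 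This is assembled from four tiers. The zeroth-order ($L^2$-type) estimate multiplies the $\phi$-equation by $\langle\xi-\xi_\star\rangle^{\beta}U^{1-3m}\phi$ and integrates: the diffusion term yields the dissipation $\iint\langle\xi-\xi_\star\rangle^{\beta}U^{-2m}\phi_x^2$ up to commutators with the weight; the $x=0$ boundary term vanishes since $\phi(0,t)=0$; convexity of $f$ with $U_x<0$, together with the centering of $\langle\xi-\xi_\star\rangle$ at the point where $f'(U)=s$ (so that its derivative has the favorable sign), give localized good terms $\iint|U_x|\phi^2$ (up to positive factors); and the forcing $d'(t)\int\langle\xi-\xi_\star\rangle^{\beta}U^{2-3m}\phi$ is absorbable precisely because $m>\tfrac12$ supplies the integrability of the singular weight needed against the decay \eqref{f''>0} and the bound on $|d'(t)|$. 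The first-, second- and third-order estimates differentiate the equation $j=1,2,3$ times, multiply by $\langle\xi-\xi_\star\rangle^{\beta+j}$ times the singular weights $U^{-(1+m)}$, $U^{m-3}$ and a companion weight, respectively, times $\partial_x^j\phi$, and integrate, using Lemma \ref{Existence of the shock profile} and Remark \ref{remark 2.1} to dominate $|U_x|,\dots,|\partial_x^4U|$ by powers of $U$.

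Two recurring issues are handled as in the strategy section: integrations by parts that would generate nonvanishing values at $x=+\infty$ --- which genuinely occur because the weights are singular and $u^{m-1}u_x$ need not decay --- are carried out after inserting a cutoff $\chi_R(x)$ and letting $R\to+\infty$, as in \cite{critical fast diffusion}; and the $x=0$ boundary terms in the higher-order estimates are removed by using $\phi_t(0,t)=0$ together with \eqref{new2} to trade $\partial_x^j\phi(0,t)$ against $d'(t)$, lower traces, and exponentially small profile quantities. Every nonlinear term carries a factor $\le CN(T)$ via \eqref{0sobolev embedding4} and weighted Gagliardo--Nirenberg inequalities, which accounts for the cubic term, and each shift term $d'(t)\partial_x^kU$ is $x$-integrable against its weight by \eqref{f''>0} and the exponents \eqref{alphai}. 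Adding the four tiers with small coupling constants closes the estimate for $\epsilon_1\ll1$. I expect the genuine obstacle to be this weight construction: the singular weights $U^{1-3m},U^{-(1+m)},U^{m-3}$ must be chosen so that simultaneously every dissipation integral stays nonnegative and strong enough, every shift forcing $d'(t)\partial_x^kU$ is finite against its weight (which is exactly why the naive whole-line weight $U^{-2}$ of \cite{Xu-Mei-Qin-Sheng} cannot be used here), the embedding \eqref{0sobolev embedding4} survives so that ellipticity and the nonlinear estimates go through, and the $x=+\infty$ boundary terms from the singular diffusion vanish in the cutoff limit --- and it is precisely in balancing these requirements that $m>\tfrac12$, the Lax condition \eqref{degenerate lax's} and the convexity of $f$ are consumed.

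Finally, the a priori estimate combined with local existence yields a unique global solution with $N(\infty)\le C\epsilon_1$. From the dissipation integrals, $\int_0^{\infty}(\|\phi_x(\cdot,t)\|_w^2+\|\phi_{xx}(\cdot,t)\|_w^2)\,dt<\infty$ in the relevant weighted norms, and differentiating the lower-order identities shows $\tfrac{d}{dt}\|\phi_x(\cdot,t)\|_w^2\in L^1(0,\infty)$, hence $\|\phi_x(\cdot,t)\|_w\to0$; the weighted Sobolev embedding then gives $\sup_x|u-U|=\|\phi_x(\cdot,t)\|_{L^\infty}\to0$, which is \eqref{new-2}. And the bound on $|d'(t)|$ together with the time-integrability of the traces $\phi_{xx}(0,\cdot)$ --- sharpened using the algebraic-in-$\xi$ weights, which force $\phi$ and its derivatives to decay in time away from the shock location $x\sim st$ and in particular near $x=0$ --- gives $\int_0^{\infty}|d'(t)|\,dt<\infty$, so $d(t)$ converges to some constant $d_\infty$, establishing \eqref{new-3}.
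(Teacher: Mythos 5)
Your overall blueprint matches the paper's proof: pass to the antiderivative $\phi$, choose $d(t)$ so that $\phi(0,t)\equiv0$ (which is indeed the content of \eqref{new2}), control $|d'(t)|\le Ce^{-\gamma(d_0+t)}+C|\phi_{xx}(0,t)|$, run a hierarchy of singular-$U$-weighted and algebraic $\langle\xi-\xi_\star\rangle^{\beta+j}$-weighted energy estimates with a cutoff at $x=+\infty$, close by local existence plus continuity, and read off the asymptotics from the dissipation integrals and the time-integrability of $d'$. Two points, however, deserve flagging.

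The more substantive one: you propose to multiply the $j$-th differentiated equation by a single product weight $\langle\xi-\xi_\star\rangle^{\beta+j}\cdot U^{\text{power}}\cdot\partial_x^j\phi$ per order, whereas the paper deliberately runs the $U$-weighted estimate and the $\langle\xi-\xi_\star\rangle^{\beta+j}$-weighted estimate separately (Lemmas 3.8/3.10/3.12 on one side, 3.9/3.11/3.13 on the other) and only then sums them. This separation is not cosmetic. The positivity mechanisms are different: the $U$-weight produces a good term through a quantity $\mathbb{A}_\eta$ whose positivity near $\xi=+\infty$ consumes $f''>0$ and $m>\tfrac13$, while the algebraic weight produces $\mathbb{A}_\beta>0$ via the centering at $\xi_\star$; a product weight would have to make a hybrid quantity positive, which is an extra constraint. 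More decisively, the shift forcing does not close against the product weight. In your order-one combined estimate you would face $|d'(t)|\int_{\xi>0}\langle\xi-\xi_\star\rangle^{\beta+1}|U_x|U^{-(1+m)}|\phi_x|$, and Cauchy--Schwarz against the available zeroth-order product dissipation $\int\langle\xi-\xi_\star\rangle^{\beta}U^{-2m}\phi_x^2$ requires $\int_{\xi>0}\langle\xi-\xi_\star\rangle^{\beta+2}U_x^2U^{-2}\,d\xi<\infty$; but $U_x^2U^{-2}\sim U^{2-2m}\sim\langle\xi-\xi_\star\rangle^{-2}$, so the integrand behaves like $\langle\xi-\xi_\star\rangle^{\beta}$, which is not integrable for any $\beta>0$. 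The paper's separate estimates avoid precisely this (in Lemma 3.10 the corresponding integrand is $U^{2-2m}\sim\langle\xi\rangle^{-2}$, convergent; in Lemma 3.11 it is $\langle\xi-\xi_\star\rangle^{\beta+2}U_x^2U^{1-m}\sim\langle\xi\rangle^{-4}$, convergent). So either adopt the paper's two-estimate-per-order structure, or your plan needs a sharper decomposition.

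A smaller point: the higher-order boundary traces $\phi_{xxx}(0,t)$, $\phi_{xxxx}(0,t)$ are not eliminated from $\phi_t(0,t)=0$; the paper differentiates \eqref{original model}$_1$ once and twice and evaluates at $x=0$ (where $u\equiv u_-$, so $u_t(0,t)=0$) to get identities such as \eqref{phixxx 0}, which then yield the boundary inequalities \eqref{boundary phixxx}--\eqref{boundary phixxxx}. Also the $m>\tfrac12$ condition enters through $\int_{\xi>0}U^m\,d\xi<\infty$ in the unweighted $L^2$ estimate, not through $U^{2-3m}$. These are minor inaccuracies, but the product-weight issue above is a genuine obstruction as your sketch stands.
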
	

\begin{remark}
The restriction of $\frac{1}{2}<m<1$ is due to the following technical issue.
 In the basic $L^2$ estimate, to control the time-dependent shift term $d'(t)U(x-st-d(t))$ of the perturbation, particularly for the region of $x>st+d(t)$, we need to utilize the decay of $U(x-st-d(t))$ at infinity to ensure the convergence:
 \begin{equation}\nonumber
  \int_{s\tau+d(\tau)}^{+\infty}U^{m}\sim C \int_{s\tau+d(\tau)}^{+\infty}|x-s\tau-d(\tau)|^{\frac{-m}{1-m}}\leq C.
 \end{equation}
 Hence, the condition $m>1/2$ is required. For more details, we refer to the discussion  in \eqref{eq15} below. For $0<m<\frac{1}{2}$, unfortunately the stability of shock waves is still open.
\end{remark}

\section{Asymptotic stability}	\label{section stability}
\subsection{Reformulation of original problem}
Assume that the condition \eqref{R-H} holds with $s>0$, then by $f(0)=0$, we get
\begin{equation}\nonumber
	f(u_-)>0.
\end{equation}
Moreover, it follows from the Lax's condition \eqref{degenerate lax's} that
\begin{equation}\nonumber
\left( \frac{ f(U)-f(u_-)}{U-u_-}-s\right)(U-u_-)	<0 ,
\end{equation}
which along with $U\in(0,u_-)$ implies that
\begin{equation}\label{f'(u_-)}
 f'(u_-)>s>0.
\end{equation}
Let $U(x-st-d(t))$ be the asymptotic profile of the solution $u(x,t)$ as $t\rightarrow+\infty$, where $U(\cdot)$ is the shock profile of \eqref{original model} and $d(t)$ is a time-dependent shift. Hence, $U(x-st-d(t))$ satisfies
\begin{equation}\label{U equ}\nonumber
  U_t+d'(t)U_x+f(U)_{x}=\left(U^{m-1}U_{x}\right)_{x}.	
\end{equation}
This along with \eqref{original model} gives the perturbation equation
\begin{equation}\label{perturbation}
	\left(u-U\right)_t-d'(t)U_x+\left(f(u)-f(U)\right)_{x}=\left(u^{m-1}u_{x}-U^{m-1}U_{x}\right)_{x}.	
\end{equation}	
It is expected that
\[
\lim\limits_{x\rightarrow+\infty}u^{m-1}u_x(x,t)=\lim\limits_{x\rightarrow+\infty}U^{m-1}U_x(x-st-d(t))=0.
\]
Then, by $u(0,t)=u_-$, integrating \eqref{perturbation} in $x$ over $(0,+\infty)$ yields	
\begin{equation}\label{eq1}
	\begin{aligned}
		 \frac{\mathrm{d}}{\mathrm{d}t}&\int _{0}^{+\infty}\left(u-U\right)d x+d'(t)U(-st-d(t))-\left(f(u_-)-f(U(-st-d(t)))\right)\\&=-\left(u^{m-1}u_{x}-U^{m-1}U_{x}\right)\big|_{x=0} .	
	\end{aligned}
\end{equation}	
We define $d(t)$ as the solution to the following initial value problem
\begin{equation}\label{eq2}
	\begin{cases}
	-d'(t)U(-st-d(t))+f(u_-)-f\left(U(-st-d(t)) \right)=\left(u^{m-1}u_{x}-U^{m-1}U_{x}\right)\big|_{x=0},\\	
	d(0)=d_0,
	\end{cases}
\end{equation}	
where the initial data $d_0$ is given by
\begin{equation}\label{eq3}
	 \int _{0}^{+\infty}\left(u_0(x)-U(x-d_0)\right)d x=0.
\end{equation}	
Then, combining \eqref{eq1}-\eqref{eq2}, we get for all $t>0$ that
\begin{equation}\label{eq4}
	\int _{0}^{+\infty}\left(u(x,t)-U(x-st-d(t))\right)d x=0.
\end{equation}	
Therefore, we set
\begin{equation}\label{decompose}
	\phi(x, t):=-\int_x^{+\infty}\left(u(y, t)-U\left(y-s t-d(t)\right) \right)\mathrm{d} y,\quad\forall	x\in \mathbb{R_+},~t>0,
\end{equation}
and may expect $\phi(x,t)$ to be integrable in $L^2(\mathbb{R}_+)$. From
\eqref{eq4}, we have
\begin{equation}\label{eq5}
	\phi(+ \infty,t)=\phi(0,t)=0,\quad \text{for all }t>0.	
\end{equation}	
Now substituting \eqref{decompose} into \eqref{perturbation}, integrating the equation over $(x,+\infty)$, and using \eqref{eq5}, we reformulate the problem \eqref{original model} as the following system for $(\phi(x,t),d(t))$:
\begin{equation}\label{d equ}
	\begin{cases}
\phi_t+f'(U(x-st-d(t))) \phi_x-\left(\frac{\phi_x}{U^{1-m}(x-st-d(t))}\right)_x \\
 \ \ \ =d'(t)U(x-st-d(t))+F+\frac{G_x}{m},\\
-d'(t)U(-st-d(t))+f(u_-)-f\left(U(-st-d(t)) \right)-\left(\frac{\phi_x}{U^{1-m}(-st-d(t))}\right)_x\Big|_{x=0} \\
\ \ \ =\frac{G_x}{m}\Big|_{x=0}, 		
	\end{cases}
\end{equation}
with the initial and boundary values		
\begin{equation}\label{phiequ}
\phi(x,0):=\phi_0(x)=\int_x^{+\infty}\left( u_0(y)-U(y-d_0)\right) d y,
\end{equation}
\begin{equation}\label{d0}
	d(0)=d_0, 	
\end{equation}	
\begin{equation}\label{phiboundary}
	\phi(+ \infty,t)=\phi(0,t)=0,	
\end{equation}
where
\begin{equation}\label{F}\nonumber
	F:=-\left[f\left(U(x-st-d(t) )+\phi_x\right)-f(U(x-st-d(t) ))-f^{\prime}(U(x-st-d(t) )) \phi_x\right],
\end{equation}
and
\begin{equation}\label{G}
	G:= \left(U(x-st-d(t) )+\phi_x\right)^{m}- U^{m}(x-st-d(t) )-\frac{m\phi_x}{U^{1-m}(x-st-d(t) )}.
\end{equation}

\begin{remark}
	 Indeed, $d_0$ is uniquely determined as below. Assume that $u_0(x)-U(x)\in L^1(\mathbb{R_+})$, we get from \eqref{eq3} that
\begin{equation}\nonumber
	\begin{aligned}
		0&=\int_{0}^{+\infty}\left(u_0(x)-U(x)\right) \mathrm{d} x+\int_{0}^{+\infty}\left(U(x)-U\left(x-d_0\right)\right)\mathrm{d} x\\
		&=\int_{0}^{+\infty}\left(u_0(x)-U(x)\right) \mathrm{d} x+\int_{0}^{+\infty}\int_{-d_0}^{0} U'(x+y)\mathrm{d}y\mathrm{d} x\\
		&=\int_{0}^{+\infty}\left(u_0(x)-U(x)\right) \mathrm{d} x+\int_{-d_0}^{0}\int_{0}^{+\infty} U'(x+y)\mathrm{d} x\mathrm{d}y\\
		&=\int_{0}^{+\infty}\left(u_0(x)-U(x)\right) \mathrm{d} x-\int_{-d_0}^{0}U(y) \mathrm{d}y,
	\end{aligned}
\end{equation}	
which gives	
\begin{equation}\label{3.18}
	\int_{-d_0}^{0}U(y) \mathrm{d}y=\int_{0}^{+\infty}\left(u_0(x)-U(x)\right) \mathrm{d} x.
\end{equation}	
Since $U$ is monotonically decreasing, there exists a unique $d_0$ satisfying \eqref{3.18}.	
\end{remark}

\subsection{Energy estimates}
Let us define the following solution space for  $\phi(x,t)$ and  $d(t)$:
\begin{equation}\label{space tilde X}
	\begin{aligned}
		\tilde{X}(0, T):=&\left\{\left( \phi,d\right) \mid \phi \in C\left([0, T] ; L_{\langle x-st\rangle_+^{\alpha_1}}^2 \cap H_{\langle x-st\rangle^{\beta}}^3\right),\phi_{x xx} \in  L^2\left((0, T) ; L_{\langle x-st\rangle_+^{\alpha_6}}^2\right),\right.\\&\left. \qquad\qquad\phi_x \in C\left([0, T] ; L_{\langle x-st\rangle_+^{\alpha_2}}^2\right)\cap L^2\left((0, T) ;  L_{\langle x-st\rangle_+^{\alpha_4}}^2\cap H_{\langle x-st\rangle^{\beta},\langle x-st\rangle_+}^3\right),\right.\\&\left. \qquad\qquad\phi_{x x} \in C\left([0, T] ; L_{\langle x-st\rangle_+^{\alpha_3}}^2\right)\cap L^2\left((0, T) ; L_{\langle x-st\rangle_+^{\alpha_5}}^2\right),\right.\\&\left. \qquad\qquad\left(d_0+t\right)^{\beta+3}|\phi_{xx}(0,t)|^2+\int_{0}^{t}\left(d_0+\tau\right)^{\beta+3}|\phi_{xx}(0,\tau)|^2\mathrm{d} \tau<+\infty,\right.\\&\left. \qquad\qquad d(t)\in C^1\left( [0,T]\right),~ |d(t)-d_0|<+\infty\right\},
	\end{aligned}
\end{equation}
where $\alpha_i$ for $i=1,\cdots,6$ are given in \eqref{alphai},
and $\beta$ is a constant such that
\begin{equation}\nonumber
	0<\beta\leq\frac{3m-1}{1-m}.
\end{equation}
Denote
\begin{eqnarray}
		\tilde{N}(T)&:=&\sup_{t\in [0,T]}\Big\{\|\phi(\cdot,t)\|_{\langle x-st\rangle_+^{\alpha_1}}+\|\phi_x(\cdot,t)\|_{\langle x-st\rangle_+^{\alpha_2}}+\|\phi_{xx}(\cdot,t)\|_{\langle x-st\rangle_+^{\alpha_3}} \nonumber \\
&&\qquad \quad +\|\phi(\cdot,t)\|_{3,\langle x-st\rangle^{\beta}}+(d_0+t)^{\frac{\beta+3}{2}}|\phi_{xx}(0,t)| \nonumber \\
&&\qquad \quad +\Big( \int_{0}^{t} (d_0+\tau)^{\beta+3}|\phi_{xx}(0,\tau)|^2\mathrm{d} \tau \Big)^{\frac{1}{2}}\Big\}.	
\end{eqnarray}

Now we state Theorem \ref{stability theorem} in  the following equivalent theorem for $\phi(x,t)$ and $d(t)$.

\begin{theorem}\label{phi stability}
	Suppose that $\frac{1}{2}<m<1$, $f\in C^4$, the conditions \eqref{R-H} with $s>0$ and \eqref{degenerate lax's} hold.
	Then there exists a positive constant $\epsilon_3$ such that if $\phi_0\in  L_{\langle x\rangle_+^{\alpha_1}}^2\cap H_{\langle x\rangle^{\beta}}^3$, $\phi_{0x}\in L_{\langle x\rangle_+^{\alpha_2}}^2$, $\phi_{0xx}\in L_{\langle x\rangle_+^{\alpha_3}}^2$ and that $\tilde{N}(0)+d_0^{-\frac{1}{2}}\leq \epsilon_3$
	, then the reformulated problem \eqref{d equ} with \eqref{phiequ}-\eqref{phiboundary} has a unique global solution $(\phi,d)\in \tilde{X}(0,\infty)$ satisfying
	\begin{eqnarray}\label{priori estimate}
			&&\|\phi(\cdot,t)\|_{\langle x-st\rangle_+^{\alpha_1}}^2 + \|\phi_x(\cdot,t)\|_{ \langle x-st\rangle_+^{\alpha_2}}^2
+\|\phi_{xx}(\cdot,t)\|_{ \langle x-st\rangle_+^{\alpha_3}}^2 
+\|\phi(\cdot,t)\|_{3,\langle x-st\rangle^{\beta}}^2 \nonumber \\
&& \ \ \ +(d_0+t)^{\beta+3}|\phi_{xx}(0,t)|^2
+\int_0^t\Big(\|\phi_x(\cdot,\tau)\|_{\langle x-st\rangle_+^{\alpha_4}}^2+\|\phi_{xx}(\cdot,\tau)\|_{ \langle x-st\rangle_+^{\alpha_5}}^2 \nonumber \\
&& \ \ \ +\|\phi_{xxx}(\cdot,\tau)\|_{ \langle x-st\rangle_+^{\alpha_6}}^2 +\|\phi_x(\cdot,t)\|_{H_{\langle x-st\rangle^{\beta},\langle x-st\rangle_+}^3}^2
+(d_0+\tau)^{\beta+3}|\phi_{xx}(0,\tau)|^2\Big) \nonumber \\
&&			\leq C(1+d_0^{\beta+3})\Big(\|\phi_0\|_{\langle x\rangle_+^{\alpha_1}}^2+\|\phi_{0x}\|_{ \langle x\rangle_+^{\alpha_2}}^2
+\|\phi_{0xx}\|_{ \langle x\rangle_+^{\alpha_3}}^2+\|\phi_0\|_{3,\langle x\rangle^{\beta}}^2\Big)+Cd_0^{-1} \nonumber \\
&&\leq C(1+d_0^{\beta+3})N^2(0)+Cd_0^{-1},
	\end{eqnarray}
	where $\alpha_i$ ($i=1,\cdots,6$) are defined by \eqref{alphai}, and that
	\begin{equation}\label{asymptotic}
		\sup _{x \in \mathbb{R_+}}|\phi_{x}(x,t)| \rightarrow 0, \quad \text { as } t \rightarrow+\infty,
	\end{equation}
	\begin{equation}\label{asymptotic1}
		 d'\in L^1(0,\infty) \text{ and } d(t)\rightarrow d_{\infty},\text{ as } t\rightarrow \infty,
	\end{equation}
for some constant $d_{\infty}$.	
\end{theorem}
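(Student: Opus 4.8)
The plan is the standard continuation scheme: combine a local existence result for the reformulated system \eqref{d equ}--\eqref{phiboundary} with the \emph{a priori} estimate \eqref{priori estimate}, then let the existence time tend to infinity. For \emph{local existence}, fix a small $T>0$ and construct $(\phi,d)$ in a small ball of $\tilde X(0,T)$ by a contraction mapping argument: on a compact time interval $U(-st-d(t))$ stays bounded away from $0$, so once $\phi_{xx}(0,\cdot)$ is prescribed the ODE \eqref{eq2} determines $d(t)$ by Picard iteration, while the $\phi$-equation is a linear problem, locally uniformly parabolic away from $x=+\infty$, whose forcing terms $F$, $G_x/m$ and the shift source $d'(t)U$ are controlled, being small since $\tilde N(0)$ is small. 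Granting local existence together with \eqref{priori estimate}, a routine argument closes the loop: as long as $\tilde N(T)\le\epsilon$ for a fixed small $\epsilon$, \eqref{priori estimate} improves this to $\tilde N(T)\le C(\tilde N(0)+d_0^{-1/2})<\epsilon$, ruling out norm inflation and allowing $T\to\infty$.

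The substance is the \emph{a priori} estimate, obtained from a hierarchy of weighted energy identities whose weights are tuned to the singularity. Put $\xi=x-st-d(t)$ and let $\xi_\star$ solve $f'(U(\xi_\star))=s$. The $L^2$ estimate tests the first equation of \eqref{d equ} with $w_0\phi$, $w_0\approx U^{1-3m}(\xi)\langle\xi-\xi_\star\rangle^{\beta}$: after integration by parts the convection term yields a nonnegative quantity whose coercivity near $\xi_\star$ comes from the generalized shock condition \eqref{Lax's} (equivalently \eqref{degenerate lax's}) via Lemma \ref{Existence of the shock profile}, and the diffusion term produces the dissipation $\int U^{m-1}w_0\,\phi_x^2\approx\|\phi_x(\cdot,t)\|_{\langle x-st\rangle_+^{\alpha_4}}^2$. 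Testing the once- and twice-differentiated equations with $w_1\phi_x$, $w_1\approx U^{-(1+m)}(\xi)\langle\xi-\xi_\star\rangle^{\beta+1}$, and with $w_2\phi_{xx}$, $w_2\approx U^{m-3}(\xi)\langle\xi-\xi_\star\rangle^{\beta+2}$, gives the dissipations $\|\phi_{xx}\|_{\langle x-st\rangle_+^{\alpha_5}}^2$ and $\|\phi_{xxx}\|_{\langle x-st\rangle_+^{\alpha_6}}^2$, and a third-order estimate with the algebraic weight $\langle\xi-\xi_\star\rangle^{\beta+3}$ closes the $H^3_{\langle x-st\rangle^\beta}$ norm. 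These exponents are precisely those for which the singular weight cancels the factor $U^{m-1}$ from the diffusivity; the commutators produced by differentiating $f'(U)$, $U^{1-m}$ and the weights are absorbed using the pointwise bounds \eqref{q1}, \eqref{direct Uzz}, \eqref{q3}, \eqref{q4}, and the nonlinear remainders $F$ and $G$ of \eqref{G}, quadratic in $\phi_x$ and its derivatives, are absorbed after invoking the embedding \eqref{0sobolev embedding4}, $\|\phi_x(\cdot,t)/U\|_{L^\infty}\le C\tilde N(T)$, itself a consequence of the first- and second-order weighted bounds.

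Two features make these identities delicate, and they are where the construction is genuinely new. First, the shift source $\partial_x^j\bigl(d'(t)U(\xi)\bigr)$: from \eqref{eq2}, $|d'(t)|$ is controlled by the boundary flux at $x=0$, hence essentially by $|\phi_{xx}(0,t)|$ plus terms that decay exponentially in $t$ or are quadratically small; integrating it against the singular weights requires finiteness of $\int_{s\tau+d(\tau)}^{+\infty}U^{m}\,dx$, which holds exactly because $U(\xi)\sim|\xi|^{-1/(1-m)}$ and $m>\tfrac12$, and it forces the time weight $(d_0+t)^{\beta+3}$ on $|\phi_{xx}(0,t)|^2$ and the factor $d_0^{-1}$ on the right-hand side of \eqref{priori estimate}. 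Second, the boundary terms: at $x=0$, integration by parts at orders one, two and three leaves contributions that are rewritten through \eqref{eq2} as multiples of $(d_0+\tau)^{\beta+3}|\phi_{xx}(0,\tau)|^2$ and absorbed, which is why $\tilde X$ records that quantity; at $x=+\infty$, the singular weight paired with the singular diffusivity $U^{m-1}\phi_x$ creates a nonzero flux, removed by the cut-off technique already exploited in the whole-space fast-diffusion problem. Summing the four weighted identities with small relative coefficients and using the smallness of $\tilde N(T)$ and $d_0^{-1/2}$ yields \eqref{priori estimate}; this is the step I expect to be the main obstacle.

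Finally, the asymptotics follow from \eqref{priori estimate}. Since $\int_0^\infty\bigl(\|\phi_x(\cdot,\tau)\|_{\langle x-st\rangle_+^{\alpha_4}}^2+\|\phi_{xx}(\cdot,\tau)\|_{\langle x-st\rangle_+^{\alpha_5}}^2\bigr)d\tau<\infty$ and, from the first-order identity, $\tfrac{d}{dt}\|\phi_x(\cdot,t)\|^2\in L^1(0,\infty)$, one obtains $\|\phi_x(\cdot,t)\|\to0$, and then $\sup_x|\phi_x(x,t)|\to0$ by Gagliardo--Nirenberg together with the uniform bound on $\|\phi_{xx}(\cdot,t)\|$, which is \eqref{asymptotic}. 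The ODE \eqref{eq2} expresses $d'(t)$ through $\phi_{xx}(0,t)$ and terms exponentially small in $t$ (using the decay of $U$ toward $u_-$ at $-\infty$ in Lemma \ref{Existence of the shock profile}) or quadratically small; since $\int_0^\infty(d_0+\tau)^{\beta+3}|\phi_{xx}(0,\tau)|^2\,d\tau<\infty$ with $\beta+3>1$, Cauchy--Schwarz gives $d'\in L^1(0,\infty)$, hence $d(t)\to d_\infty$, which is \eqref{asymptotic1}; translating back through \eqref{decompose} then gives Theorem \ref{stability theorem}.
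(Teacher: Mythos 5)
Your proposal is correct and follows essentially the same route as the paper: a continuation scheme built on local existence plus the weighted a priori estimate, with the hierarchy of singular weights $U^{1-3m}$, $U^{-(1+m)}$, $U^{m-3}$ chosen to match the dissipation exponents $\alpha_4,\alpha_5,\alpha_6$, the time-dependent algebraic weights $\langle\xi-\xi_\star\rangle^{\beta+j}$ to handle the boundary layer, the observation that $m>\tfrac12$ controls the shift source $d'(t)U$, the cut-off device at $x=+\infty$, and the final asymptotics deduced from the integrability of the dissipation and of $d'$. The only presentational difference is that you fold the singular and algebraic weights into a single product weight at each order, whereas the paper runs them in separate lemmas (Lemmas 3.4--3.6 at zeroth order, 3.7--3.8 at first order, 3.9--3.10 at second order, and 3.11 at third order) and only introduces the algebraic weight by itself at third order with the cut-off; this is a minor matter of bookkeeping that makes the cross-term estimates cleaner in the paper but does not change the substance.
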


Note that the global existence for $\phi(x,t)$ and $d(t)$ can be established by the continuity argument based on  the local existence and the \textit{a priori} estimate. Now we are going to sketch the proof for  the local existence of $(\phi(x,t),d(t))$ as follows. The main issue of the paper is to establish the {\it a priori} estimate.

Let us define the sequence of the approximate solutions $(\phi^{(n)},d^{(n)})$:
\begin{equation}\label{new-1}
	\begin{cases}
		\displaystyle{\phi_t^{(n)}+f'(U(x-st-d^{(n-1)}(t))) \phi_x^{(n-1)}-\left(\frac{\phi_x^{(n)}}{U^{1-m}(x-st-d^{(n-1)}(t))}\right)_x} \\
\displaystyle{\quad =d^{(n-1)}_t(t) U(x-st-d^{(n-1)}(t))+F^{(n-1)}+\frac{G_x^{(n-1)}}{m},}\\
\displaystyle{		-d^{(n)}_t(t) U(-st-d^{(n-1)}(t))+f(u_-)-f\left(U(-st-d^{(n-1)}(t)) \right) } \\
\displaystyle{ \quad -\left(\frac{\phi_x^{(n-1)}}{U^{1-m}(-st-d^{(n-1)}(t))}\right)_x\Big|_{x=0}
=\frac{G_x^{(n-1)}}{m}\Big|_{x=0}},
	\end{cases}
\end{equation}
where,
\begin{eqnarray}
	F^{(n-1)}&=&-\Big[f\left(U(x-st-d^{(n-1)}(t) )+\phi_x^{(n-1)}\right) \nonumber \\
&&-f(U(x-st-d^{(n-1)} (t)))-f^{\prime}(U(x-st-d^{(n-1)}(t))) \phi_x^{(n-1)}\Big],\nonumber
\end{eqnarray}
and
\begin{eqnarray}
	G^{(n-1)}&=& \left(U(x-st-d^{(n-1)}(t) )+\phi_x^{(n-1)}\right)^{m}- U^{m}(x-st-d^{(n-1)}(t) ) \nonumber \\
&&-\frac{m\phi_x^{(n-1)}}{U^{1-m}(x-st-d^{(n-1)}(t))}.\nonumber
\end{eqnarray}
For any $T>0$, the solution space $X^{(n)}(0,T)$ is defined as
\begin{equation}\nonumber
	\begin{aligned}
		X^{(n)}(0, T):=&\left\{(\phi^{(n)},d^{(n)}) \mid \phi^{(n)} \in C\left([0, T] ; L_{w_1}^2 \cap H_{\langle x-st-d^{(n-1)}(t)\rangle^{\beta}}^3\right),\phi^{(n)}_{x xx} \in  L^2\left((0, T) ; L_{w_6}^2\right), \right.\\
&\left. \qquad\qquad\qquad\phi^{(n)}_x \in C\left([0, T] ; L_{w_2}^2\right)\cap L^2\left((0, T) ;  L_{w_4}^2\cap H_{\langle x-st-d^{(n-1)}(t)\rangle^{\beta},w_7}^3\right),\right.\\
&\left. \qquad\qquad\qquad\phi^{(n)}_{x x} \in C\left([0, T] ; L_{w_3}^2\right)\cap L^2\left((0, T) ; L_{w_5}^2\right),\right.\\
&\left. \qquad\qquad\qquad\left(d_0+t\right)^{\beta+3}|\phi^{(n)}_{xx}(0,t)|^2+\int_{0}^{t}\left(d_0+\tau\right)^{\beta+3}|\phi^{(n)}_{xx}(0,\tau)|^2\mathrm{d} \tau<+\infty,\right.\\
&\left. \qquad\qquad \qquad d^{(n)}(t)\in C^1\left( [0,T]\right), |d^{(n)}(t)-d_0|<+\infty\right\},
	\end{aligned}
\end{equation}
where
\begin{equation}\label{wU}
	\begin{aligned}
		&\quad w_1=U^{1-3m},~ w_2=U^{-(1+m)},~ w_3=U^{m-3},\\&		
		w_4=U^{-2m},~ w_5=U^{-2},~ w_6=U^{2m-4},~w_7=U^{m-1},
	\end{aligned}
\end{equation}
with $U=U(x-st-d^{(n-1)}(t))$, and its norm is denoted by
\begin{equation}\nonumber
	\begin{aligned}
		N^{(n)}(T):&=\sup_{t\in [0,T]}\left\{\left\|\phi^{(n)}(\cdot,t)\right\|_{w_1}+\left\|\phi^{(n)}_x(\cdot,t)\right\|_{w_2}+\left\|\phi^{(n)}_{xx}(\cdot,t)\right\|_{w_3}
+\left\|\phi^{(n)}(\cdot,t)\right\|_{3,\langle x-st-d^{(n-1)}(t)\rangle^{\beta}}\right.\\
&\left.\quad+\left(d_0+t\right)^{\frac{\beta+3}{2}}|\phi^{(n)}_{xx}(0,t)|+\left( \int_{0}^{t} \left(d_0+\tau\right)^{\beta+3}|\phi^{(n)}_{xx}(0,\tau)|^2\mathrm{d} \tau\right)^{\frac{1}{2}}\right\}.		
	\end{aligned}
\end{equation}

Given initial data $(\phi_{0},d_0)=:(\phi^{(0)},d^{(0)})$ satisfying $(\phi_0,d_0)\in X^{(0)}(0,0)$ with $T=0$,  since the equations \eqref{new-1} are linear, by a straightforward but tedious computation,
  it can be proved that, for given $(\phi^{(n-1)},d^{(n-1)})\in X^{(n-1)}(0,T)$ with $T>0$, the iterative sequence determined by \eqref{new-1} yields a unique solution $(\phi^{(n)},d^{(n)})\in X^{(n)}(0,T)$,  and that the sequence converges to $(\phi,d)\in X(0,T_0)$ for some small number $T_0\ll 1$, where $X(0,T_0)=\lim_{n\to\infty} X^{(n)}(0,T_0)$ is defined by
\begin{equation}\label{space X}
	\begin{aligned}
		X(0, T_0):=&\left\{(\phi,d) \mid \phi \in C\left([0, T_0] ; L_{w_1}^2 \cap H_{\langle x-st-d(t)\rangle^{\beta}}^3\right),\phi_{x xx} \in  L^2\left((0, T_0) ; L_{w_6}^2\right), \right.\\
&\left. \qquad\qquad\phi_x \in C\left([0, T_0] ; L_{w_2}^2\right)\cap L^2\left((0, T_0) ;  L_{w_4}^2\cap H_{\langle x-st-d(t)\rangle^{\beta},w_7}^3\right),\right.\\
&\left. \qquad\qquad\phi_{x x} \in C\left([0, T_0] ; L_{w_3}^2\right)\cap L^2\left((0, T_0) ; L_{w_5}^2\right),\right.\\
&\left. \qquad\qquad\left(d_0+t\right)^{\beta+3}|\phi_{xx}(0,t)|^2+\int_{0}^{t}\left(d_0+\tau\right)^{\beta+3}|\phi_{xx}(0,\tau)|^2\mathrm{d} \tau<+\infty,\right.\\
&\left. \qquad\qquad d(t)\in C^1\left( [0,T_0]\right), |d(t)-d_0|<+\infty\right\},
	\end{aligned}
\end{equation}
subject to
\begin{equation}\label{N(t)}
	\begin{aligned}
		N(T):&=\sup_{t\in [0,T]}\left\{\left\|\phi(\cdot,t)\right\|_{w_1}+\left\|\phi_x(\cdot,t)\right\|_{w_2}+\left\|\phi_{xx}(\cdot,t)\right\|_{w_3}+\left\|\phi(\cdot,t)\right\|_{3,\langle x-st-d(t)\rangle^{\beta}}\right.\\&\left.\quad+\left(d_0+t\right)^{\frac{\beta+3}{2}}|\phi_{xx}(0,t)|+\left( \int_{0}^{t} \left(d_0+\tau\right)^{\beta+3}|\phi_{xx}(0,\tau)|^2\mathrm{d} \tau\right)^{\frac{1}{2}}\right\},		
	\end{aligned}
\end{equation}
where $w_i(U)(i=1,...,7)$ are defined by \eqref{wU} with $U=U(x-st-d(t))$.

Thus, we can  prove the following local existence and uniqueness theorem. The detail of proof is omitted.

\begin{proposition}[Local existence]\label{local existence}
	Suppose $\phi_0\in  L_{w_{1,0}}^2\cap H_{\langle x-d_0\rangle^{\beta}}^3$, $\phi_{0x}\in L_{w_{2,0}}^2$ and $\phi_{0xx}\in L_{w_{3,0}}^2$, where $w_{1,0}=w_1|_{t=0}=U^{1-3m}(x-d_0)$, $w_{2,0}=w_2|_{t=0}=U^{-(1+m)}(x-d_0)$, and $w_{3,0}=w_3|_{t=0}=U^{m-3}(x-d_0)$. For any $\epsilon_{0}>0$, there exists a positive constant $T_0$ depending on $\epsilon_{0}$ such that if $N(0)+d_0^{-\frac{1}{2}}\leq \epsilon_{0}$, then the problem \eqref{d equ}-\eqref{d0} has a unique solution $(\phi,d)\in X(0,T_0)$ satisfying $N(t)\leq 2(N(0)+d_0^{-\frac{1}{2}})$ for any $0\leq t\leq T_0$.
\end{proposition}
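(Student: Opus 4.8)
\medskip
\noindent\textbf{Proof proposal.} My plan is to run the iteration-and-contraction scheme already set up in \eqref{new-1}, reducing the proposition to three checks: (i) for each $n\ge 1$ the \emph{linear} problem \eqref{new-1} has a unique solution $(\phi^{(n)},d^{(n)})\in X^{(n)}(0,T)$ whenever $(\phi^{(n-1)},d^{(n-1)})\in X^{(n-1)}(0,T)$ is given; (ii) on a short interval $[0,T_0]$ the iterates obey the uniform bound $N^{(n)}(T_0)\le 2(N(0)+d_0^{-1/2})$; and (iii) the iteration map is a contraction on that ball. Banach's fixed point theorem then produces a limit $(\phi,d)\in X(0,T_0)$ solving \eqref{d equ}--\eqref{d0} with the asserted bound, and the same difference estimate gives uniqueness.

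For step (i): the second line of \eqref{new-1} is a scalar ODE for $d^{(n)}$. Since $|d^{(n-1)}-d_0|<+\infty$ and $d_0$ is large, the argument $-st-d^{(n-1)}(t)$ stays in a region where $U$ is bounded away from $0$, so the coefficient $U(-st-d^{(n-1)}(t))$ multiplying $d^{(n)}_t$ is bounded below; its right-hand side involves only the already-known $(\phi^{(n-1)},d^{(n-1)})$ and the trace $\phi_x^{(n-1)}(0,\cdot)$, so one integrates to get a unique $d^{(n)}\in C^1([0,T])$ with $|d^{(n)}-d_0|$ controlled by $T$ times the known norms. With $d^{(n-1)}$ and now $d^{(n)}$ fixed, the first line of \eqref{new-1} is a linear parabolic equation for $\phi^{(n)}$ with smooth, time-dependent coefficients that are nondegenerate on every bounded $x$-interval; I would invoke standard linear parabolic theory (a Galerkin or semigroup argument) for existence and uniqueness, and then obtain the weighted bounds defining $X^{(n)}(0,T)$ by repeating, term by term, the weighted energy estimates used for the \emph{a priori} estimate of Theorem \ref{phi stability}, with the simplification that every occurrence of the unknown $\phi^{(n)}$ is now linear while all quadratic-and-higher contributions carry only $N^{(n-1)}$. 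This should lead to an inequality of the schematic form $N^{(n)}(T)^2\le C(N(0)^2+d_0^{-1})+C(T)\,P(N^{(n-1)}(T))$ with a fixed polynomial $P$, $P(0)=0$; here the cut-off device removes the boundary contributions at $x=+\infty$ created by the singular weights \eqref{wU}, the pointwise bounds \eqref{q1} control $U_z$, and the Sobolev embedding behind \eqref{0sobolev embedding4} guarantees $U+\phi_x^{(n-1)}>0$ so that $F^{(n-1)}$ and $G^{(n-1)}$ make sense.

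For steps (ii)--(iii): set $M:=2(N(0)+d_0^{-1/2})$. Inductively, if $N^{(n-1)}(T_0)\le M$, then the displayed inequality yields $N^{(n)}(T_0)\le M$ once $T_0=T_0(\epsilon_0)$ is chosen small enough, which is the uniform bound and the claimed control $N(t)\le 2(N(0)+d_0^{-1/2})$. For the contraction I subtract \eqref{new-1} for consecutive indices and run the same weighted estimates on $\phi^{(n+1)}-\phi^{(n)}$ and $d^{(n+1)}-d^{(n)}$. The only genuinely new point is that the weights $w_i$ of the $n$-th and $(n-1)$-st iterates are centered at $-st-d^{(n-1)}(t)$ and $-st-d^{(n-2)}(t)$: since $|d^{(n-1)}-d^{(n-2)}|=O(T_0)$ and $U$ is Lipschitz with $|U_z|\le CU^{2-m}$, these weights are mutually equivalent up to a factor $1+CT_0$, so one may estimate everything in a single weight at the price of a $CT_0$ loss, and this closes as a genuine contraction for $T_0$ small. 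Hence the iterates are Cauchy in (a slightly weaker norm than that of) $X(0,T_0)$; the limit $(\phi,d)$ retains $N(T_0)\le M$ by lower semicontinuity, lies in $X(0,T_0)$, and solves \eqref{d equ}--\eqref{d0} upon passing to the limit in \eqref{new-1}.

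The step I expect to be the main obstacle is the bookkeeping around three intertwined singularities: the diffusivity $U^{m-1}$, which blows up as $x\to+\infty$; the singular weights of \eqref{wU}, which are negative powers of $U$; and the fact that these weights \emph{move} with $d^{(n-1)}(t)$ and therefore change from one iterate to the next. The first two are handled by the cut-off technique and by the pointwise control of $U,U_z,U_{zz},\dots$ from Remark \ref{remark 2.1}; the third — comparing weighted energies carried by different moving singular weights — is what forces both the short time window and the smallness of $N(0)+d_0^{-1/2}$. None of this is essential beyond what is already needed for the \emph{a priori} estimate, which is why the detailed computation is omitted.
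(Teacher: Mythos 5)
Your proposal follows exactly the iteration scheme the paper sets up in \eqref{new-1} before stating Proposition~\ref{local existence}: solve the ODE for $d^{(n)}$, solve the linear parabolic equation for $\phi^{(n)}$, derive uniform weighted bounds on a short time interval, and pass to the limit. The paper explicitly declares this argument ``straightforward but tedious'' and omits the details; you have filled in precisely those details, including the correct observation that the weights are centered at the moving point $-st-d^{(n-1)}(t)$ but remain mutually comparable (by $|U_z|\le CU^{2-m}$, hence $U_z/U$ bounded) on a short window, which is what lets the contraction close.
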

\begin{remark}
	 Due to the uniform boundedness of $d(t)$ on $[0,T_0]$, we find that $x-st-d(t)\sim x-st$ for $0\leq t\leq T_0$. As a result, the solution space $X(0,T_0)$ and $\tilde{X}(0,T_0)$ are equivalent, as are $N(T_0)$ and $\tilde{N}(T_0)$.
\end{remark}

 To prove Theorem \ref{phi stability}, the crucial step is  to establish the following \textit{a priori} estimate.

\begin{proposition}[A priori estimate]\label{proposition priori estimate}
Let $(\phi,d)\in X(0,T)$ be a solution of \eqref{d equ} for a positive constant $T$. Then there exists a positive constant $\epsilon_{2}$ independent of $T$, such that if $N(T)+d_0^{-\frac{1}{2}}\leq \epsilon_{2}$, then the estimate \eqref{priori estimate} holds for all $t\in [0,T]$.	
\end{proposition}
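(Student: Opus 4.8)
\noindent\textit{Proof strategy.} The plan is to derive \eqref{priori estimate} through a hierarchy of weighted $L^2$ energy estimates on the reformulated system \eqref{d equ}--\eqref{phiboundary}, performed in the travelling coordinate $\xi=x-st-d(t)$, and to close the whole scheme by a single differential inequality together with Gronwall's lemma. Write $U=U(\xi)$, let $\xi_\star$ be defined by $f'(U(\xi_\star))=s$, and note that in the $\xi$-variable the convection operator becomes $(f'(U)-s-d'(t))\partial_\xi$, which degenerates at $\xi=\xi_\star$. The preliminary work is threefold: (a) from the a priori smallness $N(T)+d_0^{-1/2}\le\epsilon_2$, the $w_2$-, $w_3$-weighted bounds on $\phi_x,\phi_{xx}$, and the Sobolev embedding, obtain $\sup_{[0,T]}\|\phi_x/U\|_{L^\infty}\le CN(T)$, which in particular ensures $U+\phi_x>0$ so that $G$ in \eqref{G} makes sense; (b) Taylor-expand $F$ and $G$ to get pointwise bounds such as $|F|\le C|\phi_x|^2$, $|G|\le CU^{m-2}|\phi_x|^2$, with the corresponding bounds for $G_x,G_{xx}$, so that every nonlinear remainder carries an extra factor $N(T)$; (c) using the second line of \eqref{d equ}, the lower bound $U(-st-d(t))\to u_->0$ (cf. \eqref{decay u_-}), and the pointwise bounds \eqref{q1}, \eqref{q3}, \eqref{q4} on the derivatives of $U$, express $d'(t)$ through the boundary traces $\phi_x(0,t),\phi_{xx}(0,t)$ and control its size and time integrability.

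For the basic $L^2$ estimate I would multiply the $\phi$-equation by a weighted multiple of $\phi$ built from $U^{1-3m}$ and the algebraic factor $\langle\xi-\xi_\star\rangle^{\beta}$, and integrate over $\mathbb{R}_+\times(0,t)$. Integration by parts against the diffusion term yields the dissipation $\int_0^t\|\phi_x\|^2_{\langle\xi-\xi_\star\rangle^{\beta}U^{-2m}}$ (since $U^{1-3m}\cdot U^{m-1}=U^{-2m}=w_4$) together with a flux at $x=+\infty$; because a singular weight now multiplies the singular diffusivity, this flux does not vanish automatically, so I insert a spatial cut-off, estimate with it in place, and pass to the limit. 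The convection term is handled in the Matsumura--Nishihara spirit: after integration by parts it becomes a lower-order term $\sim\int(f'(U)-s)\,\partial_\xi(\langle\xi-\xi_\star\rangle^{\beta}U^{1-3m})\,\phi^2$, and the factor $\langle\xi-\xi_\star\rangle^{\beta}$ is chosen precisely so that its derivative cooperates with $(f'(U)-s)$ to give a sign-definite contribution away from a compact neighbourhood of $\xi_\star$, the error on that neighbourhood being absorbed by the dissipation. The boundary contributions at $x=0$ are rewritten using the second equation of \eqref{d equ}. The genuinely new term is the shift contribution $\sim\int_0^t d'(\tau)\int\langle\xi-\xi_\star\rangle^{\beta}U^{2-3m}\phi$; using $|d'(\tau)|\le C(\text{traces})$, a weighted Poincar\'e-type inequality, and the splitting $\mathbb{R}_+=\{x<s\tau+d(\tau)\}\cup\{x\ge s\tau+d(\tau)\}$, the delicate far-field piece reduces to the convergence of $\int_{s\tau+d(\tau)}^{+\infty}U^{m}\,dx\sim\int^{+\infty}|\xi|^{-m/(1-m)}\,dx<+\infty$, which holds exactly because $m>1/2$; this is the structural origin of the restriction on $m$.

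The first- and second-order estimates repeat this scheme at the level of $\phi_x$ (singular weight $w_2=U^{-(1+m)}$, algebraic factor $\langle\xi-\xi_\star\rangle^{\beta+1}$) and of $\phi_{xx}$ (singular weight $w_3=U^{m-3}$, algebraic factor $\langle\xi-\xi_\star\rangle^{\beta+2}$), producing the time-supremum norms $\|\phi_x\|_{w_2}$, $\|\phi_{xx}\|_{w_3}$ and the dissipations $\|\phi_{xx}\|_{w_5}$, $\|\phi_{xxx}\|_{w_6}$ with $w_5=U^{-2}$, $w_6=U^{2m-4}$; the cut-off again removes the flux at infinity, the shift term is again controlled via $m>1/2$ and the decay \eqref{f''>0} of $U$, and the boundary integrations by parts now generate the traces $\phi_x(0,t),\phi_{xx}(0,t)$ weighted by $(d_0+t)^{\beta+1}$ and $(d_0+t)^{\beta+3}$ — the latter being the value of the algebraic factor at $x=0$ raised by one order from the boundary condition — which is exactly the term on the left of \eqref{priori estimate} involving $|\phi_{xx}(0,t)|^2$. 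A separate, non-singular round of estimates with purely algebraic weights $\langle x-st\rangle^{\beta+j}$, $j=0,1,2,3$, obtained by differentiating \eqref{d equ} up to twice more and testing against $\langle x-st\rangle^{\beta+j}\partial_x^j\phi$, closes the remaining pieces of $\|\phi\|_{3,\langle x-st\rangle^{\beta}}$, in particular the time-supremum of $\phi_{xxx}$ and the space-time $H^3$-norm of $\phi_x$ with dissipation weight $w_7=U^{m-1}\sim\langle\cdot\rangle_+$. Adding all these estimates with small relative constants, absorbing the nonlinear remainders by $N(T)\le\epsilon_2$ and the leftover boundary terms using $d_0^{-1/2}\le\epsilon_2$ (which produces the $Cd_0^{-1}$ term, while the factor $C(1+d_0^{\beta+3})$ comes from evaluating the time-dependent weight at $t=0$, where $\langle\xi-\xi_\star\rangle|_{t=0}\sim d_0$ near the boundary), and finally invoking Gronwall's inequality in $t$, yields \eqref{priori estimate}.

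The principal obstacle is that the three bad effects reinforce one another. The fast-diffusion singularity at $u=0$ forces the unbounded weights $U^{-(1+m)},U^{m-3},U^{2m-4}$; these are what make the flux at $x=+\infty$ nonzero (forcing the cut-off device) and, more seriously, interact destructively with the shift term $d'(t)U(\xi)$, whose integrability against a singular weight over the far region is precisely the computation demanding $m>1/2$ — no substitute is available when $m\le1/2$. Simultaneously the boundary layer at $x=0$ injects the traces $\phi_x(0,t),\phi_{xx}(0,t)$ with the time-growing weights $(d_0+t)^{\beta+j}$, which must be re-absorbed through the boundary equation, so the algebraic-weight exponents $\beta,\beta+1,\beta+2,\beta+3$ across the four orders must be matched so that every trace produced at one order is dominated by a dissipation controlled at the next. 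I expect the second-order estimate — extracting $\phi_{xx}(0,t)$ with the weight $(d_0+t)^{\beta+3}$ and reconciling it with the $L^2$ and first-order hierarchy so that the entire system of estimates closes — to be the single hardest step, and the place where the precise choices in \eqref{wU} and the constraint $0<\beta\le\frac{3m-1}{1-m}$ are truly needed.
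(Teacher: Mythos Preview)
Your overall architecture is close to the paper's, but there is one genuine gap concerning how the boundary trace closes. You expect the second-order estimate to produce the term $(d_0+t)^{\beta+3}|\phi_{xx}(0,t)|^2$ on the left, and you plan to finish with Gronwall. In fact the second-order algebraic-weight estimate (weight $\langle\xi-\xi_\star\rangle^{\beta+2}$) can only give the weaker weight $(d_0+\tau)^{\beta+2}$ on $\phi_{xx}^2(0,\tau)$, while every lower-order lemma leaves on its right-hand side a term of size $\int_0^t(d_0+\tau)^{\beta+3}|\phi_{xx}(0,\tau)|^2$ (or a lower power which, after multiplication by the missing $d_0^{-\delta}$, reduces to this). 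The paper closes this by going to the \emph{third}-order estimate with weight $\langle\xi-\xi_\star\rangle^{\beta+3}$: the boundary term there is $\frac{\phi_{xxx}\phi_{xxxx}}{U^{1-m}}\big|_{x=0}$, and the crucial new ingredient (Lemma~3.4) is an explicit computation of $\phi_{xxx}(0,t)$ and $\phi_{xxxx}(0,t)$ from the original PDE \eqref{original model} at $x=0$, giving the identities \eqref{phixxx 0}, \eqref{boundary phixxx}, \eqref{boundary phixxxx}. These turn the third-order boundary term into a good sign piece $\frac{\mathrm{d}}{\mathrm{d}t}\big[(d_0+t)^{\beta+3}\phi_{xx}^2(0,t)\big]$ plus $(d_0+\tau)^{\beta+3}\phi_{xxx}^2(0,\tau)$; the latter, via \eqref{phixxx 0}, is equivalent to $(d_0+\tau)^{\beta+3}\phi_{xx}^2(0,\tau)$ up to exponentially small errors. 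This is what finally absorbs the trace terms from all previous levels, and the closure is by direct absorption using the smallness of $N(T)+d_0^{-1/2}$, not by Gronwall. Your proposal does reach $j=3$ in the algebraic-weight hierarchy, but it does not contain this boundary mechanism, and without it the scheme does not close.

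A secondary structural point: you propose a single multiplier combining the singular weight $U^{1-3m}$ and the algebraic weight $\langle\xi-\xi_\star\rangle^\beta$. The paper treats these in two separate passes (Lemmas~3.5--3.6 versus Lemma~3.7, and similarly at higher order), using the dissipation $\int_0^t\!\!\int\phi_x^2/U^{2m}$ produced by the $U^{1-3m}$-round to absorb the cross-terms that arise in the $\langle\xi-\xi_\star\rangle^\beta$-round (see the treatment of \eqref{6}--\eqref{8}, where $\langle\xi-\xi_\star\rangle^{\beta-1}/U^{2-2m}$ is dominated by $U^{-2m}$ precisely when $\beta\le\frac{3m-1}{1-m}$). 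Combining the weights would superimpose these two mechanisms and make the sign analysis of the convection term and the far-field absorption harder to carry out; the separation is not cosmetic.
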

Before establishing the \textit{a priori} estimate in Proposition \ref{proposition priori estimate}, we first present some preliminary calculations.

\begin{lemma}
	Under the same assumptions of Proposition \ref{proposition priori estimate}, there exists a constant $C>0$ independent of $T$ such that
	\begin{equation}\label{sobolev embedding3}
		\sup_{t\in [0,T]}	\left\|\phi(\cdot,t)/U^{m}\right\|_{L^{\infty}} \leq CN(T),
	\end{equation}
	and
	\begin{equation}\label{sobolev embedding4}
		\sup_{t\in [0,T]}	\left\|\phi_x(\cdot,t)/U\right\|_{L^{\infty}} \leq CN(T).
	\end{equation}
\end{lemma}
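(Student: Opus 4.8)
The plan is to derive both $L^\infty$ bounds from the Gagliardo–Nirenberg/Sobolev embedding on the half-line, $\|g\|_{L^\infty}^2 \le 2\|g\|_{L^2}\|g_x\|_{L^2}$, applied to the rescaled quantities $g = \phi/U^m$ and $g = \phi_x/U$, after first re-expressing the $L^2$ norms of $g$ and $g_x$ in terms of weighted norms of $\phi$, $\phi_x$, $\phi_{xx}$ that are already controlled by $N(T)$. First I would record the elementary weight identities: since $\|\phi/U^m\|_{L^2}^2 = \int U^{-2m}\phi^2$ and $-2m = (1-3m) + \alpha_1(1-m)\cdot(\text{something})$—more precisely one checks from \eqref{alphai} that $U^{-2m}$ is comparable to the product of $\langle x-st\rangle_+^{\alpha_1}$-type weights near $z\to+\infty$ via \eqref{f''>0}, while near $z\to -\infty$ all powers of $U$ are bounded above and below by \eqref{decay u_-}. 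Hence $\|\phi/U^m\| \le C\|\phi\|_{\langle x-st\rangle_+^{\alpha_1}}$ (treating the two regions $x \gtrless st+d(t)$ separately, using that on $x\le st+d(t)$ the function $U$ is bounded below). The same bookkeeping, using $\alpha_2 = \frac{1+m}{1-m}$, gives $\|\phi_x/U\| \le C\|\phi_x\|_{\langle x-st\rangle_+^{\alpha_2}}$ and, with $\alpha_3=\frac{3-m}{1-m}$, $\|\phi_{xx}/U\| \le C\|\phi_{xx}\|_{\langle x-st\rangle_+^{\alpha_3}}$.

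Next I would compute the spatial derivatives that appear in the Sobolev inequality. For the first estimate, $\partial_x(\phi/U^m) = \phi_x/U^m - m\,\phi\, U^{-m-1}U_z$; using \eqref{q1}, $|U_z| \le CU^{2-m}$, so $|U^{-m-1}U_z| \le CU^{1-2m}$, whence $\|\partial_x(\phi/U^m)\| \le C\|\phi_x/U^m\| + C\|\phi/U^{2m-1}\| $. Both terms are again dominated by $N(T)$ after matching exponents with the weights $w_2 = U^{-(1+m)}$ (for $\|\phi_x/U^m\|^2 = \int U^{-2m}\phi_x^2$, noting $-2m > -(1+m)$ on the relevant region so one needs the refined comparison with $\langle\cdot\rangle_+^{\alpha_2}$) and $w_1 = U^{1-3m}$ (for the $\phi$ term). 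For the second estimate, $\partial_x(\phi_x/U) = \phi_{xx}/U - \phi_x U^{-2}U_z$, and $|U^{-2}U_z|\le CU^{-m}$, so $\|\partial_x(\phi_x/U)\| \le C\|\phi_{xx}/U\| + C\|\phi_x/U^m\|$, both controlled by $\|\phi_{xx}\|_{\langle x-st\rangle_+^{\alpha_3}}$ and the previously bounded quantity. Combining with the Sobolev inequality yields \eqref{sobolev embedding3} and \eqref{sobolev embedding4}.

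The main obstacle I anticipate is the weight bookkeeping near $x\to+\infty$: the "natural" powers of $U$ appearing after differentiation ($U^{-2m}$, $U^{1-2m}$, $U^{-m}$, etc.) are generally \emph{stronger} singularities than the weights $w_i$ or the algebraic weights $\langle x-st\rangle_+^{\alpha_i}$ built into $N(T)$, so a naive pointwise comparison $U^{-a} \le C U^{-b}$ fails. The resolution is that one never needs the borderline case: by the precise asymptotics \eqref{f''>0}, $U(z)\sim |z|^{-1/(1-m)}$, so $U^{-a} \sim \langle z\rangle_+^{a/(1-m)}$, and one checks that each required exponent $a/(1-m)$ is \emph{at most} the corresponding $\alpha_i$ (this is exactly why the $\alpha_i$ in \eqref{alphai} were defined as they were, and why $m>\tfrac12$ enters). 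So the genuine content of the proof is verifying these finitely many inequalities among the exponents $\alpha_1,\dots,\alpha_6$ and $\beta$, region by region ($x\le st+d(t)$ versus $x\ge st+d(t)$), together with the uniform-in-$t$ two-sided bounds on $U$ away from $0$; the Sobolev step itself is routine.
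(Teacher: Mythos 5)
Your overall plan---integrate the square of the rescaled quantity, expand the derivative, and use Cauchy--Schwarz to match the weights already in $N(T)$---is the right one, but the specific instrument you reach for, the symmetric Sobolev/Gagliardo--Nirenberg bound $\|g\|_{L^\infty}^2\le 2\|g\|_{L^2}\|g_x\|_{L^2}$, cannot close the argument. It forces you to bound $\|\phi/U^m\|_{L^2}^2=\int\phi^2 U^{-2m}$ and $\|\phi_x/U\|_{L^2}^2=\int\phi_x^2 U^{-2}$, and neither of these is controlled by $N(T)$. The norm $N(T)$ (defined in \eqref{N(t)}, which is what Proposition \ref{proposition priori estimate} uses) controls $\int \phi^2 U^{1-3m}$, $\int\phi_x^2 U^{-(1+m)}$, and $\int\phi_{xx}^2 U^{m-3}$. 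Since $m<1$ one has $U^{-2m}>U^{1-3m}$ and $U^{-2}>U^{-(1+m)}$ as $U\to 0$, so the weights you would need are strictly \emph{stronger} than the available ones by a factor $U^{-(1-m)}$. Your remark that ``one checks that each required exponent is at most the corresponding $\alpha_i$'' is exactly where the gap is: the check fails. For instance $2m/(1-m)>\alpha_1=(3m-1)/(1-m)$ precisely because $m<1$.

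The paper never computes $\|g\|_{L^2}$. Writing $g^2(x)=\int_0^x(g^2)_y\,\mathrm{d}y$ (using $\phi(0,t)=0$), expanding $(g^2)_y=2\phi\phi_y U^{-2m}-2mU_y\phi^2 U^{-2m-1}$, and applying Cauchy--Schwarz with an \emph{asymmetric} split of the powers of $U$,
\begin{equation}
\frac{2|\phi\phi_y|}{U^{2m}}=2\,\frac{|\phi|}{U^{(3m-1)/2}}\cdot\frac{|\phi_y|}{U^{(1+m)/2}}\le\frac{\phi^2}{U^{3m-1}}+\frac{\phi_y^2}{U^{1+m}},\nonumber
\end{equation}
works because $\tfrac{3m-1}{2}+\tfrac{1+m}{2}=2m$; the two resulting integrals are exactly $\|\phi\|_{w_1}^2$ and $\|\phi_x\|_{w_2}^2$, both $\le N^2(T)$. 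The cross term with $U_y$ is absorbed using $|U_y|\le CU^{2-m}$. That asymmetric split---rather than the symmetric $\|g\|\,\|g_x\|$ pairing---is the missing idea. The same device, splitting $2|\phi_y\phi_{yy}|U^{-2}$ as $(\phi_y^2 U^{-(1+m)})+(\phi_{yy}^2 U^{-(3-m)})$ using $\tfrac{1+m}{2}+\tfrac{3-m}{2}=2$, gives \eqref{sobolev embedding4}; here one also has a nonzero boundary value $\phi_x(0,t)$ that you do not address, which the paper first controls by the preliminary bound $\sup_x|\phi_x|\le CN(T)$ (itself obtained by integrating from $+\infty$).
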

\begin{proof}
	Since $\phi(0,t)=0$, then by virtue of \eqref{q1} and \eqref{N(t)}, we get
	\begin{equation}\nonumber
		\begin{aligned}
			\frac{\phi^2(x,t)}{U^{2m}( x-st-d(t))}&=\int_{0}^{x}\left(\frac{\phi^2(y,t)}{U^{2m}(y-st-d(t))} \right)_y\mathrm{d}y=\int_{0}^{x}\left(\frac{2\phi\phi_{y}}{U^{2m}}-\frac{2mU_y\phi^2}{U^{2m+1}} \right)\mathrm{d}y\\&\leq C\left( \int_{0}^{+\infty}\frac{\phi^2}{U^{3m-1}}\mathrm{d}y+\int_{0}^{+\infty}\frac{\phi_{y}^2}{U^{1+m}}\mathrm{d}y\right)\leq CN^2(T),~\forall(x,t)\in\mathbb{R_+}\times [0,T],
		\end{aligned}
	\end{equation}
	which implies \eqref{sobolev embedding3}. Furthermore, given that $\phi_x\in L_{w_2}^2(\mathbb{R_+})$, one has for any $(x,t)\in\mathbb{R_+}\times [0,T]$,
	\begin{equation}\nonumber
		\begin{aligned}
			\phi_x^2(x,t)&=-\int_{x}^{+\infty}\left(\phi_y^2(y,t) \right)_y\mathrm{d}y=-\int_{x}^{+\infty}2\phi_y\phi_{yy}\mathrm{d}y\\&\leq C \int_{0}^{+\infty}\frac{\phi_y^2}{U^{1+m}}\mathrm{d}y+C\int_{0}^{+\infty}\frac{\phi_{yy}^2}{U^{3-m}}\mathrm{d}y\leq CN^2(T).
		\end{aligned}
	\end{equation}
	This gives
	\begin{equation}\label{sobolev embedding2}
		\sup_{t\in [0,T]}	|\phi_x(0,t)|\leq CN(T).
	\end{equation}
	Thus, it follows from \eqref{N(t)} and \eqref{sobolev embedding2} that
	\begin{align}
		\frac{\phi_{x}^2(x,t)}{U^2( x-st-d(t))}&=\int_{0}^{x}\left(\frac{\phi_{y}^2(y,t)}{U^2(y-st-d(t))} \right)_y\mathrm{d}y+\frac{\phi_{y}^2(0,t)}{U^2(-st-d(t))}\nonumber\\&=\int_{0}^{x}\left(\frac{2\phi_{y}\phi_{yy}}{U^2}-\frac{2U_y\phi_{y}^2}{U^3} \right)\mathrm{d}y+\frac{\phi_{y}^2(0,t)}{U^2(-st-d(t))}\nonumber\\&\leq C\left( \int_{0}^{+\infty}\frac{\phi_{y}^2}{U^{1+m}}\mathrm{d}y+\int_{0}^{+\infty}\frac{\phi_{yy}^2}{U^{3-m}}\mathrm{d}y+\phi_{y}^2(0,t)\right)\leq CN(T)\nonumber,
	\end{align}
	which implies \eqref{sobolev embedding4}.
\end{proof}

\begin{lemma}\label{lemma d(t)}
	\begin{enumerate}
		\item [(i)] Given a constant $d_0$ determined by \eqref{3.18}, there exists a small constant $\epsilon_{4}>0$ such that if $N(T)\leq\epsilon_{4} $, then
		\begin{equation}\label{3.37}
			-\frac{st}{2}-d(t)\leq -d_0,\quad \forall0\leq t\leq T.
		\end{equation}
		\item [(ii)] Furthermore, it holds that
		\begin{equation}\label{e1}
			|d'(t)|\leq C\left( e^{-\gamma\left(d_0+t\right)}+|\phi_{xx}(0,t)|\right),
		\end{equation}
		\begin{equation}\label{e2}
			\int_{0}^{t}\left(d_0+\tau\right)^{\beta+3}	|d'(\tau)|^2\leq C\left( e^{-2\gamma d_0}+\int_{0}^{t}\left(d_0+\tau\right)^{\beta+3}|\phi_{xx}(0,\tau)|^2\right),
		\end{equation}
		where $\gamma>0$ is a constant.
	\end{enumerate}	
\end{lemma}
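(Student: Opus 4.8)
The plan is to read off all three estimates from the $d$--equation \eqref{eq2}. Writing $z_0(t):=-st-d(t)$ for the argument of the shock profile at the boundary $x=0$, and using $\phi_x=u-U$ together with the boundary condition $u(0,t)=u_-$ (so that $u_x(0,t)=U_z(z_0(t))+\phi_{xx}(0,t)$), equation \eqref{eq2} rearranges to
\[
d'(t)=\frac{1}{U(z_0(t))}\Big[\,f(u_-)-f\big(U(z_0(t))\big)-\big(u_-^{m-1}-U^{m-1}(z_0(t))\big)\,U_z(z_0(t))-u_-^{m-1}\phi_{xx}(0,t)\,\Big].
\]
Thus everything is governed by three quantities: the closeness of $U(z_0(t))$ to $u_-$ (controlled by \eqref{decay u_-}, which also gives a lower bound for $U(z_0(t))$), the bound $|U_z(z_0(t))|\le CU^{2-m}(z_0(t))\le Cu_-^{2-m}$ from \eqref{q1}, and $|\phi_{xx}(0,t)|$, which is dominated by $N(T)$ through \eqref{N(t)}.

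For part (i) I would argue by continuity. Set $g(t):=-\tfrac{st}{2}-d(t)$, so that $g\in C^1([0,T])$ and $g(0)=-d_0$, and let $T^{\ast}$ be the largest time with $g\le-d_0$ on $[0,T^{\ast}]$. On this interval $z_0(\tau)=g(\tau)-\tfrac{s\tau}{2}\le-d_0$, hence $|z_0(\tau)|\ge d_0$; since $d_0\ge\epsilon_2^{-2}$ is large (from $d_0^{-1/2}\le\epsilon_2$ in the hypotheses of Proposition \ref{proposition priori estimate}) and $f(u_-)=su_-$ by \eqref{R-H}, estimate \eqref{decay u_-} gives $U(z_0(\tau))\ge u_-/2$, $|f(u_-)-f(U(z_0(\tau)))|\le Ce^{-\lambda_- d_0}$ and $|u_-^{m-1}-U^{m-1}(z_0(\tau))|\le Ce^{-\lambda_- d_0}$, while $|\phi_{xx}(0,\tau)|\le N(T)(d_0+\tau)^{-(\beta+3)/2}\le N(T)$. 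The displayed formula then yields $|d'(\tau)|\le C(e^{-\lambda_- d_0}+N(T))\le s/4$ once $\epsilon_2$ is small enough; the same estimate at $\tau=0$ shows $g'(0)=-\tfrac{s}{2}-d'(0)<0$, so $T^{\ast}>0$. On $[0,T^{\ast}]$ we then have $g'(\tau)=-\tfrac{s}{2}-d'(\tau)\le-\tfrac{s}{4}<0$, so $g$ is strictly decreasing and $g(T^{\ast})<-d_0$; by continuity $g<-d_0$ slightly past $T^{\ast}$, contradicting the maximality of $T^{\ast}$ unless $T^{\ast}=T$. This is \eqref{3.37}.

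Given \eqref{3.37}, part (ii) follows at once: $z_0(t)\le-\tfrac{st}{2}-d_0$ forces $|z_0(t)|\ge\tfrac{st}{2}+d_0\ge c_0(d_0+t)$ with $c_0:=\min\{s/2,1\}$, so $e^{-\lambda_-|z_0(t)|}\le e^{-\gamma(d_0+t)}$ with $\gamma:=\lambda_- c_0>0$; feeding $|f(u_-)-f(U(z_0))|+|(u_-^{m-1}-U^{m-1}(z_0))U_z(z_0)|\le Ce^{-\lambda_-|z_0(t)|}$ and $U(z_0(t))^{-1}\le 2/u_-$ into the formula for $d'$ gives \eqref{e1}. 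For part (iii) I would square \eqref{e1}, multiply by $(d_0+\tau)^{\beta+3}$, and integrate over $[0,t]$: the $|\phi_{xx}(0,\tau)|^2$ part is kept as is, and for the exponential part the substitution $r=d_0+\tau$ together with the elementary bound $r^{\beta+3}e^{-\gamma r}\le C_{\beta,\gamma}$ gives $\int_{d_0}^{\infty}r^{\beta+3}e^{-2\gamma r}\,dr\le C\int_{d_0}^{\infty}e^{-\gamma r}\,dr\le Ce^{-\gamma d_0}$, which is \eqref{e2} after shrinking $\gamma$ to absorb the gap between $e^{-\gamma d_0}$ and $e^{-2\gamma d_0}$.

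The main obstacle is the continuity argument in part (i): one must make the bound for $d'$ genuinely uniform on $[0,T^{\ast}]$ and strictly below $s/2$, which is exactly where both smallness assumptions of Proposition \ref{proposition priori estimate} enter — $N(T)$ small, to absorb the boundary term $\phi_{xx}(0,\cdot)$, and $d_0$ large (equivalently $d_0^{-1/2}$ small), so that $U(z_0)$ is close to $u_-$ and $e^{-\lambda_- d_0}$ is negligible. Once \eqref{3.37} is established, parts (ii) and (iii) are routine consequences of the $d'$--formula.
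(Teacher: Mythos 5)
Your derivation of the explicit formula for $d'(t)$ from \eqref{eq2} (using $u(0,t)=u_-$, $u-U=\phi_x$, and the identity $(\tfrac{\phi_x}{U^{1-m}})_x + \tfrac1m G_x = (u^{m-1}u_x-U^{m-1}U_x)$ at $x=0$) is exactly the computation the paper performs in \eqref{eq10}, and your treatment of parts (ii) and (iii) matches the paper's: you multiply \eqref{3.37} through to get $|z_0(t)|\geq c_0(d_0+t)$, feed the resulting exponential decay and the bound on $\phi_{xx}(0,t)$ into the $d'$-formula, and for \eqref{e2} integrate $(d_0+\tau)^{\beta+3}e^{-2\gamma(d_0+\tau)}$ (where the paper writes the bound $Ce^{-2\gamma d_0}$ somewhat loosely, and your remark about shrinking $\gamma$ is exactly the correct reading). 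The genuine divergence is in part (i). You run a continuity/bootstrap argument: on the maximal interval where $g\le -d_0$ you use $|z_0|\ge d_0$ together with $d_0^{-1/2}\le\epsilon_2$ to force $U(z_0)\ge u_-/2$ via \eqref{decay u_-}, then conclude $|d'|\le s/4$ and close the bootstrap. This is valid, but it draws on the $d_0$-largeness hypothesis of Proposition \ref{proposition priori estimate}, which is not part of the stated assumption ``$N(T)\le\epsilon_4$'' of the Lemma. The paper instead proves $h'(t)<0$ directly for \emph{every} $t$ by a dichotomy on the sign of $-[f(u_-)-f(U(z_0))]+C\epsilon_4$: in the first case $-d'(t)<0$ is immediate, and in the second case the smallness of $f(u_-)-f(U(z_0))$ (not the decay estimate and not $d_0$) already forces $U(z_0)$ close to $u_-$, yielding $-d'(t)\le C\epsilon_4<s/2$. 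That version needs no continuity argument and no size assumption on $d_0$ at this stage; your version trades that precision for a more mechanical bootstrap. Both are sound, and under the standing hypotheses of Proposition \ref{proposition priori estimate} they give the same conclusion.
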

\begin{remark}
	 From \eqref{e1}, we know for all $t\in[0,T]$ that
	 \begin{equation}\label{3.28}
	|d'(t)|\leq C\left( e^{-\gamma d_0}+|\phi_{xx}(0,t)|\right) \leq C\left( d_0^{-1}+N(T)\right).	
	 \end{equation}
\end{remark}

\begin{proof}
	(i) Let $h(t)\triangleq-\frac{st}{2}-d(t)$, $\forall t\in[0,T]$. Noting $h(0)=-d_0$, we only need to show for any $t\in[0,T]$ that $h'(t)=-\frac{s}{2}-d'(t)<0$. In fact, by the equation \eqref{d equ}, one has
	\begin{equation}\label{eq7}
		\begin{aligned}
			-d'(t)U(-st-d(t))\leq -\left[ f(u_-)-f\left(U(-st-d(t)) \right)\right]+\Big|\left(\frac{\phi_x}{U^{1-m}}\right)_x(0,t)+\frac{1}{m}G_x(0,t)\Big|.		
		\end{aligned}
	\end{equation}	
	According to the
	Sobolev embedding inequality $H^1(\mathbb{R_+})\hookrightarrow C^0(\mathbb{R_+})$ and \eqref{N(t)}, we know that $\left\|\phi_x(\cdot,t)\right\|_{L^{\infty}} \leq CN(T)$. Together with \eqref{q1} and \eqref{G}, it follows from Taylor's expansion that
	\begin{equation}\label{eq10}
		\begin{aligned}
		 &\Big|\left(\frac{\phi_x}{U^{1-m}}\right)_x(0,t)+\frac{1}{m}G_x(0,t)\Big|\\
&=\Big|\left(U+\phi_x\right)^{m-1}\left(U_x+\phi_{xx}\right)\big|_{x=0}-U^{m-1}U_x\big|_{x=0}\Big|\\
&=\bigg|\left(\left(U+\phi_x\right)^{m-1}- U^{m-1}\right)U_x\big|_{x=0}+\left(U+\phi_x\right)^{m-1}\phi_{xx}\big|_{x=0}\bigg|\\
&\leq C\left(\frac{|U_x|}{U^{2-m}} |\phi_x|+\frac{|\phi_{xx}| }{U^{1-m} }	 \right)\bigg|_{x=0}\\
&\leq	C\left(|\phi_{x}(0,t)|+|\phi_{xx}(0,t)| \right)\leq CN(T),
		\end{aligned}
	\end{equation}
	where we have used \eqref{N(t)} and  \eqref{sobolev embedding2} in the last inequality. This  with \eqref{eq7} together gives
	\begin{equation}\nonumber
		\begin{aligned}
			-d'(t)U(-st-d(t))&\leq -\left[ f(u_-)-f\left(U(-st-d(t)) \right)\right]+CN(T)\\&\leq-\left[ f(u_-)-f\left(U(-st-d(t)) \right)\right]+C\epsilon_{4},
		\end{aligned}
	\end{equation}
	where $\epsilon_{4}>0$ is a small constant. We observe that $-\left[ f(u_-)-f\left(U(-st-d(t)) \right)\right]\leq0$ due to the fact that $f$ is convex and $0<U(-st-d(t))\leq u_-$. Thus, if $-\left[ f(u_-)-f\left(U(-st-d(t)) \right)\right]+C\epsilon_{4}<0$, then we have $-d'(t)<0$; if $-\left[ f(u_-)-f\left(U(-st-d(t)) \right)\right]+C\epsilon_{4}\geq0$, which implies that $-d'(t)\leq C\epsilon_{4}$, then we have $\frac{-s}{2}-d'(t)<\frac{-s}{2}+C\epsilon_{4}<0$ if $\epsilon_{4}$ is small enough. Therefore, in both cases, $h'(t)=\frac{-s}{2}-d'(t)<0$ holds for $t\in[0,T]$.
	
	(ii) From \eqref{decay u_-}, \eqref{decompose} and the Taylor's expansion, the result of (i) means that
	\begin{equation}\nonumber
		f(u_-)-f\left(U(-st-d(t))\right)\sim u_-- U(-st-d(t))\sim \mathrm{e}^{-\lambda_-(st+d(t))},	
	\end{equation}
	and
	\begin{equation}\nonumber
		\phi_{x}(0,t)=u_--U(-st-d(t))\sim \mathrm{e}^{-\lambda_-(st+d(t))}.
	\end{equation}
	Hence, by \eqref{d equ} and \eqref{eq10}, one obtains
	\begin{equation}\label{eq11}\nonumber
		\begin{aligned}
			|d'(t)|U(-st-d(t))&\leq \Big| f(u_-)-f\left(U(-st-d(t)) \right)\Big|+\Big|\left(\frac{\phi_x}{U^{1-m}}\right)_x(0,t)+\frac{1}{m}G_x(0,t)\Big|\\&
			\leq \Big| f(u_-)-f\left(U(-st-d(t)) \right)\Big|+C\left(|\phi_{x}(0,t)|+|\phi_{xx}(0,t)| \right)\\&\leq C\left(\mathrm{e}^{-\lambda_-(st+d(t))}+|\phi_{xx}(0,t)| \right).	 
		\end{aligned}	
	\end{equation}
	Since $-st-d(t)\leq -d_0-\frac{st}{2}$ for $t\in[0,T]$, we get $U(-st-d(t))>U(-d_0)>U(0)>0$. Thus,
	\begin{equation}\nonumber
		|d'(t)|\leq	 C\left(\mathrm{e}^{-\lambda_-(st+d(t))}+|\phi_{xx}(0,t)| \right)\leq	 C\left(\mathrm{e}^{-\gamma(d_0+t)}+|\phi_{xx}(0,t)| \right)\text{ for some }\gamma>0.
	\end{equation}
	Next, we  show \eqref{e2}. In fact, from \eqref{e1}, we get
	\begin{equation}\nonumber
		\begin{aligned}
			&\int_{0}^{t}\left(d_0+\tau\right)^{\beta+3}	|d'(\tau)|^2\mathrm{d} \tau\\&\leq C\left( \int_{0}^{t}\left(d_0+\tau\right)^{\beta+3}\mathrm{e}^{-2\gamma(d_0+\tau)}+  \int_{0}^{t}\left(d_0+\tau\right)^{\beta+3}|\phi_{xx}(0,t)|^2 \right)\\&
			\leq C\left( e^{-2\gamma d_0}+\int_{0}^{t}\left(d_0+\tau\right)^{\beta+3}|\phi_{xx}(0,\tau)|^2\mathrm{d} \tau\right),	
		\end{aligned}
	\end{equation}
	where we have used $\int_{0}^{t}\left(d_0+\tau\right)^{\beta+3}\mathrm{e}^{-2\gamma\tau}\leq C$.
\end{proof}

\begin{lemma}\label{F G}
	Under the same assumptions of Proposition \ref{proposition priori estimate}, if $N(T)$ is sufficiently small, then there exists a constant $C>0$ independent of $T$ such that
	\begin{equation}\label{F1}
		|F|\leq C \phi_{x}^2,\quad |F_x|\leq C\left( |U_x|\phi_{x}^2+|\phi_{x}||\phi_{xx}|\right),
	\end{equation}	
	\begin{equation}\label{Fzz}
		|F_{xx}|\leq C\left[ \left(U_x^2+|U_{xx}|\right) \phi_{x}^2+\phi_{xx}^2+|U_{x}||\phi_{x}||\phi_{xx}|+|\phi_{x}||\phi_{xxx}|\right],
	\end{equation}				
	\begin{equation}\label{Fzzz}
		\begin{aligned}
			|F_{xxx}|&\leq C\left[ \left(|U_x|^3+|U_x||U_{xx}|+|U_{xxx}|\right) \phi_{x}^2+\left(U_x^2+|U_{xx}| \right)|\phi_{x}||\phi_{xx}|+|U_x||\phi_{x}||\phi_{xxx}|\right.\\&\left.\quad+|\phi_{x}||\phi_{xxxx}|+|U_x|\phi_{xx}^2+|\phi_{x x}|^3+|\phi_{xx}||\phi_{xxx}|\right],
		\end{aligned}
	\end{equation}
\begin{equation}\label{Gz}
	|G|\leq C \frac{\phi_{x}^2}{U^{2-m}},\quad|G_x|\leq C\left( \frac{|U_x|}{U^{3-m}}\phi_x^2+\frac{|\phi_{x}||\phi_{xx}| }{U^{2-m}}\right),
\end{equation}	
\begin{equation}\label{Gzz}
	|G_{xx}| \leq C\left[\left(\frac{U_x^2 }{U^{4-m}}+\frac{|U_{xx}| }{U^{3-m}} \right)\phi_{x}^2+ \frac{\phi_{xx}^2 }{U^{2-m}}+\frac{|U_x| }{U^{3-m}}|\phi_{x}||\phi_{xx}|+\frac{ |\phi_{x}||\phi_{xxx}|}{U^{2-m}}\right],
\end{equation}					
\begin{equation}\label{Gzzz}
	\begin{aligned}
		|G_{xxx}| &\leq C\left[\left(\frac{|U_x|^3 }{U^{5-m}}+\frac{|U_x||U_{xx}| }{U^{4-m}}+\frac{|U_{xxx}| }{U^{3-m}} \right)\phi_{x}^2+\left(\frac{U_x^2 }{U^{4-m}}+\frac{|U_{xx}| }{U^{3-m}} \right)|\phi_{x}||\phi_{xx}|\right.\\&\left.\quad+\frac{|U_x| }{U^{3-m}}|\phi_{x}||\phi_{xxx}|+\frac{|\phi_{x}||\phi_{xxxx}| }{U^{2-m}}+\frac{|U_x| }{U^{3-m}}\phi_{xx}^2+\frac{|\phi_{xx}|^3}{U^{3-m}}+\frac{|\phi_{xx}||\phi_{xxx}|}{U^{2-m}}\right].
	\end{aligned}
\end{equation}
\end{lemma}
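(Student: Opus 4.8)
\emph{Plan of proof.} The idea is to read all eight bounds off the fact that $F$ and $G$ are second-order Taylor remainders whose structure is preserved under differentiation. Write $v:=U+\phi_x$ with $U=U(x-st-d(t))$; then $-F=f(v)-f(U)-f'(U)\phi_x$ is the second-order remainder of $f$ at $U$ with increment $\phi_x$, and, recalling \eqref{G}, $G=v^{m}-U^{m}-mU^{m-1}\phi_x$ is the corresponding remainder of $p\mapsto(U+p)^{m}$. The first step is to convert smallness of $N(T)$ into the pointwise bound $|\phi_x|\le CN(T)\,U\le\tfrac12 U$, which follows from \eqref{sobolev embedding4}; this yields $\tfrac12 U\le v\le\tfrac32 U$, so $v\sim U$, and, since $0<U\le u_-$, the value $v$ stays in a fixed compact subset of $[0,\tfrac32 u_-]$ on which $f\in C^4$ and all its derivatives up to order four are bounded, and on which $p\mapsto p^{a}$ is smooth for every real $a$.

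For the $F$-bounds I would introduce, for $k=0,1,2,3$, the remainder and difference functions
\[
R_k:=f^{(k)}(v)-f^{(k)}(U)-f^{(k+1)}(U)\phi_x,\qquad B_k:=f^{(k)}(v)-f^{(k)}(U),
\]
so that $F=-R_0$. By Taylor's theorem and the boundedness of the derivatives of $f$ one has $|R_k|\le C\phi_x^{2}$ and $|B_k|\le C|\phi_x|$, while differentiation using $v_x=U_x+\phi_{xx}$ gives the recursions $\partial_x R_k=R_{k+1}U_x+B_{k+1}\phi_{xx}$ and $\partial_x B_k=B_{k+1}U_x+f^{(k+1)}(v)\phi_{xx}$. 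Applying these to $F=-R_0$ once, twice and three times, and using $|f^{(k)}(v)|\le C$, every term produced is a product of one of $R_j$, $B_j$, $f^{(j)}(v)$ — of respective sizes $C\phi_x^{2}$, $C|\phi_x|$, $C$ — with a monomial in $U_x,U_{xx},U_{xxx}$ and $\phi_{xx},\phi_{xxx},\phi_{xxxx}$; collecting these monomials is exactly \eqref{F1}, \eqref{Fzz} and \eqref{Fzzz}.

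The $G$-bounds follow the identical scheme applied to $R_g(U,p):=(U+p)^{m}-U^{m}-mU^{m-1}p$, with $G=R_g(U,\phi_x)$; the only new feature is the singularity of $p\mapsto p^{m}$ at $0$. Since every derivative of $p\mapsto p^{a}$ is again a power of $p$ and $v\sim U$, for each real $a$ one has $v^{a}\sim U^{a}$, $|v^{a}-U^{a}|\le C|\phi_x|\,U^{a-1}$ and $|v^{a}-U^{a}-aU^{a-1}\phi_x|\le C\phi_x^{2}\,U^{a-2}$. Hence the mixed partials of $R_g$ obey $|\partial_U^{j}\partial_p^{i}R_g|\le C\phi_x^{2}\,U^{m-2-j}$ when $i=0$, $\le C|\phi_x|\,U^{m-2-j}$ when $i=1$, and $\le C\,U^{m-i-j}$ when $i\ge2$. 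Differentiating $G=R_g(U,\phi_x)$ once, twice and three times by the chain rule and inserting these bounds produces \eqref{Gz}, \eqref{Gzz} and \eqref{Gzzz}; in particular the worst singularities $U^{m-5}$ and $U^{m-3}$ in \eqref{Gzzz} land on the $|U_x|^{3}\phi_x^{2}$ and $|\phi_{xx}|^{3}$ terms, which explains the denominators $U^{5-m}$ and $U^{3-m}$ there.

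Since the individual computations are elementary calculus, the real work — and the only place an error could hide — is the combinatorial bookkeeping in the third-order identities \eqref{Fzzz} and \eqref{Gzzz}: one must expand $\partial_x^{3}$ of a composite function of the pair $(U,\phi_x)$ completely and check that each of the many resulting monomials is dominated by a listed term with the correct power of $U$, the exponent being governed by the rule that each $\partial_U$ and each $\partial_p$ lowers the base power by one, with base exponent $m-2$ for a remainder or difference of $p\mapsto p^{m}$ and $m$ for a pure $\partial_p$-derivative. The one analytic point to keep track of is that the $F_{xxx}$ term carrying the factor $R_3=f'''(v)-f'''(U)-f''''(U)\phi_x$ needs $|R_3|\le C\phi_x^{2}$, which is precisely where the full $C^{4}$ regularity of $f$ is invoked; no such care is needed on the $G$-side because $p\mapsto p^{m}$ is smooth away from the origin.
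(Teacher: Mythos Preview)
Your proposal is correct and takes essentially the same approach as the paper: both invoke \eqref{sobolev embedding4} to ensure $v=U+\phi_x\sim U$, then differentiate $F$ and $G$ and estimate each piece by Taylor's theorem; your recursions $\partial_x R_k=R_{k+1}U_x+B_{k+1}\phi_{xx}$ and $\partial_x B_k=B_{k+1}U_x+f^{(k+1)}(v)\phi_{xx}$ are just a tidier bookkeeping device for the explicit expansions the paper writes out term by term (and your systematic power-counting for $G$ is exactly what the paper means by ``similar''). One small technical caveat, shared with the paper's own statement: the bound $|R_3|\le C\phi_x^{2}$ you quote for the $|U_x|^{3}\phi_x^{2}$ contribution to \eqref{Fzzz} really needs $f^{(4)}$ Lipschitz rather than merely $f\in C^{4}$; with $C^{4}$ alone one only obtains $|R_3|\le C|\phi_x|$, though this weaker bound is all that is actually used in the subsequent energy estimates.
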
	
\begin{proof}
Recalling that $F=-\left(f\left(U+\phi_x\right)-f(U)-f^{\prime}(U) \phi_x\right)$, and noting \eqref{sobolev embedding4}, we then get the first inequality of \eqref{F1} by applying Taylor's expansion. Furthermore, a direct calculation gives
\begin{equation}\nonumber
		F_x=-\left(f^{\prime}\left(U+\phi_{x}\right)-f^{\prime}(U)-f^{\prime \prime}(U) \phi_{x}\right) U_x-\left(f^{\prime}\left(U+\phi_{x }\right)-f^{\prime}(U)\right) \phi_{xx},	 
\end{equation}
\begin{equation}\nonumber
\begin{aligned}
F_{xx}&=-\left(f^{\prime\prime}\left(U+\phi_{x}\right)-f^{\prime\prime}(U)-f^{\prime \prime\prime}(U) \phi_{x}\right) U_x^2-\left(f^{\prime}\left(U+\phi_{x}\right)-f^{\prime}(U)-f^{\prime \prime}(U) \phi_{x}\right) U_{xx}\\&\quad-f^{\prime\prime}\left(U+\phi_{x}\right)\phi_{xx}^2-2\left(f^{\prime\prime}\left(U+\phi_{x }\right)-f^{\prime\prime}(U)\right)U_x \phi_{xx}\\&\quad-\left(f^{\prime}\left(U+\phi_{x }\right)-f^{\prime}(U)\right) \phi_{xxx},	
\end{aligned}	
\end{equation}
and
\begin{equation}\nonumber
	\begin{aligned}
F_{xxx}&=-\left(f^{\prime\prime\prime}\left(U+\phi_{x}\right)-f^{\prime\prime\prime}(U)-f^{4}(U) \phi_{x}\right) U_x^3-3\left(f^{\prime\prime}\left(U+\phi_{x}\right)-f^{\prime\prime}(U)-f^{\prime \prime\prime}(U) \phi_{x}\right) U_xU_{xx}\\&\quad-\left(f^{\prime}\left(U+\phi_{x}\right)-f^{\prime}(U)-f^{\prime \prime}(U) \phi_{x}\right) U_{xxx}	 -3\left(f^{\prime\prime\prime}\left(U+\phi_{x}\right)-f^{\prime\prime\prime}(U)\right)U_x^2\phi_{x x}\\&\quad-3\left(f^{\prime\prime}\left(U+\phi_{x}\right)-f^{\prime\prime}(U)\right)U_{xx}\phi_{x x}-3\left(f^{\prime\prime}\left(U+\phi_{x}\right)-f^{\prime\prime}(U)\right)U_{x}\phi_{xxx}\\&\quad -\left(f^{\prime}\left(U+\phi_{x}\right)-f^{\prime}(U)\right)\phi_{xxxx}-3f^{\prime\prime\prime}\left(U+\phi_{x}\right)U_x\phi_{xx}^2\\&\quad-2f^{\prime\prime}\left(U+\phi_{x}\right)\phi_{xx}\phi_{xxx}-\!f^{\prime\prime\prime}\left(U+\phi_{x}\right)\phi_{xx}^3,
	\end{aligned}
\end{equation}
where, by virtue of $\|\phi_{x}(\cdot,t)\|_{L^{\infty}}\leq CN(t)$, the second inequality of \eqref{F1}, \eqref{Fzz} and \eqref{Fzzz} immediately follow from Taylor's expansion. The proof of \eqref{Gz}-\eqref{Gzzz} is similar to that \eqref{F1}-\eqref{Fzzz}. We omit it.
\end{proof}

\begin{lemma}
	 For $0\leq t\leq T$, the following inequalities hold:
\begin{equation}\label{boundary phixxx}
	\frac{\phi_{xx}\phi_{xxx}(0,t)}{U^{1-m}(-st-d(t))}\geq \left( \frac{3}{4}-CN(T)\right)f'(u_-)\phi_{xx}^2(0,t)-C\left( U_x^2+U_{xx}^2\right)\big|_{x=0},
\end{equation}	
\begin{eqnarray}\label{boundary phixxxx}
		\frac{\phi_{xxx}\phi_{xxxx}(0,t)}{U^{1-m}(-st-d(t))}&\geq&\left(\frac{3}{4}\!-\!CN(T)\right)
		\frac{f'(u_-) }{2}\frac{\mathrm{d}}{\mathrm{d}t}\phi_{xx}^2(0,t)\!+\!\left( \frac{3}{4}-CN(T)\right)f'(u_-)\phi_{xxx}^2(0,t) \nonumber \\ &&+\frac{1}{U^{1-m}}\frac{\mathrm{d}}{\mathrm{d}t}\left[\phi_{xx}\left( u_-^{1-m}f'(u_-)U_{x} \!-\!U_{xx}\!-\!\frac{ (m\!-\!1)}{u_-}\left(U_{x}\!+\!\phi_{xx}\right)^2\!\right)\!\right]\!\bigg|_{x=0}	\nonumber \\
&&-CN(T)\phi_{x x}^2(0,t)-C\left(U_x^2+U_{xx}^2+U_{xxx}^2 \right)\big|_{x=0}.
\end{eqnarray} 	
\end{lemma}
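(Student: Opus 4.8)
The plan is to exploit that the boundary value $u(0,t)\equiv u_-$ is \emph{constant} in $t$. Writing $u:=U(x-st-d(t))+\phi_x$ (which solves \eqref{original model} with $u(0,t)=u_-$, as follows from the two equations in \eqref{d equ} and $f'(u_-)>0$), the equation $u_t+f(u)_x=(u^{m-1}u_x)_x$ and its first $x$-derivative, evaluated at $x=0$, become \emph{algebraic} identities for the traces $\phi_{xxx}(0,t),\phi_{xxxx}(0,t)$. First I would record that, by \eqref{decay u_-} and Lemma \ref{lemma d(t)}(i), $|\phi_x(0,t)|=|u_--U(-st-d(t))|$ and $|U_x|,|U_{xx}|,|U_{xxx}|$ at $x=0$ are all $O(e^{-\gamma(d_0+t)})$ (hence $\le C\epsilon$), while $|\phi_{xx}(0,t)|\le N(T)(d_0+t)^{-(\beta+3)/2}\le N(T)$ by \eqref{N(t)}; thus $u_x(0,t)=U_x+\phi_{xx}$ and $\phi_{xxx}(0,t)$ are small at the boundary. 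Since $u_t(0,t)=0$, evaluating the equation at $x=0$ gives $u_{xx}(0,t)=u_-^{1-m}f'(u_-)u_x(0,t)-\tfrac{m-1}{u_-}u_x^2(0,t)$, which after substituting $u_x=U_x+\phi_{xx}$, $u_{xx}=U_{xx}+\phi_{xxx}$ yields the exact identity
\[
\phi_{xxx}(0,t)=u_-^{1-m}f'(u_-)\,\phi_{xx}(0,t)+\Psi(t),\qquad \Psi(t):=\Big(u_-^{1-m}f'(u_-)U_x-U_{xx}-\tfrac{m-1}{u_-}(U_x+\phi_{xx})^2\Big)\big|_{x=0};
\]
note $\Psi(t)$ is precisely the bracket multiplying $\phi_{xx}$ inside $\tfrac{\mathrm{d}}{\mathrm{d}t}[\cdots]$ in \eqref{boundary phixxxx}.

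For \eqref{boundary phixxx} I would multiply this identity by $\phi_{xx}(0,t)/U^{1-m}(-st-d(t))$, getting $\tfrac{u_-^{1-m}f'(u_-)}{U^{1-m}(-st-d(t))}\phi_{xx}^2(0,t)+\tfrac{\phi_{xx}(0,t)\Psi(t)}{U^{1-m}(-st-d(t))}$. Because $U(-st-d(t))\le u_-$, the factor $(u_-/U)^{1-m}\ge 1$, so the first term is $\ge f'(u_-)\phi_{xx}^2(0,t)$. Expanding $\Psi$ and $(U_x+\phi_{xx})^2$, every remaining term is a product of the exponentially small $U_x(0,t),U_{xx}(0,t)$ with $\phi_{xx}(0,t)$, or $(\text{small})\cdot\phi_{xx}^2(0,t)$, or $\phi_{xx}^3(0,t)$, so by Young's inequality (using $U_x^4\le\|U_x\|_\infty^2U_x^2\le CU_x^2$ for the mixed term) it is bounded by $\tfrac14 f'(u_-)\phi_{xx}^2(0,t)+CN(T)\phi_{xx}^2(0,t)+C(U_x^2+U_{xx}^2)|_{x=0}$, which gives \eqref{boundary phixxx}.

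For \eqref{boundary phixxxx} I would differentiate the equation once in $x$ and evaluate at $x=0$; now $u_{xt}(0,t)$ does not vanish, and differentiating $u_x(0,t)=U_x+\phi_{xx}|_{x=0}$ in $t$ gives $u_{xt}(0,t)=\phi_{xxt}(0,t)+U_{zz}(-st-d(t))(-s-d'(t))=\phi_{xxt}(0,t)+O(e^{-\gamma(d_0+t)})$ by \eqref{direct Uzz}. Solving for $u_{xxx}(0,t)$ and using the Step~1 identity for $u_{xx}(0,t)$ produces $\phi_{xxxx}(0,t)=u_-^{1-m}\phi_{xxt}(0,t)+u_-^{1-m}f'(u_-)\phi_{xxx}(0,t)+\widetilde\Psi(t)$, where $\widetilde\Psi(t)$ is a sum of terms that are $O(e^{-\gamma(d_0+t)})$ or quadratic/cubic in the small quantities $U_x(0,t),\phi_{xx}(0,t)$. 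Multiplying by $\phi_{xxx}(0,t)/U^{1-m}(-st-d(t))$: the term $\tfrac{u_-^{1-m}f'(u_-)}{U^{1-m}}\phi_{xxx}^2(0,t)\ge f'(u_-)\phi_{xxx}^2(0,t)$ is the good one, and the $\widetilde\Psi$-contribution is absorbed exactly as in the previous step into $\tfrac14 f'(u_-)\phi_{xxx}^2(0,t)+CN(T)(\phi_{xx}^2+\phi_{xxx}^2)(0,t)+C(U_x^2+U_{xx}^2+U_{xxx}^2)|_{x=0}$. The decisive term is $\tfrac{u_-^{1-m}}{U^{1-m}}\phi_{xxx}(0,t)\phi_{xxt}(0,t)$: I would not attempt to bound $\phi_{xxt}(0,t)$ pointwise, but substitute $\phi_{xxx}(0,t)=u_-^{1-m}f'(u_-)\phi_{xx}(0,t)+\Psi(t)$ to rewrite it as $\tfrac{u_-^{2(1-m)}f'(u_-)}{2U^{1-m}}\tfrac{\mathrm{d}}{\mathrm{d}t}\phi_{xx}^2(0,t)+\tfrac{u_-^{1-m}}{U^{1-m}}\Psi(t)\phi_{xxt}(0,t)$, then use $\Psi\phi_{xxt}=\tfrac{\mathrm{d}}{\mathrm{d}t}(\Psi\phi_{xx})-\Psi_t\phi_{xx}$ to turn the last piece into $\tfrac{1}{U^{1-m}}\tfrac{\mathrm{d}}{\mathrm{d}t}[\phi_{xx}\Psi]|_{x=0}$ plus the correction $-\tfrac{u_-^{1-m}}{U^{1-m}}\Psi_t\phi_{xx}$; since $\Psi_t$ re-introduces $\phi_{xxt}$ only through the small factor $-\tfrac{2(m-1)}{u_-}(U_x+\phi_{xx})$, this correction is, after one further such rewriting ($\phi_{xx}^2\phi_{xxt}=\tfrac13\tfrac{\mathrm{d}}{\mathrm{d}t}\phi_{xx}^3$, absorbed into $\tfrac{\mathrm{d}}{\mathrm{d}t}[\phi_{xx}\Psi]$), either a total derivative with tiny coefficient or directly absorbable. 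Collecting the total time derivatives and the absorbed errors, and using $(u_-/U(-st-d(t)))^{1-m}=1+O(e^{-\gamma(d_0+t)})$ to pass to the coefficients stated, gives \eqref{boundary phixxxx}. (Equivalently one may start from \eqref{d equ} and control the nonlinear $F,G$ by Lemma \ref{F G}; the $u$-form is merely more economical.)

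The step I expect to be the main obstacle is precisely the handling of $\phi_{xxt}(0,t)$: unlike $U_x(0,t),\phi_x(0,t)$, etc., it is not forced to be small and admits no pointwise bound, so the argument must instead recognize — after substituting $\phi_{xxx}(0,t)=u_-^{1-m}f'(u_-)\phi_{xx}(0,t)+\Psi(t)$ — that $\phi_{xxt}(0,t)$ enters \emph{only} through the combinations $\phi_{xx}\phi_{xxt}=\tfrac12\tfrac{\mathrm{d}}{\mathrm{d}t}\phi_{xx}^2$, $\Psi\phi_{xxt}=\tfrac{\mathrm{d}}{\mathrm{d}t}(\Psi\phi_{xx})-\Psi_t\phi_{xx}$, and (iterating) $\phi_{xx}^2\phi_{xxt}=\tfrac13\tfrac{\mathrm{d}}{\mathrm{d}t}\phi_{xx}^3$, i.e. essentially through total time derivatives; this is exactly why the $\tfrac{\mathrm{d}}{\mathrm{d}t}[\cdots]$ terms are carried along in \eqref{boundary phixxxx}, since they will telescope when the a priori estimate is integrated in $t$. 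A subsidiary technical point is the bookkeeping for the slowly varying weight $1/U^{1-m}(-st-d(t))$, whose time derivative is $O(e^{-\gamma(d_0+t)})$ by \eqref{decay u_-} and Lemma \ref{lemma d(t)}(i), so that differentiating it only costs errors of the absorbable types above.
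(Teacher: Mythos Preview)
Your proposal is correct and follows essentially the same approach as the paper: both derive the exact boundary identity $\phi_{xxx}(0,t)=u_-^{1-m}f'(u_-)\phi_{xx}(0,t)+\Psi(t)$ by evaluating the equation at $x=0$ (using $u_t(0,t)=0$), and then for \eqref{boundary phixxxx} both differentiate once more, multiply by $\phi_{xxx}(0,t)/U^{1-m}$, and handle the unbounded $\tfrac{\mathrm{d}}{\mathrm{d}t}\phi_{xx}(0,t)$ term precisely by substituting the identity for $\phi_{xxx}(0,t)$ so as to produce the total time derivatives $\tfrac12\tfrac{\mathrm{d}}{\mathrm{d}t}\phi_{xx}^2$ and $\tfrac{\mathrm{d}}{\mathrm{d}t}[\phi_{xx}\Psi]$. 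The only cosmetic difference is that the paper absorbs the residual cubic piece $\phi_{xx}\tfrac{\mathrm{d}}{\mathrm{d}t}\phi_{xx}^2$ directly into the leading $\tfrac{\mathrm{d}}{\mathrm{d}t}\phi_{xx}^2$ term with a small coefficient $CN(T)$ (since $|\phi_{xx}(0,t)|\le CN(T)$), whereas you propose writing it as $\tfrac13\tfrac{\mathrm{d}}{\mathrm{d}t}\phi_{xx}^3$ and folding it into $\tfrac{\mathrm{d}}{\mathrm{d}t}[\phi_{xx}\Psi]$; both work.
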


\begin{proof}
	 To estimate the value of $\phi_{xxx}(0,t)$, we integrate $\partial_x\eqref{original model}_1$ over $(0,+\infty)$ to have
	 \begin{equation}\nonumber
	 	\frac{\mathrm{d}}{\mathrm{d}t }u(0,t)+f'(u_-)u_x(0,t)=\left(m-1\right)u_-^{m-2}u_x^2(0,t)+u_-^{m-1}u_{xx}(0,t),
	 \end{equation}
	 which, along with \eqref{decompose}, gives
	 \begin{equation}\label{phixxx 0}
	 	\phi_{xxx}(0,t)=\left\{ -U_{xx}+u_-^{1-m}f'(u_-)\left(U_{x}+\phi_{xx}\right)-\frac{ (m-1)}{u_-}\left(U_{x}+\phi_{xx}\right)^2\right\}\bigg|_{x=0}.	
	 \end{equation}
	 Therefore, we have
	 \begin{equation}\label{eq39}
	 	\begin{aligned}
	 		&\frac{\phi_{xx}\phi_{xxx}(0,t)}{U^{1-m}(-st-d(t))}\\
&=\left\{\frac{\phi_{xx}}{U^{1-m}}\left[  -U_{xx}+u_-^{1-m}f'(u_-)\left(U_{x}\!+\!\phi_{xx}\right)\!-\!\frac{ (m-1)}{u_-}\left(U_{x}\!+\!\phi_{xx}\right)^2\right]\right\}\bigg|_{x=0}\\&\geq f'(u_-)\phi_{xx}^2(0,t)\!+\!\frac{\phi_{xx}}{U^{1-m}}\!\left[ u_-^{1-m}f'(u_-)U_{x} \!-\!U_{xx}\!-\!\frac{ (m-1)}{u_-}\left(U_{x}\!+\!\phi_{xx}\right)^2\right]\!\bigg|_{x=0},
	 	\end{aligned}
	 \end{equation}
	 where, by $|\phi_{xx}(0,t)|\leq CN(T)$ and the Cauchy-Schwarz inequality, it holds that
	 \begin{equation}\label{eq40}
	 	\begin{aligned}
	 		&\bigg|\frac{\phi_{xx}}{U^{1-m}}\!\left[ u_-^{1-m}f'(u_-)U_{x} \!-\!U_{xx}\!-\!\frac{ (m-1)}{u_-}\left(U_{x}\!+\!\phi_{xx}\right)^2\right]\bigg|_{x=0}\bigg|\\&\leq \left(\frac{1}{4}+CN(T)\right) f'(u_-)\phi_{xx}^2(0,t)+C\left( U_x^2+U_{xx}^2\right)\big|_{x=0}.
	 	\end{aligned}	
	 \end{equation}
	 Therefore, combining \eqref{eq39} and \eqref{eq40}, one obtains
	 \begin{equation}\nonumber
	 	\frac{\phi_{xx}\phi_{xxx}(0,t)}{U^{1-m}(-st-d(t))}\geq\left(\frac{3}{4}-CN(T)\right)f'(u_-)\phi_{xx}^2(0,t)-C\left( U_x^2+U_{xx}^2\right)\big|_{x=0},
	 \end{equation}
which implies \eqref{boundary phixxx}.

It remains to show \eqref{boundary phixxxx}. To estimate the boundary value of $\phi_{xxxx}(0,t)$, we integrate $\partial_x^2\eqref{original model}_1$ over $(0,+\infty)$ to have
\begin{equation}\nonumber
	\begin{aligned}
		&\frac{\mathrm{d}}{\mathrm{d}t }u_x(0,t)+f''(u_-)u_x^2(0,t)+f'(u_-)u_{xx}(0,t)\\&=\left(m-1\right)(m-2)u_-^{m-3}u_x^3(0,t)+3(m-1)u_-^{m-2}u_x(0,t)u_{xx}(0,t)+u_-^{m-1}u_{xxx}(0,t),
	\end{aligned}
\end{equation}
which, along with \eqref{decompose}, gives
\begin{equation}\nonumber
	\begin{aligned}
		\phi_{xxxx}(0,t)&= \frac{\mathrm{d}}{\mathrm{d}t}\phi_{xx}(0,t)+u_-^{1-m}f'(u_-)\phi_{xxx}(0,t)+\frac{ 3(m-1)}{u_-}U_{x}\phi_{xxx}(0,t)-U_{xxx}(-st-d(t))\\&\quad-\frac{3 (m-1)}{u_-}\phi_{xx}(0,t)\phi_{xxx}(0,t)+u_-^{1-m}\left\{U_{xx}\left[-s-d'+f'(u_-)\!-\!3(m-1)u_-^{m-2}(\phi_{xx}\right.\right.\\&\left.\left.\quad+U_x)\right]+f''(u_-)(\phi_{xx}+U_x)^2-(m-1)(m-2)u_-^{m-3}(\phi_{xx}+U_x)^3\right\}\big|_{x=0}.
	\end{aligned}	
\end{equation}
Thus, we have
\begin{align}	 &\frac{\phi_{xxx}\phi_{xxxx}(0,t)}{U^{1-m}(-st-d(t))}\nonumber\\
&=\frac{\phi_{xxx}}{U^{1-m}}\frac{\mathrm{d}\phi_{xx}}{\mathrm{d}t}\bigg|_{x=0}+\frac{u_-^{1-m}}{U^{1-m}}f'(u_-)\phi_{xxx}^2(0,t)+\frac{ 3(m-1)}{u_-}\frac{U_{x}}{U^{1-m}}\phi_{xxx}^2(0,t)\nonumber\\&\quad-U_{xxx}\frac{\phi_{xxx}(0,t)}{U^{1-m}}-\frac{3 (m-1)}{u_-}\frac{\phi_{xx}(0,t)}{U^{1-m}}\phi_{xxx}^2(0,t)+\frac{u_-^{1-m}}{U^{1-m}}\phi_{xxx}\left\{U_{xx}\right.\nonumber\\&\left.\quad \cdot\left[-s-d'+f'(u_-)-3(m-1)u_-^{m-2}(\phi_{xx}+U_x)\right]+ f''(u_-)(\phi_{xx}+U_x)^2\right.\nonumber\\
&\left.\quad-(m-1)(m-2)u_-^{m-3}(\phi_{xx}+U_x)^3\right\}\big|_{x=0}\nonumber\\&\equiv B_1+\cdots+B_6.	 \label{42}	
\end{align}
By \eqref{phixxx 0}, it holds that
\begin{align}
	B_1&=	\frac{1}{U^{1-m}}\frac{\mathrm{d}\phi_{xx}}{\mathrm{d}t}\left[ -U_{xx}+u_-^{1-m}f'(u_-)\left(U_{x}+\phi_{xx}\right)-\frac{ (m-1)}{u_-}\left(U_{x}+\phi_{xx}\right)^2\right]\bigg|_{x=0}\nonumber\\&
	=\frac{u_-^{1-m}}{U^{1-m}}
	\frac{f'(u_-) }{2}\frac{\mathrm{d}}{\mathrm{d}t}\phi_{xx}^2(0,t)+\frac{1}{U^{1-m}}\frac{\mathrm{d}}{\mathrm{d}t}\left[\phi_{xx}\left( u_-^{1-m}f'(u_-)U_{x} -U_{xx}\right.\right.\nonumber\\&\left.\left.\quad-\frac{ (m-1)}{u_-}\left(U_{x}+\phi_{xx}\right)^2\right)\right]\bigg|_{x=0}-\frac{\phi_{xx}}{U^{1-m}}\frac{\mathrm{d}}{\mathrm{d}t}\left[\left( u_-^{1-m}f'(u_-)U_{x} -U_{xx}\right.\right.\nonumber\\&\left.\left.\quad-\frac{ (m-1)}{u_-}\left(U_{x}+\phi_{xx}\right)^2\right)\right]\bigg|_{x=0}	,\label{43}
\end{align}
where, noting that $|\phi_{xx}(0,t)|\leq CN(T)$, one has
\begin{equation}\nonumber
	\begin{aligned}
		&\text{the third trem on RHS of \eqref{43}}\\&= -\frac{\phi_{xx}}{U^{1-m}}(-s-d')\left(u_-^{1-m}f'(u_-)U_{xx} -U_{xxx}-\frac{ 2(m-1)}{u_-}U_{xx}\left(U_{x}+\phi_{xx}\right)\right)\bigg|_{x=0}\\&\quad+\frac{ (m-1)}{u_-}\frac{U_{x} }{U^{1-m}}\frac{\mathrm{d}}{\mathrm{d}t}\phi_{xx}^2(0,t)+\frac{ (m-1)}{u_-}\frac{\phi_{xx}(0,t) }{U^{1-m}}\frac{\mathrm{d}}{\mathrm{d}t}\phi_{xx}^2(0,t)\\&\leq CN(T)\phi_{xx}^2(0,t)+\left(\frac{f'(u_-) }{8}+CN(T) \right)\frac{u_-^{1-m}}{U^{1-m}}\frac{\mathrm{d}}{\mathrm{d}t}\phi_{xx}^2(0,t)+C\left(U_x^2+U_{xx}^2+U_{xxx}^2 \right)\big|_{x=0}.	
	\end{aligned}
\end{equation}
Moreover, we utilize $|\phi_{xx}(0,t)|\leq CN(T)$ and the Cauchy-Schwarz inequality to get
\begin{equation}\nonumber
	\begin{aligned}
		B_3&=\frac{ 3(m-1)}{u_-}\frac{U_{x}}{U^{1-m}}\left[ -U_{xx}+u_-^{1-m}f'(u_-)\left(U_{x}+\phi_{xx}\right)-\frac{ (m-1)}{u_-}\left(U_{x}+\phi_{xx}\right)^2\right]^2\bigg|_{x=0}\\&
		\leq CN(T) \phi_{x x}^2(0,t)+C\left(U_x^2+U_{xx}^2 \right)\big|_{x=0},
	\end{aligned}
\end{equation}
and
\begin{align} \label{44}
	B_4+B_5+B_6 \leq & \left(\frac{1}{4}+CN(T) \right)\frac{u_-^{1-m}}{U^{1-m}}f'(u_-)\phi_{xxx}^2(0,t) \notag \\
&+CN(T) \phi_{x x}^2(0,t)+C\left(U_x^2\!+\!U_{xx}^2\!+\!U_{xxx}^2 \right)\big|_{x=0}.
\end{align}
Therefore, combining \eqref{43}-\eqref{44} and noting $U^{1-m}\leq u_-^{1-m}$, we show \eqref{42} by
\begin{equation}\nonumber
	\begin{aligned}
		\frac{\phi_{xxx}\phi_{xxxx}(0,t)}{U^{1-m}(-st-d(t))}&\geq\left(\frac{3}{4}\!-\!CN(T)\right)
		\frac{f'(u_-) }{2}\frac{\mathrm{d}}{\mathrm{d}t}\phi_{xx}^2(0,t)\!+\!\left( \frac{3}{4}-CN(T)\right)f'(u_-)\phi_{xxx}^2(0,t)\\&\quad+\!\frac{1}{U^{1-m}}\frac{\mathrm{d}}{\mathrm{d}t}\left[\phi_{xx}\left( u_-^{1-m}f'(u_-)U_{x} \!-\!U_{xx}\!-\!\frac{ (m\!-\!1)}{u_-}\left(U_{x}\!+\!\phi_{xx}\right)^2\!\right)\!\right]\!\bigg|_{x=0}	\\&\quad- CN(T)\phi_{x x}^2(0,t)-C\left(U_x^2+U_{xx}^2+U_{xxx}^2 \right)\big|_{x=0}	.
	\end{aligned}
\end{equation}
We complete the proof of \eqref{boundary phixxxx}.
\end{proof}

\subsubsection{Basic estimates}

We now give the following basic $L^2$ estimate.
\begin{lemma}\label{L2}
	Let the assumptions of Proposition \ref{proposition priori estimate} hold. If $N(T)+d_0^{-\frac{1}{2}}\ll1$, then there exists a constant $C>0$ independent of $T$ such that
		\begin{align} \label{L2 estimate}
		&\|\phi(\cdot,t)\|^2+\int_0^t\left\|\sqrt{|U_x|}\phi(\cdot,\tau)\right\|^2+\left\|\phi_x(\cdot,\tau)\right\|_{w_7}^2 \mathrm{d} \tau  \notag \\
&\leq C\left(\|\phi_0\|^2+d_0^{-1} \right)+C\int_{0}^{t}(d_0+\tau)^{3}|\phi_{xx}(0,\tau)|^2\mathrm{d} \tau,
	\end{align}	
where $w_7=U^{m-1}(x-st-d(t))$.
\end{lemma}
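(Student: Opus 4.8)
The plan is to establish \eqref{L2 estimate} from the basic energy identity obtained by multiplying the first equation of \eqref{d equ} by $\phi$ and integrating over $\mathbb{R_+}$; since the diffusion coefficient $1/U^{1-m}$ blows up as $x\to+\infty$, this should be carried out against a cut-off $\chi_R(x)=\chi(x/R)$ (with $\chi\equiv1$ on $[0,1]$, $\chi\equiv0$ on $[2,\infty)$) and then one lets $R\to\infty$, as in \cite{critical fast diffusion}. The left-hand side produces three nonnegative quantities: $\tfrac12\frac{d}{dt}\|\phi\|^2$ from $\phi_t\phi$; the term $-\tfrac12\int f''(U)U_x\phi^2=\tfrac12\int f''(U)|U_x|\phi^2$ coming from $f'(U)\phi_x\cdot\phi=\tfrac12 f'(U)(\phi^2)_x$ after integration by parts, where the boundary contribution at $x=0$ vanishes because $\phi(0,t)=0$, the integrand is nonnegative by convexity of $f$ together with $U_x<0$, and strict convexity of $f$ on $[0,u_-]$ makes it control $\|\sqrt{|U_x|}\phi\|^2$; and $\|\phi_x\|_{w_7}^2=\int\phi_x^2/U^{1-m}$ from $-\big(\phi_x/U^{1-m}\big)_x\cdot\phi$ after integration by parts, where $w_7=U^{m-1}$ — with the cut-off in place the boundary term at $x=+\infty$ is identically zero, while the commutator terms carrying $\chi_R'$ (supported in $[R,2R]$, $|\chi_R'|\le C/R$) tend to $0$ as $R\to\infty$ using $\phi\in L^2$, $\phi_x\in L^2_{w_7}$ and $\phi\in L^2_{w_1}$ (here $m>\tfrac12$ ensures $\langle x\rangle\phi^2$ is integrable near $+\infty$). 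After integrating in $t$, it remains to control the three source terms $\int_0^t\!\int d'(\tau)U\phi$, $\int_0^t\!\int F\phi$ and $\tfrac1m\int_0^t\!\int G_x\phi$.

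The two nonlinear source terms are routine. Using $|F|\le C\phi_x^2$ from \eqref{F1}, the bound $\|\phi/U^m\|_{L^\infty}\le CN(T)$ from \eqref{sobolev embedding3} and $U\le u_-$, one gets $\int|F\phi|\le CN(T)\int U^m\phi_x^2\le CN(T)\|\phi_x\|_{w_7}^2$. Integrating $G_x\phi$ by parts (the boundary terms vanish as above), using $|G|\le C\phi_x^2/U^{2-m}$ from \eqref{Gz} and $\|\phi_x/U\|_{L^\infty}\le CN(T)$ from \eqref{sobolev embedding4}, one gets $\tfrac1m\int|G\phi_x|\le CN(T)\int\phi_x^2/U^{1-m}=CN(T)\|\phi_x\|_{w_7}^2$. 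Both are absorbed by the left-hand side once $N(T)$ is small.

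The main obstacle is the shift term $\int_0^t\!\int d'(\tau)U(x-s\tau-d(\tau))\phi\,dx\,d\tau=\int_0^t d'(\tau)\big(\int_0^\infty U\phi\,dx\big)d\tau$, because $\int_0^\infty U(x-s\tau-d(\tau))^2\,dx$ grows linearly in $\tau$ (the profile is near $u_->0$ on $0<x<s\tau+d(\tau)$), so a direct Cauchy--Schwarz fails. The remedy is to split the $x$-integral at $x=s\tau+d(\tau)$. On $0<x<s\tau+d(\tau)$ one only has $U\le u_-$, hence $\big|\int_0^{s\tau+d(\tau)}U\phi\big|\le u_-(s\tau+d(\tau))^{1/2}\|\phi(\tau)\|\le C(d_0+\tau)^{1/2}\|\phi(\tau)\|$, using $|d(\tau)-d_0|\le C$ and $d_0\gg1$. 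On $x>s\tau+d(\tau)$ one exploits the algebraic decay $U(z)\sim z^{-1/(1-m)}$ as $z\to+\infty$ from \eqref{f''>0}: with $\|\phi\|_{L^\infty}\le CN(T)$ (a consequence of \eqref{sobolev embedding3} and $U\le u_-$) and Cauchy--Schwarz, $\big|\int_{s\tau+d(\tau)}^\infty U\phi\big|\le C\big(\int_{s\tau+d(\tau)}^\infty U^{2-m}\big)^{1/2}\big(N(T)^2\int_{s\tau+d(\tau)}^\infty U^{m}\big)^{1/2}\le CN(T)$, and it is exactly the convergence $\int_{s\tau+d(\tau)}^\infty U^{m}\,dx\le C$ that forces $m>\tfrac12$ (cf.\ the remark after Theorem~\ref{stability theorem}). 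Next insert $|d'(\tau)|\le C\big(e^{-\gamma(d_0+\tau)}+|\phi_{xx}(0,\tau)|\big)$ from \eqref{e1}: the part with $e^{-\gamma(d_0+\tau)}$ is controlled by $\int_0^\infty e^{-\gamma\tau}(1+\tau)^{1/2}d\tau<\infty$ and $e^{-\gamma d_0}d_0^{k}\le Cd_0^{-1}$, producing $\varepsilon\sup_{\tau\le t}\|\phi(\tau)\|^2+Cd_0^{-1}$; for the part with $|\phi_{xx}(0,\tau)|$ one applies Young's inequality on region $1$ in the form $|\phi_{xx}(0,\tau)|(d_0+\tau)^{1/2}\|\phi(\tau)\|\le \varepsilon\,\dfrac{\|\phi(\tau)\|^2}{(d_0+\tau)^2}+\dfrac{C}{\varepsilon}(d_0+\tau)^{3}|\phi_{xx}(0,\tau)|^2$, and a Cauchy--Schwarz in $\tau$ with the weight $(d_0+\tau)^{\pm3/2}$ on region $2$; since $\int_0^\infty(d_0+\tau)^{-2}d\tau=d_0^{-1}$ is small, all $\|\phi\|^2$-contributions are absorbed, and what remains on the right is precisely $C\int_0^t(d_0+\tau)^{3}|\phi_{xx}(0,\tau)|^2\,d\tau$, which is exactly the term appearing in \eqref{L2 estimate}.

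Collecting the estimates, taking the supremum over $t'\in[0,t]$ on the left and absorbing the small multiple of $\sup_{\tau\le t}\|\phi(\tau)\|^2$ (legitimate since $N(T)+d_0^{-1/2}\ll1$), one arrives at \eqref{L2 estimate}. I expect the two delicate points to be (i) the rigorous passage to the limit $R\to\infty$ in the cut-off, where the singular diffusion coefficient interacts with the decay of $\phi$ and $\phi_x$ at $+\infty$, and (ii) the bookkeeping of the time-weights in the shift term so that only the admissible quantities $\|\phi_0\|^2$, $d_0^{-1}$ and $\int_0^t(d_0+\tau)^3|\phi_{xx}(0,\tau)|^2\,d\tau$ remain on the right-hand side — and it is at exactly this point that the restriction $m>\tfrac12$ is used.
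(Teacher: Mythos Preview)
Your proof is correct and shares the same skeleton as the paper's: multiply the first equation in \eqref{d equ} by $\phi$, integrate, and recognise that the only substantial term is the shift contribution $\int_0^t d'(\tau)\int U\phi$, which one splits at $x=s\tau+d(\tau)$; the restriction $m>\tfrac12$ enters through the convergence of $\int_{s\tau+d(\tau)}^\infty U^m$.

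The difference lies in how the two pieces are absorbed. For the far field the paper writes $U\le C|U_x|^{1/(2-m)}$ and uses H\"older so that the result is swallowed by the good term $\tfrac12\int_0^t\!\int|U_x|\phi^2$ on the left; for the near field it exploits $\phi(0,\tau)=0$ to get the Hardy-type bound $\int_0^{s\tau+d(\tau)}|\phi|\le C(d_0+\tau)^{3/2}\|\phi_x(\tau)\|$, which is then absorbed into the other good term $\tfrac12\int_0^t\!\int\phi_x^2/U^{1-m}$, leaving $(d_0+\tau)^3|d'(\tau)|^2$ and hence $(d_0+\tau)^3|\phi_{xx}(0,\tau)|^2$ via \eqref{e1}. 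You instead bound the far field by a flat $CN(T)$ through $\|\phi\|_{L^\infty}\le CN(T)$, bound the near field by $(d_0+\tau)^{1/2}\|\phi(\tau)\|$ via plain Cauchy--Schwarz (without using $\phi(0,\tau)=0$), and close with a time-weighted Young inequality and a supremum-in-$t$ absorption exploiting $\int_0^\infty(d_0+\tau)^{-2}d\tau=d_0^{-1}\ll1$. Both routes land on the same right-hand side; the paper's version feeds directly into the dissipative structure and needs no sup-trick, which dovetails better with the subsequent weighted estimates (Lemmas~\ref{L2-1}--\ref{H3}), while yours is slightly more elementary in that it bypasses the Hardy step. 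Finally, the cut-off $\chi_R$ you introduce is a safe precaution but is not actually needed at this unweighted $L^2$ level: the flux at $x=+\infty$ in the divergence form \eqref{eq13} vanishes because $\phi/U^m,\ \phi_x/U\in L^\infty$ and $U\to0$; the paper reserves the cut-off argument for the third-order estimate (Lemma~\ref{H3}), where the weights are too singular for the boundary terms to vanish directly.
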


\begin{proof}
Multiplying \eqref{phiequ} by $\phi(x,t)$ yields
	\begin{align} \label{eq13}
		&\left(\frac{1}{2}\phi^2\right)_t+\left(\frac{1}{2} g^\prime(U)\phi^2+\frac{s}{2}\phi^2- \frac{\phi\phi_x}{U^{1-m}}-\frac{1}{m}G\phi\right)_x 
               -\frac{1}{2} g^{\prime\prime}(U)U_x\phi^2+ \frac{\phi_x^2}{U^{1-m}}\notag \\
		&=d'(t)U\phi+F\phi-\frac{1}{m} G\phi_x.
	\end{align}	
Noting that $\phi(0,t)=0$, integrating \eqref{eq13} over $(0,+\infty)\times(0,t)$, we get
	\begin{align} \label{eq14}
		&\int \phi^2+\int_{0}^{t}\int|U_x|\phi^2+\int_{0}^{t}\int\frac{\phi_x^2}{U^{1-m}} \notag \\
		&  \leq C\left(\int \phi_0^2+\int_{0}^{t}|d'(\tau)|\int U|\phi|+\int_{0}^{t}\int\left| F\phi\right|+\int_{0}^{t}\int\left| G \phi_x\right|\right).
	\end{align}
We decompose the second term on the right hand side of \eqref{eq14} as
	\begin{align} \label{eq19}
	 &\int_{0}^{t}|d'(\tau)|\int U(x-s\tau-d(\tau))|\phi(x,\tau)| \notag \\
&=\int_{0}^{t}|d'(\tau)|\left( \int_{0}^{s\tau+d(\tau)}+\int_{s\tau+d(\tau)}^{+\infty}\right)U(x-s\tau-d(\tau))|\phi(x,\tau)|.
 \end{align}
In the region $x>st+d(t)$, by \eqref{ODE}, one obtains
\begin{equation}\nonumber
	U_x= U^{2-m}\left(\frac{f(U)}{U}-s\right)\sim U^{2-m},
\end{equation}
which implies that
\begin{equation}\label{e3}
	|U|\leq C|U_x|^{\frac{1}{2-m}} \text{ for }x>st+d(t).
\end{equation}
Then, utilizing H\"{o}lder's inequality and the Cauchy-Schwarz inequality, we have
	 \begin{align} \label{eq15}
&\int_{0}^{t}|d'(\tau)|\int_{s\tau+d(\tau)}^{+\infty}U(x-s\tau-d(\tau))|\phi(x,\tau)| \notag \\
&\leq C\int_{0}^{t}|d'(\tau)|\int_{s\tau+d(\tau)}^{+\infty}|U_x|^{\frac{1}{2-m}}	 |\phi| \notag \\
&\leq C\int_{0}^{t}|d'(\tau)|\left(\int_{s\tau+d(\tau)}^{+\infty}|U_x|\phi^2 \right)^{\frac{1}{2}}\left(\int_{s\tau+d(\tau)}^{+\infty}|U_x|^{\frac{m}{2-m}}\right)^{\frac{1}{2}}	 \notag \\
&\leq\frac{1}{2}\int_{0}^{t}\int|U_x|\phi^2+C\int_{0}^{t}|d'(\tau)|^2\int_{s\tau+d(\tau)}^{+\infty}|U_x|^{\frac{m}{2-m}},
	 \end{align}
where, in view of \eqref{f''>0}, \eqref{q1} and the fact that $\frac{-m}{1-m}<-1$ due to $m>1/2$, it holds that
\begin{equation}\nonumber
\int_{s\tau+d(\tau)}^{+\infty}|U_x|^{\frac{m}{2-m}}\leq C \int_{s\tau+d(\tau)}^{+\infty}U^{m}\sim C \int_{s\tau+d(\tau)}^{+\infty}|x-s\tau-d(\tau)|^{\frac{-m}{1-m}}\leq C.
\end{equation}
This, along with \eqref{eq15} and \eqref{e1}, yields
	\begin{align} \label{eq17}
&\int_{0}^{t}|d'(\tau)|\int_{s\tau+d(\tau)}^{+\infty}U(x-s\tau-d(\tau))|\phi(x,\tau)| \notag \\
&\leq\frac{1}{2}\int_{0}^{t}\int|U_x|\phi^2+C\int_{0}^{t}|d'(\tau)|^2 \notag \\
&\leq\frac{1}{2}\int_{0}^{t}\int|U_x|\phi^2+C\left(\mathrm{e}^{-2\gamma d_0}+\int_{0}^{t}|\phi_{xx}(0,\tau)|^2\right).	
\end{align}
In the region of $x<st+d(t)$, we apply Fubini's theorem and the H\"{o}lder's inequality to obtain
	\begin{align} \label{eq16}
\int_{0}^{s\tau+d(\tau)}|\phi(x,\tau)|\mathrm{d}x	
&\leq\int_{0}^{s\tau+d(\tau)}\left(\int_{0}^{x}|\phi_y(y,\tau)|\mathrm{d}y \right) \mathrm{d}x \notag \\
&=\int_{0}^{s\tau+d(\tau)}\left(\int_{y}^{s\tau+d(\tau)}|\phi_y(y,\tau)|\mathrm{d}x \right) \mathrm{d}y \notag \\
&=\int_{0}^{s\tau+d(\tau)}|\phi_y(y,\tau)|\left(s\tau+d(\tau)-y\right) \mathrm{d}y
\notag \\
&\leq|s\tau+d(\tau)|^{\frac{3}{2}}\left(\int_{0}^{s\tau+d(\tau)}|\phi_y(y,\tau)|^2\mathrm{d}y\right)^{\frac{1}{2}}.
	\end{align}
Note that
\begin{equation}\label{3.58}\nonumber
	\int_{0}^{t}|\phi_{xx}(0,\tau)|\leq\left( \int_{0}^{t}\left(d_0+\tau\right)^{\beta+3}|\phi_{xx}(0,\tau)|^2\right)^{\frac{1}{2}}\left( \int_{0}^{t}\left(d_0+\tau\right)^{-(\beta+3)}\right)^{\frac{1}{2}}\leq CN(T),	
\end{equation}
then, by \eqref{e2} and $d_0^{-\frac{1}{2}}+N(T)\ll1$, we have
\begin{equation}\nonumber
|d(t)-d_0|\leq \int_{0}^{t}|d'(\tau)|\leq C\int_{0}^{t}\left( e^{-\gamma\left(d_0+\tau\right)}+|\phi_{xx}(0,\tau)|\right)\leq C\left( e^{-\gamma d_0}+N(T)\right)\ll1,	
\end{equation}
and
\begin{equation}\label{st+d(t)}
	|st+d(t)|\sim (d_0+t) \text{ for any } t\in[0,T].
\end{equation}
Combining \eqref{eq16} with \eqref{st+d(t)}, and utilizing \eqref{e2}, we get
	\begin{align} \label{eq18}
	&\int_{0}^{t}|d'(\tau)|\int_{0}^{s\tau+d(\tau)}U(x-s\tau-d(\tau))|\phi(x,\tau)| \notag \\
	&\leq C\int_{0}^{t}(d_0+\tau)^{\frac{3}{2}}|d'(\tau)|\left(\int_{0}^{s\tau+d(\tau)}\phi_x^2\right)^{\frac{1}{2}} \notag \\
&\leq\frac{1}{2u_-^{1-m}}\int_{0}^{t}\int\phi_x^2+C\int_{0}^{t}(d_0+\tau)^{3}|d'(\tau)|^2 \notag \\
&\leq\frac{1}{2}\int_{0}^{t}\int\frac{\phi_x^2 }{U^{1-m}}+C\left(\mathrm{e}^{-2\gamma d_0}+\int_{0}^{t}(d_0+\tau)^{3}|\phi_{xx}(0,\tau)|^2 \right),
\end{align}
where we have used the fact that $\int_{0}^{t}(d_0+\tau)^{3}\mathrm{e}^{-2\gamma\left(d_0+\tau\right)}\leq C\mathrm{e}^{-2\gamma d_0}$.
Therefore, by virtue of \eqref{eq17} and \eqref{eq18}, \eqref{eq19} can be further estimated as
	\begin{align} \label{eq20}
	&\int_{0}^{t}|d'(\tau)|\int U(x-s\tau-d(\tau))|\phi(x,\tau)| \notag \\
&\leq\frac{1}{2}\int_{0}^{t}\int\left( |U_x|\phi^2+\frac{\phi_x^2 }{U^{1-m}}\right)+C\left(\mathrm{e}^{-2\gamma d_0}+\int_{0}^{t}(d_0+\tau)^{3}|\phi_{xx}(0,\tau)|^2 \right).
		\end{align}
Furthermore, by \eqref{F1}, \eqref{Gz} and the fact that $\left\|\phi\left(\cdot,t\right)\right\|_{L^{\infty}}\leq CN(T)$ and $\left\|\phi_x(\cdot,t)/U\right\|_{L^{\infty}}\leq CN(T)$ from \eqref{sobolev embedding4}, one can see that
\begin{equation}
	\int_{0}^{t}\int\left| F\phi\right|\leq CN(T)\int_{0}^{t}\int \phi_x^2,
\end{equation}	
and
\begin{equation}\label{eq21}
	\int_0^t\int |G\phi_x|\leq C\int_0^t\int \frac{|\phi_x|^3}{U^{2-m}}\leq C N(T)\int_0^t\int \frac{\phi_x^2}{U^{1-m}}.
\end{equation}
Substituting \eqref{eq20}-\eqref{eq21} into \eqref{eq14}, it holds
\begin{equation}\nonumber
	\begin{aligned}
		&\int \phi^2+\left(\frac{1}{2}-CN(T)\right)\int_{0}^{t}\int|U_x|\phi^2+\left(\frac{1}{2}-CN(T)\right)\int_{0}^{t}\int\frac{\phi_x^2}{U^{1-m}}\\
		&  \leq C\int \phi_0^2+C\left(\mathrm{e}^{-2\gamma d_0}+\int_{0}^{t}(d_0+\tau)^{3}|\phi_{xx}(0,\tau)|^2 \right).
	\end{aligned}
\end{equation}
Taking $N(T)$ small enough such that $CN(T)\leq1/4$, we then get \eqref{L2 estimate}.
\end{proof}

\begin{lemma}\label{L2-1}
	Let the assumptions of Proposition \ref{proposition priori estimate} hold. If $N(T)+d_0^{-\frac{1}{2}}\ll1$, then there exists a constant $C>0$ independent of $T$ such that
	\begin{equation}\label{L2-1 estimate}
			\|\phi(\cdot,t)\|_{w_1}^2+\int_0^t\left\|\phi_x(\cdot,\tau)\right\|_{w_4}^2 \mathrm{d} \tau   \leq C\left(\|\phi_0\|_{w_{1,0}}^2+d_0^{-1} \right)+C\int_{0}^{t}(d_0+\tau)^{3}|\phi_{xx}(0,\tau)|^2\mathrm{d} \tau,
	\end{equation}	
where $w_1=U^{1-3m}$, $w_4=U^{-2m}$ with $U=U(x-st-d(t))$ and $w_{1,0}=U^{1-3m}(x-d_0)$.
\end{lemma}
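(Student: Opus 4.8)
The plan is to carry out a weighted $L^2$ estimate parallel to that of Lemma~\ref{L2}, this time multiplying the first equation of \eqref{d equ} by $w_1\phi=U^{1-3m}(x-st-d(t))\,\phi$ — equivalently, multiplying the identity \eqref{eq13} by $w_1$ — and integrating over $(0,+\infty)\times(0,t)$. Since $w_1$ is singular at $x=+\infty$ (here $1-3m<0$), I would first localize with a smooth cut-off $\chi_R$ equal to $1$ on $[0,R]$ and supported in $[0,2R]$, perform the integrations by parts, and let $R\to+\infty$ at the end, all the $\chi_R'$-remainders being integrable and vanishing in the limit. Because $\phi(0,t)=0$, every boundary term at $x=0$ drops out, and one is left with an identity of the form
\begin{equation}\nonumber
\frac{\mathrm d}{\mathrm dt}\,\tfrac12\|\phi\|_{w_1}^2+\|\phi_x\|_{w_4}^2+\int\Phi\,\phi^2=\int w_1 d'(t)U\phi+\int w_1 F\phi-\frac1m\int\big((w_1)_x\phi+w_1\phi_x\big)G,
\end{equation}
where the dissipation weight is exactly $w_4=w_1U^{m-1}=U^{-2m}$ and the zeroth-order coefficient is $\Phi=-\tfrac12(w_1)_t-\tfrac12\big(w_1f'(U)\big)_x-\tfrac{1-3m}{2}\big(U^{-2m-1}U_x\big)_x$.

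The shift term and the nonlinear terms are handled as in Lemma~\ref{L2}. For $\int_0^t\int w_1d'(t)U\phi=\int_0^td'(\tau)\int U^{2-3m}\phi$ I split $(0,+\infty)$ at $x=s\tau+d(\tau)$: on $\{x<s\tau+d(\tau)\}$ one has $U\sim u_-$ so $U^{2-3m}$ is bounded, and \eqref{eq16} with Cauchy--Schwarz gives a bound $\tfrac12\int_0^t\|\phi_x\|_{w_4}^2+C\int_0^t(d_0+\tau)^3|d'(\tau)|^2$, the first term being absorbed into the left-hand side (since $\int\phi_x^2\le u_-^{2m}\|\phi_x\|_{w_4}^2$) and the second controlled by \eqref{e2} together with $\mathrm e^{-2\gamma d_0}\le Cd_0^{-1}$; on $\{x>s\tau+d(\tau)\}$ one uses \eqref{sobolev embedding3}, $|\phi|\le CN(T)U^m$, so that $U^{2-3m}|\phi|\le CN(T)U^{2-2m}$ with $\int_{s\tau+d(\tau)}^{+\infty}U^{2-2m}\le C$ (here $U\sim|x-s\tau-d(\tau)|^{-1/(1-m)}$ makes $U^{2-2m}$ behave like $|x-s\tau-d(\tau)|^{-2}$), which yields $\le CN(T)\int_0^t|d'(\tau)|\le Cd_0^{-1}+\tfrac12\int_0^t(d_0+\tau)^3|\phi_{xx}(0,\tau)|^2$ by \eqref{e1}, Cauchy--Schwarz and $d_0$ large. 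The $F$- and $G$-terms are estimated via Lemma~\ref{F G} and the bounds \eqref{sobolev embedding3}--\eqref{sobolev embedding4}: one finds $|w_1\phi F|\le CN(T)u_-\,w_4\phi_x^2$ and $|((w_1)_x\phi+w_1\phi_x)G|\le CN(T)\,w_4\phi_x^2$ (using $|G|\le C\phi_x^2U^{m-2}$, $|U_x|\le CU^{2-m}$, $|\phi|\le CN(T)U^m$, $|\phi_x|\le CN(T)U$), all absorbable into $\|\phi_x\|_{w_4}^2$ once $N(T)$ is small.

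The genuine difficulty is the zeroth-order term $\int\Phi\,\phi^2$, because $\Phi$ is itself singular — each of its three pieces is of size $O(U^{-3m}|U_x|)=O(U^{2-4m})$, far worse than the $O(|U_x|)=O(U^{2-m})$ coefficient in Lemma~\ref{L2}. Setting $\xi:=x-st-d(t)$: on the region $\{\xi\le\xi_0\}$ where $U$ is bounded away from $0$ one simply has $|\Phi|\le C|U_x|$, so $\int_0^t\int_{\xi\le\xi_0}|\Phi|\phi^2\le C\int_0^t\int|U_x|\phi^2$ is already controlled by \eqref{L2 estimate}; on $\{\xi\ge\xi_0\}$ a computation of the leading term gives, as $\xi\to+\infty$, $\Phi\sim\tfrac{(3m-1)(2-3m)}{2}(f'(0)-s)^2U^{2-4m}$, which is nonnegative for $\tfrac12<m\le\tfrac23$ (so the term is favourable and may be kept or discarded) but negative for $\tfrac23<m<1$, in which case $\int_0^t\int_{\xi\ge\xi_0}|\Phi|\phi^2\le C\int_0^t\int_{\xi\ge\xi_0}U^{2-4m}\phi^2$ must be absorbed into $\int_0^t\|\phi_x\|_{w_4}^2=\int_0^t\int U^{-2m}\phi_x^2$. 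This absorption is the heart of the argument and is achieved by a Hardy-type inequality obtained via the ground-state substitution $\phi=U^{\gamma}\eta$ with $0<\gamma<3m-1$: since $\phi(0,t)=0$ and $|\phi|\le CN(T)U^m$ make the boundary terms $\gamma U^{-2m-1}U_x\phi^2$ vanish at $x=0$ and $x=+\infty$, one obtains, after discarding a remainder supported where $U$ is bounded away from $0$ (again handled by \eqref{L2 estimate}), the estimate $\int_0^{+\infty}U^{-2m}\phi_x^2\ge c\int_{\xi\ge\xi_0}U^{2-4m}\phi^2$ with optimal constant $c=\tfrac{(3m-1)^2}{4}(f'(0)-s)^2$, attained at $\gamma=\tfrac{3m-1}{2}$. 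Since $\tfrac{(3m-1)(3m-2)}{2}<\tfrac{(3m-1)^2}{4}$ precisely when $m<1$, the negative zeroth-order contribution is strictly dominated by the dissipation and can be absorbed with room to spare — this is exactly where the upper restriction $m<1$ enters this step, while $m>\tfrac12$ is used, just as in Lemma~\ref{L2}, for the integrability of the shift term. Collecting all the estimates, taking $N(T)+d_0^{-1/2}$ small, and integrating in $t$ then yields \eqref{L2-1 estimate}.
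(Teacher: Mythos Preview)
Your proposal is correct and follows essentially the same weighted energy approach as the paper: multiply \eqref{d equ} by $U^{1-3m}\phi$, integrate, handle the shift term $d'(t)U$ by the split at $x=s\tau+d(\tau)$, and absorb $F,G$ via \eqref{sobolev embedding3}--\eqref{sobolev embedding4} and Lemma~\ref{F G}. The only substantive difference lies in how the singular zeroth-order contribution is shown to be dominated by the dissipation $\int U^{-2m}\phi_x^2$. The paper does \emph{not} integrate the cross term $(3m-1)\tfrac{U_x}{U^{2m+1}}\phi\phi_x$ by parts a second time; instead it applies Young's inequality with a parameter $\eta$ to this cross term, sacrifices the fraction $\eta$ of the dissipation, and collects the remaining $\phi^2$ pieces into a single coefficient $\mathbb{A}_\eta=f''(U)-(3m-1)\tfrac{g'(U)}{U}+\tfrac{(3m-1)^2}{2\eta}\tfrac{U_x}{U^{3-m}}$, which is shown to be positive on $\{\xi>0\}$ by choosing $\eta=m$ (the constraint $\eta>(3m-1)/2$ needed for positivity, together with $\eta<1$ to retain dissipation, is exactly $m<1$). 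Your route --- full integration by parts followed by the Hardy inequality $\int U^{-2m}\phi_x^2\ge\tfrac{(3m-1)^2}{4}(f'(0)-s)^2\int U^{2-4m}\phi^2$ obtained via the substitution $\phi=U^\gamma\eta$ --- is the ``sharp-constant'' version of the same mechanism and yields the identical constraint $m<1$. Your treatment of the shift term on $\{\xi>0\}$ via the $L^\infty$ bound $|\phi|\le CN(T)U^m$ is slightly simpler than the paper's H\"older/Cauchy--Schwarz splitting (which produces an extra good term $\tfrac{|U_x|}{U^{3m}}\phi^2$ that you do not need), but both are valid. In short: same scheme, the coercivity step is packaged as Young-with-parameter in the paper versus Hardy in your sketch.
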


\begin{proof}
Multiplying \eqref{phiequ} by $\frac{\phi}{U^{3m-1}}$ and noting that
\begin{equation}\nonumber
\frac{\phi\phi_t}{U^{3m-1}}=\frac{1}{2}\left(\frac{\phi^2}{U^{3m-1}} \right)_t+\frac{3m-1}{2}\frac{U_x}{U^{3m}}(-s-d'(t))\phi^2,
\end{equation}
and
\begin{equation}\nonumber
	\bigg|(3m-1)\frac{U_x}{U^{2m+1}}\phi\phi_x\bigg|\leq \eta \frac{\phi_x^2}{U^{2m}}+\frac{(3m-1)^2}{4\eta}\frac{U_x^2}{U^{2m+2}}\phi^2,	
\end{equation}
we obtain
	\begin{align} \label{eq23}
		&\frac{1}{2}\int\frac{\phi^2}{U^{3m-1}} +\left(1-\eta\right)\int_{0}^{t}\int\frac{\phi_x^2}{U^{2m}}-\frac{1}{2}\int_{0}^{t}\int\mathbb{A_\eta} \frac{U_x }{U^{3m-1}}\phi^2  \notag\\
& \ \ \ -\frac{3m-1}{2}\int_{0}^{t}d'(\tau)\int\frac{U_x}{U^{3m}}\phi^2	\notag \\
&\leq\frac{1}{2}\int\frac{\phi_0^2}{U^{3m-1}} +\int_{0}^{t}\!d'(\tau)\int \frac{\phi}{U^{3m-2}}+\int_{0}^{t}\int \frac{F\phi}{U^{3m-1}} \notag \\
& \ \ \ +\frac{3m-1}{m}\int_{0}^{t}\int\frac{U_x}{U^{3m}} G \phi-\frac{1}{m}\int_{0}^{t}\int \frac{G\phi_x}{U^{3m-1}},
	\end{align}
where $\eta\in(0,1)$ is a constant to be determined later and
\begin{equation}\nonumber
	\mathbb{A_\eta}\triangleq f''(U)-(3m-1)\frac{g'(U)}{U} +\frac{(3m-1)^2}{2\eta}\frac{U_x}{U^{3-m}}.
\end{equation}
In the region of $x-st-d(t)<0$, since $U(x-st-d(t))>U(0)>0$, we get from \eqref{3.28} that
\begin{equation}\label{eq24}\nonumber
\bigg|	 \frac{\mathbb{A_\eta}}{2}\frac{U_x }{U^{3m-1}}\phi^2\bigg|+\frac{3m-1}{2}|d'(t)|\frac{|U_x|}{U^{3m}}\phi^2\leq C|U_x|\phi^2.
\end{equation}
In the region of $x-st-d(t)>0$, we claim that $\mathbb{A_\eta}>0$. In fact, we are primarily concerned with the case as $z\rightarrow+\infty$ due to singularity. Since \eqref{ODE} and \eqref{degenerate lax's}, when $x\rightarrow+\infty$,
	\begin{align} \label{eq25}
	\mathbb{A_\eta}&\sim f''(0)-(3m-1)\frac{f'(0)-s}{U} +\frac{(3m-1)^2}{2\eta}\frac{f'(0)-s}{U} \notag \\
&\sim f''(0)-(3m-1)\frac{f'(0)-s}{U}\left[1-\frac{(3m-1)}{2\eta}\right].	
\end{align}
We thus take $\eta=m$, then $1-\frac{(3m-1)}{2\eta}=1-\frac{(3m-1)}{2m}=\frac{1}{2}\left(\frac{1}{m}-1\right)>0$. It then follows from $m>1/3$, $f''>0$, $f'(0)-s<0$, $|d'(t)|\ll1$ and \eqref{3.28} that
\begin{equation}\nonumber
	 \begin{aligned}
\frac{\mathbb{A_\eta}}{2}\frac{|U_x| }{U^{3m-1}}\phi^2+\frac{3m-1}{2}d'(t)\frac{|U_x|}{U^{3m}}\phi^2\geq 	\frac{C_0}{2}\frac{|U_x| }{U^{3m}}\phi^2 ,	
	 \end{aligned}
\end{equation}
where $C_0$ is a positive constant.
Next we estimate the terms on the right hand side (RHS) of \eqref{eq23}. The second term on the RHS of \eqref{eq23} is
\begin{equation}\nonumber
	\begin{aligned}
		& \int_{0}^{t}|d'(\tau)|\int \frac{|\phi(x,\tau)|}{U^{3m-2}(x-s\tau-d(\tau))}\\&=\int_{0}^{t}|d'(\tau)|\left( \int_{0}^{s\tau+d(\tau)}+\int_{s\tau+d(\tau)}^{+\infty}\right)\frac{|\phi(x,\tau)|}{U^{3m-2}(x-s\tau-d(\tau))}.
	\end{aligned}
\end{equation}
In the region of $x-st-d(t)>0$, utilizing \eqref{e3}, H\"{o}lder's inequality and the Cauchy-Schwarz inequality, we get
	\begin{align} \label{eq28}
		&\int_{0}^{t}|d'(\tau)|\int_{s\tau+d(\tau)}^{+\infty}\frac{|\phi(x,\tau)|}{U^{3m-2}(x-s\tau-d(\tau))} \notag \\
&\leq C\int_{0}^{t}|d'(\tau)|\int_{s\tau+d(\tau)}^{+\infty}\frac{ |U_x|^{\frac{2}{2-m}}}{U^{3m}}|\phi| \notag \\
&\leq C\int_{0}^{t}|d'(\tau)|\left(\int_{s\tau+d(\tau)}^{+\infty}\frac{|U_x|}{U^{3m}}\phi^2 \right)^{\frac{1}{2}}\left(\int_{s\tau+d(\tau)}^{+\infty}\frac{|U_x|^{\frac{2+m}{2-m}} }{U^{3m} }\right)^{\frac{1}{2}}	\notag \\
&\leq\frac{C_0}{4}\int_{0}^{t}\int\frac{|U_x|}{U^{3m}}\phi^2+C\int_{0}^{t}|d'(\tau)|^2,
	\end{align}
where we have used
\begin{equation}\nonumber
	\int_{s\tau+d(\tau)}^{+\infty}\frac{|U_x|^{\frac{2+m}{2-m}} }{U^{3m} }\leq C \int_{s\tau+d(\tau)}^{+\infty}U^{2-2m}\sim C \int_{s\tau+d(\tau)}^{+\infty}|x-s\tau-d(\tau)|^{-2}\leq C,
\end{equation}
due to $|U_x|\leq CU^{2-m}$ and \eqref{f''>0}. In the region of $x-st-d(t)<0$, since $0<U(0)<U(x-st-d(t))<u_-$, by virtue of \eqref{eq18}, one obtains
	\begin{align}
		&\int_{0}^{t}|d'(\tau)|\int_{0}^{s\tau+d(\tau)}\frac{|\phi(x,\tau)|}{U^{3m-2}(x-s\tau-d(\tau))}\nonumber\\&\leq C\int_{0}^{t}(d_0+\tau)^{\frac{3}{2}}|d'(\tau)|\left(\int_{0}^{s\tau+d(\tau)}\phi_x^2\right)^{\frac{1}{2}}\nonumber\\&\leq\frac{1-m}{4u_-^{2m}}\int_{0}^{t}\int\phi_x^2+C\left(\mathrm{e}^{-2\gamma d_0}+\int_{0}^{t}(d_0+\tau)^{3}|\phi_{xx}(0,\tau)|^2 \right)\nonumber\\&\leq\frac{1-m}{4}\int_{0}^{t}\int\frac{\phi_x^2 }{U^{2m}}+C\left(\mathrm{e}^{-2\gamma d_0}+\int_{0}^{t}(d_0+\tau)^{3}|\phi_{xx}(0,\tau)|^2 \right).\label{eq29}
	\end{align}
Therefore, combining \eqref{eq28} and \eqref{eq29}, noting \eqref{e2}, we arrive at
	\begin{align} \label{eq30}
&\int_{0}^{t}|d'(\tau)|\int \frac{|\phi(x,\tau)|}{U^{3m-2}(x-s\tau-d(\tau))} \notag \\
&\leq\frac{1}{4}\int_{0}^{t}\int\left(C_0\frac{|U_x|}{U^{3m} } \phi^2+(1-m)\frac{\phi_x^2 }{U^{2m}}\right)+C\left(\mathrm{e}^{-2\gamma d_0}+\int_{0}^{t}(d_0+\tau)^{3}|\phi_{xx}(0,\tau)|^2 \right).
\end{align}
Furthermore, by \eqref{sobolev embedding4}, \eqref{F1} and the fact that $\left\|\phi\left(\cdot,t\right)\right\|_{L^{\infty}}\leq C \left\|\phi/U^{m}\left(\cdot,t\right)\right\|_{L^{\infty}}\leq CN(T)$, one can see that
\begin{equation}\nonumber
	\int_{0}^{t}\int \big|F\frac{\phi}{U^{3m-1}}\big|\leq CN(T)\int_{0}^{t}\int \frac{\phi_x^2}{U^{3m-1}},
\end{equation}	
\begin{equation}\nonumber
	 \int_{0}^{t}\int\big|\frac{U_x}{U^{3m}} G \phi\big|\leq C\int_{0}^{t}\int\frac{ |\phi|\phi_x^2}{U^{3m}}\leq C N(T)\int_0^t\int \frac{\phi_x^2}{U^{2m}},
\end{equation}
and
\begin{equation}\label{eq31}
\int_{0}^{t}\int \big|G\frac{\phi_x}{U^{3m-1}}\big|\leq C\int_0^t\int \frac{|\phi_x|^3}{U^{2m+1}}\leq C N(T)\int_0^t\int \frac{\phi_x^2}{U^{2m}}.
\end{equation}
Noting that $\frac{1}{U^{3m-1}}\leq \frac{1}{U^{2m}}$ due to $m<1$, substituting \eqref{eq30}-\eqref{eq31} into \eqref{eq23} and adding \eqref{L2 estimate}, we have
\begin{equation}\nonumber
	\begin{aligned}
		&\int\frac{\phi^2}{U^{3m-1}}+\frac{C_0}{4}\int_{0}^{t}\int\frac{|U_x|}{U^{3m}}\phi^2+(1-m)\left(\frac{3}{4}-CN(T)\right)\int_{0}^{t}\int\frac{\phi_x^2}{U^{2m}}	 \\&\leq\int\frac{\phi_0^2}{U^{3m-1}} +C\left(\mathrm{e}^{-2\gamma d_0}+\int_{0}^{t}(d_0+\tau)^{3}|\phi_{xx}(0,\tau)|^2 \right).
	\end{aligned}
\end{equation}
We thus obtain \eqref{L2-1 estimate}
provided that $ CN(T)\leq 1/4$.
\end{proof}
	
\begin{lemma}\label{L2-2}
	Let the assumptions of Proposition \ref{proposition priori estimate} hold. Assume that $0<\beta \leq \frac{3m-1}{1-m}$, if $N(T)+d_0^{-\frac{1}{2}}\ll1$, then there exists a constant $C>0$ independent of $T$ such that
		\begin{align} \label{L2-2 estimate}
		&\|\phi(\cdot,t)\|_{\langle x-st-d(t)\rangle^\beta}^2+\int_0^t\left(\|\phi(\cdot,t)\|_{\langle x-st-d(t)\rangle^{\beta-1}}^2+ \left\|\phi_x(\cdot,\tau)/U^{\frac{1-m}{2}}\right\|_{\langle x-st-d(t)\rangle^\beta}^2 \right)\mathrm{d} \tau \notag \\
&  \leq C\left(\|\phi_0\|_{\langle x-d_0\rangle^\beta}^2+\|\phi_0\|_{w_{1,0}}^2+d_0^{-1} \right) \notag \\
&\ \ \ +C\int_{0}^{t}\left((d_0+\tau)^{\beta+\frac{5}{2}}+(d_0+\tau)^3 \right)|\phi_{xx}(0,\tau)|^2\mathrm{d} \tau,
	\end{align}
where  $w_{1,0}=U^{1-3m}(x-d_0)$.	
\end{lemma}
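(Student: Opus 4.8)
The plan is to carry out, with the algebraic weight $\langle\xi\rangle^{\beta}$ ($\xi:=x-st-d(t)$), the analogue of the weighted $L^{2}$ arguments of Lemmas~\ref{L2} and~\ref{L2-1}: multiply the first equation of \eqref{d equ} by $\langle\xi\rangle^{\beta}\phi$ and integrate over $\mathbb{R_+}\times(0,t)$. Since the diffusion coefficient $U^{m-1}$ grows like $|\xi|$ as $\xi\to+\infty$, some integrations by parts create boundary terms at $x=+\infty$ that are not manifestly harmless; to handle this I first multiply by $\chi_{R}\langle\xi\rangle^{\beta}\phi$, with $\chi_{R}\equiv1$ on $[0,R]$ and $\mathrm{supp}\,\chi_{R}\subset[0,2R]$, perform all computations, and let $R\to\infty$ at the end, the $\chi_{R}'$-error terms vanishing because $(\phi,d)\in X(0,T)$. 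All boundary terms at $x=0$ carry the factor $\phi(0,t)=0$. The resulting identity is, schematically,
\begin{equation}\label{plan-identity}
\begin{aligned}
&\frac{1}{2}\frac{\mathrm{d}}{\mathrm{d}t}\int\langle\xi\rangle^{\beta}\phi^{2}+\int\langle\xi\rangle^{\beta}\frac{\phi_{x}^{2}}{U^{1-m}}+\frac{\beta}{2}\int\langle\xi\rangle^{\beta-2}\xi\,\bigl(s+d'(t)-f'(U)\bigr)\phi^{2}\\
&\qquad+\frac{1}{2}\int\langle\xi\rangle^{\beta}f''(U)|U_{x}|\phi^{2}+\beta\int\langle\xi\rangle^{\beta-2}\xi\,\frac{\phi\phi_{x}}{U^{1-m}}=\int\langle\xi\rangle^{\beta}\phi\,\Bigl(d'(t)U+F+\tfrac1m G_{x}\Bigr).
\end{aligned}
\end{equation}

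The term $\tfrac12\int\langle\xi\rangle^{\beta}f''(U)|U_{x}|\phi^{2}$ is nonnegative since $f$ is convex and $U_{z}<0$. The coefficient $s+d'(t)-f'(U)$ tends to $s-f'(0)>0$ as $\xi\to+\infty$ and to $s-f'(u_-)<0$ as $\xi\to-\infty$, and in each limit $\langle\xi\rangle^{\beta-2}\xi$ has the matching sign, so this term yields a good norm $\|\phi\|_{\langle\xi\rangle^{\beta-1}}^{2}$ outside a bounded $\xi$-window, while inside that window it is $\le C\int|U_x|\phi^2$ (recovered by adding a large multiple of \eqref{L2 estimate}) and the $d'(t)$-piece is $\le C|d'(t)|\,\|\phi\|_{\langle\xi\rangle^{\beta-1}}^{2}$, absorbable since $|d'(t)|\ll1$ by \eqref{3.28}. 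The cross term is handled by Young's inequality: $|\beta\langle\xi\rangle^{\beta-2}\xi\tfrac{\phi\phi_{x}}{U^{1-m}}|\le \tfrac\epsilon2\langle\xi\rangle^{\beta}\tfrac{\phi_{x}^{2}}{U^{1-m}}+\tfrac{\beta^{2}}{2\epsilon}\langle\xi\rangle^{\beta-4}\xi^{2}\tfrac{\phi^{2}}{U^{1-m}}$; the first piece is absorbed by the diffusion term once $\epsilon<2$, and since $U^{m-1}\sim(1-m)(s-f'(0))\,\xi$ as $\xi\to+\infty$, the second piece is, to leading order in $\langle\xi\rangle$, $\tfrac{\beta(1-m)}{\epsilon}$ times the good term $\tfrac\beta2(s-f'(0))\|\phi\|_{\langle\xi\rangle^{\beta-1}}^{2}$ (the factor $s-f'(0)$ cancels). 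Hence the cross term is absorbable (its finitely many lower-order corrections being swallowed by the diffusion term and a multiple of \eqref{L2 estimate}) exactly when one may pick $\beta(1-m)<\epsilon<2$, i.e.~when $\beta(1-m)<2$; this is precisely where the hypothesis $\beta\le\frac{3m-1}{1-m}$ enters, since then $\beta(1-m)\le 3m-1<2$ thanks to $m<1$.

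On the right-hand side of \eqref{plan-identity}, the shift term $\int\langle\xi\rangle^{\beta}\phi\,d'(t)U$ is split at $\xi=0$. On $\{\xi>0\}$ I pair $\langle\xi\rangle^{\beta}U|\phi|$ by Cauchy--Schwarz with $\|\phi\|_{\langle\xi\rangle^{\beta-1}}$, using $\int_{\{\xi>0\}}\langle\xi\rangle^{\beta+1}U^{2}<\infty$, which holds because $U\sim|\xi|^{-1/(1-m)}$ and $\beta+1-\tfrac{2}{1-m}<-1$, again a consequence of $\beta\le\frac{3m-1}{1-m}$ and $m<1$. On $\{\xi<0\}$ I pair with $\|\phi\|_{\langle\xi\rangle^{\beta-1}}$ as well, now using $\int_{0}^{s\tau+d(\tau)}\langle\xi\rangle^{\beta+1}U^{2}\le C(d_{0}+\tau)^{\beta+2}$ ($U$ bounded, weight $\le C(d_0+\tau)^{\beta+1}$, interval of length $\sim d_0+\tau$); this produces $\int_{0}^{t}(d_{0}+\tau)^{\beta+2}|d'(\tau)|^{2}$, which by \eqref{e1} and $(d_{0}+\tau)^{\beta+2}\le(d_{0}+\tau)^{\beta+5/2}$ is dominated by the right-hand side of \eqref{L2-2 estimate}. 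The nonlinear terms follow Lemmas~\ref{L2} and~\ref{L2-1}: after one integration by parts moving the derivative off $G_{x}$, the bounds \eqref{F1}, \eqref{Gz} of Lemma~\ref{F G} together with $\|\phi\|_{L^{\infty}},\|\phi_{x}/U\|_{L^{\infty}}\le CN(T)$ from \eqref{sobolev embedding3}--\eqref{sobolev embedding4} give $\int_{0}^{t}\!\int\langle\xi\rangle^{\beta}\bigl(|F\phi|+|G_{x}\phi|\bigr)\le CN(T)\bigl(\|\phi\|_{\langle\xi\rangle^{\beta-1}}^{2}+\int_{0}^{t}\!\int\langle\xi\rangle^{\beta}\tfrac{\phi_{x}^{2}}{U^{1-m}}\bigr)$ (using $\langle\xi\rangle U^{1-m}\le C$ to compare weights), which is absorbed for $N(T)$ small. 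Collecting everything, choosing $\epsilon$ and the multiples of \eqref{L2 estimate}, \eqref{L2-1 estimate} appropriately, and using $N(T)+d_{0}^{-1/2}\ll1$, gives \eqref{L2-2 estimate}.

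I expect the main obstacle to be the coupling, as $\xi\to+\infty$, of the singular diffusion weight $U^{m-1}\sim|\xi|$ with the algebraic weight $\langle\xi\rangle^{\beta}$: it pins the Young-split leftover of the cross term at the critical order $\langle\xi\rangle^{\beta-1}\phi^{2}$, so closing the estimate is not automatic but hinges on the sharp relation $\beta(1-m)<2$ built into $\beta\le\frac{3m-1}{1-m}$. A secondary difficulty is the time-weight bookkeeping: on $\{\xi<0\}$ the weight grows like $(d_{0}+t)^{\beta}$, so the contribution $d'(t)U$ has to be traded for the boundary quantity $\phi_{xx}(0,t)$ with precisely the right power of $(d_{0}+\tau)$, which is where the term $(d_{0}+\tau)^{\beta+3}|\phi_{xx}(0,\tau)|^{2}$ in $N(T)$, via Lemma~\ref{lemma d(t)}, earns its place; this goes hand in hand with the cut-off argument needed to discard the $x=+\infty$ boundary terms where $\langle\xi\rangle^{\beta}U^{m-1}$ is unbounded.
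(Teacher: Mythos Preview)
Your plan is correct and follows the same energy-estimate template as the paper, but with two tactical differences worth flagging. First, the paper centers the algebraic weight at $\xi_\star$ (the unique point where $f'(U(\xi_\star))=s$) rather than at $0$; with $\langle\xi-\xi_\star\rangle^\beta$ the convection contribution becomes $\tfrac12\langle\xi-\xi_\star\rangle^{\beta-1}\mathbb{A}_\beta\phi^2$ with $\mathbb{A}_\beta:=-\langle\xi-\xi_\star\rangle f''(U)U_\xi-\beta\tfrac{\xi-\xi_\star}{\langle\xi-\xi_\star\rangle}(f'(U)-s)$, and the paper shows $\mathbb{A}_\beta\ge\bar c\beta$ for \emph{all} $\xi$, so no ``bad window'' has to be patched by a multiple of \eqref{L2 estimate}. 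Second, and more substantively, the paper does not absorb the cross term by your asymptotic coefficient comparison on $\phi^2$. Instead it Cauchy--Schwarzes so that the leftover lands on $\phi_x$, namely $C\int\tfrac{\langle\xi-\xi_\star\rangle^{\beta-1}}{U^{2-2m}}\phi_x^2$, and then uses the weight comparison $\langle\xi-\xi_\star\rangle^{\beta-1}\le CU^{2-4m}$ for large $\xi$ --- this is \emph{exactly} the hypothesis $\beta\le\tfrac{3m-1}{1-m}$ --- to bound the remainder by $\int\tfrac{\phi_x^2}{U^{2m}}$, already controlled by Lemma~\ref{L2-1}; this is why $\|\phi_0\|_{w_{1,0}}^2$ shows up on the right of \eqref{L2-2 estimate}. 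Your route is slightly more self-contained (at this step it only needs the weaker $\beta(1-m)<2$), while the paper's makes the dependence on the singular-weight estimate explicit and uses the $\beta$-constraint sharply. Finally, for the shift term on $\{\xi<0\}$ the paper uses a $\sup$-norm argument that produces the power $(d_0+\tau)^{\beta+5/2}$ in the statement; your direct $L^2$ pairing would give $(d_0+\tau)^{\beta+2}$, which of course also fits under the stated bound.
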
	
	
\begin{proof}
Taking $\xi=x-st-d(t)$ and $\xi_\star$ such that $f'(U(\xi_\star))=s$, multiplying \eqref{eq13} by $\langle\xi-\xi_\star\rangle^\beta$ with $0<\beta\leq \frac{3m-1}{1-m}$ and integrating the result over $(0,+\infty)\times(0,t)$, using $\phi(0,t)=0$, we get	
	\begin{align}\label{5}
		&\frac{1}{2}\int\langle\xi-\xi_\star\rangle^\beta\phi^2+\frac{1}{2}\int_{0}^{t}\int\langle\xi-\xi_\star\rangle^{\beta-1}\mathbb{A_\beta}\phi^2+\int_{0}^{t}\int \frac{\langle\xi-\xi_\star\rangle^\beta }{U^{1-m}}\phi_x^2 \notag \\
&= \frac{1}{2}\int\langle\xi_0-\xi_\star\rangle^\beta\phi_0^2 - \frac{\beta}{2}\int_{0}^{t}d'(\tau)\int\langle\xi-\xi_\star\rangle^{\beta-2} (\xi-\xi_\star)\phi ^2 \notag \\
& \ \ \ -\beta\int_{0}^{t}\int\frac{\langle\xi-\xi_\star\rangle^{\beta-2} }{U^{1-m}}\left( \xi-\xi_\star\right)\phi\phi_x \notag \\
&\quad+\int_{0}^{t}d'(\tau)\int\langle\xi-\xi_\star\rangle^\beta U\phi +\int_{0}^{t}\int\langle\xi-\xi_\star\rangle^{\beta} F\phi-\frac{1}{m}\int_{0}^{t}\int\langle\xi-\xi_\star\rangle^{\beta} G\phi_x \notag \\
&\quad-\frac{\beta}{m}\int_{0}^{t}\int\langle\xi-\xi_\star\rangle^{\beta-2}(\xi-\xi_\star) G\phi,
	\end{align}
where $\xi_0\triangleq x-d_0$ and
\begin{equation}\nonumber
	\mathbb{A}_\beta\triangleq- \langle\xi-\xi_\star\rangle g^{\prime\prime}(U)U_x-\beta\frac{\xi-\xi_\star }{\langle\xi-\xi_\star\rangle }g'(U).
\end{equation}
Since $g''=f''>0$, $U_x<0$ and $f'(U(\xi_\star))=s$, it follows that for any $\delta>0$, when $|\xi-\xi_\star|\geq\delta$,
\begin{equation}\nonumber
\mathbb{A}_\beta\geq-\beta\frac{\xi-\xi_\star }{\langle\xi-\xi_\star\rangle }g'(U)=-\beta\frac{\xi-\xi_\star }{\langle\xi-\xi_\star\rangle } (f'(U)-f'(U(\xi_\star)))\geq c_0(\delta)\beta.
\end{equation}
Thanks to $f\in C^2$ and $0<U<u_-$, there exists a constant $\delta_\star>0$ such that for $|\xi-\xi_\star|\leq \delta_\star$,
\begin{equation}\nonumber
	\mathbb{A}_\beta\geq- \langle\xi-\xi_\star\rangle g^{\prime\prime}(U)U_x=-\langle\xi-\xi_\star\rangle g^{\prime\prime}(U)U^{1-m}g(U)\geq c_1 .
\end{equation}
We take $\bar{ c}=\min\{c_0(\delta_\star),\frac{c_1}{\beta}\}$, then for any $\xi\in \mathbb{R_+}$, it has
\begin{equation}\label{3}
	\mathbb{A}_\beta\geq \bar{ c}\beta.
\end{equation}
By virtue of \eqref{3.28} and $|\phi_{xx}(0,t)|\leq N(T)$ due to \eqref{N(t)}, it holds that
	 \begin{align}\label{13}
&\frac{\beta}{2}\int_{0}^{t}|d'(\tau)|\int\langle\xi-\xi_\star\rangle^{\beta-2}| \xi-\xi_\star|\phi ^2 \notag \\
&\leq\beta \sup_{\tau\in [0,t]}|d'(\tau)|\int_{0}^{t}\int\langle\xi-\xi_\star\rangle^{\beta-1}\phi^2 \notag \\
& \leq C\beta\left(N(T)+d_0^{-1}\right)\int_{0}^{t}\int\langle\xi-\xi_\star\rangle^{\beta-1}\phi^2.
	 \end{align}	
Moreover, we get by the Cauchy-Schwarz inequality,
	\begin{align}\label{6}
 &\beta\int_{0}^{t}\int\bigg|\frac{\langle\xi-\xi_\star\rangle^{\beta-2} }{U^{1-m}}\left( \xi-\xi_\star\right)\phi\phi_x\bigg| \notag \\
 &\leq C\beta\int_{0}^{t}\int\frac{\langle\xi-\xi_\star\rangle^{\beta-1} }{U^{1-m}}|\phi\phi_x| \notag \\
 &\leq	\frac{\bar{c}\beta}{8} \int_{0}^{t}\int\langle\xi-\xi_\star\rangle^{\beta-1}\phi^2	 +C\int_{0}^{t}\int\frac{\langle\xi-\xi_\star\rangle^{\beta-1} }{U^{2-2m}}\phi_x^2,
 	\end{align}
where $\bar{c}$ is the constant in \eqref{3} and
\begin{equation}\nonumber
\int_{0}^{t}\int\frac{\langle\xi-\xi_\star\rangle^{\beta-1} }{U^{2-2m}}\phi_x^2	= \int_{0}^{t} \left(\int_{|\xi-\xi_\star|<\frac{4}{U^{1-m}(\xi_\star)}}+\int_{|\xi-\xi_\star|\geq\frac{4}{U^{1-m}(\xi_\star)}} \right)\frac{\langle\xi-\xi_\star\rangle^{\beta-1} }{U^{2-2m}}\phi_x^2.
\end{equation}	
For $|\xi-\xi_\star|<\frac{4}{U^{1-m}(\xi_\star)}$, it is verified that  $0<U<u_-$, and	
\begin{equation}\label{7}
\int_{0}^{t} \int_{|\xi-\xi_\star|<\frac{4}{U^{1-m}(\xi_\star)}}\frac{\langle\xi-\xi_\star\rangle^{\beta-1} }{U^{2-2m}}\phi_x^2\leq	C\int_{0}^{t} \int \phi_x^2.
\end{equation}	
For $|\xi-\xi_\star|\geq\frac{4}{U^{1-m}(\xi_\star)}$, if $\xi-\xi_\star\leq-\frac{4}{U^{1-m}(\xi_\star)}$, that is $\xi\leq\xi_\star-\frac{4}{U^{1-m}(\xi_\star)}\leq\xi_\star$, we have $U(\xi)\geq U(\xi_\star)$ due to $U_\xi<0$, thus we derive
\begin{equation}\nonumber
	\begin{aligned}
	\int_{0}^{t} \int_{\xi-\xi_\star\leq-\frac{4}{U^{1-m}(\xi_\star)}}\frac{\langle\xi-\xi_\star\rangle^{\beta-1} }{U^{2-2m}}\phi_x^2&=\int_{0}^{t} \int_{\xi-\xi_\star\leq-\frac{4}{U^{1-m}(\xi_\star)}}\frac{1}{U^{1-m}\langle\xi-\xi_\star\rangle  }\frac{\langle\xi-\xi_\star\rangle^{\beta} }{U^{1-m}}\phi_x^2\\&\leq\int_{0}^{t} \int_{\xi-\xi_\star\leq-\frac{4}{U^{1-m}(\xi_\star)}}\frac{1}{U^{1-m}(\xi_\star)|\xi-\xi_\star|  }\frac{\langle\xi-\xi_\star\rangle^{\beta} }{U^{1-m}}\phi_x^2\\&\leq\frac{1}{4} \int_{0}^{t} \int\frac{\langle\xi-\xi_\star\rangle^{\beta} }{U^{1-m}}\phi_x^2,
\end{aligned}
\end{equation}
and if $\xi-\xi_\star\geq\frac{4}{U^{1-m}(\xi_\star)}$, by \eqref{f''>0}, we get $U^{2-4m}\sim \langle\xi-\xi_\star\rangle^{\frac{3m-1}{1-m}-1}\geq \langle\xi-\xi_\star\rangle^{\beta-1}$, and
	\begin{align} \label{8}
	\int_{0}^{t} \int_{\xi-\xi_\star\geq\frac{4}{U^{1-m}(\xi_\star)}}\frac{\langle\xi-\xi_\star\rangle^{\beta-1} }{U^{2-2m}}\phi_x^2
&=\int_{0}^{t} \int_{\xi-\xi_\star\geq\frac{4}{U^{1-m}(\xi_\star)}}\frac{\phi_x^2 }{U^{2m}}\frac{ \langle\xi-\xi_\star\rangle^{\beta-1}}{U^{2-4m}} \notag \\
&\leq C\int_{0}^{t} \int\frac{\phi_x^2 }{U^{2m}}.
\end{align}
Therefore, adding \eqref{7} and \eqref{8}, and noting $1\leq \frac{C }{U^{2m}}$, we  note that \eqref{6} is transformed into
	\begin{align}\label{10}
	&\beta\int_{0}^{t}\int\bigg|\frac{\langle\xi-\xi_\star\rangle^{\beta-2} }{U^{1-m}}\left( \xi-\xi_\star\right)\phi\phi_x\bigg| \notag \\
&\leq	\frac{\bar{ c}\beta}{8} \int_{0}^{t}\int\langle\xi-\xi_\star\rangle^{\beta-1}\phi^2+\frac{1}{4} \int_{0}^{t} \int\frac{\langle\xi-\xi_\star\rangle^{\beta} }{U^{1-m}}\phi_x^2+C\int_{0}^{t} \int\frac{\phi_x^2 }{U^{2m}}.			
\end{align}
We write the fourth term on RHS of \eqref{5} as
\begin{equation}\nonumber
\int_{0}^{t}|d'(\tau)|\int\langle\xi-\xi_\star\rangle^\beta U|\phi|=\int_{0}^{t}|d'(\tau)|\left(\int_{0}^{s\tau+d(\tau)}+\int_{s\tau+d(\tau)}^{+\infty} \right)\langle\xi-\xi_\star\rangle^\beta U|\phi|.
\end{equation}	
For $\xi>0$, by the Cauchy-Schwarz inequality, we have	
	\begin{align}\label{11}
&\int_{0}^{t}|d'(\tau)|\int_{s\tau+d(\tau)}^{+\infty} \langle\xi-\xi_\star\rangle^\beta U|\phi| \notag \\
&\leq\frac{\bar{ c}\beta}{8}\int_{0}^{t}\int_{s\tau+d(\tau)}^{+\infty}\langle\xi-\xi_\star\rangle^{\beta-1}\phi^2
+C\int_{0}^{t}|d'(\tau)|^2\int_{s\tau+d(\tau)}^{+\infty}\langle\xi-\xi_\star\rangle^{\beta+1}U^2 \notag \\
&\leq\frac{\bar{ c}\beta}{8}\int_{0}^{t}\int_{s\tau+d(\tau)}^{+\infty}\langle\xi-\xi_\star\rangle^{\beta-1}\phi^2+C\int_{0}^{t}|d'(\tau)|^2,
	\end{align}
where we have used the fact $U^2\sim\langle\xi-\xi_\star\rangle^{-\frac{2}{ 1-m}} $ as $\xi\rightarrow+\infty$, $\langle\xi-\xi_\star\rangle^{\beta+1}\leq\langle\xi-\xi_\star\rangle^{\frac{2m}{1-m}}$ and 	
\begin{equation}\nonumber
\int_{s\tau+d(\tau)}^{+\infty}\langle\xi-\xi_\star\rangle^{\beta+1}U^2\leq	C \int_{s\tau+d(\tau)}^{+\infty}\langle\xi-\xi_\star\rangle^{-2}\leq C.
\end{equation}	
For $\xi<0$, it holds
	 \begin{align} \label{9}
&\int_{0}^{t}|d'(\tau)|\int_{0}^{s\tau+d(\tau)} \langle\xi-\xi_\star\rangle^\beta U|\phi|	\notag \\
&\leq \int_{0}^{t}|d'(\tau)|\left(\sup_{x\in [0,s\tau+d(\tau)]}\langle\xi-\xi_\star\rangle^\frac{\beta-\frac{1}{2}}{2} |\phi| \right)\int_{0}^{s\tau+d(\tau)} \langle\xi-\xi_\star\rangle^\frac{\beta+\frac{1}{2}}{2}  U,
	 \end{align}
where, in view of \eqref{st+d(t)}, a direct calculation gives	
\begin{equation}\nonumber
\int_{0}^{s\tau+d(\tau)} \langle\xi-\xi_\star\rangle^\frac{\beta+\frac{1}{2}}{2}  U\leq 	C\left(s\tau+d(\tau) \right) ^\frac{\beta+\frac{5}{2}}{2}\leq 	C\left(d_0+\tau \right) ^\frac{\beta+\frac{5}{2}}{2},
\end{equation}	
and
\begin{equation}\nonumber
	\begin{aligned}
\langle\xi-\xi_\star\rangle^{\beta-\frac{1}{2}} \phi^2&=-\int_{x}^{0}\left[ \frac{\partial}{\partial x}\left(\langle\xi-\xi_\star\rangle^{\beta-\frac{1}{2}} \right)\phi^2+2\langle\xi-\xi_\star\rangle^{\beta-\frac{1}{2}}\phi\phi_{x}\right] \\&\leq C\int_{0}^{s\tau+d(\tau)}\left(\langle\xi-\xi_\star\rangle^{\beta-1}\phi^2+\langle\xi-\xi_\star\rangle^{\beta}\phi_x^2 \right).
	\end{aligned}
\end{equation}	
Then, \eqref{9} can be estimated as
\begin{eqnarray}\label{12}
&&\int_{0}^{t}|d'(\tau)|\int_{0}^{s\tau+d(\tau)} \langle\xi-\xi_\star\rangle^\beta U|\phi| \nonumber \\
&&\leq C\int_{0}^{t}|d'(\tau)|\left[\int_{0}^{s\tau+d(\tau)}\left(\langle\xi-\xi_\star\rangle^{\beta-1}\phi^2+\langle\xi-\xi_\star\rangle^{\beta}\phi_x^2 \right)\right]^\frac{1}{2}\int_{0}^{s\tau+d(\tau)} \langle\xi-\xi_\star\rangle^\frac{\beta+\frac{1}{2}}{2}  U \notag \\
&&\leq C\int_{0}^{t}|d'(\tau)|\left[\left(\int_{0}^{s\tau+d(\tau)}\langle\xi-\xi_\star\rangle^{\beta-1}\phi^2\right)^\frac{1}{2}\!\!\!+\!\!\left(\int_{0}^{s\tau+d(\tau)}\langle\xi-\xi_\star\rangle^{\beta}\phi_x^2\right)^\frac{1}{2} \right]\int_{0}^{s\tau+d(\tau)} \langle\xi-\xi_\star\rangle^\frac{\beta+\frac{1}{2}}{2}  U	 \notag \\ &&\leq\int_{0}^{t}\int_{0}^{s\tau+d(\tau)}\left(\frac{c\beta}{8}\langle\xi-\xi_\star\rangle^{\beta-1}\phi^2+\frac{1}{4}\langle\xi-\xi_\star\rangle^{\beta}\phi_x^2 \right)+C\int_{0}^{t}|d'(\tau)|^2\left(d_0+\tau \right) ^{\beta+\frac{5}{2}}.		
\end{eqnarray}
Combining \eqref{11} and \eqref{12}, using \eqref{e1}, we have
	\begin{align}\label{14}
	&\int_{0}^{t}|d'(\tau)|\int\langle\xi-\xi_\star\rangle^\beta U|\phi| \notag \\
&\leq\int_{0}^{t}\int\left(\frac{c\beta}{4}\langle\xi-\xi_\star\rangle^{\beta-1}\phi^2+\frac{1}{4}\langle\xi-\xi_\star\rangle^{\beta}\phi_x^2 \right)+C\int_{0}^{t}|d'(\tau)|^2\left(d_0+\tau \right) ^{\beta+\frac{5}{2}} \notag \\
&\leq\int_{0}^{t}\int\left(\frac{c\beta}{4}\langle\xi-\xi_\star\rangle^{\beta-1}\phi^2+\frac{1}{4}\langle\xi-\xi_\star\rangle^{\beta}\phi_x^2 \right) \notag \\
&\ \ \ +C\left(\mathrm{e}^{-2\gamma d_0} +\int_{0}^{t}\left(d_0+\tau \right) ^{\beta+\frac{5}{2}}|\phi_{x x}^2(0,\tau)|\right).
	\end{align}
Furthermore, by \eqref{F1}, \eqref{10} and the fact that $\left\|\phi\left(\cdot,t\right)\right\|_{L^{\infty}}\leq CN(T)$ and $\left\|\phi_x(\cdot,t)/U\right\|_{L^{\infty}}\leq CN(T)$, one can see that
\begin{equation}\nonumber
	\int_{0}^{t}\int\bigg|\langle\xi-\xi_\star\rangle^{\beta} F\phi\bigg|\leq CN(T)\int_{0}^{t}\int\langle\xi-\xi_\star\rangle^{\beta}\phi_x^2 ,
\end{equation}	
\begin{equation}\nonumber
	\int_{0}^{t}\int\bigg|\langle\xi-\xi_\star\rangle^{\beta} G\phi_x\bigg|\leq C\int_0^t\int \langle\xi-\xi_\star\rangle^{\beta}\frac{|\phi_x|^3}{U^{2-m}}\leq C N(T)\int_0^t\int\langle\xi-\xi_\star\rangle^{\beta} \frac{\phi_x^2}{U^{1-m}},
\end{equation}
and
	\begin{align}\label{15}
	&\int_{0}^{t}\int\bigg|\langle\xi-\xi_\star\rangle^{\beta-1} G\phi\bigg| \notag \\
&\leq CN(T)\int_{0}^{t}\int\langle\xi-\xi_\star\rangle^{\beta-1}\frac{ |\phi\phi_x|}{U^{1-m}} \notag\\
&\leq  CN(T)\beta \int_{0}^{t}\!\!\int\langle\xi-\xi_\star\rangle^{\beta-1}\phi^2\!\!+\!CN(T) \int_{0}^{t} \!\!\int\frac{\langle\xi-\xi_\star\rangle^{\beta} }{U^{1-m}}\phi_x^2\!+\!C\int_{0}^{t}\!\! \int\frac{\phi_x^2 }{U^{2m}}.
		\end{align}
Combining \eqref{13}, \eqref{10} and \eqref{14}-\eqref{15} with \eqref{5},  we deduce
\begin{equation}\nonumber
	\begin{aligned}
&\frac{1}{2}\!\int\langle\xi-\xi_\star\rangle^\beta\phi^2\!+\!c\beta\left(\frac{1}{8}\!-\!C(N(T)\!+\!d_0^{-1})\right)\!\int_{0}^{t}\!\int\langle\xi-\xi_\star\rangle^{\beta-1}\phi^2\!+\!\left(\frac{1}{2}\!-\!CN(T)\right)\int_{0}^{t}\!\int \frac{\langle\xi-\xi_\star\rangle^\beta }{U^{1-m}}\phi_x^2\\&\leq \frac{1}{2}\int\langle \xi_0-\xi_\star\rangle^\beta\phi_0^2+	C\left(\mathrm{e}^{-2\gamma d_0} +\int_{0}^{t} \int\frac{\phi_x^2 }{U^{2m}}+\int_{0}^{t}\left(d_0+\tau \right) ^{\beta+\frac{5}{2}}|\phi_{x x}^2(0,\tau)|\right)\\&\leq C\left( \int\langle \xi_0-\xi_\star\rangle^\beta\phi_0^2+\int\frac{\phi_0^2 }{U^{3m-1}}+d_0^{-1}\right)+C\int_{0}^{t}\left(\left(d_0+\tau \right) ^{\beta+\frac{5}{2}}+\left(d_0+\tau \right) ^3 \right)|\phi_{x x}^2(0,\tau)|.	
	\end{aligned}
\end{equation}	
Therefore, when $C(N(T)+d_0^{-1})\leq 1/16$, we prove \eqref{L2-2 estimate}.	
\end{proof}	
	
\subsubsection{Estimates for first order derivative}

\begin{lemma}\label{H1}
	Let the assumptions of Proposition \ref{proposition priori estimate} hold. If $N(T)+d_0^{-\frac{1}{2}}\ll1$, then there exists a constant $C>0$ independent of $T$ such that
	\begin{equation}\label{H1 estimate}
		\begin{aligned}
		&\|\phi_x(\cdot,t)\|_{w_2}^2+\int_0^t\left\|\phi_{xx}(\cdot,\tau)\right\|_{w_5}^2 \mathrm{d} \tau   \\&\leq C\left(\|\phi_0\|_{w_{1,0}}^2+\|\phi_{0x}\|_{w_{2,0}}^2+d_0^{-1} \right)+C\int_{0}^{t}(d_0+\tau)^{3}|\phi_{xx}(0,\tau)|^2\mathrm{d} \tau,	
	\end{aligned}
	\end{equation}	
where $w_1=U^{1-3m}$,  $w_2=U^{-(1+m)}$, $w_5=U^{-2}$ for $U=U(x-st-d(t))$ and $w_{i,0}(U)~(i=1,2)$ for $U=U(x-d_0)$.
\end{lemma}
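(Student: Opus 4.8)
The plan is to run a weighted energy estimate on the equation for $\phi_x=u-U$, i.e.\ \eqref{perturbation}, with the singular weight $w_2=U^{-(1+m)}$. Concretely, I would differentiate \eqref{d equ}$_1$ in $x$ (equivalently use \eqref{perturbation}, recalling $u^{m-1}u_x-U^{m-1}U_x=(\phi_x/U^{1-m})_x+G_x/m$), multiply by $\phi_x/U^{1+m}$, and integrate over $(0,+\infty)\times(0,t)$. Since the singular weight destroys the decay of the boundary integrand at $x=+\infty$, I would insert a spatial cut-off $\chi_R$ equal to $1$ on $[0,R]$ and supported in $[0,R+1]$, carry out all integrations by parts against $\chi_R$, and let $R\to\infty$; the errors carry $\chi_R'$ supported in $[R,R+1]$ and vanish because $\phi_x\in L^2_{w_2}$, $\phi_{xx}\in L^2_{w_3}$. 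The identity then reads, schematically, $\tfrac12\tfrac{d}{dt}\|\phi_x\|_{w_2}^2+\int \phi_{xx}^2/U^2+\tfrac{(1+m)s}{2}\int |U_x|\phi_x^2/U^{2+m}+(\text{boundary at }x=0)=(\text{shift term})+(\text{convection term})+(F_x,\,G_{xx}\text{ terms})$, where the $d'$‑correction to the time derivative is absorbed into the third term on the left because $|d'|\ll1$ by \eqref{3.28}, and the last two left‑hand terms are the new good terms (the second having exactly the weight $w_5$).

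To control the right-hand side I would argue as in Lemmas \ref{L2}--\ref{L2-1}. The cross terms from the diffusion integration by parts, $\int U_x\phi_x\phi_{xx}/U^3$ and $\int U_x^2\phi_x^2/U^4$, are handled by Cauchy--Schwarz against $\int \phi_{xx}^2/U^2$; using $|U_x|\le CU^{2-m}$ and $U\le u_-$ one gets $U_x^2/U^4\le CU^{-2m}$, so the remainder is $C\int_0^t\|\phi_x\|_{w_4}^2$, controlled by Lemma \ref{L2-1}. One integration by parts on the convection term gives $\int f''(U)U_x\phi_x^2/U^{1+m}$ (plus a harmless $x=0$ boundary term and a term with $U_x^2/U^{2+m}$); since $|U_x|/U^{1+m}\le CU^{-2m}$ on the far field and is exponentially small on $\xi<0$, this is again $\le C\int_0^t(\|\phi_x\|_{w_4}^2+\|\phi_x\|_{w_7}^2)$, controlled by Lemmas \ref{L2-1} and \ref{L2}. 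For the shift term I would split $\int_0^t d'(\tau)\int U_x\phi_x/U^{1+m}$ into $\xi=x-s\tau-d(\tau)>0$ and $\xi<0$: on $\xi>0$, using \eqref{e3}, H\"older and Cauchy--Schwarz, it is $\le \eta\int_0^t\int |U_x|\phi_x^2/U^{2+m}+C\int_0^t|d'|^2$; on $\xi<0$, arguing as in \eqref{eq16}--\eqref{eq18} with \eqref{st+d(t)}, it is bounded by a small multiple of $\int_0^t\int\phi_x^2/U^{1-m}$ plus $C\int_0^t(d_0+\tau)^3|d'|^2$; then \eqref{e2} turns these $|d'|$-integrals into $Ce^{-2\gamma d_0}+C\int_0^t(d_0+\tau)^3|\phi_{xx}(0,\tau)|^2$. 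The genuinely nonlinear $F_x$- and $G_{xx}$-contributions are estimated by \eqref{F1}, \eqref{Gzz} with the $L^\infty$ bounds \eqref{sobolev embedding2}--\eqref{sobolev embedding4}; each carries a factor $N(T)$ and is absorbed once $N(T)$ is small, modulo a leftover $C\int_0^t\|\phi_x\|_{w_4}^2$ controlled by Lemma \ref{L2-1}.

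The boundary terms at $x=0$ need care since $\phi_x(0,t)=u_--U(-st-d(t))\neq0$; however, by Lemma \ref{lemma d(t)}(i) we have $U(-st-d(t))\ge U(0)>0$, and by \eqref{decay u_-} together with the Rankine--Hugoniot identity $g(u_-)=0$ both $\phi_x(0,t)$ and $U_x(-st-d(t))$ are $O(e^{-\lambda_-(st+d)})$, so every $x=0$ boundary term is $\le C(|\phi_{xx}(0,t)|+1)e^{-\lambda_-(st+d)}$; integrating in $t$ and applying a weighted Cauchy--Schwarz (pairing $e^{-\lambda_-(s\tau+d)}$ with $(d_0+\tau)^{(\beta+3)/2}$) bounds this by $Ce^{-\gamma d_0}(1+N(T))\le Cd_0^{-1}$. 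Collecting everything, choosing $\eta$ and $N(T)+d_0^{-1/2}$ small, and substituting the bound for $\int_0^t\|\phi_x\|_{w_4}^2$ from Lemma \ref{L2-1} (whose right-hand side supplies the surviving $C\int_0^t(d_0+\tau)^3|\phi_{xx}(0,\tau)|^2$), we obtain \eqref{H1 estimate}. The main obstacle is the same as in the earlier lemmas: organising the argument so that the singular weight $w_2=U^{-(1+m)}$ — which is forced both by the need for \eqref{sobolev embedding4} and by convergence of the $d'(t)U$ term in the higher-order estimates — is reconciled with the $x=+\infty$ boundary behaviour (hence the cut-off) and so that every singular cross term reduces either to a $w_4$- or $w_7$-weighted quantity already controlled in Lemmas \ref{L2}--\ref{L2-1} or to exponentially small $x=0$ boundary data; the "natural" choice $U^{-2}$ on $\phi_x$ from the whole-space problem would leave the shift term uncontrollable.
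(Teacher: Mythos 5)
Your proposal is correct and follows essentially the same route as the paper: multiply the differentiated perturbation equation \eqref{phizequ} by $\phi_x/U^{1+m}$, split the shift term $d'\!\int U_x\phi_x/U^{1+m}$ into the regions $\xi\gtrless 0$ using \eqref{e3}, the Fubini/H\"older step and \eqref{e2}, exploit the exponential smallness of $\phi_x(0,t)$ and $U_x(-st-d(t))$ for the $x=0$ boundary contributions, and absorb the leftover $w_4$- and $w_7$-weighted quantities via Lemmas \ref{L2}--\ref{L2-1}. The only differences are cosmetic (you make the $x\to+\infty$ cut-off explicit, keep the sign-definite time-derivative correction as a good left-hand term and integrate the convection term by parts, whereas the paper moves these to the right and controls them by Cauchy--Schwarz and $|U_x|\le CU^{2-m}$), so no substantive gap.
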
	
	
\begin{proof}
Differentiating \eqref{phiequ} in $x$ gives
\begin{equation}\label{phizequ}
	\phi_{xt}+f''(U)U_x \phi_x+f'(U)\phi_{x x}-\left(\frac{\phi_x}{U^{1-m}}\right)_{xx}=d'(t)U_x+F_x+\frac{1}{m}G_{xx}.
\end{equation}	
From \eqref{decay u_-} and \eqref{decompose}, we have
\begin{equation}\label{phix 0}
	\phi_x(0,t)=u_--U(-st-d(t))\sim \mathrm{e}^{-\lambda_-(st+d(t))},
\end{equation}
and thus
\begin{equation}\label{3.92}
	\int_{0}^{t}\phi_{x}(0,\tau)\leq C\int_{0}^{t}\mathrm{e}^{-\lambda_-(s\tau+d(\tau))}\leq C\mathrm{e}^{-d_0}\leq Cd_0^{-1}
\end{equation}
due to \eqref{3.37}. Multiplying \eqref{phizequ} by $\frac{\phi_{x}}{U^{1+m}}$ and integrating the result over $(0,+\infty)\times(0,t)$, noting \eqref{3.92},
one obtains
	\begin{align}\label{eq32}
		&\frac{1}{2}\int\frac{\phi_x^2}{U^{1+m}} +\int_{0}^{t}\int\frac{\phi_{xx}^2}{U^2}\notag\\
&\leq\frac{1}{2} \int\frac{\phi_{0x}^2}{U^{1+m}}
 +Cd_0^{-1} \notag \notag\\
  & \ \ \ +C\left[  \int_{0}^{t}\int f''(U)\frac{|U_x| }{U^{1+m}}\phi_x^2
 +\int_{0}^{t}\int\frac{|U_x|}{U^{2+m}} \bigg|\!\left( \!-\!s\!-\!d'+2(1\!-\!m)\frac{U_x}{U^{2-m}}\!\right)\!\bigg|\phi_x^2\right.\notag\\
 &\left.\quad+\int_{0}^{t}\int \left(\frac{|f'(U)|}{U^{1+m}}+\frac{|U_x|}{U^{3}} \right)|\phi_x\phi_{xx}|+\int_{0}^{t} |d'(\tau)|\int\frac{ |U_x|}{U^{1+m}}|\phi_x|	 \right.\notag\\
 &\left.\quad+\int_{0}^{t}\int\left(\big| F_x \frac{ \phi_x}{U^{1+m}}\big|+\big|G_x\frac{ \phi_{xx}}{ U^{1+m}}\big|+\big|\frac{U_x }{ U^{2+m}}G_x\phi_{x}\big| \right)\right].
	\end{align}

We next estimate the terms on the RHS of \eqref{eq32}. By $|U_x|\leq CU^{2-m}$ and the Cauchy-Schwarz inequality, it holds that	
\begin{equation}\label{eq33}
\int_{0}^{t}\int f''(U)\frac{|U_x| }{U^{1+m}}\phi_x^2\leq	 C\int_{0}^{t}\int\frac{\phi_x^2}{U^{2m-1}},
\end{equation}	
\begin{equation}
	\int_{0}^{t}\int\frac{|U_x|}{U^{2+m}} \bigg|\!\left( \!-\!s\!-\!d'+2(1\!-\!m)\frac{U_x}{U^{2-m}}\!\right)\!\bigg|\phi_x^2\leq	 C\int_{0}^{t}\int\frac{\phi_x^2}{U^{2m}},
\end{equation}
and	
\begin{equation}\label{eq34}
	\int_{0}^{t}\int \left(\frac{|f'(U)|}{U^{1+m}}+\frac{|U_x|}{U^{3}} \right)|\phi_x\phi_{xx}|\leq \frac{1}{4}\int_{0}^{t}\int\frac{\phi_{xx}^2}{U^{2}}+C\int_{0}^{t}\int\frac{\phi_x^2}{U^{2m}}.
\end{equation}
To estimate the sixth term on RHS of \eqref{eq32}, we decompose it as
\begin{equation}\nonumber
	 \int_{0}^{t}|d'(\tau)|\int \frac{|U_x|}{U^{1+m}}|\phi_x|=\int_{0}^{t}|d'(\tau)|\left( \int_{0}^{s\tau+d(\tau)}+\int_{s\tau+d(\tau)}^{+\infty}\right)\frac{|U_x|}{U^{1+m}}|\phi_x|.
\end{equation}
For $\xi>0$, utilizing \eqref{f''>0}, \eqref{e1}, $|U_x|\leq CU^{2-m}$, we get from H\"{o}lder's inequality and Cauchy-Schwarz inequality that
	\begin{align}\label{eq35}
	&\int_{0}^{t}|d'(\tau)|\int_{s\tau+d(\tau)}^{+\infty}\frac{|U_x|}{U^{1+m}}|\phi_x|\notag \\
&\leq C\int_{0}^{t}|d'(\tau)|\int_{s\tau+d(\tau)}^{+\infty}\frac{|\phi_x|}{U^{2m-1}}\notag \\
&\leq C\int_{0}^{t}|d'(\tau)|\left(\int_{s\tau+d(\tau)}^{+\infty}\frac{\phi_x^2}{U^{2m}} \right)^{\frac{1}{2}}\left(\int_{s\tau+d(\tau)}^{+\infty}\frac{1}{U^{2m-2} }\right)^{\frac{1}{2}}	 \notag \\
&\leq C\left(\int_{0}^{t}\int\frac{\phi_x^2}{U^{2m}}+\int_{0}^{t}|d'(\tau)|^2\int_{s\tau+d(\tau)}^{+\infty}\frac{1 }{U^{2m-2} } \right)\notag \\
&\leq C\left(\mathrm{e}^{-2\gamma d_0}+ \int_{0}^{t}\int\frac{\phi_x^2}{U^{2m}}+\int_{0}^{t}|\phi_{xx}(0,\tau)|^2\right).
	\end{align}
For $\xi<0$, as in the proof of \eqref{eq16}, one can see that
\begin{equation}\nonumber
		\int_{0}^{s\tau+d(\tau)}|\phi_x(x,\tau)|\mathrm{d}x	 \leq|s\tau+d(\tau)|^{\frac{3}{2}}\left(\int_{0}^{s\tau+d(\tau)}|\phi_{yy}(y,\tau)|^2\mathrm{d}y\right)^{\frac{1}{2}}+|s\tau+d(\tau)||\phi_y(0,\tau)|,
\end{equation}
which along with \eqref{st+d(t)}, \eqref{e1} and \eqref{phix 0} implies that
	\begin{align}\label{eq36}
		&\int_{0}^{t}|d'(\tau)|\int_{0}^{s\tau+d(\tau)}\frac{|U_x|}{U^{1+m}}|\phi_x| \notag \\
		&\leq C\int_{0}^{t}(d_0+\tau)^{\frac{3}{2}}|d'(\tau)|\left(\int_{0}^{s\tau+d(\tau)}\phi_{xx}^2\right)^{\frac{1}{2}}
+C\int_{0}^{t}|d'(\tau)|(d_0+\tau)|\phi_x(0,\tau)| \notag\\
&\leq\frac{1}{4u_-^2}\int_{0}^{t}\int\phi_{xx}^2+C\left(\mathrm{e}^{-2\gamma d_0}+ \int_{0}^{t}(d_0+\tau)^{3}|d'(\tau)|^2\right)\notag \\
&\leq\frac{1}{4}\int_{0}^{t}\int\frac{\phi_{xx}^2 }{U^2}+C\left(\mathrm{e}^{-2\gamma d_0}+\int_{0}^{t}(d_0+\tau)^{3}|\phi_{xx}(0,\tau)|^2 \right).
	\end{align}
Therefore, combining \eqref{eq35} and \eqref{eq36}, the sixth term can be estimated as
\begin{align}\label{eq37}
	&\int_{0}^{t}|d'(\tau)|\int \frac{|U_x|}{U^{1+m}}|\phi_x| \notag \\
&\leq\frac{1}{4}\int_{0}^{t}\int\frac{\phi_{xx}^2}{U^{2} } +C\left(\mathrm{e}^{-2\gamma d_0}+\int_{0}^{t}\int\frac{\phi_x^2 }{U^{2m}}+\int_{0}^{t}(d_0+\tau)^{3}|\phi_{xx}(0,\tau)|^2 \right).
\end{align}
By virtue of $|U_x|\leq CU^{2-m}$ and the fact that $\left\|\phi_x(\cdot,t)/U\right\|_{L^{\infty}}\leq CN(T)$, it follows from the Cauchy-Schwarz inequality, \eqref{F1} and \eqref{Gz} that
\begin{equation}\nonumber
\begin{aligned}
	\int_{0}^{t}\int\big| F_x \frac{ \phi_x}{U^{1+m}}\big|
&\leq\int_{0}^{t}\int\left(\frac{|U_x| }{U^{1+m}}|\phi_x|^3+\frac{\phi_x^2|\phi_{xx}| }{U^{1+m}} \right) \\
&\leq CN(T)\int_{0}^{t}\int\frac{\phi_{xx}^2 }{U^{2}}+C\int_{0}^{t}\int\frac{\phi_x^2 }{U^{2m-2}},
\end{aligned}
\end{equation}	 	
\begin{equation}\nonumber
\begin{aligned}
\int_{0}^{t}\int \big|G_x\frac{ \phi_{xx}}{ U^{1+m}}\big|
&\leq\int_{0}^{t}\int\left(\frac{|U_x| }{U^{4}}\phi_x^2|\phi_{x x}|+\frac{|\phi_{x}|\phi_{xx}^2 }{U^{3}} \right) \\
&\leq CN(T)\int_{0}^{t}\int\frac{\phi_{xx}^2 }{U^{2}}+C\int_{0}^{t}\int\frac{\phi_x^2 }{U^{2m}},
\end{aligned}	
\end{equation}	
and	
\begin{align}\label{eq38}
\int_{0}^{t}\int \big|\frac{U_x }{ U^{2+m}}G_x\phi_{x}\big| &\leq\int_{0}^{t}
\int\left(\frac{U_x^2 }{U^{5}}|\phi_x|^3+\frac{|U_x| }{U^{4}} \phi_{x}^2|\phi_{xx}|\right) \notag \\
&\leq CN(T)\int_{0}^{t}\int\frac{\phi_{xx}^2 }{U^{2}}+C\int_{0}^{t}\int\frac{\phi_x^2 }{U^{2m}}.	
\end{align}	
Combining \eqref{eq32}, \eqref{eq33}-\eqref{eq34} with \eqref{eq37}-\eqref{eq38}, and noting that $\frac{1}{U^{2m-2}}\leq\frac{C}{U^{2m-1}}\leq\frac{C}{U^{2m}}$, yields
\begin{equation}\nonumber
	\begin{aligned}
		&\frac{1}{2}\int\frac{\phi_x^2}{U^{1+m}} +\left(\frac{1}{2}-CN(T)\right)\int_{0}^{t}\int\frac{\phi_{xx}^2}{U^2}\\&\leq C\left(\int\frac{\phi_{0x}^2}{U^{1+m}}+d_0^{-1} \right)+C\left(\mathrm{e}^{-2\gamma d_0}+\int_{0}^{t}\int\frac{\phi_x^2 }{U^{2m}}+\int_{0}^{t}(d_0+\tau)^{3}|\phi_{xx}(0,\tau)|^2 \right).
	\end{aligned}
\end{equation}		
Taking $N(T)$ sufficiently small such that $CN(T)\leq 1/4$, utilizing \eqref{L2-1 estimate}, we then get \eqref{H1 estimate}.
\end{proof}

\begin{lemma}\label{H1-2}
	Let the assumptions of Proposition \ref{proposition priori estimate} hold. Assume that $0<\beta \leq \frac{3m-1}{1-m}$, if $N(T)+d_0^{-\frac{1}{2}}\ll1$, then there exists a constant $C>0$ independent of $T$ such that
		\begin{align}\label{H1-2 estimate}
			&\|\phi_x(\cdot,t)\|_{\langle x-st-d(t)\rangle ^{\beta+1}}^2+\int_0^t \left\|\phi_{xx}(\cdot,\tau)/U^{\frac{1-m}{2}}\right\|_{\langle x-st-d(t)\rangle ^{\beta+1}}^2 \mathrm{d} \tau \notag \\
&  \leq C\left(\|\phi_0\|_{1,\langle x-d_0\rangle ^{\beta}}^2+\|\phi_0\|_{w_{1,0}}^2+d_0^{-1} \right) \notag \\
& \ \ \ +C\int_{0}^{t}\left( (d_0+\tau)^{\beta+\frac{5}{2}}+(d_0+\tau)^3\right)|\phi_{xx}(0,\tau)|^2\mathrm{d} \tau,
		\end{align}
where   $w_{1,0}=U^{1-3m}(x-d_0)$.
\end{lemma}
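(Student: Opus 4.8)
The proof is a weighted energy estimate on the differentiated equation \eqref{phizequ}, running parallel to Lemmas \ref{H1} and \ref{L2-2} but now with the \emph{algebraic} weight $\langle\xi-\xi_\star\rangle^{\beta+1}$ (where $\xi=x-st-d(t)$ and $f'(U(\xi_\star))=s$) in place of the singular weight $U^{-(1+m)}$. Concretely, I would multiply \eqref{phizequ} by $\langle\xi-\xi_\star\rangle^{\beta+1}\phi_x$ and integrate over $(0,+\infty)\times(0,t)$, using $\phi(0,t)=0$ and \eqref{phix 0}. The time-derivative term yields $\frac{1}{2}\frac{\mathrm{d}}{\mathrm{d}t}\|\phi_x\|_{\langle x-st-d(t)\rangle^{\beta+1}}^2$ together with the lower-order contribution $\frac{\beta+1}{2}\int(s+d'(t))\langle\xi-\xi_\star\rangle^{\beta-1}(\xi-\xi_\star)\phi_x^2$ coming from $\partial_t\langle\xi-\xi_\star\rangle^{\beta+1}$: its $\xi>\xi_\star$ part is kept, while the $\xi<\xi_\star$ part lives where $\langle\xi-\xi_\star\rangle$ is bounded or, for $\xi<0$, comparable to $(d_0+\tau)$ by \eqref{st+d(t)}, and is absorbed exactly as in \eqref{13}. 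The two convection terms $f''(U)U_x\phi_x$ and $f'(U)\phi_{xx}$, after integrating the second by parts, combine into a quadratic form in $\phi_x$ with coefficient of type $\frac{1}{2}f''(U)U_x\langle\xi-\xi_\star\rangle^{\beta+1}-\frac{\beta+1}{2}\big(f'(U)-s\big)(\xi-\xi_\star)\langle\xi-\xi_\star\rangle^{\beta-1}$; the second summand is nonnegative (as for $\mathbb{A}_\beta$ in Lemma \ref{L2-2}), while the first, although negative, satisfies $|U_x|\langle\xi-\xi_\star\rangle^{\beta+1}\leq CU^{2-m}\langle\xi-\xi_\star\rangle^{\beta+1}\leq CU^{-2m}$ precisely because $\beta\leq\frac{3m-1}{1-m}\leq\frac{1+2m}{1-m}$, so it is dominated by $C\int\frac{\phi_x^2}{U^{2m}}$, which is controlled via \eqref{L2-1 estimate}.

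Next I would treat the diffusion term $-\big(\frac{\phi_x}{U^{1-m}}\big)_{xx}$: one integration by parts produces the dissipation $\int_0^t\int\frac{\langle\xi-\xi_\star\rangle^{\beta+1}}{U^{1-m}}\phi_{xx}^2$ plus cross terms of the form $\int\frac{|U_x|}{U^{2-m}}\langle\xi-\xi_\star\rangle^{\beta+1}|\phi_x\phi_{xx}|$, $\int\frac{\langle\xi-\xi_\star\rangle^{\beta-1}|\xi-\xi_\star|}{U^{1-m}}|\phi_x\phi_{xx}|$ and $\int\frac{|U_x|\langle\xi-\xi_\star\rangle^{\beta-1}|\xi-\xi_\star|}{U^{2-m}}\phi_x^2$; by Cauchy--Schwarz each is bounded by $\frac{1}{8}$ of the dissipation plus $\langle\xi-\xi_\star\rangle$-weighted $\phi_x$ integrals that, split according to whether $|\xi-\xi_\star|$ is smaller or larger than $\frac{4}{U^{1-m}(\xi_\star)}$ exactly as in \eqref{7}--\eqref{8}, reduce to $C\int\frac{\phi_x^2}{U^{2m}}$ and to a fraction of $\int\frac{\langle\xi-\xi_\star\rangle^{\beta}}{U^{1-m}}\phi_x^2$, both already controlled by \eqref{L2-1 estimate} and \eqref{L2-2 estimate}. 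The boundary term at $x=0$ equals $-\big(\frac{\phi_x}{U^{1-m}}\big)_x(0,t)\,\langle -st-d(t)-\xi_\star\rangle^{\beta+1}\phi_x(0,t)$, which by \eqref{eq10}, \eqref{phix 0} and \eqref{3.37} is bounded by $C(|\phi_x(0,t)|+|\phi_{xx}(0,t)|)(d_0+t)^{\beta+1}\mathrm{e}^{-\lambda_-(st+d(t))}$; absorbing $|\phi_{xx}(0,t)|$ by Cauchy--Schwarz and using $(d_0+t)^{\beta+1}\mathrm{e}^{-\lambda_-(st+d(t))}\leq C\mathrm{e}^{-\gamma(d_0+t)}$ leaves only exponentially small contributions.

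The source term $d'(t)U_x$ is handled as in Lemmas \ref{L2-2} and \ref{H1} by splitting $\int_0^t|d'(\tau)|\int\langle\xi-\xi_\star\rangle^{\beta+1}|U_x||\phi_x|$ into $\int_0^{s\tau+d(\tau)}$ and $\int_{s\tau+d(\tau)}^{+\infty}$. On $\{\xi>0\}$ I would use $|U_x|\leq CU^{2-m}$ and $U\sim\langle\xi-\xi_\star\rangle^{-1/(1-m)}$ with H\"older's and Cauchy--Schwarz inequalities to peel off $C\int_0^t\int\frac{\phi_x^2}{U^{2m}}$ plus $C\int_0^t|d'(\tau)|^2$ times a far-field integral of $\langle\xi-\xi_\star\rangle^{\beta+1}$ against a power of $U$ large enough to converge thanks to $\beta\leq\frac{3m-1}{1-m}$, after which \eqref{e1} applies. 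On $\{\xi<0\}$ I would write $\phi_x(x,\tau)=\phi_x(0,\tau)-\int_0^x\phi_{yy}$, use \eqref{st+d(t)} to replace $\langle\xi-\xi_\star\rangle$ by $(d_0+\tau)$, and H\"older in $x$ to gain the power $(d_0+\tau)^{\beta+\frac{5}{2}}$ exactly as in \eqref{eq16} and \eqref{9}--\eqref{12}; this lets me absorb $\int\phi_{xx}^2$ into the dissipation and, via \eqref{e1}, \eqref{e2} and \eqref{phix 0}, produces the terms $C\big(\mathrm{e}^{-2\gamma d_0}+\int_0^t((d_0+\tau)^{\beta+\frac{5}{2}}+(d_0+\tau)^3)|\phi_{xx}(0,\tau)|^2\big)$. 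Finally the nonlinear terms $F_x$ and $\frac{1}{m}G_{xx}$: for $G_{xx}$ I integrate by parts once to distribute a derivative onto $\langle\xi-\xi_\star\rangle^{\beta+1}\phi_x$, then invoke the pointwise bounds \eqref{F1}, \eqref{Gz}, \eqref{Gzz} of Lemma \ref{F G} together with $\|\phi_x/U\|_{L^\infty}+\|\phi\|_{L^\infty}\leq CN(T)$ from \eqref{sobolev embedding4}; every resulting term carries a factor $N(T)$ and is bounded by $CN(T)$ times the dissipation plus $\langle\xi-\xi_\star\rangle$- and $U$-weighted $\phi_x$ integrals already in hand, the $x=0$ boundary term for $G_x$ being handled by \eqref{eq10}. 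Collecting everything, choosing $N(T)+d_0^{-1/2}$ small to absorb the $\frac{1}{8}$- and $N(T)$-terms into the dissipation, integrating in $t$, and adding \eqref{L2-1 estimate}, \eqref{L2-2 estimate} and \eqref{H1 estimate} yields \eqref{H1-2 estimate}.

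The main obstacle I expect is the bookkeeping around the competing weights in the near-boundary region $\{\xi<0\}$: one must simultaneously match the algebraic spatial weight $\langle\xi-\xi_\star\rangle^{\beta+1}\sim(d_0+\tau)^{\beta+1}$, the time weight $(d_0+\tau)^{\beta+\frac{5}{2}}$ generated by the H\"older step on $\phi_x$, and the boundary dissipation $(d_0+\tau)^{\beta+3}|\phi_{xx}(0,\tau)|^2$ available from the higher-order estimates, while verifying that the far-field constraint $\beta\leq\frac{3m-1}{1-m}$ is exactly the threshold making both the far-field $d'$-integrals over $\{\xi>0\}$ and the convection residual $f''(U)U_x\langle\xi-\xi_\star\rangle^{\beta+1}\phi_x^2\leq C\phi_x^2/U^{2m}$ admissible. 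Everything else is a routine, if lengthy, adaptation of the arguments already carried out in Lemmas \ref{L2-2} and \ref{H1}.
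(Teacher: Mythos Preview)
Your proposal is correct and follows essentially the same approach as the paper: multiply \eqref{phizequ} by $\langle\xi-\xi_\star\rangle^{\beta+1}\phi_x$, extract the dissipation $\int_0^t\int\langle\xi-\xi_\star\rangle^{\beta+1}U^{m-1}\phi_{xx}^2$, split the $d'(\tau)U_x$ source into $\{\xi>0\}$ and $\{\xi<0\}$ exactly as you describe (the paper's \eqref{17}--\eqref{22}), bound the nonlinear $F_x,G_x$ terms via Lemma~\ref{F G} and $\|\phi_x/U\|_{L^\infty}\leq CN(T)$, and close with \eqref{L2-1 estimate} and \eqref{L2-2 estimate}. The only cosmetic difference is in the convection residual: you keep the sign structure $-\tfrac{\beta+1}{2}(f'(U)-s)(\xi-\xi_\star)\langle\xi-\xi_\star\rangle^{\beta-1}\geq 0$ and bound the bad piece $\tfrac{1}{2}f''(U)U_x\langle\xi-\xi_\star\rangle^{\beta+1}\phi_x^2$ by $C\int\phi_x^2/U^{2m}$, whereas the paper simply uses $|U_x|\langle\xi-\xi_\star\rangle\leq C$ (from \eqref{e5}--\eqref{e6}) to reduce everything to $C\int_0^t\int\langle\xi-\xi_\star\rangle^{\beta}\phi_x^2$ and then invokes \eqref{L2-2 estimate}; both routes are valid and equally short.
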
		
	
\begin{proof}
Multiplying \eqref{phizequ} by $\langle\xi-\xi_\star\rangle ^{\beta+1}\phi_{x}$ and integrating the result over $(0,+\infty)\times(0,t)$, noting that \eqref{phix 0}, $|\xi-\xi_\star|\leq\langle\xi-\xi_\star\rangle$,
we get
\begin{eqnarray}\label{16}
		&&\frac{1}{2}\int\langle\xi-\xi_\star\rangle ^{\beta+1} \phi_x^2+\int_{0}^{t}\int\langle\xi-\xi_\star\rangle ^{\beta+1}\frac{\phi_{xx}^2}{U^{1-m}}
\notag \\
&&\leq\frac{1}{2} \int\langle\xi_0-\xi_\star\rangle ^{\beta+1} \phi_{0x}^2+Cd_0^{-1} +C\left\{\int_{0}^{t}\int \langle\xi-\xi_\star\rangle ^{\beta+1}f''(U)|U_x|\phi_x^2+\int_{0}^{t}\int\langle\xi-\xi_\star\rangle ^{\beta} \right. \notag \\
&&\left.\quad\times\bigg|\left( 
	-s-d'+2(1-m)\frac{U_x}{U^{2-m}}\right)\bigg|\phi_x^2+\int_{0}^{t}\int\left( \langle\xi-\xi_\star\rangle ^{\beta+1}\frac{|U_x|}{U^{2-m}}+\frac{\langle\xi-\xi_\star\rangle ^{\beta} }{U^{1-m}}\right)|\phi_x\phi_{xx}|\right. \notag \\
&&\quad\!+\!\int_{0}^{t}|d'(\tau)|\int \langle\xi\!-\!\xi_\star\rangle ^{\beta+1}|U_x||\phi_x| \notag \\
&& \ \ \  +\!\int_{0}^{t}\int\!\left[\langle\xi-\xi_\star\rangle ^{\beta+1}\left(\big|F_x\phi_x\big| \!+\!\big|G_x\phi_{xx}\big| \right)\!+\!\langle\xi\!-\!\xi_\star\rangle ^{\beta}\big|G_x\phi_{x}\big| \right]\Big\}.
\end{eqnarray}	
As $\xi\rightarrow-\infty$, by \eqref{decay u_-}, one has
\begin{equation}\label{e5}
	 |U_x(\xi)|\sim |U-u_-|\sim \mathrm{e}^{-\lambda_-|\xi|},
\end{equation}
and as $\xi\rightarrow+\infty$,
\begin{equation}\label{e6}
|U_x(\xi)|\sim U^{2-m}\sim |\xi|^{-1-\frac{1}{1-m}},	
\end{equation}
then
\begin{equation}\label{20}
\int_{0}^{t}\int \langle\xi-\xi_\star\rangle ^{\beta+1}f''(U)|U_x|\phi_x^2\leq C\int_{0}^{t}\int \langle\xi-\xi_\star\rangle ^{\beta}\phi_x^2.
\end{equation}	
In view of $|U_x|\leq CU^{2-m}$, we have
\begin{equation}\nonumber
\int_{0}^{t}\int\langle\xi-\xi_\star\rangle ^{\beta} \bigg|\left( -s-d'+2(1-m)\frac{U_x}{U^{2-m}}\right)\bigg|\phi_x^2	\leq C \int_{0}^{t}\int\langle\xi-\xi_\star\rangle ^{\beta} \phi_x^2.	
\end{equation}	
Moreover, utilizing \eqref{e5} and \eqref{e6}, we get by the Cauchy-Schwarz inequality that	
	 \begin{align} \label{21}
&\int_{0}^{t}\int\left( \langle\xi-\xi_\star\rangle ^{\beta+1}\frac{|U_x|}{U^{2-m}}+\frac{\langle\xi-\xi_\star\rangle ^{\beta} }{U^{1-m}}\right)|\phi_x\phi_{xx}| \notag \\
&\leq \frac{1}{4}\int_{0}^{t}\int\langle\xi-\xi_\star\rangle ^{\beta+1}\frac{\phi_{xx}^2}{U^{1-m}}+C\int_{0}^{t}\int\left(\langle\xi-\xi_\star\rangle ^{\beta+1}\frac{ |U_x|}{U}+\frac{\langle\xi-\xi_\star\rangle ^{\beta-1}}{U^{1-m}} \right)\phi_{x}^2 \notag \\
&\leq \frac{1}{4}\int_{0}^{t}\int\langle\xi-\xi_\star\rangle ^{\beta+1}\frac{\phi_{xx}^2}{U^{1-m}}+C\int_{0}^{t}\int\langle\xi-\xi_\star\rangle ^{\beta}\phi_{x}^2.
	 \end{align}
Decompose the sixth term on RHS of \eqref{16} as	
\begin{equation}\nonumber
\int_{0}^{t}|d'(\tau)|\int \langle\xi-\xi_\star\rangle ^{\beta+1}|U_x||\phi_x|=\int_{0}^{t}|d'(\tau)|\left(\int_{0}^{s\tau+d(\tau)}+\int_{s\tau+d(\tau)}^{+\infty} \right)\langle\xi-\xi_\star\rangle ^{\beta+1}|U_x||\phi_x|. 	
\end{equation}	
For $\xi>0$, it holds
	\begin{align} \label{18}
		&\int_{0}^{t}|d'(\tau)|\int_{s\tau+d(\tau)}^{+\infty} \langle\xi-\xi_\star\rangle ^{\beta+1}|U_x||\phi_x| \notag \\
&\leq\frac{1}{2}\int_{0}^{t}\int_{s\tau+d(\tau)}^{+\infty}\langle\xi-\xi_\star\rangle^{\beta}\frac{\phi_x^2 }{U^{1-m}}+\frac{1}{2}\int_{0}^{t}|d'(\tau)|^2\int_{s\tau+d(\tau)}^{+\infty}\langle\xi-\xi_\star\rangle^{\beta+2}U_x^2U^{1-m} \notag \\
&\leq C\left( \int_{0}^{t}\int\langle\xi-\xi_\star\rangle^{\beta}\frac{\phi_x^2 }{U^{1-m}}+\int_{0}^{t}|d'(\tau)|^2\right),
	\end{align}
where we have used the fact $U_x^2U^{1-m}\sim\langle\xi-\xi_\star\rangle^{-\frac{5-3m}{ 1-m}} $ as $\xi\rightarrow+\infty$ and 	
\begin{equation}\nonumber
	\int_{s\tau+d(\tau)}^{+\infty}\langle\xi-\xi_\star\rangle^{\beta+2}U_x^2U^{1-m}\leq	C \int_{s\tau+d(\tau)}^{+\infty}\langle\xi-\xi_\star\rangle^{-4}\leq C,
\end{equation}	
due to $\beta\leq \frac{3m-1}{1-m}$. For $\xi<0$,
	\begin{align} \label{17}
		&\int_{0}^{t}|d'(\tau)|\int_{0}^{s\tau+d(\tau)} \langle\xi-\xi_\star\rangle ^{\beta+1}|U_x||\phi_x|	\notag \\
&\leq \int_{0}^{t}|d'(\tau)|\left(\sup_{x\in [0,s\tau+d(\tau)]}\langle\xi-\xi_\star\rangle^\frac{\beta+\frac{1}{2}}{2} |\phi_x| \right)\int_{0}^{s\tau+d(\tau)} \langle\xi-\xi_\star\rangle^\frac{\beta+\frac{1}{2}}{2}  |U_x|,
	\end{align}	
where, in view of \eqref{st+d(t)}, a direct calculation gives	
\begin{equation}\nonumber
	\int_{0}^{s\tau+d(\tau)} \langle\xi-\xi_\star\rangle^\frac{\beta+\frac{1}{2}}{2}  |U_x|\leq C\left(d_0+\tau \right) ^\frac{\beta+\frac{5}{2}}{2},
\end{equation}	
and
\begin{equation}\nonumber
	\begin{aligned}
		&\langle\xi-\xi_\star\rangle^{\beta+\frac{1}{2}} \phi_x^2 \\
&=-\int_{x}^{0}\left[ \frac{\partial}{\partial x}\left(\langle\xi-\xi_\star\rangle^{\beta+\frac{1}{2}} \right)\phi_x^2+2\langle\xi-\xi_\star\rangle^{\beta+\frac{1}{2}}\phi_x\phi_{xx}\right]+ \langle\xi-\xi_\star\rangle^{\beta+\frac{1}{2}}\big|_{x=0} \phi_x^2(0,\tau)\\&\leq C\int_{0}^{s\tau+d(\tau)}\left(\langle\xi-\xi_\star\rangle^{\beta}\phi_x^2+\langle\xi-\xi_\star\rangle^{\beta+1}\phi_{xx}^2 \right)+ \langle\xi-\xi_\star\rangle^{\beta+\frac{1}{2}}\big|_{x=0} \phi_x^2(0,\tau).
	\end{aligned}
\end{equation}	
Then, by \eqref{phix 0} and $\langle\xi-\xi_\star\rangle^{\beta+\frac{1}{2}}\big|_{x=0}\sim (d_0+\tau)^{\beta+\frac{1}{2}}$ due to \eqref{st+d(t)}, \eqref{17} is transformed into
	\begin{align}
		&\int_{0}^{t}|d'(\tau)|\int_{0}^{s\tau+d(\tau)} \langle\xi-\xi_\star\rangle^{\beta+1} |U_x||\phi_x|\nonumber\\&\leq C\!\int_{0}^{t}\!|d'(\tau)|\!\left[\int_{0}^{s\tau+d(\tau)}\!\left(\!\langle\xi\!-\!\xi_\star\rangle^{\beta}\phi_x^2\!+\!\langle\xi\!-\!\xi_\star\rangle^{\beta+1}\phi_{xx}^2 \right)\!\!+\! \langle\xi\!-\!\xi_\star\rangle^{\beta+\frac{1}{2}}\big|_{x=0} \phi_x^2(0,\tau)\right]^\frac{1}{2}\!\left(d_0\!+\!\tau \right) ^\frac{\beta+\frac{5}{2}}{2}\nonumber\\&
		\leq\!\frac{1}{4}\int_{0}^{t}\!\int_{0}^{s\tau+d(\tau)}\!\langle\xi\!-\!\xi_\star\rangle^{\beta+1}\phi_{xx}^2 \!+\!C\left(\!\mathrm{e}^{-2\gamma d_0}\!+\!\int_{0}^{t}\!\int_{0}^{s\tau+d(\tau)}\!\langle\xi\!-\!\xi_\star\rangle^{\beta}\phi_x^2\!+\!\int_{0}^{t}\!|d'(\tau)|^2\left(d_0\!+\!\tau \right) ^{\beta+\frac{5}{2}}\! \right)	.\label{19} \nonumber
	\end{align}
This along with \eqref{18}, using \eqref{e1}, we have
\begin{eqnarray}\label{22}
&&\int_{0}^{t}|d'(\tau)|\int\langle\xi-\xi_\star\rangle^\beta |U_x||\phi_x| \notag \\
&&\leq\frac{1}{4}\int_{0}^{t}\int\langle\xi-\xi_\star\rangle^{\beta+1}\phi_{xx}^2 \notag \\
&&\ \ \ +C\left(\mathrm{e}^{-2\gamma d_0}+\int_{0}^{t}\int\langle\xi-\xi_\star\rangle^{\beta}\frac{\phi_x^2 }{U^{1-m}}
+\int_{0}^{t}|d'(\tau)|^2\left(d_0+\tau \right) ^{\beta+\frac{5}{2}} \right).
\end{eqnarray}
Furthermore, by virtue of \eqref{e5}, \eqref{e6} and $\left\|\langle\xi-\xi_\star\rangle ^{\frac{\beta+1 }{2}}\phi_x(\cdot,t)\right\|_{L^{\infty}}\leq CN(T)$ due to \eqref{N(t)}, it follows from the Cauchy-Schwarz inequality and \eqref{F1} that
\begin{equation}\nonumber
	\begin{aligned}
	&\int_{0}^{t}\int\bigg|\langle\xi-\xi_\star\rangle ^{\beta+1}F_x\phi_x\bigg| \\&
	\leq CN(T)\int_{0}^{t}\int\langle\xi-\xi_\star\rangle ^{\beta+1}\frac{\phi_{xx}^2 }{U^{1-m}}+C\int_{0}^{t}\int\left(U^{1-m}\phi_x^2+\langle\xi-\xi_\star\rangle ^{\frac{\beta+1 }{2}}|U_x|\phi_x^2\right)\\&\leq CN(T)\int_{0}^{t}\int\langle\xi-\xi_\star\rangle ^{\beta+1}\frac{\phi_{xx}^2 }{U^{1-m}}+C\int_{0}^{t}\int\langle\xi-\xi_\star\rangle ^{\beta}\phi_x^2.
		\end{aligned}
\end{equation}	
Similarly, we deduce from \eqref{Gz}, \eqref{e5} and $\left\|\phi_x(\cdot,t)/U\right\|_{L^{\infty}}\leq CN(T)$ that	
	\begin{align}
	&\int_{0}^{t}\int\bigg|\langle\xi-\xi_\star\rangle ^{\beta+1}G_x\phi_{xx}\bigg|\nonumber\\&
	\leq CN(T)\int_{0}^{t}\int\langle\xi-\xi_\star\rangle ^{\beta+1}\frac{\phi_{xx}^2 }{U^{1-m}}+C\int_{0}^{t}\int\langle\xi-\xi_\star\rangle ^{\beta}\frac{\phi_x^2 }{U^{1-m}}\cdot\langle\xi-\xi_\star\rangle\frac{U_x^2 }{U^{2}}\nonumber\\& \leq CN(T)\int_{0}^{t}\int\langle\xi-\xi_\star\rangle ^{\beta+1}\frac{\phi_{xx}^2 }{U^{1-m}}+C\int_{0}^{t}\int\langle\xi-\xi_\star\rangle ^{\beta}\frac{\phi_x^2 }{U^{1-m}},	\nonumber
		\end{align} 	
where we have used \eqref{e5} and $\langle\xi-\xi_\star\rangle\sim U^{1-m}$ as $\xi\rightarrow+\infty$	in the last inequality, and by $|U_x|\leq CU^{2-m}$,	and
\begin{eqnarray}\label{23}
&&\int_{0}^{t}\int\bigg|\langle\xi-\xi_\star\rangle ^{\beta}G_x\phi_{x}\bigg|\notag \\
&&	\leq CN(T)\int_{0}^{t}\int\langle\xi-\xi_\star\rangle ^{\beta}\frac{\phi_{xx}^2 }{U^{1-m}}+C\int_{0}^{t}\int\left(\langle\xi-\xi_\star\rangle ^{\beta}\frac{\phi_x^2 }{U^{1-m}} +\langle\xi-\xi_\star\rangle ^{\beta}\phi_x^2\right).
\end{eqnarray}	
Therefore, combining \eqref{20}-\eqref{21}, \eqref{22}-\eqref{23} and \eqref{16}, one has
\begin{equation}\nonumber
	\begin{aligned}
		&\frac{1}{2}\int\langle\xi-\xi_\star\rangle ^{\beta+1} \phi_x^2+\left(\frac{1}{4}-CN(T) \right)\int_{0}^{t}\int\langle\xi-\xi_\star\rangle ^{\beta+1}\frac{\phi_{xx}^2}{U^{1-m}}\\&\text{\small$\leq\frac{1}{2}\int\langle \xi_0-\xi_\star\rangle ^{\beta+1} \phi_{0x}^2+C\int_{0}^{t}\int\left(d_0^{-1}+\langle\xi-\xi_\star\rangle ^{\beta-1}\phi^2 +\langle\xi-\xi_\star\rangle ^{\beta}\frac{\phi_x^2 }{U^{1-m}}\right) +\int_{0}^{t}\left(d_0+\tau \right) ^{\beta+\frac{5}{2}}|\phi_{x x}^2(0,\tau)|$}\\&\text{\small$\leq C\left( \int\langle \xi_0-\xi_\star\rangle^\beta\phi_0^2\!+\!\int\langle \xi_0-\xi_\star\rangle ^{\beta+1} \phi_{0x}^2\!+\!\int\frac{\phi_0^2 }{U^{3m-1}}+d_0^{-1}\right)\!+\!	C\int_{0}^{t}\left( (d_0\!+\!\tau)^{\beta+\frac{5}{2}}\!+\!(d_0\!+\!\tau)^3\right)|\phi_{x x}^2(0,\tau)|$}.
	\end{aligned}
\end{equation}	
Taking $N(T)$ sufficiently small such that $CN(T)\leq 1/8$, we then get \eqref{H1-2 estimate}.	
\end{proof}	

\subsubsection{Estimates for second order derivative}	
\begin{lemma}\label{H2}
	Let the assumptions of Proposition \ref{proposition priori estimate} hold. If $N(T)+d_0^{-\frac{1}{2}}\ll1$, then there exists a constant $C>0$ independent of $T$ such that
	\begin{equation}\label{H2 estimate}
		\begin{aligned}
			&\|\phi_{xx}(\cdot,t)\|_{w_3}^2+\int_0^t\left\|\phi_{xxx}(\cdot,\tau)\right\|_{w_6}^2 \mathrm{d} \tau   \\&\leq C\left(\|\phi_0\|_{w_{1,0}}^2+\|\phi_{0x}\|_{w_{2,0}}^2+\|\phi_{0xx}\|_{w_{3,0}}^2+d_0^{-1} \right)+C\int_{0}^{t}(d_0+\tau)^{\beta+3}|\phi_{xx}(0,\tau)|^2\mathrm{d} \tau,
		\end{aligned}
	\end{equation}	
where  $w_1=U^{1-3m}$, $w_2=U^{-(1+m)}$, $w_3=U^{m-3}$, $w_6=U^{2m-4}$ for $U=U(x-st-d(t))$ and $w_{i,0}~(i=1,2,3)$ for $U=U(x-d_0)$.
\end{lemma}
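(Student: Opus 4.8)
The plan is to run the weighted-energy scheme of Lemma \ref{H1}, one derivative higher and with the singular weight $w_3=U^{m-3}$. First I would differentiate \eqref{phizequ} once more in $x$,
\begin{equation}\nonumber
\phi_{xxt}+\partial_x\!\left(f''(U)U_x\phi_x+f'(U)\phi_{xx}\right)-\left(\frac{\phi_x}{U^{1-m}}\right)_{xxx}=d'(t)U_{xx}+F_{xx}+\frac{1}{m}G_{xxx},
\end{equation}
multiply by $\phi_{xx}/U^{3-m}$ and integrate over $(0,+\infty)\times(0,t)$. Integration in $t$ of the time-derivative term gives $\frac{1}{2}\|\phi_{xx}(\cdot,t)\|_{w_3}^2-\frac{1}{2}\|\phi_{0xx}\|_{w_{3,0}}^2$ together with the favourable term $\frac{3-m}{2}\int_0^t\int\frac{|U_x|}{U^{4-m}}(s+d'(\tau))\phi_{xx}^2$, nonnegative since $|d'|\ll s$ by \eqref{3.28}; this also supplies an extra dissipation $\int_0^t\int\frac{|U_x|}{U^{4-m}}\phi_{xx}^2$ used below. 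Integrating the diffusion term by parts (once for the principal part, with a handful of further integrations by parts and Cauchy--Schwarz for the lower-order cross terms) produces the dissipation $\int_0^t\|\phi_{xxx}(\cdot,\tau)\|_{w_6}^2$ plus boundary terms at $x=0$. Combining the diffusion and convection contributions, the latter amount to $\int_0^t\left[c(\tau)\phi_{xx}\phi_{xxx}(0,\tau)+e(\tau)\phi_{xx}^2(0,\tau)\right]\mathrm{d}\tau$ with $c,e$ bounded and $c$ bounded away from $0$; writing $c(\tau)\phi_{xx}\phi_{xxx}(0,\tau)$ as a bounded, strictly positive multiple of $\phi_{xx}\phi_{xxx}(0,\tau)/U^{1-m}(-s\tau-d(\tau))$, applying \eqref{boundary phixxx}, and using the convexity of $f$ (so $f'(U)\le f'(u_-)$), this expression is bounded below by a nonnegative multiple of $\int_0^t\phi_{xx}^2(0,\tau)\mathrm{d}\tau$ minus $C\int_0^t(U_x^2+U_{xx}^2)\big|_{x=0}\mathrm{d}\tau$; since $-s\tau-d(\tau)\to-\infty$ by \eqref{3.37}, the exponential decay \eqref{decay u_-} makes the last integral $\le C\mathrm{e}^{-2\gamma d_0}\le Cd_0^{-1}$.

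Boundary contributions at $x=+\infty$ generated by integrating the singular weights by parts are removed exactly as in \cite{critical fast diffusion}: one carries out every manipulation with $\phi_{xx}/U^{3-m}$ replaced by $\chi_R(x)\phi_{xx}/U^{3-m}$ for a smooth cut-off $\chi_R$ supported in $[0,R]$ and then lets $R\to\infty$, the limit being justified by $\phi_{xx}\in L^2_{w_3}$, $\phi_{xxx}\in L^2_{w_6}$ and the decay of $U$; I shall only record this step. For the source $\frac{1}{m}G_{xxx}$ I would integrate by parts once, so that only $G_{xx}$ enters (never $\phi_{xxxx}$); then, using the bounds of Lemma \ref{F G}, the estimates $|U_x|\le CU^{2-m}$ and $|U_{xx}|\le CU^{3-2m}$ of Remark \ref{remark 2.1}, and the $L^\infty$ controls \eqref{sobolev embedding3}--\eqref{sobolev embedding4}, every nonlinear term is dominated by $CN(T)$ times the dissipation $\int_0^t\|\phi_{xxx}(\cdot,\tau)\|_{w_6}^2$ plus $C$ times the already-controlled quantities $\int_0^t\int\frac{\phi_x^2}{U^{2m}}$ and $\int_0^t\int\frac{\phi_{xx}^2}{U^2}$, which are bounded through Lemmas \ref{L2-1} and \ref{H1}.

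The shift-driven terms, chiefly $\int_0^t|d'(\tau)|\int\frac{|U_{xx}|}{U^{3-m}}|\phi_{xx}|$ (and the similar lower-order ones), are treated by the usual splitting $\int_0^{s\tau+d(\tau)}+\int_{s\tau+d(\tau)}^{+\infty}$. On $\{x>s\tau+d(\tau)\}$ one uses $U\le C|U_x|^{1/(2-m)}$ from \eqref{e3}, the decay \eqref{f''>0}, and H\"older's and Cauchy--Schwarz inequalities to absorb a small fraction of $\int\frac{|U_x|}{U^{4-m}}\phi_{xx}^2$ and leave $C\int_0^t|d'(\tau)|^2$. On $\{x<s\tau+d(\tau)\}$, where $U$ is bounded above and below, one uses the Fubini estimate $\int_0^{s\tau+d(\tau)}|\phi_{xx}|\le C(d_0+\tau)^{3/2}\left(\int_0^{s\tau+d(\tau)}\phi_{xxx}^2\right)^{1/2}+C(d_0+\tau)|\phi_{xx}(0,\tau)|$, then Cauchy--Schwarz and \eqref{st+d(t)}, leaving a small multiple of $\int_0^t\int\frac{\phi_{xxx}^2}{U^{4-2m}}$ plus $C\int_0^t(d_0+\tau)^3|d'(\tau)|^2+C\int_0^t|\phi_{xx}(0,\tau)|^2$; since $1\le(d_0+\tau)^3\le(d_0+\tau)^{\beta+3}$ for $d_0$ large, \eqref{e2} converts everything into $Cd_0^{-1}+C\int_0^t(d_0+\tau)^{\beta+3}|\phi_{xx}(0,\tau)|^2$.

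Collecting all terms, choosing $N(T)+d_0^{-1/2}$ small enough to absorb the small multiples of $\int_0^t\|\phi_{xxx}\|_{w_6}^2$ and of $\int_0^t\int\frac{|U_x|}{U^{4-m}}\phi_{xx}^2$ into their left-hand counterparts, adding Lemmas \ref{L2-1} and \ref{H1} to dispose of the residual lower-order integrals, and absorbing $C\int_0^t|\phi_{xx}(0,\tau)|^2\le C\int_0^t(d_0+\tau)^{\beta+3}|\phi_{xx}(0,\tau)|^2$ into the right-hand side, yields \eqref{H2 estimate}. The main obstacle is the $x=0$ boundary term left by the integration by parts in the diffusion term: it is of the ``wrong'' order $\phi_{xx}\phi_{xxx}(0,t)$ and cannot be controlled directly, which is exactly why the pointwise identity \eqref{phixxx 0} for $\phi_{xxx}(0,t)$ and the resulting lower bound \eqref{boundary phixxx} are indispensable; it is also the reason $w_3=U^{m-3}$ has to be chosen so that the residual boundary weight $U^{-(3-m)}(-st-d(t))$ remains bounded.
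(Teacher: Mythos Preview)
Your strategy is essentially the paper's: differentiate once more, test against $\phi_{xx}/U^{3-m}$, recover the dissipation $\int_0^t\|\phi_{xxx}\|_{w_6}^2$, treat the $x=0$ boundary term via \eqref{boundary phixxx}, split the shift term at $x=s\tau+d(\tau)$, and close with Lemmas \ref{L2-1} and \ref{H1}. Two points deserve comment.

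\emph{Regularisation.} You handle the singular weight at $x=+\infty$ by a spatial cut-off $\chi_R$ and passage $R\to\infty$; the paper instead replaces the weight $U^{m-3}$ by $U_\epsilon^{m-3}$ with $U_\epsilon=U+\epsilon$, carries out all the estimates with $\epsilon>0$, and then sends $\epsilon\to0^+$ using Fatou's lemma. Both devices work; the $U_\epsilon$-route is slightly cleaner here because it keeps the weight smooth and nonsingular throughout without generating commutator terms, whereas the cut-off route (which the paper reserves for the third-order Lemma \ref{H3}) requires tracking the extra $\chi_R'$-terms and showing they are $O(L^{-1/2})$.

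\emph{The cubic $\phi_{xx}$-terms.} Your claim that the $L^\infty$ controls \eqref{sobolev embedding3}--\eqref{sobolev embedding4} dispose of all nonlinear contributions is not quite enough. Those inequalities bound $\phi/U^m$ and $\phi_x/U$ in $L^\infty$, but $G_{xx}$ contains a term $\phi_{xx}^2/U^{2-m}$ carrying no factor of $\phi_x$; after multiplication by $\phi_{xxx}/U^{3-m}$ (respectively $U_x\phi_{xx}/U^{4-m}$) this produces $\int_0^t\int\phi_{xx}^2|\phi_{xxx}|/U^{5-2m}$ and $\int_0^t\int|\phi_{xx}|^3/U^{4-m}$, which cannot be absorbed using only $\|\phi_x/U\|_{L^\infty}$. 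The paper closes this gap with a H\"older $L^\infty$--$L^2$--$L^2$ splitting together with the elementary Sobolev bound
\[
\Big\|\frac{\phi_{xx}}{U_\epsilon^{(3-m)/2}}\Big\|_{L^\infty}^2\le C\Big(\int\frac{\phi_{xxx}^2}{U^{1-m}U_\epsilon^{3-m}}+\int\frac{\phi_{xx}^2}{U^{2}}\Big),
\]
combined with $\|\phi_{xx}/U^{(3-m)/2}\|_{L^2}\le N(T)$; this yields exactly $CN(T)\int_0^t\|\phi_{xxx}\|_{w_6}^2$ plus already-controlled remainders. You should insert this step explicitly.
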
	

\begin{proof}
	Differentiate \eqref{phizequ} in $x$,
	\begin{eqnarray}\label{phizzequ}
		&&\phi_{xxt}+f'(U)\phi_{x xx}-\left(\frac{\phi_{xxx}}{U^{1-m}}\right)_{x} \notag \\
&&=-2f''(U)U_x \phi_{xx}-f'''(U)U_x^2\phi_x-f''(U)U_{xx}\phi_x \notag \\
&&\ \ \ +\left[-2(1-m)\frac{U_x}{U^{2-m}}\phi_{xx}-(1-m)\frac{U_{xx}}{U^{2-m}}\phi_{x}+(1-m)(2-m)\frac{U_x^2}{U^{3-m}}\phi_{x}\right]_x \notag \\
&&\ \ \ +d'(t)U_{xx}+F_{xx}+\frac{1}{m}G_{xxx}.	
	\end{eqnarray}	
Set $U_\epsilon\triangleq U+\epsilon$, where $\epsilon>0$ is a small constant. It is easy to see that $U_{\epsilon x}=U_x$.
Multiplying \eqref{phizzequ} by $\frac{\phi_{xx}}{U_\epsilon^{3-m}}$, integrating the result over $(0,+\infty)\times (0,t)$, we then get
	\begin{align}
		&\frac{1}{2}\int\frac{\phi_{xx}^2}{U_\epsilon^{3-m}} +\int_{0}^{t}\int\frac{\phi_{xxx}^2}{U^{1-m}U_\epsilon^{3-m}}+\int_{0}^{t}\left(\frac{\phi_{xx}\phi_{xxx}}{U^{1-m}U_\epsilon^{3-m}}
+\frac{1}{m}G_{xx}\frac{\phi_{xx}}{U_\epsilon^{3-m}}\right)\bigg|_{x=0}\nonumber\\
&\leq\frac{1}{2} \int\frac{\phi_{0xx}^2}{U_\epsilon^{3-m}}+C\left[ \int_{0}^{t}\int\left( \frac{|U_x|}{U_\epsilon^{4-m}}\big|\left( -s-d'\right)\big|+f''(U)\frac{|U_{x}| }{U_\epsilon^{3-m}}\right)\phi_{xx}^2\right.\nonumber\\&\left.\quad+\int_{0}^{t}\!\int \left(|f'''(U)|\frac{U_x^2 }{U_\epsilon^{3-m}}\!+\! f''(U)\frac{ |U_{xx}| }{U_\epsilon^{3-m}}\right)|\phi_x\phi_{xx}|\!+\!\int_{0}^{t}\int\left(\frac{|U_x|}{U^{1-m}U_\epsilon^{4-m} }\!+\!\frac{|f'(U)| }{ U_\epsilon^{3-m}} \right)|\phi_{xx}\phi_{xxx}| \right.\nonumber\\
&\left.\quad+\int_{0}^{t}\int\bigg|\left[-2(1-m)\frac{U_x}{U^{2-m}}\phi_{xx}-(1-m)\frac{U_{xx}}{U^{2-m}}\phi_{x}
+(1-m)(2-m)\frac{U_x^2}{U^{3-m}}\phi_{x}\right]\frac{\phi_{xxx}}{U_\epsilon^{3-m}}\bigg|\right.\nonumber\\
&\left.\quad+\int_{0}^{t}\int\bigg|\left[-2(1-m)\frac{U_x}{U^{2-m}}\phi_{xx}
-(1-m)\frac{U_{xx}}{U^{2-m}}\phi_{x}+(1-m)(2-m)\frac{U_x^2}{U^{3-m}}\phi_{x}\right]\frac{U_x}{U_\epsilon^{4-m}}\phi_{xx}\bigg|\right.\nonumber\\
&\left.\quad-\int_{0}^{t}\left[\left( -2(1-m)\frac{U_x}{U^{2-m}}\phi_{xx}-(1-m)\frac{U_{xx}}{U^{2-m}}\phi_{x}
+(1-m)(2-m)\frac{U_x^2}{U^{3-m}}\phi_{x}\right)\frac{\phi_{xx}}{U_\epsilon^{3-m}}\right]\bigg|_{x=0}\right.\nonumber\\
&\quad+\int_{0}^{t}|d'(\tau)|\int\frac{ |U_{xx}|}{U_\epsilon^{3-m}}|\phi_{xx}| \notag \\
&\quad +\int_{0}^{t}\int \left( \bigg|F_{xx} \frac{ \phi_{xx}}{U_\epsilon^{3-m}}\bigg|
+\bigg|G_{xx}\frac{ \phi_{xxx}}{ U_\epsilon^{3-m}}\bigg|+\bigg|\frac{U_x }{ U_\epsilon^{4-m}}G_{xx}\phi_{xx}\bigg|\right)\Big].\label{eq47}	
	\end{align}
In view of \eqref{Gzz}, \eqref{boundary phixxx}, $\left\|\phi_x/U\left(\cdot,t\right)\right\|_{L^{\infty}}\leq CN(T)$,  $|\phi_{xx}(0,t)|\leq CN(T)$ and $U_\epsilon(-st-d(t))<u_-+1$, the value from the boundary $x=0$ is equal to
	\begin{align}\label{eq41}
	&\left(\frac{\phi_{xx}\phi_{xxx}}{U^{1-m}U_\epsilon^{3-m}}+\frac{1}{m}G_{xx}\frac{\phi_{xx}}{U_\epsilon^{3-m}}	 \right) \bigg|_{x=0} \notag \\
&\geq \left(1-CN(T)\right)\frac{\phi_{xx}\phi_{xxx}}{U^{1-m}U_\epsilon^{3-m}}\bigg|_{x=0}-CN(T)\left( \phi_x^2+\phi_{xx}^2\right)\big|_{x=0} \notag \\
&\geq c_2\phi_{xx}^2(0,t)-C\left( U_x^2+U_{xx}^2+\phi_{x}^2\right)\big|_{x=0},		
	\end{align}
for a constant $c_2>0$, provided $N(T)$ is sufficiently small.

Next, we estimate the terms on RHS of \eqref{eq47}. By virtue of $d\in C^1$, $|U_x|\leq C U^{2-m}$ and $|U_{xx}|\leq C U^{3-2m}$, noting $\frac{1}{U_\epsilon}\leq \frac{1}{U}\leq\frac{C}{U^2} $, we get by Cauchy-Schwarz inequality that
\begin{equation}\label{eq52}\nonumber
\int_{0}^{t}\int\left( \frac{|U_x|}{U_\epsilon^{4-m}}\big|\left( -s-d'\right)\big|+f''(U)\frac{|U_{x}| }{U_\epsilon^{3-m}}\right)\phi_{xx}^2\leq C	\int_{0}^{t}\int \frac{\phi_{xx}^2 }{U^{2}},
\end{equation}
\begin{equation}\nonumber
\int_{0}^{t}\!\int \left(|f'''(U)|\frac{U_x^2 }{U_\epsilon^{3-m}}\!+\! f''(U)\frac{ |U_{xx}| }{U_\epsilon^{3-m}}\right)|\phi_x\phi_{xx}|\leq C\int_{0}^{t}\int\frac{|\phi_x\phi_{xx}| }{U^{m}}	\leq C\int_{0}^{t}\int\left( \frac{\phi_x^2 }{U^{2m}}+\frac{\phi_{xx}^2 }{U^{2m}} \right),
\end{equation}
and that
\begin{equation}\nonumber
\int_{0}^{t}\int\left(\frac{|U_x|}{U^{1-m}U_\epsilon^{4-m} }\!+\!\frac{|f'(U)| }{ U_\epsilon^{3-m}} \right)|\phi_{xx}\phi_{xxx}|	 
\leq \frac{1}{4}\int_{0}^{t}\int\frac{\phi_{xxx}^2}{U^{1-m}U_\epsilon^{3-m}}+C\int_{0}^{t}\int\frac{\phi_{xx}^2 }{U^{2}}.
\end{equation}
Similarly, it holds
\begin{equation}\nonumber
\text{the fifth term on RHS of \eqref{eq47}}
\leq \frac{1}{4}\int_{0}^{t}\int\frac{\phi_{xxx}^2}{U^{1-m}U_\epsilon^{3-m}}+C\int_{0}^{t}\int\left( \frac{\phi_{x}^2 }{U^{2m}}+\frac{\phi_{xx}^2 }{U^{2}}\right),	
\end{equation}
and noting $U^2\leq \frac{C}{U^2}$, it has
	\begin{equation}
\text{the sixth term on RHS of \eqref{eq47}}\leq C\int_{0}^{t}\int\left(\frac{\phi_{xx}^2 }{U^{2}} +\frac{|\phi_x\phi_{xx}| }{U^{m-1}} \right) \leq C\int_{0}^{t}\int\left(\frac{\phi_{x}^2 }{U^{2m}} +\frac{\phi_{xx}^2 }{U^{2}} \right)\nonumber
	\end{equation}
and
\begin{equation}\label{eq46}
		\text{the seventh term on RHS of \eqref{eq47}}
		\leq \frac{c_2}{2}\int_{0}^{t}\phi_{xx}^2(0,t)+C\int_{0}^{t} \phi_{x}^2(0,t),
\end{equation}
where we have ignored the negative term $- 2(1-m)\frac{|U_x|}{U^{2-m}U_\epsilon^{3-m}}\phi_{xx}^2\big|_{x=0}$.
We decompose the eighth term on RHS of \eqref{eq47} as
\begin{equation}\nonumber
\int_{0}^{t}|d'(\tau)|\int \frac{|U_{xx}|}{U_\epsilon^{3-m}}|\phi_{xx}|=\int_{0}^{t}|d'(\tau)|\left( \int_{0}^{s\tau+d(\tau)}+\int_{s\tau+d(\tau)}^{+\infty}\right)\frac{|U_{xx}|}{U_\epsilon^{3-m}}|\phi_{xx}|.
\end{equation}
For $\xi>0$, by \eqref{f''>0}, \eqref{e1}, $|U_{xx}|\leq CU^{3-2m}$, H\"{o}lder's inequality and the Cauchy-Schwarz inequality, we have
	\begin{align}\label{eq48}
		&\int_{0}^{t}|d'(\tau)|\int_{s\tau+d(\tau)}^{+\infty}\frac{|U_{xx}|}{U_\epsilon^{3-m}}|\phi_{xx}| \notag \\
&\leq C\int_{0}^{t}|d'(\tau)|\int_{s\tau+d(\tau)}^{+\infty}\frac{|\phi_{xx}|}{U^{m}} \notag \\
&\leq C\int_{0}^{t}|d'(\tau)|\left(\int_{s\tau+d(\tau)}^{+\infty}\frac{\phi_{xx}^2}{U^{2}} \right)^{\frac{1}{2}}\left(\int_{s\tau+d(\tau)}^{+\infty}\frac{1}{U^{2m-2} }\right)^{\frac{1}{2}}	 \notag \\
&\leq C\left(\int_{0}^{t}\int\frac{\phi_{xx}^2}{U^{2}}+\int_{0}^{t}|d'(\tau)|^2\int_{s\tau+d(\tau)}^{+\infty}\frac{1 }{U^{2m-2} } \right) \notag \\
&\leq C\left(\mathrm{e}^{-2\gamma d_0}+ \int_{0}^{t}\int\frac{\phi_{xx}^2}{U^{2}}+\int_{0}^{t}|\phi_{xx}(0,\tau)|^2\right).
	\end{align}
For $\xi<0$, following the proof of \eqref{eq16}, one can see that
\begin{equation}\nonumber
	\int_{0}^{s\tau+d(\tau)}|\phi_{xx}(x,\tau)|\mathrm{d}x	 \leq|s\tau+d(\tau)|^{\frac{3}{2}}\left(\int_{0}^{s\tau+d(\tau)}|\phi_{yyy}(y,\tau)|^2\mathrm{d}y\right)^{\frac{1}{2}}+|s\tau+d(\tau)||\phi_{yy}(0,\tau)|,
\end{equation}
which in combination with $\frac{1}{U_\epsilon}\leq\frac{C}{U}$, $U(-st-d(t))>U(0)$, \eqref{e1} and \eqref{st+d(t)} implies that
	\begin{align}\label{eq49}
		&\int_{0}^{t}|d'(\tau)|\int_{0}^{s\tau+d(\tau)}\frac{|U_{xx}|}{U_\epsilon^{3-m}}|\phi_{xx}| \notag 
		\\
&\leq C\int_{0}^{t}(d_0+\tau)^{\frac{3}{2}}|d'(\tau)|\left(\int_{0}^{s\tau+d(\tau)}\phi_{xxx}^2\right)^{\frac{1}{2}}+C\int_{0}^{t}|d'(\tau)|(d_0+\tau)|\phi_{xx}(0,\tau)| \notag \\
&\leq\frac{1}{8u_-^{4-2m}}\int_{0}^{t}\int\phi_{xxx}^2+C\left(  \int_{0}^{t}(d_0+\tau)^{3}|d'(\tau)|^2\!+\!\int_{0}^{t}(d_0+\tau)^2|\phi_{xx}(0,\tau)|^2\right) \notag \\
&\leq\frac{1}{8}\int_{0}^{t}\int\frac{\phi_{xxx}^2}{U^{4-2m}}\!\!+\!C\left(\mathrm{e}^{-2\gamma d_0}\!\!+\!\!\int_{0}^{t}(d_0+\tau)^{3}|\phi_{xx}(0,\tau)|^2 \right).
	\end{align}
Therefore, combining \eqref{eq48} and \eqref{eq49}, we arrive at
\begin{align}\label{eq54}
&\int_{0}^{t}|d'(\tau)|\int \frac{|U_{xx}|}{U_\epsilon^{3-m}}|\phi_{xx}| \notag \\
&\leq\frac{1}{8}\int_{0}^{t}\int\frac{\phi_{xxx}^2}{U^{4-2m}}+C\left(\mathrm{e}^{-2\gamma d_0}\!+\!\int_{0}^{t}\int\frac{\phi_{xx}^2 }{U^{2}}+\int_{0}^{t}(d_0+\tau)^{3}|\phi_{xx}(0,\tau)|^2 \right).
\end{align}
Furthermore, by $|U_x|\leq CU^{2-m}$ and $|U_{xx}|\leq CU^{3-2m}$, it follows from \eqref{Fzz} and \eqref{Gzz} that
\begin{equation}\nonumber
	|F_{xx}|\leq C\left[ \left(U^{4-2m}+U^{3-2m}\right) \phi_{x}^2+\phi_{xx}^2+U^{2-m}|\phi_{x}||\phi_{xx}|+|\phi_{x}||\phi_{xxx}|\right],
\end{equation}				
and
\begin{equation}\nonumber
	|G_{xx}| \leq C\left(\frac{\phi_{x}^2 }{U^m}+ \frac{\phi_{xx}^2 }{U^{2-m}}+\frac{|\phi_{x}||\phi_{xx}| }{U}+\frac{ |\phi_{x}||\phi_{xxx}|}{U^{2-m}}\right),
\end{equation}
Then, together with $\left\|\phi_x(\cdot,t)/U\right\|_{L^{\infty}}\leq CN(T)$, the last term on the right hand side of \eqref{eq47} can be estimated as
	\begin{align}
		&\int_{0}^{t}\int \left( \bigg|F_{xx} \frac{ \phi_{xx}}{U_\epsilon^{3-m}}\bigg|+\bigg|G_{xx}\frac{ \phi_{xxx}}{ U_\epsilon^{3-m}}\bigg|+\bigg|\frac{U_x }{ U_\epsilon^{4-m}}G_{xx}\phi_{xx}\bigg|\right)\nonumber\\
		&\leq C\int_{0}^{t}\int\left(\frac{|\phi_{xx}|^3 }{U_\epsilon^{3-m}}+\frac{|U_{x}||\phi_{xx}|^3}{U^{2-m}U_\epsilon^{4-m}}  \right)+C\int_{0}^{t}	 \int\frac{\phi_{xx}^2|\phi_{xxx}|}{U^{2-m}U_\epsilon^{3-m}}\nonumber\\&\quad+CN(T)\int_{0}^{t}\int \left[ \frac{\phi_{xxx}^2}{U^{1-m}U_\epsilon^{3-m}}+\left(1+\frac{1}{U^2} \right)\phi_{xx}^2 +\left(\frac{1}{U^{m}U_\epsilon} +\frac{1}{U^2}\right)|\phi_{x}||\phi_{xxx}| \right.\nonumber\\&\left.\quad+\left(U^{2-m}+ \frac{1 }{U^{1+m}}\right)|\phi_{x}||\phi_{xx}|+\left(\frac{1}{U_\epsilon^{2-m}}+\frac{ 1}{U_\epsilon^{3-m}} \right)|\phi_{xx}||\phi_{xxx}|\right] \nonumber\\&\leq C\int_{0}^{t}\int\frac{|\phi_{xx}|^3}{U_\epsilon^{4-m}}  \!+\!C\int_{0}^{t}	 \int\frac{\phi_{xx}^2|\phi_{xxx}|}{U^{2-m}U_\epsilon^{3-m}}\!+\! CN(T)\int_{0}^{t}\int \frac{\phi_{xxx}^2}{U^{1-m}U_{\epsilon}^{3-m}}+C\int_{0}^{t}\int\left(\frac{\phi_{x}^2}{U^{2m}}+  \frac{\phi_{xx}^2}{U^2} \right).\nonumber
	\end{align}
Utilizing $\|\phi_{xx}(\cdot,t)/U^{\frac{3-m}{2}}\|_{L^2}\leq N(T)$, we get from the H\"{o}lder's inequality that
\begin{equation}\label{II2}
	\begin{aligned}
 \int_{0}^{t}\int\frac{|\phi_{xx}|^3 }{U_\epsilon^{4-m}}&\leq C\int_{0}^{t}\left\|\frac{\phi_{xx} }{U_\epsilon^{\frac{3-m}{2}} }\right\|_{L^{\infty}}\left\|\frac{\phi_{xx} }{U_\epsilon^{\frac{3-m}{2}} }\right\|_{L^{2}}\left\|\frac{\phi_{xx} }{U_\epsilon} \right\|_{L^{2}}\\&
		\leq CN(T)\int_{0}^{t}\int\frac{\phi_{xxx}^2}{U^{1-m}U_\epsilon^{3-m}}+C \int_{0}^{t}\int\frac{\phi_{xx}^2 }{U^{2}},\nonumber
	\end{aligned}	
\end{equation}
where we have used
\begin{equation}\nonumber
		\frac{\phi_{xx}^2 }{U_\epsilon^{3-m} }\!=\!-\int_{x}^{+\infty}\!\!\left( \frac{\phi_{xx}^2 }{U_\epsilon^{3-m} }\right)_x\!=\!\int \frac{2\phi_{xx}\phi_{xxx} }{U_\epsilon^{3-m} }-(3-m)\int \frac{U_x}{U_\epsilon^{4-m} }\phi_{xx}^2\!\leq\! C\left(\! \int\frac{\phi_{xxx}^2 }{U^{1-m}U_\epsilon^{3-m}}\!+\!\int\frac{\phi_{xx}^2 }{U^{2}}\right).
\end{equation}
Similarly, it holds
	\begin{align} \label{eq55}
		 \int_{0}^{t}	 \int\frac{\phi_{xx}^2|\phi_{xxx}|}{U^{2-m}U_\epsilon^{3-m}}
		 &\leq C\int_{0}^{t} \left\|\frac{ \phi_{xx}}{U_{\epsilon}^{\frac{3-m}{2}}}
            \right\|_{L^{\infty}}\left\|\frac{\phi_{xx} }{U^{\frac{3-m}{2}} }\right\|_{L^{2}}
            \left\|\frac{\phi_{xxx} }{U^{\frac{1-m}{2}}U_\epsilon^{\frac{3-m}{2}}} \right\|_{L^{2}} \notag \\
		 &
		 \leq CN(T)\int_{0}^{t}\int\frac{\phi_{xxx}^2}{U^{1-m}U_\epsilon^{3-m}}+C \int_{0}^{t}\int\frac{\phi_{xx}^2 }{U^{2}}.
	\end{align}
Therefore, combining \eqref{eq41}-\eqref{eq46} with \eqref{eq54}-\eqref{eq55}, noting that \eqref{st+d(t)}, \eqref{phix 0}, $1\leq\frac{C}{U^{2m}}\leq \frac{C}{U^{2}}$ and
\begin{equation}\label{e7}
	\int_{0}^{t}\left( U_x^2+U_{xx}^2\right)\big|_{x=0}\leq C\int_{0}^{t}\mathrm{e}^{-s\tau-d(\tau)}\leq C\int_{0}^{t}\mathrm{e}^{-(d_0+\tau)}\leq Cd_0^{-1},
\end{equation}
if $d_0\gg1$, then
\begin{equation}\nonumber
	 \begin{aligned}
&\frac{1}{2}\int\frac{\phi_{xx}^2}{U_\epsilon^{3-m}} +\left(\frac{1}{2}-CN(T)\right)\int_{0}^{t}\int\frac{\phi_{xxx}^2}{U^{1-m}U_\epsilon^{3-m}}\\&\leq\frac{1}{8} \int_{0}^{t}\!\int\frac{\phi_{xxx}^2}{U^{4-2m}}\!+\!\frac{1}{2}\int\frac{\phi_{0xx}^2}{U_\epsilon^{3-m}} \!+\!C\left(d_0^{-1}\!+\!\int_{0}^{t}\!\!\int\left(\frac{\phi_x^2 }{U^{2m}}\!+\!\frac{\phi_{xx}^2 }{U^{2}} \right)\!+\!\!\int_{0}^{t}(d_0\!+\!\tau)^{3}|\phi_{xx}(0,\tau)|^2 \right).	 	
	 \end{aligned}
\end{equation}	
Taking $N(T)$ sufficiently small such that $CN(T)\leq 1/4$, utilizing \eqref{L2-1 estimate} and \eqref{H1 estimate}, we get
\begin{equation}\nonumber
	\begin{aligned}
		&\frac{1}{2}\int\frac{\phi_{xx}^2}{U_\epsilon^{3-m}} +\frac{1}{4}\int_{0}^{t}\int\frac{\phi_{xxx}^2}{U^{1-m}U_\epsilon^{3-m}}\\&\leq\frac{1}{8} \int_{0}^{t}\int\frac{\phi_{xxx}^2}{U^{4-2m}}\!+\!C\left(\int\frac{\phi_{0xx}^2}{U^{3-m}}\!+\!\int\frac{\phi_{0x}^2}{U^{1+m}}\!+\!\int\frac{\phi_{0}^2}{U^{3m-1}}+d_0^{-1} +\int_{0}^{t}(d_0+\tau)^{3}|\phi_{xx}(0,\tau)|^2 \right),	 	
	\end{aligned}
\end{equation} 	
where $C$ is independent of $\epsilon$. Now letting $\epsilon\rightarrow0^+$, it follows from the Fatou's Lemma that
	\begin{align}
		&\frac{1}{2}\int\frac{\phi_{xx}^2}{U^{3-m}} +\frac{1}{4}\int_{0}^{t}\int\frac{\phi_{xxx}^2}{U^{4-2m}}\leq\varliminf_{\epsilon\rightarrow0^+ }\left(\frac{1}{2}\int\frac{\phi_{xx}^2}{U_\epsilon^{3-m}} +\frac{1}{4}\int_{0}^{t}\int\frac{\phi_{xxx}^2}{U^{1-m}U_\epsilon^{3-m}}\right)\nonumber\\&\leq\frac{1}{8} \int_{0}^{t}\int\frac{\phi_{xxx}^2}{U^{4-2m}}\!+\!C\left(\int\frac{\phi_{0xx}^2}{U^{3-m}}\!+\!\int\frac{\phi_{0x}^2}{U^{1+m}}\!+\!\int\frac{\phi_{0}^2}{U^{3m-1}}+d_0^{-1} +\int_{0}^{t}(d_0+\tau)^{3}|\phi_{xx}(0,\tau)|^2 \right)\nonumber,	 	
	\end{align}
which implies that \eqref{H2 estimate}.
\end{proof}

\begin{lemma}\label{H2-2}
	Let the assumptions of Proposition \ref{proposition priori estimate} hold. Assume that $0<\beta \leq \frac{3m-1}{1-m}$, if $N(T)+d_0^{-\frac{1}{2}}\ll1$, then there exists a constant $C>0$ independent of $T$ such that
		\begin{align}\label{H2-2 estimate}
			&\|\phi_{xx}(\cdot,t)\|_{\langle x-st-d(t)\rangle ^{\beta+2}}^2+\int_{0}^{t}(d_0+\tau)^{\beta+2}|\phi_{xx}(0,\tau)|^2+\int_0^t \left\|\phi_{xxx}(\cdot,\tau)/U^{\frac{1-m}{2}}\right\|_{\langle x-st-d(t)\rangle ^{\beta+2}}^2 \mathrm{d} \tau \notag \\
&  \leq C\left(\|\phi_0\|_{2,\langle x-d_0\rangle ^{\beta}}^2+\|\phi_0\|_{w_{1,0}}^2+\|\phi_{0x}\|_{w_{2,0}}^2+d_0^{-1} \right) \notag \\
&\ \ \ +C\int_{0}^{t}\left( (d_0+\tau)^{\beta+\frac{5}{2}}+(d_0+\tau)^3\right)|\phi_{xx}(0,\tau)|^2\mathrm{d} \tau,
		\end{align}	
where  $w_{1,0}=U^{1-3m}$, $ w_{2,0}=U^{-(1+m)}$  for $U=U(x-d_0)$.
\end{lemma}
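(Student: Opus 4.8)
\emph{Sketch of the argument.} The plan is to run a weighted energy estimate on the third-differentiated equation \eqref{phizzequ}, multiplying it by $\langle\xi-\xi_\star\rangle^{\beta+2}\phi_{xx}$ (with $\xi=x-st-d(t)$ and $\xi_\star$ determined by $f'(U(\xi_\star))=s$) and integrating over $(0,+\infty)\times(0,t)$, in complete parallel with how Lemma \ref{H1-2} refines Lemma \ref{H1}; note $\langle\xi-\xi_\star\rangle^{\beta+2}$ is comparable to $\langle x-st-d(t)\rangle^{\beta+2}$ in \eqref{H2-2 estimate} since $\xi_\star$ is a fixed constant. As in the third strategy of the introduction, I would first insert a cut-off near $x=+\infty$ so that no spurious contribution appears at infinity when the diffusion term is integrated by parts. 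The time derivative yields $\frac12\frac{d}{dt}\int\langle\xi-\xi_\star\rangle^{\beta+2}\phi_{xx}^2$ plus the weight-evolution term $\frac{\beta+2}{2}\int\langle\xi-\xi_\star\rangle^{\beta}(\xi-\xi_\star)(-s-d'(t))\phi_{xx}^2$; the convective term $f'(U)\phi_{xxx}$ is integrated by parts in $x$, and the resulting spatial-weight and time-weight ($-s$) derivative contributions, combined with the sign-definite term $-2f''(U)U_x\langle\xi-\xi_\star\rangle^{\beta+2}\phi_{xx}^2$ on the right of \eqref{phizzequ}, are handled as in the derivation of \eqref{20} and \eqref{3}, leaving at worst an error $C\int_0^t\int\langle\xi-\xi_\star\rangle^{\beta+1}\phi_{xx}^2$ (using $|U_x|\le CU^{2-m}$, $\langle\xi-\xi_\star\rangle\sim U^{1-m}$ as $\xi\to+\infty$ and $\beta\le\frac{3m-1}{1-m}$), which is absorbed by Lemma \ref{H1-2}; the $d'(t)$-part of the weight-evolution term is bounded via \eqref{3.28}. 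Integrating $-(\phi_{xxx}/U^{1-m})_x$ by parts produces the principal dissipation $\int_0^t\int\langle\xi-\xi_\star\rangle^{\beta+2}\phi_{xxx}^2/U^{1-m}$, a cross term $(\beta+2)\langle\xi-\xi_\star\rangle^{\beta}(\xi-\xi_\star)\phi_{xx}\phi_{xxx}/U^{1-m}$ absorbed by Cauchy--Schwarz after splitting $|\xi-\xi_\star|$ small / $\xi\ll\xi_\star$ / $\xi\gg\xi_\star$ as in \eqref{6}--\eqref{8}, and a boundary term at $x=0$.

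That boundary term is the new ingredient. It equals $\langle\xi-\xi_\star\rangle^{\beta+2}\big|_{x=0}\cdot\phi_{xx}\phi_{xxx}/U^{1-m}\big|_{x=0}$, and when combined with the $x=0$ contribution of $\frac1m G_{xxx}$ (integrated by parts once, so it produces $-\frac1m\langle\xi-\xi_\star\rangle^{\beta+2}G_{xx}\phi_{xx}\big|_{x=0}$ plus an interior piece) it forms precisely the combination $\big(\phi_{xx}\phi_{xxx}/U^{1-m}+\frac1m G_{xx}\phi_{xx}\big)\big|_{x=0}$ treated in \eqref{eq41}. Using \eqref{boundary phixxx}, $|\phi_{xx}(0,t)|\le CN(T)$ and the bound \eqref{Gzz} for $G_{xx}$, this combination is $\ge c_2\phi_{xx}^2(0,t)-C(U_x^2+U_{xx}^2+\phi_x^2)\big|_{x=0}$ for some $c_2>0$ once $N(T)$ is small; multiplying by $\langle\xi-\xi_\star\rangle^{\beta+2}\big|_{x=0}\sim(d_0+t)^{\beta+2}$ from \eqref{st+d(t)} gives the good boundary dissipation $c_2\int_0^t(d_0+\tau)^{\beta+2}\phi_{xx}^2(0,\tau)$ on the left of \eqref{H2-2 estimate}, while the error $C\int_0^t(d_0+\tau)^{\beta+2}(U_x^2+U_{xx}^2+\phi_x^2)\big|_{x=0}$ is $\le Cd_0^{-1}$, since $(U_x^2+U_{xx}^2)\big|_{x=0}$ decays like $\mathrm{e}^{-2\lambda_-(s\tau+d(\tau))}$ by \eqref{e5} and $\phi_x^2(0,\tau)$ decays likewise by \eqref{phix 0}, both enhanced by $-s\tau-d(\tau)\le-d_0-\frac{s\tau}{2}$ from \eqref{3.37}.

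For the remaining right-hand terms of \eqref{phizzequ} I would follow Lemmas \ref{H2} and \ref{H1-2}: the diffusion corrections $[\cdots]_x$ are integrated by parts so the stray derivative lands on $\langle\xi-\xi_\star\rangle^{\beta+2}\phi_{xx}$, the $\phi_{xxx}$ thereby produced being absorbed by Cauchy--Schwarz into the dissipation and the rest reduced to $\int_0^t\int(\phi_x^2/U^{2m}+\phi_{xx}^2/U^2)$, with the extra $x=0$ contributions of these corrections and of $\frac1m G_{xxx}$ estimated as in \eqref{eq46}; the terms $-f'''(U)U_x^2\phi_x-f''(U)U_{xx}\phi_x$ use $|U_x|\le CU^{2-m}$, $|U_{xx}|\le CU^{3-2m}$; the nonlinear terms $F_{xx}$, $G_{xxx}$ are controlled through \eqref{Fzz}, \eqref{Gzzz}, $\|\phi_x/U\|_{L^\infty}\le CN(T)$ and $\|\langle\xi-\xi_\star\rangle^{(\beta+2)/2}\phi_{xx}\|_{L^\infty}\le CN(T)$, the cubic $\phi_{xx}$-terms being split by H\"older as in \eqref{II2}--\eqref{eq55}; and the shift term $\int_0^t|d'(\tau)|\int\langle\xi-\xi_\star\rangle^{\beta+2}|U_{xx}||\phi_{xx}|$ is split at $x=s\tau+d(\tau)$: for $\xi>0$ one uses $|U_{xx}|\le CU^{3-2m}$, H\"older and the convergence of $\int_{s\tau+d(\tau)}^{+\infty}\langle\xi-\xi_\star\rangle^{\beta+3}U_{xx}^2U^{1-m}\le C$ (valid since $\beta\le\frac{3m-1}{1-m}$) with \eqref{e1}; for $\xi<0$ one writes $\int_0^{s\tau+d(\tau)}|\phi_{xx}|\le(d_0+\tau)^{3/2}\|\phi_{xxx}\|_{L^2(0,s\tau+d(\tau))}+(d_0+\tau)|\phi_{xx}(0,\tau)|$ as in \eqref{eq49}, controls $\sup_{[0,s\tau+d(\tau)]}\langle\xi-\xi_\star\rangle^{\beta+3/2}\phi_{xx}^2$ by its $x$-integral plus the trace $\langle\xi-\xi_\star\rangle^{\beta+3/2}\big|_{x=0}\phi_{xx}^2(0,\tau)$ as in \eqref{12}, and absorbs the $d'$-contributions via \eqref{e1}--\eqref{e2} -- which is what generates the $(d_0+\tau)^{\beta+5/2}$ and $(d_0+\tau)^3$ weights on $|\phi_{xx}(0,\tau)|^2$ in \eqref{H2-2 estimate}. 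Finally, choosing $N(T)+d_0^{-1}$ small to absorb the leftover fractions of the dissipation and of the boundary dissipation, and inserting the already-proved estimates of Lemmas \ref{L2-1}, \ref{L2-2}, \ref{H1}, \ref{H1-2} and \ref{H2} for the residual norms $\int_0^t\int\phi_x^2/U^{2m}$, $\int_0^t\int\phi_{xx}^2/U^2$, $\int_0^t\int\langle\xi-\xi_\star\rangle^{\beta+1}\phi_{xx}^2/U^{1-m}$ and for the initial data, yields \eqref{H2-2 estimate}.

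I expect the main obstacle to be the bookkeeping of the $x=0$ boundary terms carrying the large polynomial weight $(d_0+t)^{\beta+2}$: one must check that, after invoking \eqref{boundary phixxx}, the positivity constant $c_2$ survives once the $G_{xx}$-boundary contribution and the additional $x=0$ contributions from the diffusion corrections are all included, and one must verify the convergence of the weighted spatial integrals of $U_x^2U^{1-m}$ and $U_{xx}^2U^{1-m}$ in the $d'(t)$-estimates under only $\beta\le\frac{3m-1}{1-m}$; the $d'(t)$-driven $(d_0+\tau)^{\beta+5/2}$ and $(d_0+\tau)^3$ weights on $|\phi_{xx}(0,\tau)|^2$ are not controlled by the left-hand side of \eqref{H2-2 estimate} alone but will be dominated by $(d_0+\tau)^{\beta+3}|\phi_{xx}(0,\tau)|^2$ from the subsequent third-order estimate, which is why this estimate must ultimately be closed jointly with it, and is precisely where $m>\frac12$ and the cap on $\beta$ enter.
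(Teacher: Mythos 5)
Your overall plan—multiplying \eqref{phizzequ} by $\langle\xi-\xi_\star\rangle^{\beta+2}\phi_{xx}$, integrating, invoking \eqref{boundary phixxx} to extract the boundary dissipation $(d_0+\tau)^{\beta+2}\phi_{xx}^2(0,\tau)$, splitting the $d'$-terms at $x=s\tau+d(\tau)$ to generate the $(d_0+\tau)^{\beta+5/2}$ and $(d_0+\tau)^3$ weights, and closing with Lemmas \ref{L2-2}, \ref{H1}, \ref{H1-2}, \ref{H2}—is exactly the paper's proof, and your observation that the resulting $(d_0+\tau)^{\beta+5/2}$ and $(d_0+\tau)^3$ weights must be absorbed by the third-order estimate is also what the paper does.

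One bookkeeping slip worth flagging: you describe the $x=0$ boundary contribution as consisting only of the diffusion piece $\langle\xi-\xi_\star\rangle^{\beta+2}\phi_{xx}\phi_{xxx}/U^{1-m}\big|_{x=0}$ plus the $G_{xx}$-piece, and claim this is ``precisely the combination treated in \eqref{eq41}''. It is not: because the convective term $f'(U)\phi_{xxx}\langle\xi-\xi_\star\rangle^{\beta+2}\phi_{xx}$ must be integrated by parts (to avoid an $O(\langle\xi-\xi_\star\rangle^{\beta+2})$ interior growth), the boundary combination in this lemma additionally carries the negative piece $-\tfrac{f'(u_-)}{2}\langle\xi-\xi_\star\rangle^{\beta+2}\big|_{x=0}\phi_{xx}^2(0,t)$, which lowers the net coefficient in front of $\phi_{xx}^2(0,t)$ from the $\tfrac34 f'(u_-)$ supplied by \eqref{boundary phixxx} to $\tfrac14 f'(u_-)$. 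The conclusion $c_3>0$ survives, but your stated $c_2$ is too large, and the comparison to \eqref{eq41} (where the multiplier is $\phi_{xx}/U_\epsilon^{3-m}$ and the convective term is treated by Cauchy--Schwarz in the interior, not integrated by parts) is misleading. Also, the cut-off near $x=+\infty$ is unnecessary here: the paper introduces it only in Lemma \ref{H3}, where the weight grows fast enough that interior passes to the limit would otherwise be delicate; for the $\langle\xi-\xi_\star\rangle^{\beta+2}$ weight, membership of $\phi$ in $X(0,T)$ already justifies the integrations by parts.
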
		

\begin{proof}
Multiplying \eqref{phizzequ} by $\langle\xi-\xi_\star\rangle ^{\beta+2}\phi_{xx}$, integrating the result over $(0,+\infty)\times (0,t)$, one obtains
	\begin{align}
		&\frac{1}{2}\!\int\langle\xi\!-\!\xi_\star\rangle ^{\beta+2} \phi_{xx}^2\!-\!\int_{0}^{t}\!\left[\left(\frac{f'(U)\phi_{xx}}{2}\!-\! \frac{\phi_{xxx}}{U^{1-m}}\!-\!\frac{G_{xx}}{m}\right)\langle\xi\!-\!\xi_\star\rangle ^{\beta+2}\phi_{xx} \!\right]\bigg|_{x=0}\!\!+\!\int_{0}^{t}\!\int\langle\xi\!-\!\xi_\star\rangle ^{\beta+2}\frac{\phi_{xxx}^2}{U^{1-m}}\nonumber\\&\leq\frac{1}{2}\int\!\langle\xi_0\!-\!\xi_\star\rangle ^{\beta+2} \phi_{0xx}^2\!+\!C\left\{\int_{0}^{t}\!\int\!\left(\langle\xi\!-\!\xi_\star\rangle ^{\beta+1} \big|\!\left( g'\!-\!d'\right)\!\big|+\langle\xi\!-\!\xi_\star\rangle ^{\beta+2}f''(U)|U_x| \right)\phi_{xx}^2\right.\nonumber\\&\left.\!\quad+\!\!\int_{0}^{t}\!\!\int\!\langle\xi\!-\!\xi_\star\rangle ^{\beta+2}|f'''|U_x^2|\phi_x\phi_{xx}|\!+\!\!\int_{0}^{t}\!\!\int\!\langle\xi\!-\!\xi_\star\rangle ^{\beta+2}f''(U)|U_{xx}||\phi_x\phi_{xx}|\!+\!\!\int_{0}^{t}\!\!\int\langle\xi\!-\!\xi_\star\rangle ^{\beta+1}\frac{|\phi_{xx}\phi_{xxx}| }{U^{1-m}}\right.\nonumber\\&\left.\quad+\int_{0}^{t}\int\bigg|\left[\!-2(1\!-\!m)\frac{U_x}{U^{2-m}}\phi_{xx}\!-\!(1\!-\!m)\frac{U_{xx}}{U^{2-m}}\phi_{x}\!+\!(1\!-\!m)(2\!-\!m)\frac{U_x^2}{U^{3-m}}\phi_{x}\right]\langle\xi\!-\!\xi_\star\rangle ^{\beta+2}\phi_{xxx}\bigg|\right.\nonumber\\&\left.\quad+\int_{0}^{t}\int\bigg|\left[\!-2(1\!-\!m)\frac{U_x}{U^{2-m}}\phi_{xx}\!-\!(1\!-\!m)\frac{U_{xx}}{U^{2-m}}\phi_{x}\!+\!(1\!-\!m)(2\!-\!m)\frac{U_x^2}{U^{3-m}}\phi_{x}\right]\langle\xi\!-\!\xi_\star\rangle ^{\beta+1}\phi_{xx}\bigg|\right.\nonumber\\&\left.\quad\!-\!\int_{0}^{t}\!\left[\left( \!-2(1\!-\!m)\frac{U_x}{U^{2-m}}\phi_{xx}\!-\!(1\!-\!m)\frac{U_{xx}}{U^{2-m}}\phi_{x}\!+\!(1\!-\!m)(2\!-\!m)\frac{U_x^2}{U^{3-m}}\phi_{x}\right)\langle\xi\!-\!\xi_\star\rangle ^{\beta+2}\phi_{xx}	\right]\!\bigg|_{x=0}\right.\nonumber\\&\left.\quad+\int_{0}^{t} |d'(\tau)|\int\langle\xi-\xi_\star\rangle ^{\beta+2}|U_{xx}||\phi_{xx}|	 +\int_{0}^{t}\int\left( \langle\xi-\xi_\star\rangle ^{\beta+2}|F_{xx}\phi_{xx}|+\langle\xi-\xi_\star\rangle ^{\beta+2}|G_{xx}\phi_{xxx}|\right.\right.\nonumber\\&\left.\left.\quad+\langle\xi-\xi_\star\rangle ^{\beta+1}|G_{xx}\phi_{xx}|\right)\right\}.\label{32}
	\end{align}
Noting that $f''>0$, $U(-st-d(t))<u_-$, in view of \eqref{Gzz}, \eqref{st+d(t)}, $\left\|\phi_x/U\left(\cdot,t\right)\right\|_{L^{\infty}}\leq CN(T)$ and $|\phi_{xx}(0,t)|\leq CN(T)$, we have
\begin{equation}\nonumber
	\begin{aligned}
		&-\left[\left(\frac{f'(U)\phi_{xx}}{2}- \frac{\phi_{xxx}}{U^{1-m}}-\frac{G_{xx}}{m}\right)\langle\xi-\xi_\star\rangle ^{\beta+2}\phi_{xx} \!\right]\bigg|_{x=0}\\&\geq\langle\xi-\xi_\star\rangle ^{\beta+2}\big|_{x=0}\left[ \left(-\frac{f'(u_-)\phi^2_{xx}}{2}+(1-CN(T)) \frac{\phi_{xx}\phi_{xxx}}{U^{1-m}}\right)\bigg|_{x=0}-CN(T)\left(\phi^2_{x}+ \phi^2_{xx}\right)\big|_{x=0}\right]\\&\geq \langle\xi-\xi_\star\rangle ^{\beta+2}\big|_{x=0}\left[ \left(\frac{1}{4}-CN(T)\right)f'(u_-)\phi_{xx}^2(0,t)-C\left( U_x^2+U_{xx}^2+\phi_{x}^2\right)\big|_{x=0}\right]\\&\geq (d_0+t)^{\beta+2}\left[  c_3\phi_{xx}^2(0,t)-C\left( U_x^2+U_{xx}^2+\phi_{x}^2\right)\big|_{x=0}	\right].
	\end{aligned}	
\end{equation}
We deduce from \eqref{e5}-\eqref{e6} and the Cauchy-Schwarz inequality that
\begin{equation}\label{33}
\int_{0}^{t}\!\int\!\left(\langle\xi\!-\!\xi_\star\rangle ^{\beta+1} \big|\!\left( g'\!-\!d'\right)\!\big|+\langle\xi\!-\!\xi_\star\rangle ^{\beta+2}f''(U)|U_x| \right)\phi_{xx}^2\leq C	 \int_{0}^{t}\int\langle\xi-\xi_\star\rangle ^{\beta+1}\phi_{xx}^2,
\end{equation}
and
\begin{equation}\label{34}
	\int_{0}^{t}\int\langle\xi-\xi_\star\rangle ^{\beta+2}|f'''(U)|U_x^2|\phi_x\phi_{xx}| 
	\leq C\left(\int_{0}^{t}\int\langle\xi-\xi_\star\rangle ^{\beta+1}\phi_{xx}^2+\int_{0}^{t}\int\langle\xi-\xi_\star\rangle ^{\beta}\phi_{x}^2\right).
\end{equation}
In view of \eqref{direct Uzz}, it has
\begin{equation}\label{e8}
	|U_{xx}(\xi)|\sim \mathrm{e}^{-\lambda_-|\xi|},\quad \text{as }\xi\rightarrow-\infty,
\end{equation}
and
\begin{equation}\label{e9}
	|U_{xx}(\xi)|\sim U^{3-2m}\sim |\xi|^{-2-\frac{1}{1-m}},\quad \text{as }\xi\rightarrow+\infty,
\end{equation}
then we have
	\begin{align} \label{35}
	 &\int_{0}^{t}\!\!\int\!\langle\xi\!-\!\xi_\star\rangle ^{\beta+2}f''(U)|U_{xx}||\phi_x\phi_{xx}| \notag \\
&\leq C\left(\int_{0}^{t}\int\langle\xi-\xi_\star\rangle ^{\beta+1}\phi_{xx}^2+\int_{0}^{t}\int\langle\xi-\xi_\star\rangle ^{\beta+3}U_{xx}^2\phi_{x}^2\right) \notag \\
&\leq C\left(\int_{0}^{t}\int\langle\xi-\xi_\star\rangle ^{\beta+1}\phi_{xx}^2+\int_{0}^{t}\int\langle\xi-\xi_\star\rangle ^{\beta}\phi_{x}^2\right).
 \end{align}	
Similarly, it holds
\begin{equation}\nonumber
	\int_{0}^{t}\int\langle\xi\!-\!\xi_\star\rangle ^{\beta+1}\frac{|\phi_{xx}\phi_{xxx}| }{U^{1-m}}\leq\frac{1}{8} \int_{0}^{t}\int\langle\xi-\xi_\star\rangle ^{\beta+2}\frac{\phi_{xxx}^2}{U^{1-m}}+C\int_{0}^{t}\int\langle\xi-\xi_\star\rangle ^{\beta}\frac{\phi_{xx}^2 }{U^{1-m}}.
\end{equation}
Moreover, it follows from \eqref{e5}-\eqref{e6}, \eqref{e8}-\eqref{e9} and the Cauchy-Schwarz inequality that
\begin{equation}\nonumber
	\begin{aligned}
&\text{the sixth term on RHS of \eqref{32}}\\&
\leq\frac{1}{8} \int_{0}^{t}\int\langle\xi-\xi_\star\rangle ^{\beta+2}\frac{\phi_{xxx}^2}{U^{1-m}}+C\int_{0}^{t}\int\left(\langle\xi-\xi_\star\rangle ^{\beta+1}\phi_{xx}^2+\langle\xi-\xi_\star\rangle ^{\beta}\phi_{x}^2\right),
	\end{aligned}
\end{equation}
\begin{equation}\nonumber
\text{the seventh term on RHS of \eqref{32}}
\leq C \int_{0}^{t}\int\left(\langle\xi-\xi_\star\rangle ^{\beta+1}\frac{\phi_{xx}^2}{U^{1-m}}+\langle\xi-\xi_\star\rangle ^{\beta}\phi_{x}^2\right),	 	
\end{equation}
and
\begin{align}
	\text{the eighth term on RHS of \eqref{32}}\leq\frac{c_3}{2}\int_{0}^{t}(d_0\!+\!t)^{\beta+2} \phi_{xx}^2(0,t)\!+\!C\int_{0}^{t}(d_0\!+\!t)^{\beta+2} \phi_{x}^2(0,t),\label{36}
\end{align}
where we have ignored the negative term $- 2(1-m)\frac{|U_x|}{U^{2-m}}\langle\xi\!-\!\xi_\star\rangle ^{\beta+2}\phi_{xx}^2\bigg|_{x=0}$. Let us rewrite	
\begin{equation}\nonumber
\int_{0}^{t}|d'(\tau)|\int \langle\xi-\xi_\star\rangle ^{\beta+2}|U_{xx}||\phi_{xx}|=\int_{0}^{t}|d'(\tau)|\left(\int_{0}^{s\tau+d(\tau)}+\int_{s\tau+d(\tau)}^{+\infty} \right)\langle\xi-\xi_\star\rangle ^{\beta+2}|U_{xx}||\phi_{xx}|.
\end{equation}	
For $\xi>0$, since $U_{xx}^2U^{1-m}\sim\langle\xi-\xi_\star\rangle^{-\frac{7-5m}{ 1-m}} $ as $\xi\rightarrow+\infty$ and $\int_{s\tau+d(\tau)}^{+\infty}\langle\xi-\xi_\star\rangle^{\beta+3}U_{xx}^2U^{1-m}\leq	C$, we get from \eqref{e1} that
	\begin{align} \label{29}
		&\int_{0}^{t}|d'(\tau)|\int_{s\tau+d(\tau)}^{+\infty} \langle\xi-\xi_\star\rangle ^{\beta+2}|U_{xx}||\phi_{xx}| \notag \\
&\leq\frac{1}{2}\int_{0}^{t}\int_{s\tau+d(\tau)}^{+\infty}\langle\xi-\xi_\star\rangle^{\beta+1}\frac{\phi_{xx}^2 }{U^{1-m}}+\frac{1}{2}\int_{0}^{t}|d'(\tau)|^2\int_{s\tau+d(\tau)}^{+\infty}\langle\xi-\xi_\star\rangle^{\beta+3}U_{xx}^2U^{1-m} \notag \\
&\leq C\left(\mathrm{e}^{-2\gamma d_0}+\int_{0}^{t}\int\langle\xi-\xi_\star\rangle^{\beta+1}\frac{\phi_{xx}^2 }{U^{1-m}}+\int_{0}^{t}|\phi_{xx}^2(0,\tau)| \right).
	\end{align}
For $\xi<0$, it holds
\begin{equation}\nonumber
	\begin{aligned}
		&\int_{0}^{t}|d'(\tau)|\int_{0}^{s\tau+d(\tau)} \langle\xi-\xi_\star\rangle ^{\beta+2}|U_{xx}||\phi_{xx}|	\\&\leq \int_{0}^{t}|d'(\tau)|\left(\sup_{x\in [0,s\tau+d(\tau)]}\langle\xi-\xi_\star\rangle^\frac{\beta+\frac{3}{2}}{2} |\phi_{xx}| \right)\int_{0}^{s\tau+d(\tau)} \langle\xi-\xi_\star\rangle^\frac{\beta+\frac{1}{2}}{2}  |U_{xx}|,
	\end{aligned}
\end{equation}	
where, in view of \eqref{st+d(t)},	
\begin{equation}\nonumber
	\int_{0}^{s\tau+d(\tau)} \langle\xi-\xi_\star\rangle^\frac{\beta+\frac{1}{2}}{2}  |U_{xx}|\leq C\left(d_0+\tau \right) ^\frac{\beta+\frac{5}{2}}{2},
\end{equation}	
and
\begin{equation}\nonumber
	\begin{aligned}
		\langle\xi-\xi_\star\rangle^{\beta+\frac{3}{2}} \phi_{xx}^2&=\!-\!\int_{x}^{0}\!\left[ \frac{\partial}{\partial x}\left(\langle\xi\!-\!\xi_\star\rangle^{\beta+\frac{3}{2}} \right)\phi_{xx}^2\!+\!2\langle\xi-\xi_\star\rangle^{\beta+\frac{3}{2}}\phi_{xx}\phi_{xxx}\right]\!+\! \langle\xi\!-\!\xi_\star\rangle^{\beta+\frac{3}{2}}\big|_{x=0} \phi_{xx}^2(0,\tau)\\&\leq C\int_{0}^{s\tau+d(\tau)}\left(\langle\xi-\xi_\star\rangle^{\beta+1}\phi_{xx}^2+\langle\xi-\xi_\star\rangle^{\beta+2}\phi_{xxx}^2 \right)+ \langle\xi-\xi_\star\rangle^{\beta+\frac{3}{2}}\big|_{x=0} \phi_{xx}^2(0,\tau).
	\end{aligned}
\end{equation}	
Then, by $\langle\xi-\xi_\star\rangle^{\beta+\frac{3}{2}}\big|_{x=0}\sim (d_0+\tau)^{\beta+\frac{3}{2}}$ due to \eqref{st+d(t)}, it has
	\begin{align}
		&\int_{0}^{t}|d'(\tau)|\int_{0}^{s\tau+d(\tau)} \langle\xi-\xi_\star\rangle^{\beta+2} |U_{xx}||\phi_{xx}|
	\nonumber	\\&\leq C\int_{0}^{t}|d'(\tau)|\left[\left(\int_{0}^{s\tau+d(\tau)}\langle\xi-\xi_\star\rangle^{\beta+1}\phi_{xx}^2\right)^\frac{1}{2}\!+\!\left(\int_{0}^{s\tau+d(\tau)}\langle\xi-\xi_\star\rangle^{\beta+2}\phi_{xxx}^2\right)^\frac{1}{2}\right]\left(d_0+\tau \right) ^\frac{\beta+\frac{5}{2}}{2}\nonumber\\&\quad+C\int_{0}^{t}|d'(\tau)|\langle\xi-\xi_\star\rangle^{\frac{\beta+\frac{3}{2} }{2}}\big|_{x=0} |\phi_{xx}(0,\tau)| \left(d_0+\tau \right) ^\frac{\beta+\frac{5}{2}}{2}\nonumber	\\&\leq\frac{1}{4}\int_{0}^{t}\int_{0}^{s\tau+d(\tau)}\langle\xi-\xi_\star\rangle^{\beta+2}\phi_{xxx}^2 +C\left(\mathrm{e}^{-2\gamma d_0}+\int_{0}^{t}\int_{0}^{s\tau+d(\tau)}\langle\xi-\xi_\star\rangle^{\beta+1}\phi_{xx}^2\right.\nonumber\\&\quad\left.+\int_{0}^{t}\left(d_0+\tau \right)^{\beta+\frac{5}{2}}|\phi_{xx}^2(0,\tau)|  \right)\nonumber.
	\end{align}
Therefore, combining with \eqref{29}, we get
	\begin{align}\label{37}
		&\int_{0}^{t}|d'(\tau)|\int\langle\xi-\xi_\star\rangle^{\beta+2} |U_{xx}||\phi_{xx}| \notag \\		 &\leq\frac{1}{4}\!\int_{0}^{t}\int_{0}^{s\tau+d(\tau)}\!\!\langle\xi-\xi_\star\rangle^{\beta+2}\phi_{xxx}^2\!+\!C\left(\int_{0}^{t}\int\langle\xi\!-\!\xi_\star\rangle^{\beta+1}\frac{\phi_{xx}^2 }{U^{1-m}} \right. \notag \\
&\left.\quad+\mathrm{e}^{-2\gamma d_0}+\int_{0}^{t}\left(d_0+\tau \right) ^{\beta+\frac{5}{2}}|\phi_{x x}^2(0,\tau)|\right).
	\end{align}
In view of \eqref{Gzz} and \eqref{Fzz}, the last three terms on RHS of \eqref{eq32} can be estimated as
	\begin{align}
		&\int_{0}^{t}\int\left( \langle\xi-\xi_\star\rangle^{\beta+2}|F_{xx}\phi_{xx} |+\langle\xi-\xi_\star\rangle^{\beta+2}|G_{xx}\phi_{xxx}|+\langle\xi-\xi_\star\rangle^{\beta+1}|G_{xx}\phi_{xx}|\right) \nonumber\\
		&\leq C\int_{0}^{t}	 \int\langle\xi-\xi_\star\rangle^{\beta+2}\frac{\phi_{xx}^2|\phi_{xxx}|}{U^{2-m}}+C\int_{0}^{t}	 \int\langle\xi-\xi_\star\rangle^{\beta+1}\frac{|\phi_{xx}|^3}{U^{2-m}} +C\int_{0}^{t}	\int \bigg[\langle\xi-\xi_\star\rangle^{\beta+2} \frac{|\phi_{x}|\phi_{xxx}^2}{U^{2-m}}\nonumber\\&\quad+\langle\xi-\xi_\star\rangle^{\beta+2}|U_x||\phi_{x}|\phi_{xx}^2\!+\!\langle\xi-\xi_\star\rangle^{\beta+1} \frac{U_x}{U^{3-m}}|\phi_{x}|\phi_{xx}^2\!+\!\langle\xi-\xi_\star\rangle^{\beta+1}\left(\frac{U_{x}^2}{U^{4-m}}+\frac{|U_{xx}|}{U^{3-m}} \right)\phi_{x}^2|\phi_{xx}|\nonumber\\&\quad+\langle\xi-\xi_\star\rangle^{\beta+2}\left(U_x^2+|U_{xx}|\right)\phi_{x}^2|\phi_{xx}|+\langle\xi-\xi_\star\rangle^{\beta+2}\left(\frac{U_x^2}{U^{4-m}}+\frac{|U_{xx}|}{U^{3-m}} \right)\phi_{x}^2|\phi_{xxx}|\nonumber\\&\quad+\left(\langle\xi-\xi_\star\rangle^{\beta+2}\frac{|U_x|}{U^{3-m}}+ \frac{\langle\xi-\xi_\star\rangle^{\beta+1}}{U^{2-m}}+\langle\xi-\xi_\star\rangle^{\beta+2}\right) |\phi_{x}||\phi_{xx}||\phi_{xxx}|+ \langle\xi-\xi_\star\rangle^{\beta+2}|\phi_{xx}|^3\bigg]\nonumber\\&\triangleq I_1+I_2+I_3\nonumber.
	\end{align}
Let us express
\begin{equation}\nonumber
I_1=\int_{0}^{t}\left(\int_{0}^{s\tau+d(\tau)}+\int_{s\tau+d(\tau)}^{+\infty} \right)	 \langle\xi-\xi_\star\rangle^{\beta+2}\frac{\phi_{xx}^2|\phi_{xxx}|}{U^{2-m}},
\end{equation}
where, for $\xi<0$, by virtue of $\left\|\langle\xi-\xi_\star\rangle^{\frac{\beta+2 }{2}}\phi_{xx}(\cdot,t)\right\|_{L^{\infty}}\leq CN(T)$ and $U(0)<U(\xi)<u_-$, we have
	\begin{align}
\int_{0}^{t}\int_{0}^{s\tau+d(\tau)}\langle\xi-\xi_\star\rangle^{\beta+2}\frac{\phi_{xx}^2|\phi_{xxx}|}{U^{2-m}}&\leq CN(T)\int_{0}^{t}\int_{0}^{s\tau+d(\tau)}\langle\xi-\xi_\star\rangle^{\frac{\beta+2 }{2}}\frac{|\phi_{xx}||\phi_{xxx}|}{U^{2-m}}\nonumber\\&\leq N(T)\int_{0}^{t}\int\langle\xi-\xi_\star\rangle^{\beta+2}\frac{\phi_{xxx}^2}{U^{1-m}}+C\int_{0}^{t}\int\phi_{xx}^2,\nonumber
	\end{align}	
and since $\langle\xi-\xi_\star\rangle/U^2\sim U^{-(3-m)}$ as $\xi\rightarrow+\infty$, we deduce from $\|\phi_{xx}(\cdot,t)/U^{\frac{3-m}{2}}\|_{L^2}\leq N(T)$ and H\"{o}lder's inequality that
	 \begin{align}
&\int_{0}^{t}	 \int_{s\tau+d(\tau)}^{+\infty}\langle\xi-\xi_\star\rangle^{\beta+2}\frac{\phi_{xx}^2|\phi_{xxx}|}{U^{2-m}}\nonumber\\&\leq C\int_{0}^{t}	 \left\|\langle\xi-\xi_\star\rangle^{\frac{\beta+1}{2}}\frac{ \phi_{xx}}{U^{\frac{1-m}{2}}}\right\|_{L^{\infty}}\left(\int_{s\tau+d(\tau)}^{+\infty}\langle\xi-\xi_\star\rangle\frac{\phi_{xx} }{U^2} \right)^{\frac{1}{2}}\left( \int\langle\xi-\xi_\star\rangle^{\beta+2}\frac{\phi_{xxx}^2}{U^{1-m}}\right)^{\frac{1}{2}}\nonumber\\&\leq CN(T)\int_{0}^{t}\left(\int\langle\xi-\xi_\star\rangle^{\beta+1}\frac{\phi_{xx}^2}{U^{1-m}}+\int\langle\xi-\xi_\star\rangle^{\beta+1}\frac{\phi_{xxx}^2}{U^{1-m}} \right)^{\frac{1}{2}}\left( \int\langle\xi-\xi_\star\rangle^{\beta+2}\frac{\phi_{xxx}^2}{U^{1-m}}\right)^{\frac{1}{2}}\nonumber\\&\leq CN(T)\int_{0}^{t}\int\langle\xi-\xi_\star\rangle^{\beta+2}\frac{\phi_{xxx}^2}{U^{1-m}}+	C\int_{0}^{t}\int\langle\xi-\xi_\star\rangle^{\beta+1}\frac{\phi_{xx}^2}{U^{1-m}}. \nonumber	 
	 \end{align}
Similarly, by $\left\|\langle\xi-\xi_\star\rangle^{\frac{\beta+2 }{2}}\phi_{xx}(\cdot,t)\right\|_{L^{\infty}}\leq CN(T)$ and $\langle\xi-\xi_\star\rangle^{\frac{\beta }{2}}U^{m}\leq U^{\frac{1-m}{2}}$ as $\xi\rightarrow+\infty$, one obtains
	\begin{align}
I_2&=\int_{0}^{t}\int_{0}^{s\tau+d(\tau)}\langle\xi-\xi_\star\rangle^{\beta+1}\frac{|\phi_{xx}|^3}{U^{2-m}}	 +\int_{0}^{t}\int_{s\tau+d(\tau)}^{+\infty}\langle\xi-\xi_\star\rangle^{\beta+1}\frac{|\phi_{xx}|^3}{U^{2-m}}\nonumber\\&\leq CN(T)\left(\int_{0}^{t}\int_{0}^{s\tau+d(\tau)} \langle\xi-\xi_\star\rangle^{\frac{\beta }{2}}\phi_{xx}^2+\int_{0}^{t}\int_{s\tau+d(\tau)}^{+\infty} \langle\xi-\xi_\star\rangle^{\frac{\beta }{2}}U^m\frac{\phi_{xx}^2 }{U^2}\right)\nonumber\\&\leq C\left(\int_{0}^{t}\int \langle\xi-\xi_\star\rangle^{\beta+1}\phi_{xx}^2+\int_{0}^{t}	\int\frac{\phi_{xx}^2}{U^{2}} \right)\nonumber.
	\end{align}
Furthermore, utilizing $\left\|\langle\xi-\xi_\star\rangle^{\frac{\beta+1 }{2}}\phi_x(\cdot,t)\right\|_{L^{\infty}}\leq CN(T)$, $\left\|\langle\xi-\xi_\star\rangle^{\frac{\beta+2 }{2}}\phi_{xx}(\cdot,t)\right\|_{L^{\infty}}\leq CN(T)$, $\|\phi_{x}(\cdot,t)/U\|_{L^\infty}\leq N(T)$ and $|U_x|\leq CU^{2-m}$, leads to
	\begin{align}
		 I_3&\leq CN(T)\int_{0}^{t}\int\left[\langle\xi-\xi_\star\rangle^{\beta+2} \frac{\phi_{xxx}^2}{U^{1-m}}+
		 \langle\xi-\xi_\star\rangle^{\frac{\beta+3 }{2}}|U_x|\phi_{xx}^2+\langle\xi-\xi_\star\rangle^{\beta+1}\phi_{xx}^2\right.\nonumber\\&\left.
		\quad+\langle\xi-\xi_\star\rangle^{\beta+1}\left(\frac{U_{x}^2}{U^{3-m}}+\frac{|U_{xx}|}{U^{2-m}} \right)|\phi_{x}||\phi_{xx}|+\langle\xi-\xi_\star\rangle^{\frac{\beta+3 }{2}}\left(U_x^2+|U_{xx}|\right)|\phi_{x}||\phi_{xx}|\right.\nonumber\\&\left.
		 \quad+\langle\xi-\xi_\star\rangle^{\beta+2}\left(\frac{U_x^2}{U^{3-m}}+\frac{|U_{xx}|}{U^{2-m}} \right)|\phi_{x}||\phi_{xxx}|+\langle\xi-\xi_\star\rangle^{\frac{\beta+2 }{2}}\phi_{xx}^2\right.\nonumber\\&\left.\quad+\left(\langle\xi-\xi_\star\rangle^{\beta+2}\frac{|U_x|}{U^{2-m}}+ \frac{\langle\xi-\xi_\star\rangle^{\beta+1}}{U^{1-m}}+\langle\xi-\xi_\star\rangle^{\frac{\beta+3 }{2}}\right) |\phi_{xx}||\phi_{xxx}| \right]\nonumber\\&\leq CN(T)\int_{0}^{t}\int\langle\xi-\xi_\star\rangle^{\beta+2} \frac{\phi_{xxx}^2}{U^{1-m}}+C\int_{0}^{t}\int\left[
		 \langle\xi-\xi_\star\rangle^{\frac{\beta+3 }{2}}|U_x|\phi_{xx}^2+\langle\xi-\xi_\star\rangle^{\beta+1}\phi_{xx}^2\right.\nonumber\\&\left.\quad+\langle\xi-\xi_\star\rangle^{\beta+1}\left(\frac{U_{x}^4}{U^{6-2m}}+\frac{U_{xx}^2}{U^{4-2m}} \right)\phi_{x}^2+\langle\xi-\xi_\star\rangle^{2} \left(U_x^2+|U_{xx}|\right)^2\phi_{x}^2\right.\nonumber\\&\left.\quad+ \langle\xi-\xi_\star\rangle^{\beta+2}\left(\frac{U_x^4}{U^{5-m}}+\frac{U_{xx}^2}{U^{3-m}} \right)\phi_{x}^2+\langle\xi-\xi_\star\rangle^{\beta+1}\phi_{xx}^2\right.\nonumber\\&\left.\quad+\left(\langle\xi-\xi_\star\rangle^{\beta+2}\frac{U_x^2}{U^{3-m}}+\frac{\langle\xi-\xi_\star\rangle^{\beta+1}}{U^{1-m}} \right) \phi_{xx}^2\right]\nonumber\\
&\leq CN(T)\int_{0}^{t}\int\langle\xi-\xi_\star\rangle^{\beta+2} \frac{\phi_{xxx}^2}{U^{1-m}} \nonumber \\
&\ \ \ +C\left[\int_{0}^{t}\int \langle\xi-\xi_\star\rangle^{\beta}\phi_{x}^2+\int_{0}^{t}\int\langle\xi-\xi_\star\rangle^{\beta+1}\frac{\phi_{xx}^2}{U^{1-m}}\right],\label{40}
	\end{align}
where we have use \eqref{e5}-\eqref{e6} and \eqref{e8}-\eqref{e9} in the last inequality. Therefore, combining \eqref{33}-\eqref{34}, \eqref{35}-\eqref{36}, \eqref{37}-\eqref{40} and \eqref{32}, noting \eqref{phix 0} and \eqref{e7}, we get
\begin{equation}\nonumber
	\begin{aligned}
		&\frac{1}{2}\int\langle\xi-\xi_\star\rangle ^{\beta+2} \phi_{xx}^2+\frac{c_3}{2}\int_{0}^{t}(d_0+\tau)^{\beta+2}\phi^2_{xx}(0,\tau)+\left(\frac{1}{2}-CN(T) \right)\int_{0}^{t}\int\langle\xi-\xi_\star\rangle ^{\beta+2}\frac{\phi_{xxx}^2}{U^{1-m}}\\&\leq C\left(\int\langle\xi_0-\xi_\star\rangle ^{\beta+2} \phi_{0xx}^2+d_0^{-1} +\int_{0}^{t}\left(d_0+\tau \right) ^{\beta+\frac{5}{2}}|\phi_{x x}^2(0,\tau)|\right.\\&\left.\quad+ \int_{0}^{t}\int\langle\xi-\xi_\star\rangle ^{\beta+1}\frac{\phi_{xx}^2}{U^{1-m}}+\int_{0}^{t}\int\langle\xi-\xi_\star\rangle ^{\beta}\frac{\phi_{x}^2}{U^{1-m}}+\int_{0}^{t}\int\frac{\phi_{xx}^2}{U^2}\right)
\end{aligned}
\end{equation}
Taking $CN(T)\leq1/4$ and utilizing \eqref{L2-2 estimate}, \eqref{H1 estimate} and \eqref{H1-2 estimate}, we then get \eqref{H2-2 estimate}.
\end{proof}

\subsubsection{Estimates for third order derivative}

\begin{lemma}\label{H3}
	Let the assumptions of Proposition \ref{proposition priori estimate} hold. Assume that $0<\beta \leq \frac{3m-1}{1-m}$, if $N(T)+d_0^{-\frac{1}{2}}\ll1$, then there exists a constant $C>0$ independent of $T$ such that
	\begin{equation}\label{H3 estimate}
		\begin{aligned}
			&\|\phi_{xxx}(\cdot,t)\|_{\langle x-st-d(t)\rangle ^{\beta+3}}^2+(d_0+t)^{\beta+3}|\phi_{xx}(0,t)|^2\\&+\int_{0}^{t}\left[ (d_0+\tau)^{\beta+3}|\phi_{xxx}(0,\tau)|^2+ \left\|\phi_{xxxx}(\cdot,\tau)/U^{\frac{1-m}{2}}\right\|_{\langle x-st-d(t)\rangle ^{\beta+3}}^2 \right]\\& \leq C(1+d_0^{\beta+3})\left(\|\phi_0\|_{3,\langle x-d_0\rangle ^{\beta}}^2 +\|\phi_0\|_{w_{1,0}}^2+\|\phi_{0x}\|_{w_{2,0}}^2+\|\phi_{0xx}\|_{w_{3,0}}^2 \right)+Cd_0^{-1},
		\end{aligned}
	\end{equation}	
where  $w_{1,0}=U^{1-3m}$,  $w_{2,0}=U^{-(1+m)}$, $w_{3,0}=U^{m-3}$ for $U=U(x-d_0)$.
\end{lemma}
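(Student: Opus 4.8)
The plan is to close the scheme at the top order by estimating $\phi_{xxx}$. First I would differentiate \eqref{phizzequ} once more in $x$ and recast the diffusion term through $\bigl(\phi_{xxx}/U^{1-m}\bigr)_{xx}=\bigl(\phi_{xxxx}/U^{1-m}\bigr)_{x}-(1-m)\bigl(\tfrac{U_x}{U^{2-m}}\phi_{xxx}\bigr)_{x}$, so that the principal dissipation reappears in flux form $-\bigl(\phi_{xxxx}/U^{1-m}\bigr)_x$ and everything else — $f''(U)U_x\phi_{xxx}$, the $x$-derivative of all the lower-order $U$-derivative terms of \eqref{phizzequ}, $d'(t)U_{xxx}$, $F_{xxx}$, and $\tfrac1m G_{xxxx}$ — lands on the right-hand side. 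Then I would multiply this $\phi_{xxx}$-equation by $\langle\xi-\xi_\star\rangle^{\beta+3}\phi_{xxx}$, with $\xi=x-st-d(t)$ and $f'(U(\xi_\star))=s$ as before, and integrate over $(0,+\infty)\times(0,t)$; since this weight is $\ge1$, no $\epsilon$-regularization of the type used in Lemma \ref{H2} is needed.

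On the coercive side, two integrations by parts in $x$ of the flux/diffusion yield the good dissipation $\int_0^t\!\int\langle\xi-\xi_\star\rangle^{\beta+3}\phi_{xxxx}^2/U^{1-m}$ together with transport- and weight-derivative remainders of the form $\int_0^t\!\int\langle\xi-\xi_\star\rangle^{\beta+2}\phi_{xxx}^2$ with coefficients bounded by $|U_x|\le CU^{2-m}$; these, like the $(s+d')$-remainder coming from $\partial_t\langle\xi-\xi_\star\rangle^{\beta+3}$, are controlled — after splitting into $\xi\gtrless0$ and using $\langle\xi-\xi_\star\rangle U^{1-m}\le C$ on $\xi>0$ and the exponential decay of the $U$-derivatives on $\xi<0$ — by the dissipations $\int_0^t\|\phi_{xxx}/U^{\frac{1-m}{2}}\|_{\langle\xi-\xi_\star\rangle^{\beta+2}}^2$ and $\int_0^t\|\phi_{xxx}\|_{w_6}^2$ already established in Lemmas \ref{H2-2} and \ref{H2}. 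The $\partial_t$ term gives $\tfrac12\int\langle\xi-\xi_\star\rangle^{\beta+3}\phi_{xxx}^2$ at time $t$ minus the initial value. The decisive piece is the boundary at $x=0$: using \eqref{q1}, \eqref{boundary phixxx}–\eqref{boundary phixxxx} and $\langle\xi-\xi_\star\rangle^{\beta+3}|_{x=0}\sim(d_0+t)^{\beta+3}$ (by \eqref{st+d(t)}), the $\tfrac{\mathrm d}{\mathrm dt}\phi_{xx}^2(0,t)$ and the exact-derivative-of-$\phi_{xx}(\cdots)$ structures of \eqref{boundary phixxxx} must be integrated by parts in $t$ against the growing weight $(d_0+t)^{\beta+3}$; this manufactures on the left-hand side exactly $(d_0+t)^{\beta+3}|\phi_{xx}(0,t)|^2$ and $\int_0^t(d_0+\tau)^{\beta+3}|\phi_{xxx}(0,\tau)|^2$, the $\int_0^t(d_0+\tau)^{\beta+2}|\phi_{xx}(0,\tau)|^2$-type leftovers are absorbed by Lemma \ref{H2-2}, the boundary contributions at time $t$ of the form $(d_0+t)^{\beta+3}|\phi_{xx}(0,t)|\,(|U_x|+|U_{xx}|)$ are split by Cauchy--Schwarz into a piece reabsorbed by the left-hand side and a piece of size $O(d_0^{-1})$ via the exponential decay as in \eqref{e7}, and the $-CN(T)\phi_{xx}^2(0,\tau)$ terms are converted through \eqref{phixxx 0} into $CN(T)\bigl(|\phi_{xxx}(0,\tau)|^2+\mathrm e^{-2\gamma(d_0+\tau)}\bigr)$ and absorbed.

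For the right-hand side, the shift term $\int_0^t|d'(\tau)|\!\int\langle\xi-\xi_\star\rangle^{\beta+3}|U_{xxx}||\phi_{xxx}|$ is handled by the $\xi\gtrless0$ splitting of Lemmas \ref{H1-2}–\ref{H2-2}: on $\xi>0$ one uses $|U_{xxx}|\le CU^{4-3m}$ (from \eqref{q4}) and \eqref{f''>0} to verify $\int_{s\tau+d(\tau)}^{+\infty}\langle\xi-\xi_\star\rangle^{\beta+4}U_{xxx}^2U^{1-m}\le C$ (here $\beta\le\frac{3m-1}{1-m}$ is used), bounding the term by $\tfrac14\times$dissipation plus $C\int_0^t\|\phi_{xxx}/U^{\frac{1-m}{2}}\|_{\langle\xi-\xi_\star\rangle^{\beta+2}}^2$ and $C\int_0^t|d'|^2$; on $\xi<0$ one integrates $\phi_{xxx}$ from the boundary and uses \eqref{st+d(t)}, \eqref{phixxx 0}, \eqref{e1}–\eqref{e2} exactly as in \eqref{eq49}. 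The nonlinear terms $\int_0^t\!\int\langle\xi-\xi_\star\rangle^{\beta+3}|F_{xxx}\phi_{xxx}|$ and — after one integration by parts in $x$ to avoid needing $G_{xxxx}$ — $\tfrac1m\int_0^t\!\int|G_{xxx}|\bigl(\langle\xi-\xi_\star\rangle^{\beta+3}|\phi_{xxxx}|+\langle\xi-\xi_\star\rangle^{\beta+2}|\phi_{xxx}|\bigr)$ are estimated term by term from \eqref{Fzzz}, \eqref{Gzzz}, using $|U_x|\le CU^{2-m}$, $|U_{xx}|\le CU^{3-2m}$, the asymptotics \eqref{f''>0}, \eqref{e5}–\eqref{e6}, \eqref{e8}–\eqref{e9}, and crucially the weighted $L^\infty$ bounds $\|\phi_x/U\|_{L^\infty}$, $\|\langle\xi-\xi_\star\rangle^{\frac{\beta+1}{2}}\phi_x\|_{L^\infty}$, $\|\langle\xi-\xi_\star\rangle^{\frac{\beta+2}{2}}\phi_{xx}\|_{L^\infty}\le CN(T)$ following from \eqref{sobolev embedding4} and \eqref{N(t)}: every top-order occurrence $\phi_{xxxx}^2$ then carries a factor $N(T)$ and is absorbed into the dissipation, and every occurrence $\langle\xi-\xi_\star\rangle^{\beta+3}\phi_{xxx}^2$ is tamed either by pulling a half power of $\langle\xi-\xi_\star\rangle$ into the weighted $L^\infty$ norm of $\phi_{xx}$ (using $\beta>0$ via $\langle\xi-\xi_\star\rangle^{\frac{\beta+4}{2}}\le\langle\xi-\xi_\star\rangle^{\beta+2}$) or, after the $\xi\gtrless0$ split, by $\langle\xi-\xi_\star\rangle U^{1-m}\le C$ on $\xi>0$ and exponential decay on $\xi<0$, landing on the dissipations of Lemmas \ref{H1}, \ref{H2}, \ref{H2-2}.

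Finally, choosing $N(T)$ small enough to absorb the $N(T)\times$(dissipation) and $N(T)(d_0+t)^{\beta+3}|\phi_{xxx}(0,t)|^2$ terms, and invoking \eqref{L2-1 estimate}, \eqref{L2-2 estimate}, \eqref{H1 estimate}, \eqref{H1-2 estimate}, \eqref{H2 estimate}, \eqref{H2-2 estimate} to dispose of the remaining lower-order right-hand sides, one reaches \eqref{H3 estimate} after bounding the initial data by $C(1+d_0^{\beta+3})$ times the stated $\langle x-d_0\rangle^{\beta}$-weighted norms — the $d_0^{\beta+3}$ factor entering only through the $t=0$ boundary term $d_0^{\beta+3}|\phi_{0xx}(0)|^2$, estimated by Sobolev embedding from $\|\phi_0\|_{3,\langle x-d_0\rangle^{\beta}}$, and through $\langle\xi_0-\xi_\star\rangle^{\beta+3}\le C(1+d_0^{\beta+3})\langle x\rangle^{\beta+3}$ on $\mathbb{R}_+$. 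I expect the main obstacle to be precisely this boundary analysis at $x=0$: extracting from \eqref{boundary phixxxx} the correct positive-definite content that builds $(d_0+t)^{\beta+3}|\phi_{xx}(0,t)|^2$ and $\int_0^t(d_0+\tau)^{\beta+3}|\phi_{xxx}(0,\tau)|^2$, while controlling the $\tfrac{\mathrm d}{\mathrm dt}$-structures it produces against the unbounded temporal weight and ensuring every leftover is $O(d_0^{-1})$, absorbable by smallness, or already furnished by the lower-order lemmas — all compounded by the volume of nonlinear terms in \eqref{Fzzz}–\eqref{Gzzz} and the region-dependent ($\xi\gtrless0$) bookkeeping required to preclude spurious $t$-growth.
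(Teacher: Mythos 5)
Your plan follows the paper's actual proof closely in almost every respect: same multiplier $\langle\xi-\xi_\star\rangle^{\beta+3}\phi_{xxx}$, same appeal to \eqref{boundary phixxxx} to manufacture $(d_0+t)^{\beta+3}|\phi_{xx}(0,t)|^2$ and $\int_0^t(d_0+\tau)^{\beta+3}|\phi_{xxx}(0,\tau)|^2$ via time integration against the growing weight, same integration by parts to replace $G_{xxxx}$ by $G_{xxx}$, same $\xi\gtrless0$ splitting of the shift term with $|U_{xxx}|\le CU^{4-3m}$, same bookkeeping for the $d_0^{\beta+3}$ factor. The ingredients, the role of each lower-order lemma, and the absorption scheme are all correct.

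There is one genuine gap, however, and it is not cosmetic. You argue that ``since this weight is $\ge1$, no $\epsilon$-regularization of the type used in Lemma \ref{H2} is needed'' and then integrate by parts over $(0,+\infty)$ directly. But the $U_\epsilon$-regularization in Lemma \ref{H2} addressed a different pathology (the weight $U^{m-3}$ blowing up at $x=+\infty$). The issue here is the \emph{boundary term at $x=+\infty$ created by integration by parts of the diffusion flux}: the combination $\langle\xi-\xi_\star\rangle^{\beta+3}\frac{\phi_{xxx}\phi_{xxxx}}{U^{1-m}}$ contains both the unbounded weight $\langle\xi-\xi_\star\rangle^{\beta+3}$ and the singular factor $U^{-(1-m)}\to\infty$, and neither the solution class $X(0,T)$ nor the a priori smallness $N(T)\ll1$ gives a pointwise decay that lets you discard this boundary contribution. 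The paper deals with this by inserting a smooth cut-off $\eta(x)$ supported on $[0,2L]$ (this is exactly the ``cut-off technique'' announced in the Strategies paragraph of the introduction), multiplying by $\langle\xi-\xi_\star\rangle^{\beta+3}\phi_{xxx}\eta^2$, and only then integrating by parts; the extra terms involving $\eta\eta_x$ ($B_7$--$B_{12}$ in the paper) are bounded by $CTN^2(T)/\sqrt{L}$ plus absorbable pieces, and one lets $L\to\infty$ at the end. Without this device your integration-by-parts step is not justified, so the estimate, as you derive it, has an unproved cancellation at infinity. Everything else in your proposal survives once the cut-off is inserted and its error terms tracked.
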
	

\begin{proof}
We take a smooth cut-off function $\eta(x)$ satisfying
\begin{equation}\label{eta}
\eta(x)=1 \text{ for }x<L,~\eta(x)=0 \text{ for }x>2L,~0\leq \eta(x)\leq 1 \text{ and } |\eta_x(x)|\leq \frac{C}{L}\text{ for }x\in\mathbb{R_+}.
\end{equation}
Differentiating \eqref{phizzequ} in $x$ and multiplying the result by $\langle\xi-\xi_\star\rangle ^{\beta+3}\phi_{xxx}\eta^2(x)$,
we have
	\begin{align}
		&\frac{1}{2}\int_{0}^{2L} \langle\xi-\xi_\star\rangle ^{\beta+3}\phi_{xxx}^2\eta^2+\int_{0}^{t}\int_{0}^{2L}\langle\xi-\xi_\star\rangle ^{\beta+3}\frac{\phi_{xxxx}^2}{U^{1-m}}\eta^2\nonumber\\&\quad-\int_{0}^{t}\left[\left(\frac{f'(U)\phi_{xxx}}{2}- \frac{\phi_{xxxx}}{U^{1-m}}-\frac{G_{xxx}}{m}\right)\langle\xi-\xi_\star\rangle ^{\beta+3}\phi_{xxx}\eta^2 \right]\bigg|_{x=0}\nonumber\\&=\frac{1}{2}\int_0^{2L} \langle\xi_0-\xi_\star\rangle ^{\beta+3}\phi_{0xxx}^2\eta^2-(\beta+3)\int_{0}^{t}\int_{0}^{2L}\langle\xi-\xi_\star\rangle ^{\beta+1}(\xi-\xi_\star)\frac{\phi_{xxx}\phi_{xxxx} }{U^{1-m}}\eta^2\nonumber\\&\quad\!-\!(1\!-\!m)\!\!\int_{0}^{t}\!\!\int_{0}^{2L}\!\!\langle\xi\!-\!\xi_\star\rangle ^{\beta+3}\frac{U_x}{U^{2-m} }\phi_{xxx}\phi_{xxxx}\eta^2\!+\!\frac{\beta\!+\!3}{2}\!\int_{0}^{t}\!\!\int_{0}^{2L}\!\langle\xi\!-\!\xi_\star\rangle ^{\beta+1}(\xi\!-\!\xi_\star) \left( g'\!-\!d'\right)\phi_{xxx}^2\eta^2\nonumber\\&\quad+\int_{0}^{t}\int_{0}^{2L}\langle\xi-\xi_\star\rangle ^{\beta+3}\left[ (1-m)\left((2-m)\frac{U_x^2}{U^{3-m} }-\frac{U_{xx}}{U^{2-m} } \right)-\frac{5}{2}f''(U)U_x\right]\phi_{xxx}^2\eta^2\nonumber\\&\quad+\int_{0}^{t}\int_{0}^{2L}\langle\xi-\xi_\star\rangle ^{\beta+3}\left[ -3\left(f'''U_x^2+f''U_{xx} \right)\phi_{xx}\phi_{xxx}-f^{(4)}(U)U_x^3\phi_{x}\phi_{xxx}\right]\eta^2\nonumber\\&\quad-\int_{0}^{t}\int_{0}^{2L}\langle\xi\!-\!\xi_\star\rangle ^{\beta+3}\left(3f'''U_xU_{xx}\!+\!f''U_{xxx} \right)\phi_{x}\phi_{xxx}\eta^2+(1-m)\int_{0}^{t}\int_{0}^{2L}\left[-\frac{2U_x}{U^{2-m}}\phi_{xxx}\right.\nonumber\\&\left.\quad\!-\!\frac{3U_{xx}}{U^{2-m}}\phi_{xx}\!-\!\frac{U_{xxx}}{U^{2-m}}\phi_{x}\!+\!(2\!-\!m)\left( \frac{3U_x^2}{U^{3-m}}\phi_{xx}\!+\!\frac{3U_xU_{xx}}{U^{3-m}}\phi_{x}\!-\!\frac{(3\!-\!m)U_{x}^3}{U^{4-m}}\phi_{x}\right)\right]_x\!\!\langle\xi\!-\!\xi_\star\rangle ^{\beta+3}\phi_{xxx}\eta^2\nonumber\\&\quad+\int_{0}^{t}d'\langle\xi-\xi_\star\rangle ^{\beta+3}\int_{0}^{2L}U_{xxx}\phi_{xxx}\eta^2	 +\int_{0}^{t}\int_{0}^{2L}\langle\xi-\xi_\star\rangle ^{\beta+3}F_{xxx}\phi_{xxx}\eta^2 \nonumber\\&\quad-\frac{1}{m}\int_{0}^{t}\int_{0}^{2L}\langle\xi-\xi_\star\rangle ^{\beta+3}G_{xxx}\phi_{xxxx}\eta^2-\frac{\beta+3}{m}\int_{0}^{t}\int_{0}^{2L}\langle\xi-\xi_\star\rangle ^{\beta}(\xi-\xi_\star)G_{xxx}\phi_{xxx}\eta^2\nonumber\\&\quad+2\int_{0}^{t}\int_{0}^{2L}\left(\frac{f'(U)\phi_{xxx}}{2}- \frac{\phi_{xxxx}}{U^{1-m}}-\frac{G_{xxx}}{m}\right)\langle\xi-\xi_\star\rangle ^{\beta+3}\phi_{xxx}\eta\eta_x.\label{41}
	\end{align}
Notice that
	 \begin{align}
&\langle\xi\!-\!\xi_\star\rangle ^{\beta+3}\big|_{x=0}
\frac{f'(u_-) }{2}\frac{\mathrm{d}}{\mathrm{d}t}\phi_{xx}^2(0,t)\nonumber\\&=\frac{f'(u_-) }{2}\frac{\mathrm{d}}{\mathrm{d}t}\left(\langle\xi-\xi_\star\rangle ^{\beta+3}\phi_{xx}^2\right)\bigg|_{x=0}-(\beta+3)\langle\xi-\xi_\star\rangle ^{\beta+1} (\xi-\xi_\star)(-s-d')
\frac{f'(u_-) }{2}\phi_{xx}^2\bigg|_{x=0}\nonumber\\&\geq \frac{f'(u_-) }{2}\frac{\mathrm{d}}{\mathrm{d}t}\left(\langle\xi-\xi_\star\rangle ^{\beta+3}\phi_{xx}^2\!\right)\!\bigg|_{x=0}-C  d_0^{-1}\langle\xi-\xi_\star\rangle ^{\beta+3}\big|_{x=0}\phi_{xx}^2(0,t),\nonumber
\end{align}
for $\langle\xi\!-\!\xi_\star\rangle^{-1}\big|_{x=0}\leq d_0^{-1}$ in the last inequality, and
	\begin{align}
&\frac{\langle\xi\!-\!\xi_\star\rangle ^{\beta+3}}{U^{1-m}}\frac{\mathrm{d}}{\mathrm{d}t}\left[\phi_{xx}\left( u_-^{1-m}f'(u_-)U_{x} \!-\!U_{xx}\!-\!\frac{ (m\!-\!1)}{u_-}\left(U_{x}\!+\!\phi_{xx}\right)^2\!\right)\!\right]\!\bigg|_{x=0}\nonumber\\&=\frac{\mathrm{d}}{\mathrm{d}t}\left[\frac{\langle\xi\!-\!\xi_\star\rangle ^{\beta+3}}{U^{1-m}}\phi_{xx}\left( u_-^{1-m}f'(u_-)U_{x} \!-\!U_{xx}\!-\!\frac{ (m\!-\!1)}{u_-}\left(U_{x}\!+\!\phi_{xx}\right)^2\!\right)\!\right]\!\bigg|_{x=0}\nonumber\\&\quad-\!(\beta+3)\langle\xi\!-\!\xi_\star\rangle ^{\beta+1} (\xi\!-\!\xi_\star)(\!-\!s\!-\!d')\frac{\phi_{xx} }{ U^{1-m}}\left( u_-^{1-m}f'(u_-)U_{x} \!-\!U_{xx}\!-\!\frac{ (m\!-\!1)}{u_-}\left(U_{x}\!+\!\phi_{xx}\right)^2\!\right)\bigg|_{x=0}\nonumber\\&\quad+(1-m)\langle\xi\!-\!\xi_\star\rangle ^{\beta+3}\frac{U_x}{U^{2-m}}\phi_{xx}\left( u_-^{1-m}f'(u_-)U_{x} \!-\!U_{xx}\!-\!\frac{ (m\!-\!1)}{u_-}\left(U_{x}\!+\!\phi_{xx}\right)^2\!\right)\!\bigg|_{x=0},\nonumber\\&
\leq \frac{\mathrm{d}}{\mathrm{d}t}\left[\frac{\langle\xi\!-\!\xi_\star\rangle ^{\beta+3}}{U^{1-m}}\phi_{xx}\left( u_-^{1-m}f'(u_-)U_{x} \!-\!U_{xx}\!-\!\frac{ (m\!-\!1)}{u_-}\left(U_{x}\!+\!\phi_{xx}\right)^2\!\right)\!\right]\!\bigg|_{x=0}\nonumber\\&\quad +C\left( d_0^{-1}+N(T)\right)\langle\xi\!-\!\xi_\star\rangle ^{\beta+3}\big|_{x=0}\phi_{xx}^2(0,t)+C\langle\xi\!-\!\xi_\star\rangle ^{\beta+3}\left(U_x^2+U_{xx}^2 \right)\big|_{x=0},	\nonumber	
	\end{align}
it then follows from \eqref{Gzz}, \eqref{boundary phixxxx}, $\left\|\phi_x/U\left(\cdot,t\right)\right\|_{L^{\infty}}\leq CN(T)$ and $|\phi_{xx}(0,t)|\leq CN(T)$ that
	\begin{align}
		&-\left[\left(\frac{f'(U)\phi_{xxx}}{2}- \frac{\phi_{xxxx}}{U^{1-m}}-\frac{G_{xxx}}{m}\right)\langle\xi-\xi_\star\rangle ^{\beta+3}\phi_{xxx}\eta^2 \!\right]\bigg|_{x=0}\nonumber\\
&\geq\!\langle\xi\!-\!\xi_\star\rangle ^{\beta+3}\big|_{x=0}\!\left[\! \left(\!-\frac{f'(u_-)\phi^2_{xxx}}{2}\!+\!(1\!-\!CN(T)) \frac{\phi_{xxx}\phi_{xxxx}}{U^{1-m}}\!\right)\!\bigg|_{x=0}\!\!-\!CN(T)\left(\phi^2_{x}\!+\! \phi^2_{xx}\!+\!\phi^2_{xxx}\right)\!\big|_{x=0}\right]\nonumber\\
&\geq\!\left(\!\frac{f'(u_-) }{8}\!\!-\!CN(T)\right)\!\frac{\mathrm{d}}{\mathrm{d}t}\!\left(\!\langle\xi\!-\!\xi_\star\rangle ^{\beta+3}\phi_{xx}^2\!\right)\!\bigg|_{x=0}\!+\!\langle\xi\!-\!\xi_\star\rangle ^{\beta+3}\big|_{x=0}\!\left(\!\frac{f'(u_-)}{4}\!-\!CN(T) \right)\phi^2_{xxx}(0,t)\nonumber\\
&\quad+\frac{\mathrm{d}}{\mathrm{d}t}\left[\frac{\langle\xi\!-\!\xi_\star\rangle ^{\beta+3}}{U^{1-m}}\phi_{xx}\left( u_-^{1-m}f'(u_-)U_{x} \!-\!U_{xx}\!-\!\frac{ (m\!-\!1)}{u_-}\left(U_{x}\!+\!\phi_{xx}\right)^2\!\right)\!\right]\!\bigg|_{x=0}\nonumber\\
&\quad-C\langle\xi\!-\!\xi_\star\rangle ^{\beta+3}\!\left(U_x^2+U_{xx}^2 +U_{xxx}^2+\phi^2_{x} \right)\big|_{x=0} \notag \\
&\quad -C(d_0^{-\frac{1}{2}}+N(T))\langle\xi\!-\!\xi_\star\rangle ^{\beta+3}\big|_{x=0} \phi^2_{xx}(0,t).\label{47}
	\end{align}	
For the last term on RHS of \eqref{41}, a direct calculation gives
	\begin{align}
&\bigg|2\left(\frac{f'(U)\phi_{xxx}}{2}- \frac{\phi_{xxxx}}{U^{1-m}}-\frac{G_{xxx}}{m}\right)\bigg|\nonumber	\\&\leq C\left[|\phi_{xxx}|+\frac{|\phi_{xxxx}|}{U^{1-m}}+\left(\frac{|U_x|^3 }{U^{5-m}}\!+\!\frac{|U_x||U_{xx}| }{U^{4-m}}\!+\!\frac{|U_{xxx}| }{U^{3-m}} \right)\phi_{x}^2\!+\!\left(\frac{U_x^2 }{U^{4-m}}\!+\!\frac{|U_{xx}| }{U^{3-m}} \right)|\phi_{x}||\phi_{xx}|\right.\nonumber\\&\left.\quad+\frac{|U_x| }{U^{3-m}}|\phi_{x}||\phi_{xxx}|+\frac{|\phi_{x}||\phi_{xxxx}| }{U^{2-m}}+\frac{U_x }{U^{3-m}}\phi_{xx}^2+\frac{|\phi_{xx}|^3}{U^{3-m}}+\frac{|\phi_{xx}||\phi_{xxx}|}{U^{2-m}}\right]\nonumber.	
	\end{align}
Then, in view of \eqref{Gzzz} and $\|\phi_x(\cdot,t)/U\|_{L^\infty}\leq CN(T)\leq C$, one has
	\begin{align}
		&\bigg|2\int_{0}^{t}\int_{0}^{2L}\left(\frac{f'(U)\phi_{xxx}}{2}- \frac{\phi_{xxxx}}{U^{1-m}}-\frac{G_{xxx}}{m}\right)\langle\xi-\xi_\star\rangle ^{\beta+3}\phi_{xxx}\eta\eta_x\bigg|\nonumber \\&
		\leq C\!\int_{0}^{t}\int_{0}^{2L}\!\langle\xi\!-\!\xi_\star\rangle ^{\beta+3}\phi_{xxx}^2\eta|\eta_x|+C\int_{0}^{t}\int_{0}^{2L}\langle\xi-\xi_\star\rangle ^{\beta+3}\frac{|\phi_{xxx}||\phi_{xxxx}|}{U^{1-m}}\eta|\eta_x|\nonumber\\&\quad+\! C\!\!\int_{0}^{t}\!\!\int_{0}^{2L}\!\!\langle\xi\!-\!\xi_\star\rangle ^{\beta+3}|\phi_{xxx}|\!\left[\!\left(\!\frac{|U_x|^3 }{U^{4-m}}\!\!+\!\frac{|U_x||U_{xx}| }{U^{3-m}}\!+\!\frac{|U_{xxx}| }{U^{2-m}}\! \right)\!|\phi_{x}|\!+\!\left(\!\frac{U_x^2 }{U^{3-m}}\!+\!\frac{|U_{xx}| }{U^{2-m}} \!\right)\!|\phi_{xx}|\right]\!|\eta||\eta_x|\nonumber\\&\quad+C\int_{0}^{t}\int_{0}^{2L}\langle\xi-\xi_\star\rangle ^{\beta+3}\frac{|U_x| }{U^{3-m}}\phi_{xx}^2|\phi_{xxx}||\eta||\eta_x|+C\int_{0}^{t}\int_{0}^{2L}\langle\xi-\xi_\star\rangle ^{\beta+3}\frac{|\phi_{xx}|^3|\phi_{xxx}|}{U^{3-m}}|\eta||\eta_x|\nonumber\\&\quad+C\int_{0}^{t}\int_{0}^{2L}\langle\xi-\xi_\star\rangle ^{\beta+3}\frac{|\phi_{xx}|\phi_{xxx}^2}{U^{2-m}}|\eta||\eta_x|\nonumber\\&\triangleq B_7+\cdots+B_{12}.\label{51}
	\end{align}
Since $\phi\in X(0,T)$, we know for any $t\in[0,T]$, that
\begin{equation}\nonumber
\int_{0}^{2L}\langle\xi-\xi_\star\rangle^{\beta+3}\phi_{xxx}^2\leq\int\langle\xi-\xi_\star\rangle^{\beta+3}\phi_{xxx}^2	 \leq N^2(T),
\end{equation}
and
\begin{equation}\nonumber
\int_{0}^{2L}\left(\langle\xi-\xi_\star\rangle ^{\beta+2}\phi_{xx}^2\!+\!\langle\xi\!-\!\xi_\star\rangle ^{\beta+1}\phi_{x}^2\right)	\leq\int\left(\langle\xi-\xi_\star\rangle ^{\beta+2}\phi_{xx}^2\!+\!\langle\xi\!-\!\xi_\star\rangle ^{\beta+1}\phi_{x}^2\right) \leq N^2(T).
\end{equation}
Thus, utilizing \eqref{eta}, we deduce that
\begin{equation}\label{48}
B_7\leq\frac{C}{L}\int_{0}^{t}\int_{0}^{2L}\langle\xi-\xi_\star\rangle ^{\beta+3}\phi_{xxx}^2\leq\frac{CTN^2(T)}{L},
\end{equation}
and
\begin{equation}\label{49}
B_8\leq	\int_{0}^{t}\int_{0}^{2L}\langle\xi-\xi_\star\rangle ^{\beta+3}\phi_{xxx}^2\frac{|\eta_x|^{\frac{3}{2}}}{U^{1-m}}+\int_{0}^{t}\int_{0}^{2L}\langle\xi-\xi_\star\rangle ^{\beta+3}\frac{\phi_{xxxx}^2}{U^{1-m}}\eta^2|\eta_x|^{\frac{1}{2}}.
\end{equation}
Thanks to $U^{1-m}(\xi)\sim |\xi|^{-1}$ as $\xi\rightarrow+\infty$ due to  \eqref{f''>0}, this implies $\frac{|\eta_x|^{\frac{3}{2}}}{U^{1-m}}\leq \frac{C}{\sqrt{L}}$. Thus,
\begin{equation}\nonumber
\int_{0}^{t}\int_{0}^{2L}\langle\xi-\xi_\star\rangle ^{\beta+3}\phi_{xxx}^2\frac{|\eta_x|^{\frac{3}{2}}}{U^{1-m}}\leq\frac{C}{\sqrt{L}}\int_{0}^{t}\int_{0}^{2L}\langle\xi-\xi_\star\rangle ^{\beta+3}\phi_{xxx}^2\leq\frac{CTN^2(T)}{\sqrt{L}},
\end{equation}
and
\begin{equation}\nonumber
\int_{0}^{t}\int_{0}^{2L}\langle\xi-\xi_\star\rangle ^{\beta+3}\frac{\phi_{xxxx}^2}{U^{1-m}}\eta^2|\eta_x|^{\frac{1}{2}}\leq\frac{C}{\sqrt{L}}\int_{0}^{t}\int_{0}^{2L}\langle\xi-\xi_\star\rangle ^{\beta+3}\frac{\phi_{xxxx}^2}{U^{1-m}}\eta^2.	
\end{equation}
Therefore, \eqref{49} is transformed into
\begin{equation}\label{50}
B_8\leq\frac{C}{\sqrt{L}}\int_{0}^{t}\int_{0}^{2L}\langle\xi-\xi_\star\rangle ^{\beta+3}\frac{\phi_{xxxx}^2}{U^{1-m}}\eta^2+\frac{CTN^2(T)}{\sqrt{L}}.
\end{equation}
Moreover, by \eqref{e5}-\eqref{e6}, \eqref{e8}-\eqref{e9} and Cauchy-Schwarz inequality, we get
	 \begin{align}
B_9&\leq C	 \int_{0}^{t}\int_{0}^{2L}\langle\xi\!-\!\xi_\star\rangle ^{\beta+3}\phi_{xxx}^2\frac{|\eta_x|^{\frac{3}{2}}}{U^{1-m}}\!+\!C\int_{0}^{t}\int_{0}^{2L}\langle\xi\!-\!\xi_\star\rangle ^{\beta+3}\left[\left(\frac{U_x^6}{U^{7-m}}\!+\!\frac{U_x^2U_{xx}^2 }{U^{5-m}}\!+\!\frac{U_{xxx}^2 }{U^{3-m}} \right)\phi_{x}^2\right.\nonumber\\&\left.\quad\!+\!\left(\frac{U_x^4 }{U^{5-m}}\!+\!\frac{U_{xx}^2 }{U^{3-m}} \right)\phi_{xx}^2\right]\eta^2|\eta_x|^{\frac{1}{2}}\nonumber\\&\leq\frac{C }{\sqrt{L}}\int_{0}^{t}\int_{0}^{2L}\left(\langle\xi-\xi_\star\rangle ^{\beta+3}\phi_{xxx}^2+\langle\xi-\xi_\star\rangle ^{\beta+2}\phi_{xx}^2+\langle\xi-\xi_\star\rangle ^{\beta+1}\phi_{x}^2 \right)\nonumber\\&\leq\frac{C TN^2(T)}{\sqrt{L}},\label{52}\nonumber
	 \end{align}
and
\begin{equation}\nonumber
	 \begin{aligned}
B_{10}&\leq\frac{C}{L} \int_{0}^{t}\left\|\langle\xi-\xi_\star\rangle ^{\frac{\beta+2 }{2}}\phi_{xx}\right\|_{L^{\infty}}\left\|\langle\xi-\xi_\star\rangle ^{\frac{1 }{2}}\frac{|U_x|}{U^{3-m}}\phi_{xx}\right\|_{L^{2}}\left\|\langle\xi-\xi_\star\rangle ^{\frac{\beta+3 }{2}}\phi_{xxx}\right\|_{L^{2}}\\&
\leq\frac{CN(T)}{L} \int_{0}^{t}\int\left(\langle\xi-\xi_\star\rangle ^{\beta+3}\phi_{xxx}^2+\langle\xi-\xi_\star\rangle ^{\beta+2}\phi_{xx}^2 \right)\\&\leq\frac{CTN^3(T)}{L},	
	 \end{aligned}
\end{equation}
where we have used $\langle\xi-\xi_\star
\rangle\frac{U_x^2}{U^{6-2m}}\leq CU^{-(3-m)}$ as $\xi\rightarrow+\infty$, $|U_x(\xi)|\sim \mathrm{e}^{-\lambda_-|\xi|}$ as $\xi\rightarrow-\infty$ and $\|\phi_{xx}(\cdot,t)/U^{\frac{3-m}{2}}\|_{L^2}\leq CN(T)$ in the second inequality. Noting that $\|\langle\xi-\xi_\star\rangle ^{\frac{\beta+2 }{2}}\phi_{xx} \|_{L^{\infty}}\leq N(T)$,
\begin{equation}\nonumber
	\begin{aligned}
B_{11}&
\leq CN(T)\int_{0}^{t}\left(\int_{0}^{s\tau+d(\tau)}+\int_{s\tau+d(\tau)}^{2L} \right)\langle\xi-\xi_\star\rangle ^{\frac{ \beta+4}{2}}\frac{\phi_{xx}^2|\phi_{xxx}|}{U^{3-m}}|\eta_x|.
	\end{aligned}
\end{equation}
For $\xi>0$, in view of $\langle\xi-\xi_\star\rangle\sim U^{-(1-m)}$, $\langle\xi-\xi_\star\rangle|\eta_x|\leq C$ and $\langle\xi-\xi_\star\rangle ^{\beta+2}\leq\langle\xi-\xi_\star\rangle ^{\frac{1+m}{1-m}}\leq U^{-(1+m)}\leq CU^{-(4-2m)}$ as $\xi\rightarrow+\infty$, it holds that
	\begin{align}
&\quad CN(T)\int_{0}^{t}\int_{s\tau+d(\tau)}^{2L}\langle\xi-\xi_\star\rangle ^{\frac{ \beta+4}{2}}\frac{\phi_{xx}^2|\phi_{xxx}|}{U^{3-m}}|\eta_x|		 \nonumber\\&\leq\frac{CN(T)}{\sqrt{L}} \int_{0}^{t}\left\|\langle\xi-\xi_\star\rangle ^{\frac{\beta+2 }{2}}\phi_{xx}\right\|_{L^{\infty}}\left(\int_{s\tau+d(\tau)}^{2L}\langle\xi-\xi_\star\rangle^2|\eta_x|\frac{\phi_{xx}^2 }{U^2} \right)^{\frac{1}{2}}\left(\int_{s\tau+d(\tau)}^{2L}\frac{\phi_{xxx}^2 }{U^{4-2m}} \right)^{\frac{1}{2}} \nonumber\\&
\leq \frac{CTN^4(T)}{\sqrt{L}}+\frac{CN^2(T)}{\sqrt{L}}\int_{0}^{t}\int\frac{\phi_{xxx}^2 }{U^{4-2m}}.\nonumber
	\end{align}
For $\xi<0$, we have $0<U(0)<U(\xi)<u_-$, then
\begin{equation}\nonumber
\!CN(T)\!\!\int_{0}^{t}\!\int_0^{s\tau+d(\tau)}\!\langle\xi\!-\!\xi_\star\rangle ^{\frac{ \beta+4}{2}}\frac{\phi_{xx}^2|\phi_{xxx}|}{U^{3-m}}|\eta_x|\!\leq\!\frac{ CN^2(T)}{L}\! \int_{0}^{t}\!\!\int_0^{s\tau+d(\tau)}\langle\xi-\xi_\star\rangle |\phi_{xx}||\phi_{xxx}|\!	\!\leq \frac{CTN^4(T)}{\sqrt{L}}.
\end{equation}
Similarly, it holds
\begin{align}
B_{12}&
\leq\frac{CN(T)}{L}\!\int_{0}^{t}\!\int_{0}^{s\tau+d(\tau)}\!\langle\xi\!-\!\xi_\star\rangle ^{\frac{\beta+4 }{2}}\phi_{xxx}^2+\frac{CN(T)}{L}\!\int_{0}^{t}\!\int_{s\tau+d(\tau)}^{2L}\!\langle\xi\!-\!\xi_\star\rangle ^{\frac{\beta+4 }{2}}\frac{\phi_{xxx}^2}{U^{2-m}}\nonumber\\&\leq\frac{CTN^3(T)}{L}+\int_{0}^{t}\!\!\int\!\!\frac{\phi_{xxx}^2 }{U^{4\!-\!2m}},\label{53}
\end{align}
where we used the fact $\langle\xi\!-\!\xi_\star\rangle ^{\frac{\beta+4 }{2}}U^{-(2-m)}\leq\langle\xi\!-\!\xi_\star\rangle ^{\frac{3-m}{2(1-m)}}U^{-(2-m)}\sim U^{-\frac{7-3m}{2}}\leq CU^{-(4-2m)}$ as $\xi\rightarrow+\infty$. Adding \eqref{48}-\eqref{53} with \eqref{51}, one obtains
	\begin{align} \label{54}
&\bigg|2\int_{0}^{t}\int_{0}^{2L}\left(\frac{f'(U)\phi_{xxx}}{2}- \frac{\phi_{xxxx}}{U^{1-m}}-\frac{G_{xxx}}{m}\right)\langle\xi-\xi_\star\rangle ^{\beta+3}\phi_{xxx}\eta\eta_x\bigg| \notag \\
&\leq\frac{C }{\sqrt{L}}\int_{0}^{t}\int_{0}^{2L}\langle\xi-\xi_\star\rangle ^{\beta+3}\frac{\phi_{xxxx}^2 }{U^{1-m}}\eta^2+\frac{C TN^2(T)}{\sqrt{L}}+\frac{CN(T)}{\sqrt{L}}\int_{0}^{t}\int\frac{\phi_{xxx}^2 }{U^{4-2m}}.	
	\end{align}

Now letting $L\rightarrow+\infty$ in \eqref{41}, noting that \eqref{phix 0}, \eqref{47}, \eqref{54} and $|U_x(-st-d(t))|\sim \mathrm{e}^{-\lambda_-(st+d(t))}$, we have
\begin{align}\label{46}
	&\int\langle\xi-\xi_\star\rangle ^{\beta+3} \phi_{xxx}^2+(d_0+t) ^{\beta+3}\phi_{xx}^2(0,t)+\int_{0}^{t}(d_0+\tau) ^{\beta+3}\phi_{xxx}^2(0,\tau)\!+\!\int_{0}^{t}\!\int\!\langle\xi\!-\!\xi_\star\rangle ^{\beta+3}\frac{\phi_{xxxx}^2}{U^{1-m}}\nonumber\\&\leq C\left\{\!\int\!\langle\xi_0\!-\!\xi_\star\rangle ^{\beta+3}\phi_{0xxx}^2\!+\!d_0^{\beta+3}\int\left(\phi_{0xx}^2\!+\!\phi_{0x}^2\!+\! \phi_{0}^2\right)\!+\!d_0^{-1}\!+\!(d_0^{-\frac{1}{2}}\!+\!N(T))\int_{0}^{t}(d_0\!+\!\tau) ^{\beta+3}\phi_{xx}^2(0,\tau)\right.\nonumber\\&\left.\quad+\int_{0}^{t}\bigg|\frac{\langle\xi-\xi_\star\rangle ^{\beta+3}}{U^{1-m}}\phi_{xx}\left[  u_-^{1-m}f'(u_-)U_{x} -U_{xx}-\frac{ (m-1)}{u_-}\left(U_{x}+\phi_{xx}\right)^2\right]\bigg|_{x=0} \bigg|\right.\nonumber\\&\left.\quad+ \int_{0}^{t}\int\left(\langle\xi-\xi_\star\rangle ^{\beta+1}\frac{|\xi-\xi_\star| }{U^{1-m}}+\langle\xi-\xi_\star\rangle ^{\beta+3}\frac{|U_x|}{U^{2-m} } \right)|\phi_{xxx}\phi_{xxxx}|\right.\nonumber\\&\left.\quad+\int_{0}^{t}\!\!\int\langle\xi-\xi_\star\rangle ^{\beta+3}\left(\frac{|U_{xx}|}{U^{2-m} }\!+\!\frac{U_x^2}{U^{3-m} } \!+\!f''(U)|U_x|\right)\phi_{xxx}^2\!+\!\int_{0}^{t}\!\!\int\langle\xi-\xi_\star\rangle ^{\beta+1}|\xi-\xi_\star| |g'-d'|\phi_{xxx}^2\right.\nonumber\\&\left.\quad+\int_{0}^{t}\int\langle\xi-\xi_\star\rangle ^{\beta+3}\left[ \left(f'''(U)U_x^2\!+\!f''(U)|U_{xx}| \right)|\phi_{xx}\phi_{xxx}|+\big|f^{(4)}(U)U_x^3\phi_{x}\phi_{xxx}\big|\right]\right.\nonumber\\&\left.\quad+\!\int_{0}^{t}\int\langle\xi-\xi_\star\rangle ^{\beta+3}\bigg|\left(f'''(U)U_xU_{xx}\!+\!f''(U)U_{xxx} \right)\phi_{x}\phi_{xxx}\bigg|
	 +(1-m)\int_{0}^{t}\int\left[-\frac{2U_x}{U^{2-m}}\phi_{xxx}\right.\right.\nonumber\\&\left.\left.\quad\!-\!\frac{3U_{xx}}{U^{2-m}}\phi_{xx}\!-\!\frac{U_{xxx}}{U^{2-m}}\phi_{x}\!+\!(2\!-\!m)\left( \frac{3U_x^2}{U^{3-m}}\phi_{xx}\!+\!\frac{3U_xU_{xx}}{U^{3-m}}\phi_{x}\!-\!\frac{(3\!-\!m)U_{x}^3}{U^{4-m}}\phi_{x}\right)\right]_x\!\!\langle\xi\!-\!\xi_\star\rangle ^{\beta+3}\phi_{xxx}\right.\nonumber\\&\left.\quad+\int_{0}^{t}|d'(\tau)|\int\langle\xi-\xi_\star\rangle ^{\beta+3}|U_{xxx}||\phi_{xxx}|\!+\!\int_{0}^{t}\int\langle\xi\!-\!\xi_\star\rangle ^{\beta+3}|F_{xxx}\phi_{xxx}| \right.\nonumber\\&\left.\quad\!+\!\int_{0}^{t}\!\int\left(\langle\xi\!-\!\xi_\star\rangle ^{\beta+3}|G_{xxx}\phi_{xxxx}|+\langle\xi\!-\!\xi_\star\rangle ^{\beta+2}|G_{xxx}\phi_{xxx}| \right)\right\}.
\end{align}
We next estimate the terms on RHS of \eqref{46}. Utilizing $\langle\xi-\xi_\star\rangle ^{\beta+3}\sim (d_0+t)^{\beta+3}$, $|U_x(-st-d(t))|\sim \mathrm{e}^{-\lambda_-(st+d(t))}$ and $|\phi_{xx}(0,t)|\leq N(T)$, we deduce from the Cauchy-Schwarz inequality that
	\begin{align} \label{60}
&\int_{0}^{t}\bigg|\frac{\langle\xi\!-\!\xi_\star\rangle ^{\beta+3}}{U^{1-m}}\phi_{xx}\!
\left[  u_-^{1-m}f'(u_-)U_{x} \!-\!U_{xx}\!-\!\!\frac{ (m\!-\!1)}{u_-}\left(U_{x}\!+\!\phi_{xx}\right)^2\!\right]\!\bigg|_{x=0} \bigg| \notag \\
&
\leq\left(\frac{1}{4}+CN(T) \right)\int_{0}^{t}	(d_0+t) ^{\beta+3}\phi_{xx}^2(0,t)+Cd_0^{-1},
	\end{align}
and
	 \begin{align}
&\int_{0}^{t}\int\left(\langle\xi-\xi_\star\rangle ^{\beta+1}\frac{|\xi-\xi_\star| }{U^{1-m}}+\langle\xi-\xi_\star\rangle ^{\beta+3}\frac{|U_x|}{U^{2-m} } \right)|\phi_{xxx}\phi_{xxxx}|\nonumber\\&
\leq\frac{1}{8}\int_{0}^{t}\int\langle\xi-\xi_\star\rangle ^{\beta+3}\frac{\phi_{xxxx}^2 }{U^{1-m}}+C\int_{0}^{t}\int\langle\xi-\xi_\star\rangle ^{\beta+2}\frac{\phi_{xxx}^2 }{U^{1-m}},	
	 \end{align}
where we have used \eqref{e5} and $\langle\xi-\xi_\star\rangle\frac{U_x^2 }{U^{3-m}}\leq \langle\xi-\xi_\star\rangle U^{1-m}\leq C\leq \frac{C}{U^{1-m}}$ in the last inequality. Similarly, one has
\begin{equation}\nonumber
\int_{0}^{t}\!\!\int\langle\xi-\xi_\star\rangle ^{\beta+3}\left(\frac{|U_{xx}|}{U^{2-m} }\!+\!\frac{U_x^2}{U^{3-m} } \!+\!f''(U)|U_x|\right)\phi_{xxx}^2\leq\! C\int_{0}^{t}\!\!\int\!\langle\xi\!-\!\xi_\star\rangle ^{\beta+2}\phi_{xxx}^2,	
\end{equation}
\begin{equation}\nonumber
\int_{0}^{t}\!\!\int\langle\xi-\xi_\star\rangle ^{\beta+1}|\xi-\xi_\star| |g'-d'|\phi_{xxx}^2\leq C\int_{0}^{t}\int\langle\xi-\xi_\star\rangle ^{\beta+2}\phi_{xxx}^2,
\end{equation}
\begin{equation}\nonumber
	 \begin{aligned}
&\int_{0}^{t}\int\langle\xi-\xi_\star\rangle ^{\beta+3}\left[ \left(f'''(U)U_x^2\!+\!f''(U)|U_{xx}| \right)|\phi_{xx}\phi_{xxx}|+\big|f^{(4)}(U)U_x^3\phi_{x}\phi_{xxx}\big|\right]\\&\leq C\int_{0}^{t}\int\langle\xi-\xi_\star\rangle ^{\beta+3}\left(U_x^2+|U_{xx}| \right)(\phi_{xx}^2+\phi_{xxx}^2)+C\int_{0}^{t}\int\langle\xi-\xi_\star\rangle ^{\beta+3}|U_x|^3(\phi_{x}^2+\phi_{xxx}^2)\\&\leq C\int_{0}^{t}\int\left( \langle\xi-\xi_\star\rangle ^{\beta+2}\phi_{xxx}^2+\langle\xi-\xi_\star\rangle ^{\beta+1}\phi_{xx}^2+\langle\xi-\xi_\star\rangle ^{\beta}\phi_{x}^2\right),
	 \end{aligned}
\end{equation}
and
\begin{equation}\nonumber
	\begin{aligned}
&\int_{0}^{t}\int\langle\xi-\xi_\star\rangle ^{\beta+3}\bigg|\left(f'''(U)U_xU_{xx}\!+\!f''(U)U_{xxx} \right)\phi_{x}\phi_{xxx}\bigg|\\&
\leq C\int_{0}^{t}\int\left( \langle\xi-\xi_\star\rangle ^{\beta+2}\phi_{xxx}^2+\langle\xi-\xi_\star\rangle ^{\beta}\phi_{x}^2\right).
	\end{aligned}
\end{equation}
By virtue of \eqref{q1}-\eqref{q4}, \eqref{e5} and $\langle\xi-\xi_\star\rangle\sim U^{-(1-m)}$ as $\xi\rightarrow+\infty$, a direct calculation yields
	\begin{align}
&\text{the fourth term from the end on RHS of \eqref{46}}\nonumber\\&\leq C\int_{0}^{t}\!\int\!\left[\frac{|U_x|}{U^{2-m}}|\phi_{xxxx}|+\left(\frac{U_x^2}{U^{3-m}}+\frac{|U_{xx}|}{U^{2-m}} \right)|\phi_{xxx}|+\left(\frac{|U_{xxx}|}{U^{2-m}} +\frac{|U_x||U_{xx}|}{U^{3-m}}+\frac{|U_x|^3}{U^{4-m}}\right)|\phi_{xx}|\right.\nonumber\\&\left.\quad+\left(\frac{|U_x||U_{xxx}|}{U^{3-m}}+\frac{|U_{xxxx}|}{U^{2-m}} +\frac{U_x^2|U_{xx}|}{U^{4-m}}+\frac{U_{xx}^2}{U^{3-m}}+\frac{U_x^4}{U^{5-m}}\right)|\phi_{x}|	\right]	\langle\xi-\xi_\star\rangle ^{\beta+3}|\phi_{xxx}| \nonumber\\&
\leq\frac{1}{8}\int_{0}^{t}\int\langle\xi-\xi_\star\rangle ^{\beta+3}\frac{\phi_{xxxx}^2 }{U^{1-m}} \notag \\
&\ \ \ +C\int_{0}^{t}\!\int\!\left(\langle\xi-\xi_\star\rangle ^{\beta+2}\phi_{xxx}^2+\langle\xi-\xi_\star\rangle ^{\beta+1}\phi_{xx}^2+\langle\xi-\xi_\star\rangle ^{\beta}\phi_{x}^2 \right).\label{61}
	\end{align}
We decompose third term from the end of \eqref{46} as
\begin{equation}\nonumber
\int_{0}^{t}|d'(\tau)|\int \langle\xi-\xi_\star\rangle ^{\beta+3}|U_{xxx}||\phi_{xxx}|=\int_{0}^{t}|d'(\tau)|\left(\int_{0}^{s\tau+d(\tau)}+\int_{s\tau+d(\tau)}^{+\infty} \right)\langle\xi-\xi_\star\rangle ^{\beta+3}|U_{xxx}||\phi_{xxx}|.
\end{equation}	
For $\xi>0$, since $U_{xxx}^2U^{1-m}\sim\langle\xi-\xi_\star\rangle^{-\frac{9-7m}{ 1-m}} $ as $\xi\rightarrow+\infty$ and $\int_{s\tau+d(\tau)}^{+\infty}\langle\xi-\xi_\star\rangle^{\beta+4}U_{xxx}^2U^{1-m}\leq\int_{s\tau+d(\tau)}^{+\infty}\langle\xi-\xi_\star\rangle^{-6}\leq	C$, we get from \eqref{e1} that
	\begin{align}
		&\int_{0}^{t}|d'(\tau)|\int_{s\tau+d(\tau)}^{+\infty} \langle\xi-\xi_\star\rangle ^{\beta+3}|U_{xxx}||\phi_{xxx}|\nonumber\\&\leq\frac{1}{2}\int_{0}^{t}\int_{s\tau+d(\tau)}^{+\infty}\langle\xi-\xi_\star\rangle^{\beta+2}\frac{\phi_{xxx}^2 }{U^{1-m}}+\frac{1}{2}\int_{0}^{t}|d'(\tau)|^2\int_{s\tau+d(\tau)}^{+\infty}\langle\xi-\xi_\star\rangle^{\beta+4}U_{xxx}^2U^{1-m}\nonumber\\&\leq C\left(\mathrm{e}^{-2\gamma d_0}+\int_{0}^{t}\int\langle\xi-\xi_\star\rangle^{\beta+2}\frac{\phi_{xxx}^2 }{U^{1-m}}+\int_{0}^{t}|\phi_{xx}^2(0,\tau)| \right).\label{55}
	\end{align}	
For $\xi<0$, it holds
	\begin{align}
		&\int_{0}^{t}|d'(\tau)|\int_{0}^{s\tau+d(\tau)} \langle\xi-\xi_\star\rangle ^{\beta+3}|U_{xxx}||\phi_{xxx}|\nonumber	\\&\leq \int_{0}^{t}|d'(\tau)|\left(\sup_{x\in [0,s\tau+d(\tau)]}\langle\xi-\xi_\star\rangle^\frac{\beta+\frac{5}{2}}{2} |\phi_{xxx}| \right)\int_{0}^{s\tau+d(\tau)} \langle\xi-\xi_\star\rangle^\frac{\beta+\frac{1}{2}}{2}  |U_{xxx}|,\nonumber
	\end{align}
where, in view of \eqref{st+d(t)},	
\begin{equation}\nonumber
	\int_{0}^{s\tau+d(\tau)} \langle\xi-\xi_\star\rangle^\frac{\beta+\frac{1}{2}}{2}  |U_{xxx}|\leq C\left(d_0+\tau \right) ^\frac{\beta+\frac{5}{2}}{2},
\end{equation}	
and
\begin{equation}\nonumber
	\begin{aligned}
		\langle\xi\!-\!\xi_\star\rangle^{\beta+\frac{5}{2}} \phi_{xxx}^2&\!\!=\!\!-\!\!\int_{x}^{0}\!\!\left[ \frac{\partial}{\partial x}\!\left(\!\langle\xi\!-\!\xi_\star\rangle^{\beta+\frac{5}{2}} \right)\!\phi_{xxx}^2\!\!+\!2\langle\xi-\xi_\star\rangle^{\beta+\frac{5}{2}}\phi_{xxx}\phi_{xxxx}\!\right]\!\!+\! \langle\xi\!-\!\xi_\star\rangle^{\beta+\frac{5}{2}}\big|_{x=0} \phi_{xxx}^2(0,\tau)\\&\!\leq\! C\!\int_{0}^{s\tau+d(\tau)}\!\!\left(\!\langle\xi\!-\!\xi_\star\rangle^{\beta+2}\phi_{xxx}^2+\langle\xi-\xi_\star\rangle^{\beta+3}\phi_{xxxx}^2 \right)\!+\! \langle\xi\!-\!\xi_\star\rangle^{\beta+\frac{5}{2}}\big|_{x=0} \phi_{xxx}^2(0,\tau).
	\end{aligned}
\end{equation}	
Then, by $\langle\xi-\xi_\star\rangle^{\beta+\frac{5}{2}}\big|_{x=0}\sim (d_0+\tau)^{\beta+\frac{5}{2}}$ due to \eqref{st+d(t)}, it has
	\begin{align}
		&\int_{0}^{t}|d'(\tau)|\int_{0}^{s\tau+d(\tau)} \langle\xi-\xi_\star\rangle^{\beta+3} |U_{xxx}||\phi_{xxx}|
		\nonumber\\&\!\leq\! C\!\int_{0}^{t}\!|d'(\tau)|\!\left[\!\left(\!\int_{0}^{s\tau+d(\tau)}\!\langle\xi\!-\!\xi_\star\rangle^{\beta+2}\phi_{xxx}^2\!\right)^\frac{1}{2}\!\!+\!\left(\!\int_{0}^{s\tau+d(\tau)}\!\langle\xi\!-\!\xi_\star\rangle^{\beta+3}\phi_{xxxx}^2\!\right)^\frac{1}{2}\right]\left(d_0\!+\!\tau \right) ^\frac{\beta+\frac{5}{2}}{2}\nonumber\\&\quad+C\int_{0}^{t}|d'(\tau)|\langle\xi-\xi_\star\rangle^{\frac{\beta+\frac{5}{2} }{2}}\big|_{x=0} |\phi_{xxx}(0,\tau)| \left(d_0+\tau \right) ^\frac{\beta+\frac{5}{2}}{2}	\nonumber\\&\leq\frac{1}{8}\int_{0}^{t}\int_{0}^{s\tau+d(\tau)}\langle\xi-\xi_\star\rangle^{\beta+3}\phi_{xxxx}^2 +C\left(\mathrm{e}^{-2\gamma d_0}+\int_{0}^{t}\int_{0}^{s\tau+d(\tau)}\langle\xi-\xi_\star\rangle^{\beta+2}\phi_{xxx}^2\right.\nonumber\\&\quad\left.+\int_{0}^{t}\left(d_0+\tau \right)^{\beta+\frac{5}{2}}|\phi_{xx}^2(0,\tau)|  +\int_{0}^{t}\left(d_0+\tau \right)^{\beta+\frac{5}{2}}|\phi_{xxx}^2(0,\tau)|  \right)	. \label{56}
	\end{align}
Therefore, combining \eqref{55} and \eqref{56}, we get
	\begin{align} \label{62}
		&\int_{0}^{t}|d'(\tau)|\int\langle\xi-\xi_\star\rangle^{\beta+3} |U_{xxx}||\phi_{xxx}| \notag \\
		&\leq\frac{1}{8}\int_{0}^{t}\int\langle\xi-\xi_\star\rangle^{\beta+3}\phi_{xxxx}^2\!+\!C\left(\int_{0}^{t}\int\langle\xi-\xi_\star\rangle^{\beta+2}\frac{\phi_{xxx}^2 }{U^{1-m}}+\mathrm{e}^{-2\gamma d_0} \right. \notag \\
&\left.\quad+d_0^{-\frac{1}{2}}\int_{0}^{t}\left(d_0+\tau \right) ^{\beta+3}|\phi_{x x}^2(0,\tau)|+d_0^{-\frac{1}{2}}\int_{0}^{t}\left(d_0+\tau \right) ^{\beta+3}|\phi_{xx x}^2(0,\tau)|\right).
	\end{align}	
Furthermore, by virtue of \eqref{Fzz},  $\left\|\langle\xi-\xi_\star\rangle^{\frac{\beta+1 }{2}}\phi_x(\cdot,t)\right\|_{L^{\infty}}\leq CN(T)$ and $\left\|\langle\xi-\xi_\star\rangle^{\frac{\beta+2 }{2}}\phi_{xx}(\cdot,t)\right\|_{L^{\infty}}\\\leq CN(T)$, as in the proof of \eqref{40}, we have
	\begin{align}\label{63}
		&\int_{0}^{t}\int \langle\xi-\xi_\star\rangle^{\beta+3}|F_{xxx}\phi_{xxx} |\nonumber\\&
		\leq CN(T)\int_{0}^{t}\int\langle\xi-\xi_\star\rangle^{\frac{\beta+5 }{2}}|\phi_{xxx}|\left[\left(|U_x|^3+|U_x||U_{xx}|+|U_{xxx}|\right)|\phi_{x}|+\left(U_x^2+|U_{xx}| \right)|\phi_{xx}|\right.\nonumber\\&\left.\quad+|U_x||\phi_{xxx}|+|\phi_{xxxx}|\right]+CN(T)\int_{0}^{t}\int\langle\xi-\xi_\star\rangle^{\frac{\beta+4 }{2}}|\phi_{xxx}|\left(|U_x||\phi_{xx}|+\phi_{x x}^2+|\phi_{xxx}|\right)\nonumber\\
		&
		\!\leq\! N(T)\!\!\int_{0}^{t}\!\!\int\!\langle\xi\!-\!\xi_\star\rangle^{\beta+3}\phi_{xxxx}^2\!+\!CN(T)\!\!\int_{0}^{t}\!\!\int\!\left(\!\langle\xi\!-\!\xi_\star\rangle^{\beta+2}\phi_{xxx}^2\!+\! \langle\xi\!-\!\xi_\star\rangle^{\beta+1}\phi_{xx}^2\!+\!\langle\xi\!-\!\xi_\star\rangle^{\beta}\phi_{x}^2 \right).\nonumber
	\end{align}
From \eqref{Gzzz}, noting that $\left\|\phi_x(\cdot,t)/U\right\|_{L^{\infty}}\leq CN(T)$, the last two term on RHS of \eqref{46} can be estimated as
	\begin{align}
&\int_{0}^{t}\!\int\left(\langle\xi\!-\!\xi_\star\rangle ^{\beta+3}|G_{xxx}\phi_{xxxx}|+\langle\xi\!-\!\xi_\star\rangle ^{\beta+2}|G_{xxx}\phi_{xxx}| \right)\nonumber\\&
\leq CN(T)\int_{0}^{t}\int\left(\langle\xi-\xi_\star\rangle^{\beta+3}|\phi_{xxxx}|+\langle\xi-\xi_\star\rangle^{\beta+2}|\phi_{xxx}| \right)\left[\left(\frac{|U_x|^3 }{U^{4-m}}\!+\!\frac{|U_x||U_{xx}| }{U^{3-m}}\!+\!\frac{|U_{xxx}| }{U^{2-m}} \right)|\phi_{x}|\right.\nonumber\\&\left.\quad+\left(\frac{U_x^2 }{U^{3-m}}+\frac{|U_{xx}| }{U^{2-m}} \right)|\phi_{xx}|+\frac{|U_x| }{U^{2-m}}|\phi_{xxx}|+\frac{|\phi_{xxxx}| }{U^{1-m}}\right]+C\int_{0}^{t}\int \langle\xi-\xi_\star\rangle^{\beta+2}\left(\frac{|U_x| }{U^{3-m}}\phi_{xx}^2|\phi_{xxx}|\right.\nonumber\\&\left.\quad+\frac{|\phi_{xx}|^3|\phi_{xxx}|}{U^{3-m}} \right)
+C\int_{0}^{t}\!\!\int\! \langle\xi\!-\!\xi_\star\rangle^{\beta+3}\left(\frac{|U_x| }{U^{3-m}}\phi_{xx}^2|\phi_{xxxx}|+ \frac{|\phi_{xx}|^3|\phi_{xxxx}|}{U^{3-m}}\right)\nonumber\\&\quad+C\int_{0}^{t}\int\left(\langle\xi-\xi_\star\rangle^{\beta+3}|\phi_{xxxx}|+\langle\xi-\xi_\star\rangle^{\beta+2}|\phi_{xxx}| \right) \frac{|\phi_{xx}||\phi_{xxx}|}{U^{2-m}}\nonumber\\&\triangleq I_4+\cdots+I_{7},\nonumber
	\end{align}
where, by virtue of \eqref{q1}-\eqref{q3} and \eqref{e5}, it holds that
	\begin{align}
I_4&\leq\! CN(T)\!\int_{0}^{t}\int\left(\langle\xi-\xi_\star\rangle^{\beta+3}+\langle\xi-\xi_\star\rangle^{\beta+2} \right)\frac{\phi_{xxxx}^2 }{U^{1-m}}+C\int_{0}^{t}\int\langle\xi-\xi_\star\rangle^{\beta+3}\frac{U_x^2 }{U^{3-m}}\phi_{xxx}^2\nonumber\\&\quad+\!C\!\int_{0}^{t}\!\int\langle\xi\!-\!\xi_\star\rangle^{\beta+2}\frac{\phi_{xxx}^2 }{U^{1-m}}\!\!+\!C\int_{0}^{t}\!\int\!\langle\xi-\xi_\star\rangle^{\beta+2}\phi_{xxx}^2\!+\!C\int_{0}^{t}\!\int\left(\langle\xi-\xi_\star\rangle^{\beta+3}\!+\!\langle\xi-\xi_\star\rangle^{\beta+2} \right)\nonumber\\&\quad\times\left[\left(\frac{U_x^6 }{U^{7-m}}\!+\!\frac{U_x^2U_{xx}^2 }{U^{5-m}}\!+\!\frac{U_{xxx}^2 }{U^{3-m}} \right)\phi_{x}^2+\left(\frac{U_x^4 }{U^{5-m}}\!+\!\frac{U_{xx}^2 }{U^{3-m}} \right)\phi_{xx}^2\right]\nonumber\\&\leq \!CN(T)\!\int_{0}^{t}\!\!\int\langle\xi\!-\!\xi_\star\rangle^{\beta+3}\frac{\phi_{xxxx}^2 }{U^{1-m}}\!+\!C\int_{0}^{t}\!\!\int\left( \langle\xi\!-\!\xi_\star\rangle^{\beta+2}\frac{\phi_{xxx}^2 }{U^{1-m}}\!+\!\langle\xi\!-\!\xi_\star\rangle^{\beta+1}\phi_{xx}^2\!+\!\langle\xi-\xi_\star\rangle^{\beta}\phi_{x}^2\right)\nonumber.
	\end{align}
By $\left\|\langle\xi-\xi_\star\rangle^{\frac{\beta+2 }{2}}\phi_{xx}(\cdot,t)\right\|_{L^{\infty}}\leq CN(T)$ and $|U_x|\leq CU^{2-m}$, it holds that
\begin{align}
	I_{5}&\leq CN(T)\int_{0}^{t}\int\langle\xi\!-\!\xi_\star\rangle^{\frac{\beta+2 }{2}}\frac{|\phi_{xx}||\phi_{xxx}|}{U}+CN^2(T)\int_{0}^{t}\int\frac{|\phi_{xx}||\phi_{xxx}|}{U^{3-m}}\nonumber\\&\leq C\int_{0}^{t}\int\left(\frac{\phi_{xxx}^2 }{U^{4-2m}}+\langle\xi\!-\!\xi_\star\rangle^{\beta+2}\phi_{xxx}^2+\frac{\phi_{xx}^2 }{U^2} \right),\nonumber
\end{align}
Note that
\begin{align}
I_6= C\int_{0}^{t}\left(\int_{0}^{s\tau+d(\tau)}+\int_{s\tau+d(\tau)}^{+\infty} \right)\langle\xi-\xi_\star\rangle^{\beta+3}\left(\frac{|U_x| }{U^{3-m}}\phi_{xx}^2|\phi_{xxxx}|+\frac{|\phi_{xx}|^3|\phi_{xxxx}| }{U^{3-m}}\right).\nonumber	
\end{align}
where, for $\xi<0$, noting that $U(\xi)>U(0)>0$ and $\left\|\langle\xi-\xi_\star\rangle^{\frac{\beta+2 }{2}}\phi_{xx}(\cdot,t)\right\|_{L^{\infty}}\leq CN(T)$,
\begin{equation}\nonumber
	\begin{aligned}
&\int_{0}^{t}\!\!\int_{0}^{s\tau+d(\tau)}\langle\xi-\xi_\star\rangle^{\beta+3}\left(\frac{|U_x| }{U^{3-m}}\phi_{xx}^2|\phi_{xxxx}|+\frac{|\phi_{xx}|^3|\phi_{xxxx}| }{U^{3-m}}\right)\\&\leq CN(T)\int_{0}^{t}\!\!\int_{0}^{s\tau+d(\tau)}\left(\langle\xi\!-\!\xi_\star\rangle^{\frac{\beta+4 }{2}}+\langle\xi-\xi_\star\rangle \right)|\phi_{xx}||\phi_{xxxx}|\\&\leq N(T)\int_{0}^{t}\!\!\int\langle\xi\!-\!\xi_\star\rangle^{\beta+3}\phi_{xxxx}^2+C\int_{0}^{t}\!\!\int\langle\xi\!-\!\xi_\star\rangle\phi_{xx}^2,		 
	\end{aligned}
\end{equation}
and for $\xi>0$, noting $\langle\xi-\xi_\star
\rangle^2\frac{U_x^2}{U^{5-m}}\leq CU^{-(3-m)}$ as $\xi\rightarrow+\infty$, $\|\phi_{xx}(\cdot,t)/U^{\frac{3-m}{2}}\|_{L^2}\leq CN(T)$ and
\begin{align}
	\langle\xi-\xi_\star\rangle \frac{\phi_{xx}^2}{U^2}=-\int_{x}^{+\infty}\frac{\partial}{\partial x}\left( \langle\xi-\xi_\star\rangle \frac{\phi_{xx}^2}{U^2}\right)&
	\leq C\left( \int_{x}^{+\infty}\frac{\phi_{xx}^2}{U^2}+\int_{x}^{+\infty}\langle\xi-\xi_\star\rangle^2\frac{\phi_{xxx}^2}{U^2}\right)	\nonumber\\&\leq C\left( \int\frac{\phi_{xx}^2}{U^2}+\int\frac{\phi_{xxx}^2}{U^{4-2m}}\right),	\nonumber
\end{align}
one has,
\begin{align}
		&\int_{0}^{t}\int_{s\tau+d(\tau)}^{+\infty} \langle\xi-\xi_\star\rangle^{\beta+3}\left(\frac{|U_x| }{U^{3-m}}\phi_{xx}^2|\phi_{xxxx}|+\frac{|\phi_{xx}|^3|\phi_{xxxx}| }{U^{3-m}}\right)\nonumber\\&\leq C \int_{0}^{t}\left\|\langle\xi-\xi_\star\rangle ^{\frac{\beta+1 }{2}}\phi_{xx}\right\|_{L^{\infty}}\left(\int_{s\tau+d(\tau)}^{+\infty} \langle\xi-\xi_\star\rangle^{2}\frac{U_x^2 }{U^{5-m}}\phi_{xx}^2 \right)^\frac{1}{2}\left(\int_{s\tau+d(\tau)}^{+\infty} \langle\xi-\xi_\star\rangle^{\beta+3}\frac{\phi_{xxxx}^2 }{U^{1-m}} \right)^\frac{1}{2}\nonumber\\&\quad+CN(T) \!\int_{0}^{t}\!\left\|\langle\xi-\xi_\star\rangle ^{\frac{1 }{2}}\frac{\phi_{xx}}{U}\right\|_{L^{\infty}(s\tau+d(\tau),+\infty)}\left(\int_{s\tau+d(\tau)}^{+\infty} \frac{\phi_{xx}^2}{U^{3-m}} \right)^\frac{1}{2}\!\!\left(\int_{s\tau+d(\tau)}^{+\infty} \langle\xi\!-\!\xi_\star\rangle^{\beta+3}\frac{\phi_{xxxx}^2 }{U^{1-m}} \right)^\frac{1}{2}\nonumber\\&
		\leq\! CN(T)\! \int_{0}^{t}\!\int\langle\xi-\xi_\star\rangle ^{\beta+3}\frac{\phi_{xxxx}^2 }{U^{1-m}}\!+\!C \int_{0}^{t}\!\int\!\left(\!\langle\xi-\xi_\star\rangle ^{\beta+2}\phi_{xxx}^2\!+\!\langle\xi-\xi_\star\rangle ^{\beta+1}\phi_{xx}^2\! +\! \frac{\phi_{xx}^2}{U^2}\! +\! \frac{\phi_{xxx}^2}{U^{4-2m}} \right)\nonumber.	
	\end{align}
Moreover, it holds
\begin{align}
	I_{7}&= \int_{0}^{t}\left(\int_{0}^{s\tau+d(\tau)}+\int_{s\tau+d(\tau)}^{+\infty}\right)\left(\langle\xi-\xi_\star\rangle^{\beta+3}|\phi_{xxxx}|+\langle\xi-\xi_\star\rangle^{\beta+2}|\phi_{xxx}| \right) \frac{|\phi_{xx}||\phi_{xxx}|}{U^{2-m}}\nonumber\\&\leq CN(T)\!\int_{0}^{t}\!\int_{0}^{s\tau+d(\tau)}\langle\xi-\xi_\star\rangle^{\frac{\beta+4}{2}}|\phi_{xxx}||\phi_{xxxx}|\!+\!CN(T)\int_{0}^{t}\!\int_{s\tau+d(\tau)}^{+\infty}\langle\xi\!-\!\xi_\star\rangle^{\frac{\beta+4}{2}}\frac{|\phi_{xxx}||\phi_{xxxx}| }{U^{2-m}}\nonumber\\&\quad+CN(T)\int_{0}^{t}\int_{0}^{s\tau+d(\tau)}\langle\xi-\xi_\star\rangle^{\frac{\beta+2}{2}}\phi_{xxx}^2	 \!+\!CN(T)\int_{0}^{t}\int_{s\tau+d(\tau)}^{+\infty}\langle\xi-\xi_\star\rangle^{\frac{\beta+2}{2}}\frac{\phi_{xxx}^2}{U^{2-m}}\nonumber
	\\&
	\leq CN(T) \int_{0}^{t}\int\langle\xi-\xi_\star\rangle ^{\beta+3}\frac{\phi_{xxxx}^2 }{U^{1-m}}+	C\int_{0}^{t}\int\frac{\phi_{xxx}^2 }{U^{4-2m}}+C\int_{0}^{t}\int\langle\xi-\xi_\star\rangle^{\beta+2}\phi_{xxx}^2,\label{58}
\end{align}
where we have used the fact $\langle\xi-\xi_\star\rangle\sim U^{-(1-m)}$ and $\langle\xi-\xi_\star\rangle^{\frac{\beta+2}{2}}\leq\langle\xi-\xi_\star\rangle^{\frac{1+m}{2(1-m)}} \sim U^{-\frac{1+m}{2}}\leq CU^{-(2-m)}$ as $\xi\rightarrow+\infty$ due to $\beta\leq \frac{3m-1}{1-m}$ and $m<1$ in the last inequality.
Combining \eqref{60}-\eqref{61}, \eqref{62}-\eqref{58}, we have
\begin{align}
	&\int\langle\xi-\xi_\star\rangle ^{\beta+3} \phi_{xxx}^2+\left(\frac{3}{4}-CN(T) \right)(d_0+t) ^{\beta+3}\phi_{xx}^2(0,t)\nonumber\\&+(1-d_0^{-\frac{1}{2}})\int_{0}^{t}(d_0+\tau) ^{\beta+3}\phi_{xxx}^2(0,\tau)+\left(\frac{3}{8}-CN(T) \right)\int_{0}^{t}\int\langle\xi-\xi_\star\rangle ^{\beta+3}\frac{\phi_{xxxx}^2}{U^{1-m}}\nonumber\\&\leq C\!\left(\int\!\langle\xi_0\!-\!\xi_\star\rangle ^{\beta+3}\phi_{0xxx}^2\!+\!d_0^{\beta+3}\!\!\int\!\left( \phi_{0xx}^2\!+\!\phi_{0x}^2\!+\!\phi_{0}^2\right)\!+\!d_0^{-1}\!+\!(d_0^{-\frac{1}{2}}\!+\!N(T))\int_{0}^{t}(d_0\!+\!\tau) ^{\beta+3}\phi_{xx}^2(0,\tau)\!\right)\nonumber\\&\quad+C\int_{0}^{t}\int\left( \langle\xi-\xi_\star\rangle^{\beta+2}\frac{\phi_{xxx}^2 }{U^{1-m}}+\langle\xi-\xi_\star\rangle^{\beta+1}\phi_{xx}^2+\langle\xi-\xi_\star\rangle^{\beta}\phi_{x}^2+\frac{\phi_{xxx}^2 }{U^{4-2m}}+\frac{\phi_{xx}^2 }{U^2}\right)\nonumber\\&\leq C\left( 1\!+\!d_0^{\beta+3}\right)\left(\int\langle\xi_0-\xi_\star\rangle^\beta\phi_0^2\!+\!\int\langle\xi_0-\xi_\star\rangle ^{\beta+1} \phi_{0x}^2\!+\!\int\langle\xi_0-\xi_\star\rangle ^{\beta+2} \phi_{0xx}^2\!+\!\int\langle\xi_0-\xi_\star\rangle ^{\beta+3}\phi_{0xxx}^2\right.\nonumber\\&\left.\quad+\!\int\frac{\phi_0^2 }{U^{3m-1}}\!+\!\int\frac{\phi_{0x}^2}{U^{1+m}}\!+\!\int\frac{\phi_{0xx}^2}{U^{3-m}}\!\right)+Cd_0^{-1}\!+\!C\left(d_0^{-\frac{1}{2}}\!+\!d_0^{-\beta}\!+\!N(T) \right)\int_{0}^{t}\left(d_0\!+\!\tau \right) ^{\beta+3}\phi_{x x}^2(0,\tau)\nonumber.
\end{align}
Since $|\phi_{xx}(0,t)|\sim|\phi_{xxx}(0,t)|+\mathrm{e}^{-\lambda_-(st+d(t))}$ from \eqref{phixxx 0}, if $d_0\gg1$, we have the desired estimate \eqref{H3 estimate}.
\end{proof}

\begin{proof}[Proof of Proposition \ref{proposition priori estimate}]
	As $x-st-d(t)\sim x-st$ for all $t\in[0,T]$ due to the uniform boundedness of $d(t)$, the desired estimate \eqref{priori estimate} follows from \eqref{L2 estimate}, \eqref{L2-1 estimate}, \eqref{L2-2 estimate}, \eqref{H1 estimate}, \eqref{H1-2 estimate},  \eqref{H2 estimate}, \eqref{H2-2 estimate} and \eqref{H3 estimate}.
\end{proof}

\section{Proof of Theorem \ref{phi stability}}
With the local existence and the \textit{a priori} estimate in hand, we now prove Theorem \ref{phi stability}. The proof is established in the solution space $X$ (see \eqref{space X}). By virtue of the equivalence between $X$ and $\tilde{X}$ (defined in \eqref{space tilde X}), the result implies its validity in $\tilde{X}$ as well.

\begin{proof}[Proof of Theorem \ref{phi stability}]
	From Lemmas \ref{L2}-\ref{H3}, we can choose a constant $\epsilon_{2}\!>\!0$ such that $\epsilon_{2}\!<\!\epsilon_{0}$, where $\epsilon_{0}$ is mentioned in local existence analysis. Let $\epsilon_3\!=\!\min\left\{\epsilon_{2}/3, (1\!+\!d_0^{\frac{\beta+3}{2}})^{-1}\epsilon_{2}/3\sqrt{ C}\right\}$ and $N(0)+d_0^{-\frac{1}{2}}\leq\epsilon_3$. Since $N(0)+d_0^{-\frac{1}{2}}\leq\epsilon_3\leq \epsilon_{2}/3<\epsilon_{0}$, it follows from Proposition \ref{local existence} that
	the system \eqref{d equ} admits a unique solution $(\phi,d)\in X(0,T_0)$. Moreover, for any $0\leq t\leq T_0$, we
	have $N(t)+d_0^{-\frac{1}{2}}\leq 2(N(0)+d_0^{-\frac{1}{2}})+d_0^{-\frac{1}{2}}\leq 3\epsilon_3\leq3\epsilon_{2}/3=\epsilon_{2}.$
	Subsequently, by employing Proposition \ref{proposition priori estimate}, we establish that $N(T_0)+d_0^{-\frac{1}{2}}\leq\sqrt{C}(1+d_0^{\frac{\beta+3}{2}})N(0)+\left(\sqrt{C}+1 \right)d_0^{-\frac{1}{2}}\leq \sqrt{ C}(1+d_0^{\frac{\beta+3}{2}})\epsilon_3\leq\epsilon_{2}/3.$
	Now, treating $T_0$ as the new initial time and applying
	Proposition \ref{local existence} again, we extend the solution uniquely to the interval $[0,2T_0]$ with $(\phi,d)\in X(T_0,2T_0)$. Similarly, for any $0\leq t\leq 2T_0$, we obtain $N(t)+d_0^{-\frac{1}{2}}\leq 2(N(T_0)+d_0^{-\frac{1}{2}})+d_0^{-\frac{1}{2}}\leq 3\epsilon_{2}/3=\epsilon_{2}.$ Then, applying Proposition \ref{proposition priori estimate}  with $T=2T_0$ again, we deduce that $N(2T_0)+d_0^{-\frac{1}{2}}\leq \sqrt{C}(1+d_0^{\frac{\beta+3}{2}})N(0)+\left(\sqrt{C}+1 \right)d_0^{-\frac{1}{2}}\leq \sqrt{C}(1+d_0^{\frac{\beta+3}{2}})\epsilon_3\leq\epsilon_{2}/3<\epsilon_{0}.$
	 Hence, by repeating this continuation process, we can derive a unique global solution $(\phi,d)\in X(0,\infty)$ that satisfies the estimate \eqref{priori estimate} for all $t\in[0,\infty)$.
	
	To complete the proof, it remains to show \eqref{asymptotic} and \eqref{asymptotic1}. We denote $q(t)\triangleq\|\phi_x(\cdot,t)\|^2$. Given the established estimate \eqref{priori estimate} for all $t\in[0,\infty)$, it is easy to see that
	$$
	\int_{0}^{\infty}\|\phi_x(\cdot,\tau)\|_1^2 \mathrm{d}\tau \leq \int_{0}^{\infty}\left\|\phi_x(\cdot,\tau)\right\|_{w_4}^2\mathrm{d}\tau\leq C\left( 1\!+\!d_0^{\beta+3}\right) N^2(0)+Cd_0^{-1} \leq C,
	$$	
	which implies $q(t)\in L^1(0,\infty)$. Noting that $\|\phi_{x}(\cdot,t)\|_{L^\infty}\leq CN(T)$ and $\|\phi_{xx}(\cdot,t)\|_{L^\infty}\leq CN(T)$, combining \eqref{priori estimate} with \eqref{phizequ}, a direct  calculation yields
		\begin{align}
\int_{0}^{\infty }\int\phi_{xt}^2&\leq C\int_{0}^{\infty }\int \left( U_x^2\phi_{x}^2+\phi_{x x}^2+\frac{\phi_{xxx}^2}{U^{2-2m}}+\frac{U_x^2}{ U^{4-2m}}\phi_{xx}^2+\frac{U_{xx}^2}{ U^{4-2m}}\phi_{x}^2+\frac{U_x^4}{ U^{6-2m}}\phi_{x}^2\right.\nonumber\\&\quad\left.+d'(\tau)^2U_x^2+F_x^2+G_{xx}^2\right)\nonumber\\&\leq C\int_{0}^{\infty }\int \left(\phi_{x}^2+\frac{\phi_{x x}^4}{U^{4-2m}}+\frac{\phi_{x x}^2}{U^2}+\frac{\phi_{xxx}^2}{U^{4-2m}}+d_0^{-1} \right)+C\int_{0}^{\infty }|\phi_{xx}(0,\tau)|^2\nonumber\\&\leq C,\nonumber
		\end{align}
where we have used
\begin{equation}\nonumber
	\begin{aligned}
\int_{0}^{\infty }\int\frac{\phi_{x x}^4}{U^{4-2m}}	\leq CN(T)\int_{0}^{\infty }\int\frac{\phi_{x x}^3}{U^{4-2m}}&\leq C\int_{0}^{\infty}\left\|\frac{\phi_{xx} }{U^{\frac{3-m}{2}} }\right\|_{L^{\infty}}\left\|\frac{\phi_{xx} }{U^{\frac{3-m}{2}} }\right\|_{L^{2}}\left\|\frac{\phi_{xx} }{U^{-m}} \right\|_{L^{2}}\\&
\leq C\left(  \int_{0}^{\infty}\int\frac{\phi_{xxx}^2}{U^{4-2m}}+ \int_{0}^{t}\int\frac{\phi_{xx}^2 }{U^{2}}\right)
	\end{aligned}
\end{equation}
in the last inequality. Thus,	
	\begin{equation}\nonumber
			\int_{0}^{\infty } |q'(\tau)|\mathrm{d}\tau
			=2 \int_{0}^{\infty } \left|\int\phi_x\phi_{xt}\right|\mathrm{d}\tau \leq C\int_{0}^{\infty }\int \left(\phi_{x}^2 +\phi_{xt}^2\right)\mathrm{d}\tau\leq C.
	\end{equation}
Then,
	\begin{equation}\nonumber
		\|\phi_x(\cdot,t)\|\rightarrow 0, \quad \text { as } t \rightarrow +\infty.
	\end{equation}
	This along with the H\"{o}lder's inequality implies that
	\begin{equation}\nonumber
		\begin{aligned}
			\phi_x^2(x, t) =-2 \int_x^{+\infty} \phi_y \phi_{yy}(y, t) d y
			& \leq 2\left(\int_{0}^{\infty} \phi_y^2 d y\right)^{1 / 2}\left(\int_{0}^{\infty} \phi_{yy}^2 d y\right)^{1 / 2} \\
			& \leq C\left\|\phi_x(\cdot, t)\right\| \rightarrow 0,\quad \text { as } t \rightarrow+\infty.
		\end{aligned}
	\end{equation}
Therefore \eqref{asymptotic} is proved.

We next show that \eqref{asymptotic1}. By virtue of \eqref{priori estimate} and \eqref{e1}, one has
\begin{equation}\nonumber
	\begin{aligned}
	 \int_{0}^{\infty}|d'(\tau)|\mathrm{d}\tau &\leq \int_{0}^{\infty}\left(\mathrm{e}^{-\gamma(d_0+\tau)}+|\phi_{x x}(0,\tau)|\right)\mathrm{d}\tau\\&\leq Cd_0^{-1}+\left( \int_{0}^{\infty}\left(d_0+\tau\right)^{\beta+3}|\phi_{xx}(0,\tau)|^2\right)^{\frac{1}{2}}\left( \int_{0}^{\infty}\left(d_0+\tau\right)^{-(\beta+3)}\right)^{\frac{1}{2}}&\leq C,
	 	\end{aligned}
\end{equation}
which implies $d'(t)\in L^1(0,\infty)$. Then,
\begin{equation}\nonumber
	 d(t)-d_0=\int_{0}^{t}d'(\tau)d\tau<+\infty.
\end{equation}
This gives $|d(t)|<+\infty$ and
\begin{equation}\nonumber
	\lim_{t\rightarrow+\infty}d(t)=d_0+\int_{0}^{\infty}d'(\tau)d\tau\triangleq d_\infty.
\end{equation}
The proof of Theorem \ref{phi stability} is complete.
\end{proof}

\section*{Acknowledgements}
The research of M. Mei is supported in part by Natural Sciences and Engineering Research Council of Canada (RGPIN 2022-03374) and National Natural Science Foundation of China (W2431005).

\end{document}